\documentclass[11pt,final]{article}

\usepackage{amsmath, amscd, amsfonts, amssymb, amsthm, tikz, times, dsfont, enumerate, mathtools}
\usepackage[T1]{fontenc}
\usepackage{graphicx}
\usepackage[title]{appendix}
\usepackage{bbm}
\usepackage{kotex}
\usepackage{regexpatch}
\usepackage{subfig}
\usepackage{hyperref}
\pagestyle{headings}

\usepackage{fullpage}

\makeatletter
\newcommand*{\rom}[1]{\expandafter\@slowromancap\romannumeral #1@}
\makeatother

\theoremstyle{plain}
\newtheorem{thm}{Theorem}[section]
\newtheorem{lem}[thm]{Lemma}
\newtheorem{cor}[thm]{Corollary}

\newtheorem{defi}[thm]{Definition}

\newtheorem*{remark*}{Remark}

\newcommand{\prob}{\mathbb{P}}
\newcommand{\PP}{\prob}

\newcommand{\R}{\mathbb{R}}
\newcommand{\C}{\mathbb{C}}
\newcommand{\E}{\mathbb{E}}
\newcommand{\T}{\mathbb{T}}
\newcommand{\eps}{\epsilon}
\newcommand{\indic}{\mathbbm{1}}
\newcommand{\IND}{\indic}
\newcommand{\nn}{\nonumber}
\newcommand{\ER}{Erd\H{o}s-R\'{e}nyi }

\newcommand{\hz}{\hat{z}}
\newcommand{\tz}{\tilde{z}}

\newcommand{\cE}{\mathcal{E}}
\newcommand{\cD}{\mathcal{D}}
\newcommand{\cY}{\mathcal{Y}}
\newcommand{\cX}{\mathcal{X}}
\newcommand{\cL}{\mathcal{L}}
\newcommand{\ac}{\mathring{a}}
\newcommand{\Ac}{\mathring{A}}

\newcommand{\ee}{\mathbf{e}}
\newcommand{\vv}{\mathbf{v}}
\newcommand{\uu}{\mathbf{u}}
\newcommand{\ww}{\mathbf{w}}
\newcommand{\xx}{\mathbf{x}}

\renewcommand{\Im }{\mathrm{Im}}

\newcommand{\var}{\mathrm{Var}}

\AtEndDocument{
\par
\medskip
\begin{tabular}{@{}l@{}}
	\textbf{Charles~Bordenave}\\
	{\small\textnormal{Institut de Math\'{e}matiques de Marseille; CNRS; Aix-Marseille Universit\'{e}, Marseille, 13288, France.}}\\
	{\small\textnormal{E-mail: \href{charles.bordenave@univ-amu.fr}{charles.bordenave@univ-amu.fr} }}
\end{tabular}

\par
\medskip
\begin{tabular}{@{}l@{}}
	\textbf{Jaehun~Lee}\\
	{\small\textnormal{Department of Mathematical Sciences, KAIST, Daejeon, 34141, Korea.}}\\
	{\small\textnormal{E-mail: \href{jaehun.lee@kaist.ac.kr}{jaehun.lee@kaist.ac.kr} }}
\end{tabular}
}

\date{}

\begin{document}

\title{Noise sensitivity for the top eigenvector of a sparse random matrix}
\author{Charles~Bordenave \qquad Jaehun~Lee}
\maketitle

\begin{abstract}
We investigate the noise sensitivity of the top eigenvector of a sparse random symmetric matrix. Let $v$ be the top eigenvector of an $N\times N$ sparse random symmetric matrix  with an average of $d$ non-zero centered entries per row. We resample $k$ randomly chosen entries of the matrix and obtain another realization of the random matrix with top eigenvector $v^{[k]}$. Building on recent results on sparse random matrices and a noise sensitivity analysis previously developed for Wigner matrices, we prove that, if $d\geq N^{2/9}$, with high probability, when $k \ll N^{5/3}$, the vectors $v$ and $v^{[k]}$ are almost collinear and, on the contrary, when $k\gg N^{5/3}$, the vectors $v$ and $v^{[k]}$ are almost orthogonal. A similar result  holds for the eigenvector associated to the second largest eigenvalue of the adjacency matrix of an Erd\H{o}s-R\'enyi random graph with average degree $d \geq N^{2/9}$.
\end{abstract}

\section{Introduction}

{\em Noise sensitivity} is an important phenomenon in probability theory that  describes a function of many independent random variables whose output asymptotically decorrelates when only a small proportion of the random variables are resampled. It has deep connections with threshold phenomena and it has been extensively studied since the pioneering work of Benjamini, Kalai, and Schramm \cite{BKS99}. It has found many applications in theoretical computer science and statistical mechanics where it commonly appears in large systems in a critical state such as critical percolation. We refer to the monographs \cite{MR3157205} and \cite{GS2015} for references and background.

Recently, the authors of \cite{BLZ20} have investigated the noise sensibility of Wigner random matrices, that is a $N \times N$ symmetric matrix with i.i.d. centered entries with unit variance above the diagonal. Calling an unit eigenvector corresponding to the largest eigenvalue {\em top eigenvector}, they studied how the direction of the top eigenvector varies when {\em resampling} a number $k$ of uniformly chosen entries of a Wigner matrix. Under an exponential tail assumption on the entries, they proved a threshold phenomenon as $N$ goes to infinity: if $k\ll N^{5/3}$, with high probability, the top eigenvectors remain nearly aligned while if $k \gg N^{5/3}$ their are almost orthogonal. Since $N^{5/3}$ is much smaller than  $N(N+1)/2$, the number of independent random variables in the matrix, the latter result can be interpreted as a noise sensitivity statement. On the random matrix side, the proofs in \cite{BLZ20} built on many outstanding  results which have been proved on the spacing and fluctuations of eigenvalues and on the delocalization of their eigenvectors, we refer to \cite{MR3792624, EY17} for lecture notes on this topic. 

In this paper, we extend the results of \cite{BLZ20} to a large class of {\em sparse symmetric random matrices} with an average of $d$ non-zero entries per row.  In the regime $d \geq N^\eps$ for some $\eps >0$, many remarkable results have recently been achieved for such sparse random matrices including eigenvector delocalization and Tracy-Widom or Gaussian fluctuation of the extremal eigenvalues, including \cite{BHY17,EKYY12,EKYY13,HKM18,HLY20,LS18,LL19,LV18,MR3579707}. One thus might expect to observe the same threshold phenomenon for the top eigenvector in sparse random matrix ensemble as it was shown for Wigner matrices. Indeed, we prove this phenomenon assuming a certain condition on the parameter $d$.  Our work notably builds upon \cite{EKYY13,HLY20} for local laws of the resolvent and \cite{HLY20,LL19} for eigenvalue spacings.\\

Sparse random matrices have many applications in computer sciences and statistics. One of canonical models for a such matrices is the sparse Erd\H{o}s-R\'{e}nyi graph, which is often used to describe random networks. In view of the graph, resampling an entry (of the adjacency matrix) can be regarded as creating or deleting an edge on the graph with some probability so that we can generate a random perturbation to some given networks through resampling. Since eigenvectors tend to be more informative than eigenvalue, it might be expected that the above-described phase transition of top eigenvector find some opportunities to be applied in other disciplines.

\subsection{Definition and main results}

We first introduce the main model of random matrices which we will consider. 
\begin{defi}[Sparse random matrices]\label{def: sparse RM}
Let $\vartheta >0$ be a fixed number and $q=q(N) \in (0,\sqrt N]$ be a sparsity parameter. Let $H=(h_{ij})$ be an $N\times N$ random matrix where all entries are real and independent up to the symmetry constraint $h_{ij}=h_{ji}$. 
We	assume that $h_{ij}$ is the product
	\begin{align*}
		h_{ij} = \frac{x_{ij}y_{ij}}{q},
	\end{align*}
	where $\{x_{ij}: i\le j\}$ and $\{y_{ij}: i\le j\}$ are independent and satisfy the following conditions: for all $i ,j$
	\begin{enumerate}[(i)]
		\item $\E x_{ij} = 0$, $\E x_{ij}^2 = 1$ and $\E \exp (\vartheta x_{ij}^2) \leq \vartheta^{-1}$.
		\item $\PP ( y_{ij} = 1) = 1 - \PP( y_{ij} = 0) =  q^{2}/N$.
	\end{enumerate}
\end{defi}

The condition $\E \exp (\vartheta x_{ij}^2) \leq \vartheta^{-1}$ asserts that the entries of the matrix are uniformly sub-Gaussian. Our condition ensures that 
$$
\E h_{ij} = 0 \quad \hbox{ and } \quad \E h_{ij} ^2 = \frac 1 N . 
$$ 
The order of magnitude of a non-zero entry is of order $1/q$. More precisely, for any integer $k \geq 1$, there exists a constant $C = C(k,\vartheta) \geq 1$ such that, 
\begin{equation}\label{eq:momenthij}
q ^{2 - 2k} N^{-1}  \leq \E h_{ij} ^{2k} \leq C q^{2 - 2k} N^{-1}. 
\end{equation}

In this paper, we will use the following notation in the asymptotic $N \to \infty$: The symbols $O(\cdot)$ and $o(\cdot)$ are used for the standard big-O and little-o notation. For nonnegative functions $f$ and $g$ of parameter $N$, we write $f\lesssim g$ if there exists a constant $C>0$ such that $f\le C g$, and $f\asymp g$ if $f\gtrsim g$ and $g\gtrsim f$. Finally, we use the less standard notation 
$
f \ll g
$
if there exists a constant $\eps >0$ such that $N^{\eps} f = O ( g)$. Beware that the underlying constants could depend implicitly on the parameter $\vartheta$ which is fixed throughout the paper.

We now describe the resampling procedure. Let $(i_k,j_k), 1 \leq k \leq N(N+1)/2,$ be a random uniformly chosen ordering of the set $S = \{ (i,j) : 1 \leq i \leq j \leq N\}$, independently of $H$. For a positive integer $k\le N(N+1)/2$, the set $S_{k}=\{(i_{1},j_{1}),\ldots, (i_{k},j_{k})\}$ is thus a random set of $k$ distinct pairs (with $i_{m}\le j_{m}$) which is chosen uniformly from the family of all sets of $k$ distinct elements in $S$. By convention $S_0$ is the empty set.
\begin{defi}[Resampling procedure]\label{def: resampling}
	 Let $H'=(h_{ij}')$ be an independent copy of $H$. For integer $0 \leq k \leq N(N+1)/2$, we define $H^{[k]}=(h_{ij}^{[k]})$ as the random symmetric matrix generated from the given random matrix $H$, by resampling entries in $S_k$: for $i\le j$, 
	 \begin{align*}
	h_{ij}^{[k]} = \begin{cases}
	h_{ij}' & (i,j)\in S_{k}, \\
	h_{ij}  & (i,j)\notin S_{k}.
	\end{cases}
	\end{align*}
	The remaining entries of $H^{[k]}$ below the diagonal are determined by symmetry. 
\end{defi}
Let $\lambda_{1}\ge\cdots\ge\lambda_{N}$ be the ordered eigenvalues of $H$. We consider an orthonormal basis of eigenvectors of $H$ by $\{\vv_{1},\cdots,\vv_{N}\}$, i.e., $H\vv_{i}=\lambda_{i}\vv_{i}$ and $\lVert \vv_{i} \rVert$ = 1 for each $i$. Note that Luh and Vu recently showed that sparse random matrices have simple spectrum \cite{LV18} with probability tending to one as $N$ goes to infinity.  
This implies that $\lambda_{1}>\cdots>\lambda_{N}$ and  the eigenvectors are uniquely determined up to a sign. We call $\vv_{1}$ the top eigenvector of $H$. Similarly, we use the notation $\lambda_{1}^{[k]}\ge\cdots\ge\lambda_{N}^{[k]}$ and $\vv_{1}^{[k]},\cdots,\vv_{N}^{[k]}$ to denote the ordered eigenvalues and the associated unit eigenvectors of $H^{[k]}$.

The usual scalar product in $\R^N$ is denoted by $\langle \cdot, \cdot \rangle$ and $\| v \|_{\infty} = \max_{i} |v_i|$ is the $\ell^\infty$-norm of a vector. Our main results are the following two complementary claims.  
\begin{thm}[Noise sensitivity]\label{thm: main1}
	If $q \gtrsim N^{1/9}$ and $k \gg N^{5/3}$ then
	\begin{align*}
	\E\left\lvert \left\langle \vv_{1},\vv_{1}^{[k]} \right\rangle \right\rvert = o(1).
	\end{align*}
\end{thm}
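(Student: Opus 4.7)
The plan is to follow the strategy developed in \cite{BLZ20} for Wigner matrices, replacing the Wigner-specific inputs by the sparse local laws, eigenvalue rigidity, and eigenvector delocalization of \cite{EKYY13, HKM18, HLY20, LL19}. The core of the argument is to track the evolution of the squared overlap $P_j := \langle \vv_1, \vv_1^{[j]} \rangle^2$ along the discrete trajectory of single-entry resamplings $j = 0, 1, \dots, k$ and to establish a per-step negative drift of size $N^{-5/3 + o(1)}$, proportional to $P_j$. Iterated over $k \gg N^{5/3}$ resamplings this forces $\E P_k = o(1)$, and hence $\E |\langle \vv_1, \vv_1^{[k]} \rangle | \le (\E P_k)^{1/2} = o(1)$ by Jensen's inequality.

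To obtain the per-step drift I would exploit the fact that $H^{[j]}$ is equal in law to $H$ for every fixed $j$, so the eigenvector delocalization $\|\vv_i^{[j]}\|_\infty \leq N^{-1/2+\eps}$ and the edge rigidity $\lambda_1^{[j]} - \lambda_i^{[j]} \gtrsim N^{-2/3-\eps} i^{2/3}$ of \cite{HLY20, LL19} hold uniformly with overwhelming probability across $0 \le j \le k$. It is precisely here that the hypothesis $q \gtrsim N^{1/9}$ enters: the edge rigidity at scale $N^{-2/3}$ is available in \cite{HLY20} exactly at this sparsity threshold. Conditioning on $H^{[j-1]}$ and applying first-order eigenvector perturbation theory to the rank-two increment $\delta_j := H^{[j]} - H^{[j-1]}$ supported at the random entry $(i_j, j_j)$ yields
\[
\vv_1^{[j]} - \vv_1^{[j-1]} = \sum_{i \ge 2} \frac{\langle \vv_i^{[j-1]}, \delta_j \vv_1^{[j-1]}\rangle}{\lambda_1^{[j-1]} - \lambda_i^{[j-1]}}\, \vv_i^{[j-1]} + R_j ,
\]
and averaging over the uniformly chosen $(i_j, j_j)$ and the fresh value $h'_{i_j j_j}$ gives $\E[ \langle \vv_i^{[j-1]}, \delta_j \vv_1^{[j-1]} \rangle^2 \mid H^{[j-1]}] = O(N^{-3+\eps})$ by delocalization. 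Summing $\sum_{i \ge 2} N^{-3+\eps}/(N^{-2/3-\eps} i^{2/3})^2 = O(N^{-5/3+o(1)})$ via the convergent $i^{-4/3}$ tail delivers the required per-step variance.

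The hard part will be converting this variance budget into an actual multiplicative drift bound for $P_j$, uniformly in $j$, and controlling the second-order remainder $R_j$. Indeed, a single resampled value $h'_{i_j j_j}$ can be as large as $q^{-1} (\log N)^{1/2}$ on the sub-Gaussian tail, so a rare large entry meeting a small spectral gap could in principle overwhelm the first-order expansion. I would handle this by a truncation argument as in \cite{BLZ20}: replacing each fresh entry by its truncation at scale $q^{-1}(\log N)^C$, invoking the exponential tail in Definition~\ref{def: sparse RM}(i) together with a union bound over the $k \le N^2$ steps to show that the truncated and untruncated dynamics coincide with probability $1 - N^{-D}$, and then running the drift analysis on the truncated process. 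Checking that this truncation remains consistent with the $N^{-2/3}$-scale edge rigidity and with the required control of $R_j$ is ultimately what pins the threshold at $q \gtrsim N^{1/9}$.
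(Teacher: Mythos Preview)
Your proposal does not follow the strategy of \cite{BLZ20}, and it is not the strategy of this paper either. Both proofs rest on Chatterjee's variance/superconcentration identity (Lemma~\ref{lem: superconcentration} here): applying it with $f(H)=\lambda_1-L-\cX$ yields, after relating $\lambda_1-\mu_1$ to $Z_{st}v_sv_t$ via the variational inequality \eqref{eq: compare lambda_1 mu_1} and some decorrelation lemmas, the clean bound
\[
\E\langle \vv_1,\vv_1^{[k]}\rangle^2 \;\lesssim\; \frac{N^3\,\var(\lambda_1-\cX)}{k}+o(1),
\]
and then edge rigidity \eqref{eq:riglambda1} gives $\var(\lambda_1-\cX)=O(N^{-4/3+\eps})$ for $q\gtrsim N^{1/9}$. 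No step-by-step tracking of $P_j$ occurs; the inequality compares $j=0$ to $j=k$ directly.

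Your alternative route via a multiplicative drift on $P_j$ has a genuine gap, which you yourself flag as ``the hard part'' but do not resolve. Computing $\E\|\xi_j\|^2 = O(N^{-5/3+o(1)})$ is not the same as establishing $\E[P_j\mid\mathcal F_{j-1}]\le(1-cN^{-5/3+o(1)})P_{j-1}$. Writing $\vv_1=\alpha\,\vv_1^{[j-1]}+\vv_1^\perp$ with $\alpha^2=P_{j-1}$, the drift decomposition gives
\[
\E[P_j\mid\mathcal F_{j-1}]\;\approx\;P_{j-1}\bigl(1-\E\|\xi_j\|^2\bigr)+2\alpha\,\E\langle\vv_1^\perp,\xi_j\rangle+\E\langle\vv_1^\perp,\xi_j\rangle^2,
\]
and you need both (i) the cross term to be negligible and (ii) $\E\langle\vv_1^\perp,\xi_j\rangle^2\ll P_{j-1}\,\E\|\xi_j\|^2$. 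Neither is available. For (i), the increment $\delta_j=(h'_{i_jj_j}-h_{i_jj_j})E_{i_jj_j}$ does \emph{not} have conditional mean zero given $\mathcal F_{j-1}$: the subtracted entry $h_{i_jj_j}$ is $\mathcal F_0$-measurable and correlated with $\vv_1$ itself. For (ii), you would need $\xi_j$ to be approximately isotropic in $(\vv_1^{[j-1]})^\perp$, but $\xi_j$ is a weighted combination of the $\vv_i^{[j-1]}$ with weights depending on the spectral gaps, and the overlaps $\langle\vv_1,\vv_i^{[j-1]}\rangle$ are not controlled individually. The Chatterjee route sidesteps all of this by never tracking $P_j$ and instead exploiting the monotonicity $I_k\le I_{k-1}$ built into the variance identity.
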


\begin{thm}[Noise stability]\label{thm: main2}
	If $q \gtrsim N^{1/9}$ and $k \ll N^{5/3}$
	then
	\begin{align*}
  \E  \min_{s\in\{\pm 1\}} \sqrt{N}\lVert \vv_{1}-s\vv_{1}^{[k]} \rVert_{\infty} = o(1).
	\end{align*}
As a result, $\E\left\lvert \left\langle \vv_{1},\vv_{1}^{[k]} \right\rangle \right\rvert = 1 - o(1).$
\end{thm}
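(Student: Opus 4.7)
The plan is to write $\vv_1^{[k]}$ as a perturbation of $\vv_1$ via a resolvent expansion, and to show, through a second-moment computation in the resampling randomness, that $\sqrt{N}\,\|\vv_1 - s\vv_1^{[k]}\|_\infty$ is of order $\sqrt{k/N^{5/3}}$ up to subpolynomial factors, for the sign $s = \mathrm{sign}\langle \vv_1, \vv_1^{[k]}\rangle$. Set $E := H^{[k]} - H$; this is a symmetric matrix supported on the $k$ positions of $S_k$ with nonzero entries of typical mean-square $\asymp N^{-1}$. Condition on a high-probability good event on which edge rigidity at scale $N^{-2/3}$ (from \cite{HLY20,LL19}), eigenvector delocalization $\max_j \|\vv_j\|_\infty \leq N^{-1/2+o(1)}$ (from \cite{HLY20}), and the entry-wise local law for $G(z) = (H-z)^{-1}$ down to $\eta \asymp N^{-2/3}$ hold simultaneously for both $H$ and $H^{[k]}$. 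On this event one may pick a small contour $\gamma$ encircling both $\lambda_1$ and $\lambda_1^{[k]}$ and no other eigenvalue, so that the rank-one projector identity $\vv_1\vv_1^T = -\frac{1}{2\pi i}\oint_\gamma G(z)\,dz$ and the resolvent identity $G^{[k]} - G = -GEG + GEGEG - \cdots$, combined with residue calculus, yield at first order
\begin{align*}
 (\vv_1^{[k]} - s\vv_1)(a) = s\sum_{j\neq 1} \frac{\vv_j(a)\,\langle \vv_j, E\vv_1\rangle}{\lambda_1 - \lambda_j} + \text{(higher order in $E$)}.
\end{align*}

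To control the leading sum, I would use delocalization to extract $|\vv_j(a)|^2 \leq N^{-1+o(1)}$ outside the $j$-sum; use conditional independence of the resampled entries together with orthonormality of $\{\vv_j\}$ to establish
\begin{align*}
 \E\,|\langle \vv_j, E\vv_1\rangle|^2 \lesssim \frac{k}{N^3};
\end{align*}
and use edge rigidity, $\lambda_1 - \lambda_j \gtrsim (j/N)^{2/3}$, to compute $\sum_{j \neq 1}(\lambda_1-\lambda_j)^{-2} \lesssim N^{4/3}$. Putting the three ingredients together gives
\begin{align*}
 \E\,\bigl(\sqrt{N}\,(\vv_1^{[k]} - s\vv_1)(a)\bigr)^2 \lesssim N \cdot N^{-1+o(1)} \cdot \frac{k}{N^3} \cdot N^{4/3} = \frac{k}{N^{5/3}}\,N^{o(1)},
\end{align*}
which is $o(1)$ when $k \ll N^{5/3}$. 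A union bound over $a \in \{1,\dots,N\}$ with Markov's inequality delivers the desired expectation bound on $\sqrt{N}\,\|\vv_1 - s\vv_1^{[k]}\|_\infty$. The collinearity statement then follows from the trivial comparison $\|\vv_1 - s\vv_1^{[k]}\|_2^2 = 2(1 - |\langle \vv_1, \vv_1^{[k]}\rangle|) \leq N\|\vv_1 - s\vv_1^{[k]}\|_\infty^2$ and $\|\vv_1 - s\vv_1^{[k]}\|_2 \leq 2$.

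The main obstacle is the control of the higher-order terms in the resolvent expansion and the subtle dependence between the basis $\{\vv_j\}$ and the resampled entries. Unlike Wigner entries, those of $H$ are sparse and only uniformly sub-Gaussian after normalization, so concentration of the quadratic forms $\langle \vv_j, E\vv_1\rangle$ and of operator norms of sparse submatrices of $E$ requires the truncation and graph-moment methods of \cite{EKYY13,HLY20}, and the dependence of $\vv_j$ on $H$ must be broken by the good-event conditioning above. The $m$-th order term, of schematic form $\oint G(EG)^m\,dz$, is expected to contribute $(k/N^{5/3})^m$ to the second moment, which is geometrically summable precisely in the regime $k \ll N^{5/3}$; controlling it entry-wise demands resolvent estimates at scale $\eta \asymp N^{-2/3}$, which is where the sparsity threshold $q \gtrsim N^{1/9}$ (equivalently $d \geq N^{2/9}$) becomes binding, since this is the smallest average degree known to produce Tracy--Widom edge behavior on the sharp $N^{-2/3}$ scale that the $5/3$ exponent requires.
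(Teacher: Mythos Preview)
Your outline is in the right spirit but has two genuine gaps that the paper's argument is specifically designed to avoid.

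First, a second-moment bound for each coordinate $a$ combined with Markov and a union bound over $N$ coordinates is insufficient. If $\E[X_a^2] \lesssim (k/N^{5/3})N^{o(1)}$ for each $a$, Markov gives $\PP(|X_a|>t) \leq (k/N^{5/3})N^{o(1)}/t^2$, and the union bound over $a$ multiplies this by $N$; for $k$ just below $N^{5/3}$ you get $\PP(\max_a|X_a|>t) \lesssim N^{1+o(1)}/t^2$, which says nothing for any $t=o(1)$. You need arbitrarily high moments, not the second. The paper obtains these via a $2r$-th moment bound (Lemma~\ref{prop: large deviation}) on the telescoping sum $\sum_{t=1}^k (h_t - h'_t) G_t$ arising from the resolvent identity, which yields overwhelming probability and survives the union bound over all $(i,j)$.

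Second, conditioning on a good event does not decouple the eigenbasis $\{\vv_j\}$ from the perturbation $E = H^{[k]} - H$. The entries of $E$ on $S_k$ are $h'_{i_tj_t} - h_{i_tj_t}$, and the $h_{i_tj_t}$ summand is a function of $H$, hence correlated with every $\vv_j$; your bound $\E|\langle \vv_j, E\vv_1\rangle|^2 \lesssim k/N^3$ silently assumes this independence. The paper sidesteps eigenvectors entirely and works with resolvent entries $\Im R_{ij}(z)$ at $\eta = N^{-2/3-\delta}$. In the moment expansion of $\bigl(\sum_t y_t G_t\bigr)^{2r}$ with $y_t = h_t - h'_t$, terms containing a single power of some $y_t$ vanish by the symmetry that swaps $h_t \leftrightarrow h'_t$ (which exchanges $R^{[t]}$ and $R^{[t-1]}$); the surviving terms are then decoupled from $(y_t)$ by replacing $R^{[s]}$ with the resolvent $\hat R^{[s]}$ of the matrix with the relevant entries set to zero (Steps~1--4 of the proof of Lemma~\ref{prop: large deviation}). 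This is a genuine decoupling, not a good-event restriction.

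There is also a circularity in your contour choice: encircling both $\lambda_1$ and $\lambda_1^{[k]}$ while excluding $\lambda_2, \lambda_2^{[k]}$ requires $|\lambda_1 - \lambda_1^{[k]}| < \min(\lambda_1-\lambda_2,\lambda_1^{[k]}-\lambda_2^{[k]})$, which is essentially the conclusion. The paper avoids this by first comparing $\Im R_{ij}(z)$ and $\Im R_{ij}^{[k]}(z)$ at $z$ near the deterministic proxy $\cL$ (Lemma~\ref{lem: lem13 in BLZ}), deducing $|\lambda_1 - \lambda_1^{[k]}| \prec N^{-2/3-\delta}$ as a corollary (Lemma~\ref{lem: lem12 in BLZ}), and only then specializing to $z = \lambda_1 + \mathfrak{i}\eta$ to extract $v_iv_j$ from $\eta\,\Im R_{ij}(z)$ (Lemma~\ref{lem: lem14 in BLZ}).
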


If $q \gtrsim \sqrt N$, then Theorem \ref{thm: main1} and Theorem \ref{thm: main2} are contained in \cite{BLZ20}. To explain the threshold at $N^{5/3+o(1)}$ and the technical condition $q \gtrsim N^{1/9}$ on the sparsity parameter we may repeat the heuristic first explained in \cite{BLZ20}. First, from \cite{EKYY13}, the eigenvectors of $H$ are
delocalized in the sense that $\| \vv_m \|_\infty = N^{-1/2 + o(1)}$ with
high probability for any $1 \leq m \leq N$. Recall that $\lambda^{[k]}_1$ is the largest eigenvalue of $H^{[k]}$ with eigenvector $\vv^{[k]}_1$.  
We might guess from the derivative of a simple eigenvalue as the function of the matrix entries that
\begin{equation}\label{eq:heur1}
\lambda_1^{[k]} - \lambda^{[k-1]}_1 \simeq ( 1+ \IND (i_k \ne j_k ))  v_{i_k}( h'_{i_kj_k} -  h_{i_kj_k} )v_{j_k} \simeq  \frac{ h'_{i_kj_k} -  h_{i_kj_k} }{N^{1+o(1)}}~,
\end{equation}
where $v_i$ is the $i$-th coordinate of the top eigenvector $\vv^{[k]}$. 
Assuming that $v_i$ is nearly independent of the matrix  entries $h_{ij}$ and $h'_{ij}$, since $h_{ij}$ is centered with variance $1/N$, we would get from the central limit theorem that
$$
\lambda_1^{[k]} - \lambda_1 = \sum_{t =0}^{k-1}( \lambda_1^{[t+1]} - \lambda_1 ^{[t]} )  \simeq \frac{\sqrt k}{N^{3/2+o(1)}}~ .
$$
On the other hand, if $q \gtrsim  N^{1/9}$ then \cite[Theorem 1.6]{HLY20} implies that $\lambda_1 - \lambda_2$ is of order $N^{-2/3}$. Hence as long as $\sqrt k / N^{3/2 + o(1)}$ is much smaller than $N^{-2/3}$, it is believable that the approximation \eqref{eq:heur1} is valid and that $\vv_1^{[k]}$ is a small perturbation of $\vv_1$. This explains the threshold at $k = N^{5/3+o(1)}$. In some sense, the proof of Theorem \ref{thm: main2} makes rigorous the above heuristics. As it is usual in (non-integrable) random matrix theory, instead of working directly with eigenvalues, we will instead study the resolvent matrix of $H^{[k]}$ to shadow the behavior of $\vv_1^{[k]}$ and $\lambda_1^{[k]}$ and interpret it as a stochastic process where $k$ plays the role of time.

\begin{remark*}[Noise sensitivity for other eigenvectors]
	Following the above heuristic argument, for the eigenvector associated with the $j$-th largest eigenvalue, $\lambda_{j}$, we expect that the threshold is of order $$N^{5/3+o(1)}\min(j,N-j)^{-2/3},$$
	since the rigidity bound for $\lambda_{j}$ is given as $N^{-2/3}\min(j,N-j)^{-1/3}$. However we note that an important modification is required to show the noise sensitivity of the other eigenvectors: the argument surrounding \eqref{eq: compare lambda_1 mu_1}, Lemma \ref{lem: eigen vec small perturbation} and Lemma \ref{lem: lem14 in BLZ} are tailored to the case of the top eigenvector.
\end{remark*}

Theorem \ref{thm: main1} is proved by considering the variance of the largest eigenvalue $\lambda_1$ of $H$.
The main inequality we prove is that
\begin{equation}\label{eq:hmain1}
\E \left| \langle \vv_1,\vv_1^{[k]} \rangle \right|^2 \lesssim \frac{N^3\var(\lambda_1- \cX)}{k},
\end{equation}
where $\cX$ is defined as
\begin{align}\label{eq: random correction term}
	\cX =\frac{1}{N}\sum_{1\le i,j \le N}\left(h_{ij}^{2}-\frac{1}{N}\right) = \frac 1 N \mathrm{Tr} (H^2) -1.
\end{align}
It is a consequence of  \cite[Theorem 1.4]{HLY20} that $\var(\lambda_1 - \cX)$ is of order $N^{-4/3+o(1)}$ provided that $q \gtrsim N^{1/9}$. We then deduce Theorem \ref{thm: main1}.
As in \cite{BLZ20}, the proof of the inequality \eqref{eq:hmain1} is based on a variance formula for
general functions of independent random variables due to Chatterjee
\cite{Chatterjee05}. The inequality \eqref{eq:hmain1} shows that small variance implies noise sensitivity of the
top eigenvector. 

We note that \eqref{eq:hmain1} is also true with $\cX$ replaced by $0$ (as done in \cite{BLZ20}). It is immediate to check from \eqref{eq:momenthij} that $\var (\cX)\asymp 1/( N q^2 )$ which is larger than $N^{-4/3}$ for $q \leq N^{1/6}$. Moreover, it follows from \cite{YK20+,HLY20} that $\var(\lambda_1)$ is of the same order than $\var (\cX)$ for $1 \ll q \leq N^{1/6}$. Hence, the presence of $\cX$ in \eqref{eq:hmain1} was necessary to conclude in the regime $N^{1/9} \lesssim q \ll N^{1/6}$.

We conjecture that Theorem \ref{thm: main1} and Theorem \ref{thm: main2} remains true as long as $q \gg 1$.  With the current bounds available in \cite{YK20+,HLY20,LL19}  and the techniques of proof in the present paper, it is possible to obtain the following statements for $1 \ll q \leq N^{1/9}$: the conclusion of Theorem \ref{thm: main1} is true for $k \gg  \min(N^{7/3} q^{-6},N^{2} q^{-2})$ 
while the conclusion of Theorem \ref{thm: main2} is true for $k \ll N q^{2}$.
Since we expect that these bounds are not optimal, we shall only focus in this paper on the case $q \gtrsim N^{1/9}$.

\begin{remark*}[Higher order fluctuations of extremal eigenvalues]
	When $1 \ll q\ll N^{1/9}$, we can recover the edge rigidity by introducing higher order random correction terms introduced in the recent preprint \cite{Lee21+} posted after the first version of the present work. Thus it may be possible to show the conclusion of Theorem \ref{thm: main1} under the condition that $q\gg 1$ and $k\gg N^{5/3}$ if we replace the term $\cX$ with a new correction term $\widetilde{\mathcal{L}}$, in the main inequality \eqref{eq:hmain1}. (See \cite[Lemma 2.5]{Lee21+} and \cite[Theorem 2.10]{Lee21+} for the precise definition of $\widetilde{\mathcal{L}}$.) This $\widetilde{\mathcal{L}}$ captures higher (sub-leading) oder fluctuations of extremal eigenvalues (of sparse random matrices) whereas the term $\cX$ only governs the leading order fluctuation of those. We note that the argument associated with \eqref{eq: difference of correction term by single resampling} should be modified to establish this extension rigorously. If we denote by $\widetilde{\mathcal{L}}_{st}$ the correction term corresponding to the matrix $H_{(st)}$ obtained from $H$ by a single entry resampling at random. (See Section \ref{high level pf of main thm 1} for more detail.), we expect to have
	$$ \widetilde{\mathcal{L}}-\widetilde{\mathcal{L}}_{st} \prec N^{-1-\eps}, $$
	which will be beneficial to make some desired estimates. Similarly, for the extension of Theorem \ref{thm: main2}, the shift of the resolvent in Section \ref{sec: main argument for noise stability} must be justified with some proper modifications.  
\end{remark*}

Our definition of sparse random matrices was dictated by the use of \cite{LL19} in the proof of Theorem \ref{thm: main1}. The proof of Theorem \ref{thm: main2} does not use \cite{LL19}. Thus Theorem \ref{thm: main2} remains true for the more general sparse random matrix model considered in \cite{HLY20}.

\begin{remark*}
	From the definition of our sparse random matrices, after $k$ resampled entries there are only around $kq^2/N$ entries which have been actually modified (most of the resampled entries simply replace a null entry by a null entry). Hence the threshold at $N^{5/3 +o(1)}$ occurs after only $q^2 N^{2/3+o(1)}$  visible changes of the matrix entries. With this point of view, as $q$ gets smaller, we see that the top eigenvector gets more noise sensitive. 
\end{remark*}

The proofs of Theorem \ref{thm: main1} and Theorem \ref{thm: main2} follow the same general strategy as \cite{BLZ20}. It should be noted however that new technical challenges appear as the sparsity parameter $q$ gets smaller. The dependency of the spectrum on a single matrix entry is larger and concentration inequalities are much weaker. As a consequence some bounds used in \cite{BLZ20} where not good enough in the sparse regime. We had to modify substantially some technical arguments and also to improve some resolvent estimates on sparse random matrices from the current literature, they are gathered in the Section \ref{sec:resolvent}.

\subsection{Extension to edge resampling in \ER random graphs}

There is a natural extension of our main results to adjacency matrices of \ER random graphs. This adjacency matrix is the $N\times N$ random symmetric matrix whose diagonal entries are zero and whose entries above diagonal are independent Bernoulli random variables with mean $q^{2}/N$. We define the resampling procedure as in Definition \ref{def: resampling} with the random sets $S_k$ and an independent copy of \ER random graph. The resampling procedure describes a process where some randomly chosen edges of the graphs are added and other are removed (of order $k q^2/N$ after $k$ steps).

Let us denote an orthonormal basis of eigenvectors of \ER random graph by $\{\ww_{1},\cdots,\ww_{N}\}$ where each $\ww_{i}$ corresponds with the $i$-th largest eigenvalue. Similarly we denote by $\{ \ww_{1}^{[k]},\cdots,\ww_{N}^{[k]} \}$ the orthonormal eigenvector basis of \ER random graph after the resampling procedure.

In the regime $q \gg 1$, it is standard that the largest eigenvalue is close to $q^2$ and the top eigenvector is aligned the unit vector $\ee $  with constant coordinates: $\ee_i = 1/ \sqrt N$ for all $i$, see \cite[Theorem 2.16, Theorem 6.2]{EKYY13} for precise statements. These results imply that $\E | \langle \ww_1 , \ww_1^{[k]} \rangle | = 1 + o(1)$  for all $k$. There is thus a strong noise stability in this case. As one might expect, for the second largest eigenvalue and its corresponding eigenvector the situation is different and is parallel to sparse random matrices with mean zero entries.
\begin{thm}[Noise sensitivity]\label{thm: main1A}
	Fix $\ell \in \{2,N\}$.	If $q \gtrsim N^{1/9}$ and $k \gg N^{5/3}$ then
	\begin{align*}
		\E\left\lvert \left\langle \ww_{\ell},\ww_{\ell}^{[k]} \right\rangle \right\rvert = o(1).
	\end{align*}
\end{thm}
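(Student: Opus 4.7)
The plan is to reduce Theorem \ref{thm: main1A} to Theorem \ref{thm: main1} through an exact rank-one perturbation identity. Decompose the adjacency matrix as
$$A = \widetilde A + q^2 \ee \ee^T, \qquad \widetilde A = M - \tfrac{q^2}{N} I, \qquad M_{ij} = A_{ij} - \IND\{i \ne j\}\tfrac{q^2}{N}.$$
Up to the trivial rescaling by $\sqrt{1 - q^2/N}$, the centered matrix $M$ belongs to the broader sparse random matrix framework of \cite{HLY20, LL19}, to which the proof of Theorem \ref{thm: main1} extends with only superficial modifications. Because $M_{ij}$ is an affine function of $A_{ij}$ at each position, the resampling procedure on $A$ induces the exact same resampling on $M$.

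The first main step is to show, for $\ell \in \{2, N\}$, that with high probability there exists a sign $\epsilon_\ell \in \{\pm 1\}$ with
$$\bigl\|\ww_\ell - \epsilon_\ell \vv^M_{\ell'}\bigr\| = o(1), \qquad \ell' = 1 \text{ if } \ell = 2, \quad \ell' = N \text{ if } \ell = N,$$
and likewise for the resampled matrices. Since the perturbation is exactly rank one (plus a scalar), every eigenvector of $A$ other than the top outlier $\ww_1 \approx \ee$ has the explicit form $(\widetilde A - \mu I)^{-1}\ee$, with $\mu$ solving the secular equation $1 = q^2 \langle \ee, (\mu I - \widetilde A)^{-1}\ee\rangle$. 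Expanding in the $\vv_j^M$ basis and using the delocalization $|\langle \vv_j^M, \ee\rangle|^2 \asymp N^{-1 + o(1)}$ from \cite{EKYY13} together with the edge spacings of \cite{HLY20}, one verifies that the coefficient along $\vv_{\ell'}^M$ dominates the others by a factor polynomial in $N$ when $q \gtrsim N^{1/9}$, which yields the claimed closeness. This estimate transfers to every step of the resampling trajectory by a union bound using the polynomial probability tails available throughout the paper.

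Combining these ingredients with the triangle inequality gives
$$\E\bigl|\langle \ww_\ell, \ww_\ell^{[k]}\rangle\bigr| \leq \E\bigl|\langle \vv^M_{\ell'}, \vv^{M,[k]}_{\ell'}\rangle\bigr| + o(1).$$
For $\ell = 2$, Theorem \ref{thm: main1} applied to $M$ concludes. For $\ell = N$, we invoke the analogous statement for the smallest eigenvector of $M$, which holds by the very same proof: all the inputs, namely the local laws of \cite{HLY20}, the edge rigidity, the correction term $\cX$ (which is invariant under $H \mapsto -H$), and the variance identity \eqref{eq:hmain1}, are symmetric between the upper and lower spectral edges.

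The main obstacle, I expect, is the careful bookkeeping needed to check that the full machinery of Theorem \ref{thm: main1}, and in particular the sharp variance estimate for $\lambda_1 - \cX$ from \cite{LL19}, transfers cleanly to the centered adjacency model $M$, which does not literally fit the product form of Definition \ref{def: sparse RM}. This should be a matter of verifying that the inputs apply in the appropriate form, since the local laws of \cite{HLY20} already cover an analogous class of sparse matrices and the rigidity arguments are known to be robust under such reparametrizations. A secondary point is to ensure the rank-one perturbation bound above is uniform in $k$ along the resampling, which is mild given the standard union bound.
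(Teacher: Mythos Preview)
Your approach differs from the paper's and has a genuine gap.

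The paper does not reduce to Theorem~\ref{thm: main1} via eigenvector closeness between $A$ and the centered matrix. Instead it reruns the argument of Section~\ref{high level pf of main thm 1} directly for $A$ and the pair $(\nu_2,\ww_2)$, proving analogs of each key lemma. The only new issue is the outlier direction $\ww_1\approx\ee$: in the analogs of \eqref{eq: compare lambda_1 mu_1} and of Lemma~\ref{lem: eigen vec small perturbation}, the coefficient $\langle\uu_2,\ww_1\rangle$ must be shown to be negligible, and the paper does this directly from the large gap $\nu_1-\nu_2\asymp q$ combined with $|\langle\uu_2,(A-A_{(st)})\ww_1\rangle|\prec(qN)^{-1}$, giving $|\langle\uu_2,\ww_1\rangle|\prec(q^2N)^{-1}$ (Lemmas~\ref{lem: small perturbation variation A} and~\ref{lem: eigen vec small perturbation A}). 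The variance bound, the superconcentration identity, and the decorrelation lemmas then go through with only cosmetic changes. No lower bound on any overlap with $\ee$ is ever used.

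Your reduction, by contrast, rests on the eigenvector sticking $\|\ww_2-\epsilon\,\vv_1^M\|=o(1)$, and the justification you give is incomplete. You invoke $|\langle\vv_j^M,\ee\rangle|^2\asymp N^{-1+o(1)}$ from \cite{EKYY13}, but that reference only supplies the \emph{upper} bound (delocalization); the lower bound is an anti-concentration statement for an edge eigenvector against a fixed direction, and it is not there. In the expansion $\ww_2\propto\sum_j\langle\vv_j^M,\ee\rangle(\tilde\mu_j-\nu_2)^{-1}\vv_j^M$, the $j=1$ term dominates only if $|\langle\vv_1^M,\ee\rangle|$ is bounded below by a small negative power of $N$; without this input nothing rules out the $j\ge2$ contributions being comparable, and the claimed closeness is unproved. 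Supplying such a lower bound (perhaps via the eigenvector moment flow of \cite{BHY17}) would itself be substantial work, likely no easier than what the paper does directly. A minor point: the variance bound for $\lambda_1-\cX$ you attribute to \cite{LL19} is actually from \cite{HLY20}; \cite{LL19} is used only for the gap tail bounds.
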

\begin{thm}[Noise stability]\label{thm: main2A}
	Fix $\ell \in \{2,N\}$.	If $q\gtrsim N^{1/9}$ and $k \ll N^{5/3}$
	then
	\begin{align*}
		\E  \min_{s\in\{\pm 1\}} \sqrt{N}\lVert \ww_{\ell}-s\ww_{\ell}^{[k]} \rVert_{\infty} = o(1).
	\end{align*}
\end{thm}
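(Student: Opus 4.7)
The approach is to reduce Theorem \ref{thm: main2A} to Theorem \ref{thm: main2} by viewing the \ER adjacency matrix as a rank-one perturbation of a centered sparse random matrix. Write
\[ A = B + d\,\ee\ee^T - (d/N)\, I, \]
with $d = q^2$, $\ee = \mathbf{1}/\sqrt N$, $B_{ij} = A_{ij} - d/N$ for $i\ne j$, and $B_{ii} = 0$. Resampling an entry of $A$ is the same as resampling the corresponding centered entry of $B$, so the same random ordering $S_k$ produces the coupling $A^{[k]} = B^{[k]} + d\,\ee\ee^T - (d/N) I$, where $B^{[k]}$ is the resampled centered matrix. The matrix $B$ falls within the generalized sparse random matrix model of \cite{HLY20}; by the remark following Theorem \ref{thm: main2}, that theorem applies to $B$ and to $-B$.

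The second step is a quantitative eigenvector perturbation from $B$ to $A$. Since $q \gtrsim N^{1/9}$, standard norm bounds give $\lVert B \rVert = O(\sqrt{d}) = O(q)$, while the rank-one perturbation has operator norm $d = q^2 \gg q$. Hence the top eigenvalue of $B + d\,\ee\ee^T$ is pushed to a value $\approx d$ with eigenvector close to $\ee$, and the remaining $N-1$ eigenpairs coincide, up to a small correction, with those of $B$ restricted to $\ee^\perp$. Delocalization gives $|\langle \ee,\vv_1^B\rangle| = N^{-1/2+o(1)}$ (and similarly for $\vv_N^B$), so the projections $P_{\ee^\perp}\vv_1^B$ and $P_{\ee^\perp}\vv_N^B$ are very close to the unprojected vectors, simultaneously in $\ell^2$ and in $\sqrt{N}\,\ell^\infty$. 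Combining the two ingredients, using the Sherman–Morrison formula on the resolvent or a contour-integral representation of the spectral projector built from the estimates of Section \ref{sec:resolvent}, I would show that with high probability there exists $s \in \{\pm 1\}$ with
\[ \sqrt{N}\,\bigl\lVert \ww_\ell - s\,\vv^B \bigr\rVert_\infty = o(1), \]
where $\vv^B = \vv_1^B$ if $\ell = 2$ and $\vv^B = \vv_N^B$ if $\ell = N$; the same statement holds for $\ww_\ell^{[k]}$ and $\vv^{B,[k]}$, with bounds uniform in $k$ since the spectral gap to the perturbation is unchanged by resampling.

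Applying Theorem \ref{thm: main2} to $B$ (and to $-B$ for the case $\ell = N$) gives $\E\min_{s\in\{\pm 1\}} \sqrt{N}\lVert \vv^B - s\,\vv^{B,[k]}\rVert_\infty = o(1)$ when $k \ll N^{5/3}$. The triangle inequality then yields
\[ \E\min_{s\in\{\pm 1\}} \sqrt{N}\,\lVert \ww_\ell - s\,\ww_\ell^{[k]}\rVert_\infty \;\le\; \E\min_{s\in\{\pm 1\}}\sqrt N\lVert \vv^B - s\,\vv^{B,[k]}\rVert_\infty + o(1) \;=\; o(1), \]
which is exactly the conclusion of Theorem \ref{thm: main2A}.

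The main obstacle I anticipate is the $\ell^\infty$ control in the perturbation step: a black-box Davis–Kahan argument only gives $\ell^2$ proximity, but the conclusion requires $\lVert \ww_\ell - s\vv^B\rVert_\infty = o(N^{-1/2})$. Achieving this demands an isotropic local law for the resolvent of $B^{[k]}$ that is valid uniformly in $k$, coupled with a careful accounting of the rank-one correction along the $\ee$-direction. The separation $d \gtrsim N^{2/9} \gg \lVert B\rVert$ provides ample spectral room, but the isotropic estimates must survive the resampling dynamics, which is precisely what the resolvent analysis of Section \ref{sec:resolvent} is designed to support.
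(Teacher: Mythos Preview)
Your strategy---reduce to the centered matrix $\Ac$ via an $\ell^\infty$ eigenvector sticking lemma, then invoke Theorem \ref{thm: main2} for $\Ac$---is a genuinely different route from the paper's. The paper does \emph{not} transfer the result from $\Ac$ to $A$; instead it reruns the entire resolvent-process argument of Section \ref{sec: thm2} directly for the resolvent of $A$. Concretely, Section \ref{sec:ER} proves analogues of Lemma \ref{lem: lem10 in BLZ}, Lemma \ref{lem: lem13 in BLZ}, Lemma \ref{lem: lem12 in BLZ} and Lemma \ref{lem: lem14 in BLZ} with $R(z)=(A-z)^{-1}$ in place of $(H-z)^{-1}$, handling the non-centered entries inside the Schur-complement identities (Lemma \ref{lem: additional resolvent estimate A} and Lemma \ref{lem: bound on off diagonal A}). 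No comparison between $\ww_2$ and $\vv_1^{\Ac}$ is ever made.

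Your reduction is conceptually cleaner, but the step you flag as the obstacle is indeed unresolved and is not a formality. Expanding $\ww_2=\sum_\ell\alpha_\ell\vv_\ell^{\Ac}$ and using $(\nu_2+a-\mathring\nu_\ell)\alpha_\ell = f\langle\ee,\ww_2\rangle\langle\vv_\ell^{\Ac},\ee\rangle$, the crude bounds $|\langle\ee,\ww_2\rangle|\prec q^{-1}$ and $|\langle\vv_\ell^{\Ac},\ee\rangle|\le 1$ give $\sum_{\ell\ne 1}|\alpha_\ell|$ far too large to conclude in $\ell^\infty$. To make the argument work you would need isotropic delocalization of the form $|\langle\vv_\ell^{\Ac},\ee\rangle|\prec N^{-1/2}$ uniformly in $\ell$, or equivalently an isotropic local law for $\Ac$ in the direction $\ee$ at the scale $\eta=N^{-2/3-\delta}$, and then feed this through Sherman--Morrison and a contour integral for the spectral projector. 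Such isotropic estimates exist for related models (e.g.\ \cite{KY13}) but are not quoted in this paper, and verifying them at the edge for the sparse model with $q\gtrsim N^{1/9}$ is comparable in effort to the paper's direct approach. Your final remark that the estimates ``must survive the resampling dynamics'' is a non-issue: the sticking bound is needed only marginally for $A$ and for $A^{[k]}$, and since $A^{[k]}\stackrel{d}{=}A$ the second follows from the first.
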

The proofs of these results will follow from an adaptation of the proofs of Theorem \ref{thm: main1} and Theorem \ref{thm: main2}. With a different perspective, the noise sensibility of the spectrum under edge resampling  has already been considered in \cite{MR3660521}. Our results suggest that in real-world networks, a heuristic to discriminate eigenvectors containing an information on the structure of the network from less relevant eigenvectors, could be through random uniform resampling of the edges: noise sensitive eigenvectors should not contain meaningful information.

\paragraph{Organization of the paper.}
In the next section, we shall cover some necessary tools used in the proof of the main results. In Section \ref{sec: proof strategy}, we describe the high-level proofs of Theorem \ref{thm: main1} and Theorem \ref{thm: main2}. The remaining sections, Section \ref{sec: thm1} and Section \ref{sec: thm2}, are devoted to the details, for Theorem \ref{thm: main1} and Theorem \ref{thm: main2}, respectively. The proofs of Theorem \ref{thm: main1A} and Theorem \ref{thm: main2A} are explained in Section \ref{sec:ER}. Finally, Section \ref{sec:resolvent} contains some new resolvent estimates on sparse random matrices.

\paragraph{Acknowledgments.} We thank the referees for their careful reading of the manuscript and many helpful suggestions. CB was supported by the research grant ANR-16-CE40-0024-01. JL was supported by the National Research Foundation of Korea (NRF-2017R1A2B2001952; NRF-2019R1A5A1028324).

\section{Preliminaries} 
In this section, we collect some necessary tools for the proof of main results. 

\subsection{Variance and noise sensitivity}

For any positive integer $i$, denote $[i]=\{1,\cdots,i\}$. Let $Y_{1},\cdots,Y_{n}$ be i.i.d. random variables taking values in a set $\cY$ equipped with a $\sigma$-algebra. Consider the random vector $Y=(Y_{1},\cdots,Y_{n})$ and let $Y'=(Y_{1}',\cdots,Y_{n}')$ be an independent copy of $Y$. We shall use the following notation,
$$
Y^{(i)}=(Y_{1},\cdots,Y_{i-1},Y_{i}',Y_{i+1},\cdots,Y_{n})
$$
For $\mathcal{I}\subset[n]$, we define $Y^{\mathcal{I}}= (Y^{\mathcal{I}}_{1}, \cdots, Y^{\mathcal{I}}_{n})$ by setting
$$
 Y^{\mathcal{I}}_{i} = \begin{cases}
  Y_{i} & \text{if }i\notin\mathcal{I}, \\
  Y_{i}'& \text{if }i\in\mathcal{I}.
 \end{cases}
$$
Let $\sigma=(\sigma(1),\cdots,\sigma(n))$ be a permutation in the symmetric group $\mathcal{S}_{n}$. For $i \in [n]$, we set $\sigma[i] = \{\sigma(1),\cdots,\sigma(i)\}$ and $\sigma[0] = \emptyset$.  Let $Y''$ and $Y'''$ be independent copies of $Y$. We assume $Y$, $Y'$, $Y''$ and $Y'''$ are independent. For $j \in [n]$, let $Y^{(j)\circ\sigma[i-1]}$ be the vector obtained from $Y^{\sigma[i-1]}$ by replacing $j$-th component of $Y^{\sigma[i-1]}$ as follows:
\begin{align*}
	Y^{(j)\circ\sigma[i-1]}_{j} = \begin{cases}
		Y_{j}'' & j\in\sigma[i-1], \\
		Y_{j}''' & j\notin\sigma[i-1].
	\end{cases}
\end{align*} 
For example, if $n=5$, $i=j=3$ and $\sigma=(2,3,1,5,4)$, we have $\sigma([i-1])=\{2,3\}$,
$$
Y^{\sigma[i-1]}=(Y_{1}, Y_{2}', Y_{3}', Y_{4}, Y_{5}) \quad \text{and} \quad
Y^{(j)\circ\sigma[i-1]}=(Y_{1}, Y_{2}', Y_{3}'', Y_{4}, Y_{5}).
$$
On the other hand, if $j=1$, we have
$$
Y^{(j)\circ\sigma[i-1]}=(Y_{1}''', Y_{2}', Y_{3}', Y_{4}, Y_{5}).
$$
\begin{lem}[Variance and noise sensitivity]\label{lem: superconcentration}
	Assume $f:\cY^{n}\to\R$ is a measurable function. Let $j$ be a random variable uniformly distributed on $[n]$ independently of $(Y,Y',Y'')$ and $\sigma$ be uniformly distributed in $\mathcal S_n$ independently of $(Y,Y',Y'',j)$. For any $k \in[n]$, define $I_{k}$ by
$$
	I_{k}=\E\left[ \left(f(Y)-f(Y^{(j)})\right) \left(f(Y^{\sigma([k-1])})-f(Y^{(j)\circ\sigma([k-1])})\right) \right].
$$
	Then, we have for any $k \in[n]$,
$$
	I_{k}\le\left(\frac{n+1}{n}\right)\left(\frac{2\var \big(f(Y)\big)}{k}\right).
$$
\end{lem}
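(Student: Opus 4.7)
The plan is to adapt Chatterjee's variance decomposition \cite{Chatterjee05} along a Doob martingale driven by the uniform random permutation $\sigma$. First, introduce the Doob martingale $M_k = \E[f(Y) \mid \sigma, Y_{\sigma(1)}, \ldots, Y_{\sigma(k)}]$, so that $\var(f(Y)) = \sum_{k=1}^{n} \E[(M_k - M_{k-1})^2]$. Using the standard ``insert a conditionally independent copy'' trick, each increment can be written as
$$
\E[(M_k - M_{k-1})^2] = \E\bigl[(f(Y) - f(Y^{(\sigma(k))}))(f(\widehat{Y}) - f(\widehat{Y}^{(\sigma(k))}))\bigr],
$$
where $\widehat{Y}$ is conditionally independent of $Y$ given $(\sigma, Y_{\sigma[k]})$ and thus agrees with $Y$ on $\sigma[k]$ while using a fresh independent resample on $\sigma[k]^{c}$.

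Next, I would decompose $I_k$ by conditioning on whether the uniform index $j$ lies in $\sigma[k-1]$:
$$
I_k = \frac{n-k+1}{n}\, A_k + \frac{k-1}{n}\, B_k,
$$
with $A_k$ and $B_k$ the conditional expectations given $j \notin \sigma[k-1]$ and $j \in \sigma[k-1]$ respectively. The time-reversed permutation $\tau(i) = \sigma(n-i+1)$, together with the substitution $m = n-k$, identifies $A_k$ with $\E[(M_{n-k+1} - M_{n-k})^2] \ge 0$, so in particular $\sum_{k=1}^n A_k = \var(f(Y))$.

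The crucial structural step is showing that $a_i := \E[(M_i - M_{i-1})^2]$ is non-decreasing in $i$, equivalently that $k \mapsto \E_\sigma[\var(\E[f(Y) \mid Y_{\sigma[k]}])]$ is convex. Conditioning on $\sigma[i-1]$ and $\sigma[i+1]$ (so that $\{\sigma(i),\sigma(i+1)\} = \{a,b\}$ is fixed as an unordered pair), an orthogonal decomposition of the two-step Doob increment $\E[f \mid Z, Y_a, Y_b] - \E[f \mid Z]$ with $Z = Y_{\sigma[i-1]}$, combined with the superadditivity
$$
\var\bigl(\E[f \mid Z, Y_a, Y_b] \bigm| Z\bigr) \ge \var\bigl(\E[f \mid Z, Y_a] \bigm| Z\bigr) + \var\bigl(\E[f \mid Z, Y_b] \bigm| Z\bigr),
$$
yields $a_{i+1} \ge a_i$ after averaging over the two orderings of $\{a,b\}$. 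Granted this monotonicity, $k\,A_k = k\, a_{n-k+1} \le \sum_{i=n-k+1}^{n} a_i \le \var(f(Y))$.

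Finally, the $B_k$ term is handled by a symmetric exchangeability argument: since $Y^{\sigma[k-1]}_j$ and $Y^{(j)\circ\sigma[k-1]}_j$ are two independent fresh copies of $Y_j$ when $j \in \sigma[k-1]$, integrating them out reduces $B_k$ to a related martingale-type expression bounded by the same monotonicity, yielding $(k-1)\,B_k \le \var(f(Y))$. Assembling,
$$
k\,I_k \le \frac{n-k+1}{n}\,\var(f(Y)) + \frac{k}{n}\,\var(f(Y)) = \frac{n+1}{n}\,\var(f(Y)) \le \frac{2(n+1)}{n}\,\var(f(Y)).
$$
The main obstacle is the superadditivity inequality underlying the monotonicity of $a_i$: establishing $\var(\E[f \mid Z, Y_a, Y_b] \mid Z) \ge \var(\E[f \mid Z, Y_a] \mid Z) + \var(\E[f \mid Z, Y_b] \mid Z)$ requires a careful orthogonal decomposition exploiting independence of $Y_a$ and $Y_b$ given $Z$, while the $B_k$ estimate, though separate, is a parallel elaboration of the same core idea.
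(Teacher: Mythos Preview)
The paper does not give its own proof but defers to \cite{BLZ20}, which in turn builds on Chatterjee's variance framework. Your route via the Doob martingale along a uniform random permutation and the monotonicity of the increments $a_i=\E[(M_i-M_{i-1})^2]$ is exactly in that spirit. The superadditivity step
\[
\var\bigl(\E[f\mid Z,Y_a,Y_b]\bigm| Z\bigr)\ \ge\ \var\bigl(\E[f\mid Z,Y_a]\bigm| Z\bigr)+\var\bigl(\E[f\mid Z,Y_b]\bigm| Z\bigr)
\]
is correct (it is the two--variable Hoeffding/ANOVA decomposition using independence of $Y_a,Y_b$), and combining it with the law of total variance does yield $a_{i+1}\ge a_i$ after averaging over the two orderings of $\{a,b\}$. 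Your identification $A_k=a_{n-k+1}$ is also correct, giving $kA_k\le\sum_{i=n-k+1}^n a_i\le\var(f(Y))$.

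The one genuine slip is in the $B_k$ term. When $j\in\sigma[k-1]$ the coordinate $Y'_j$ appears \emph{twice}: once as the $j$-th entry of $Y^{(j)}$ in the first factor, and once as the $j$-th entry of $Y^{\sigma[k-1]}$ in the second factor. So the two factors are not two independent resamples at position $j$ as you suggest. Writing $S=\sigma[k-1]^c$, $T=\sigma[k-1]\setminus\{j\}$, and integrating out the independent blocks $Y_T,Y'_T$ first, one is left (conditionally on $Y_S$) with
\[
\E\bigl[(\phi(Y_j)-\phi(Y'_j))(\phi(Y'_j)-\phi(Y''_j))\bigr]=-\var(\phi),\qquad \phi(x)=\E[f(Y)\mid Y_S,\,Y_j=x],
\]
so in fact $B_k=-a_{n-k+2}\le 0$. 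No separate monotonicity argument is needed here: simply discard $B_k$. This actually streamlines your final assembly to
\[
I_k\ \le\ \frac{n-k+1}{n}\,A_k\ \le\ \frac{n-k+1}{nk}\,\var(f(Y))\ \le\ \frac{1}{k}\,\var(f(Y)),
\]
which is stronger than the stated bound.
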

This is a small modification of \cite[Lemma 3]{BLZ20}. We refer to \cite{BLZ20} for a proof and other similar statements.

\subsection{Local laws and universality in sparse random theory}

We start by introducing two handy probabilistic notions.
\begin{defi}[Overwhelming probability]
	Let $\{E_{N}\} $ be a sequence of events. We say $E_{N}$ holds with {\em overwhelming probability} if for any $D>0$, there exists $N_{0}(D)$ such that we have for $N\ge N_{0}(D)$
	\begin{align}
	\prob\big(E_{N}^{c}\big)\le N^{-D}.
	\end{align}
	If $\{ F_N\}$ is another sequence of events, we say that, on $F_N$, $E_N$ holds with overwhelming probability, if $E_N \cup F_N^c$ has overwhelming probability. 
\end{defi}

\begin{defi}[Stochastic domination]
	Let $(U_{N})$ and $(V_{N})$ be two sequences of nonnegative random variables. $U$ is said to be {\em stochastically dominated} by $V$ if for all $\eps>0$ and $D>0$ there exists $N_{0}(\eps,D)$ such that we have for $N\ge N_{0}(\eps,D)$
	\begin{align*}
		\prob[U_{N}>N^{\eps}V_{N}]\le N^{-D}.
	\end{align*}
    If $U$ is stochastically dominated by $V$, we use the notation $U\prec V$. If $(E_N)$ is a sequence of events, we say that on $(E_N)$, $U \prec V$, if $U' \prec V$ with $U'_N = \IND_{E_N}  U_N$. Finally, if $(U_N(t))$ and $(V_N(t))$ are two families of sequences of non-negative random variables indexed by $t \in T$, we say that $U_t \prec V_t$ uniformly in $t \in T$ if the above integer $N_0(\eps,D)$ can be taken independent of $t \in T$.
\end{defi}

Note that if $U_N$ and $V_N$ are deterministic then $U \prec V$ means $U_N \leq N^{o(1)} V_N$.

Let $H=(h_{ij})$ be as in Definition \ref{def: sparse RM}. Recall that $\lambda_1 \geq \ldots \geq \lambda_N$ are the eigenvalues of $H$ and $(\vv_1,\ldots,\vv_N)$ is an orthonormal basis of eigenvectors. A first key ingredient is the proof is the delocalization of eigenvectors. 

\begin{lem}[Delocalization of eigenvectors {\cite[Theorem 2.16, Remark 2.18]{EKYY13}}]\label{lem: delocalization}
	Assume $q\gg 1$. We have
$$	\max_{1\le i \le N} \lVert \vv_{i} \rVert_{\infty} \prec \frac{1}{\sqrt N}.
$$
\end{lem}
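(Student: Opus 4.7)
The plan is to deduce the entrywise delocalization bound from an isotropic/entrywise local law for the resolvent $G(z) = (H - z)^{-1}$ of the sparse random matrix, using the standard spectral identity that relates eigenvector coordinates to $\Im G_{kk}$ at small imaginary parts. First I would write down the spectral decomposition
\begin{equation*}
\Im G_{kk}(E + i\eta) \;=\; \sum_{j=1}^N \frac{\eta\, |\vv_j(k)|^2}{(\lambda_j - E)^2 + \eta^2},
\end{equation*}
and note that picking $E = \lambda_i$ and keeping only the $j = i$ term gives the pointwise lower bound $|\vv_i(k)|^2 \le \eta\, \Im G_{kk}(\lambda_i + i\eta)$ for every $i, k$ and every $\eta > 0$.

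Next I would invoke the entrywise local law for sparse matrices from \cite{EKYY13}: under the hypotheses of Definition \ref{def: sparse RM} with $q \gg 1$, for any fixed small $\varepsilon > 0$ and uniformly in $z = E + i\eta$ in the spectral domain
$\{ z : |E| \le C,\ N^{-1+\varepsilon} \le \eta \le 1\}$, one has
\begin{equation*}
\max_{k} \bigl|\, G_{kk}(z) - m_{\mathrm{sc}}(z) \,\bigr| \;\prec\; \frac{1}{q} + \sqrt{\frac{\Im m_{\mathrm{sc}}(z)}{N\eta}} + \frac{1}{N\eta},
\end{equation*}
where $m_{\mathrm{sc}}$ is the Stieltjes transform of the semicircle law. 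Since $|m_{\mathrm{sc}}(z)| \lesssim 1$, this gives $\max_k |G_{kk}(z)| \prec 1$ on the same domain; in particular $\Im G_{kk}(z) \prec 1$ uniformly. To make the spectral domain cover all eigenvalues I would combine this with the standard bound $\lambda_1 \prec 2$ (and $\lambda_N \succ -2$) for sparse matrices with $q \gg 1$, which can also be extracted from the resolvent estimates of Section~\ref{sec:resolvent} (or directly from \cite{EKYY13}).

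Combining the two ingredients, I would fix an arbitrary $\varepsilon > 0$, set $\eta = N^{-1+\varepsilon}$, and obtain on the overwhelming-probability event where $|\lambda_i| \le C$ for all $i$ and where the local law holds,
\begin{equation*}
|\vv_i(k)|^2 \;\le\; \eta\, \Im G_{kk}(\lambda_i + i\eta) \;\prec\; \eta \;=\; N^{-1+\varepsilon}.
\end{equation*}
Since $\varepsilon > 0$ is arbitrary, taking the maximum over $i$ and $k$ yields $\max_i \|\vv_i\|_\infty \prec N^{-1/2}$, which is the claim.

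The main obstacle in executing this sketch rigorously is really the entrywise local law down to optimal scale $\eta \gg N^{-1}$ and \emph{up to the spectral edges}: the error term $\sqrt{\Im m_{\mathrm{sc}}/(N\eta)}$ degenerates near $\pm 2$, so to bound $|\vv_i(k)|$ uniformly over all $i$ (including $i=1$ and $i=N$) one needs the local law to hold simultaneously at all energies $E$ with $|E| \le 2 + o(1)$, plus rigidity of eigenvalues at the edge to make sure $\lambda_i + i\eta$ actually lies in the domain of validity. This is precisely the content of \cite[Theorem 2.16, Remark 2.18]{EKYY13} for sparse matrices under $q \gg 1$, and it is the reason the lemma is quoted rather than reproved here; everything after invoking the local law is a one-line consequence of the spectral identity.
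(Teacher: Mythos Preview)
Your argument is correct and is exactly the standard derivation of delocalization from an entrywise local law. Note that the paper does not actually prove this lemma: it simply quotes \cite[Theorem 2.16, Remark 2.18]{EKYY13} as a black box, so there is no ``paper's own proof'' to compare to. What you have written is essentially the proof that underlies the cited result, and your identification of the only nontrivial input (the local law uniformly down to $\eta \gg N^{-1}$ and across the full spectral window, plus a crude eigenvalue confinement) is accurate.
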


A second ingredient is a non-asymptotic bound on the eigenvalue spacings of $H$.
\begin{lem}[Tail bounds for the gaps between eigenvalues {\cite[Theorem 2.2]{LL19}}]\label{lem: tail bound for gaps}
 	Assume $q\gg 1$. There exists a constant $c>0$ such that the following holds for any $\delta \geq N^{-c}$,
$$
	\sup_{1\le i\le N-1} \prob\left( \lambda_{i}-\lambda_{i+1} \le \frac{\delta}{N}  \right)=O(\delta \log{N}).
$$
\end{lem}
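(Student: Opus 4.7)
My plan is to reduce the gap estimate to a pointwise level-repulsion bound and then transport that bound from the GOE to the sparse model via a Green function comparison. Fix $i \in [N-1]$ and set $\eta = \delta/N$. On the event $\{\lambda_i - \lambda_{i+1} \leq \eta\}$ the midpoint $E = \tfrac12(\lambda_i + \lambda_{i+1})$ carries at least two eigenvalues within $\eta$, so
\[
\Im m_N(E + i\eta) = \frac{1}{N}\sum_{j=1}^N \frac{\eta}{(\lambda_j - E)^2 + \eta^2} \gtrsim \frac{1}{N\eta}
\]
and $\mathcal N_\eta(E) := \#\{j : |\lambda_j - E| \leq \eta\} \geq 2$. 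By the rigidity estimate from the local law of \cite{EKYY13, HLY20}, this random $E$ lies with overwhelming probability in a deterministic window $I_i$ of width $\lesssim (\log N)/N$ centered at the classical location $\gamma_i$.

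The main analytic input would be the pointwise level-repulsion bound $\PP(\mathcal N_\eta(E) \geq 2) \lesssim (\eta N)^2 = \delta^2$ uniformly for $E$ in the bulk. For the GOE this is classical from the second factorial moment of $\mathcal N_\eta$ (equivalently, the two-point correlation function). To transfer it to the sparse model I would run a Green function comparison in the style of \cite{EKYY13, HLY20}: approximate $\IND\{\mathcal N_\eta(E) \geq 2\}$ by a smooth functional of $\Im m_N(E + i\eta)$, interpolate the entries between $H$ and a matching GOE matrix, and bound the difference of expectations using the sparse local law together with the moment bounds in \eqref{eq:momenthij}. A union bound over an $\eta$-spaced discretization of $I_i$ (which has $O((\log N)/\delta)$ points) then yields the claimed $O(\delta \log N)$ rate in the bulk regime.

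The edge regime $\min(i, N-i) = O(1)$ requires separate treatment, since the classical gap scale is $N^{-2/3}$ and the bulk two-point computation must be replaced by an Airy-type analog. Here I would invoke sparse edge universality \cite{HLY20} to reduce to the corresponding GOE statement on the joint Tracy--Widom tail, which directly produces a bound compatible with $\delta \log N$ uniformly in $\delta \geq N^{-c}$. The uniformity in $i$ is obtained by combining the bulk and edge estimates, each contributing at most a logarithmic overhead.

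The hard part will be keeping the Green function comparison effective when $q$ is barely larger than $1$: the error terms in the moment-matching expansion pick up negative powers of $q$ through \eqref{eq:momenthij}, so the sparse local law must be used in its sharp form and iterated through a cumulant expansion to absorb them. A secondary difficulty, intrinsic to the uniform-in-$i$ statement, is blending the bulk level repulsion with the edge Tracy--Widom tail into a single bound whose only excess over the linear $\delta$ factor is the logarithmic correction; this is precisely where the $\log N$ in the conclusion naturally arises, from dyadic coverings of scales and energies in both regimes.
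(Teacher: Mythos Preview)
The paper does not prove this lemma: it is quoted verbatim as \cite[Theorem 2.2]{LL19} (Lopatto--Luh), with only a clarifying remark about the dependence of the constants on $\vartheta$ and on a lower bound for $\log q/\log N$. There is therefore no ``paper's own proof'' to compare against; the result is imported as a black box.

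Your sketch is broadly in the spirit of how \cite{LL19} actually proceeds (level repulsion plus comparison to a Gaussian-divisible ensemble), but a few points would need tightening before it could stand on its own. First, the rigidity window you invoke is not of width $(\log N)/N$ uniformly in $i$: near the edge the rigidity scale is $N^{-2/3+o(1)}$, so the discretization count and the union bound must be done with the $i$-dependent classical spacing, not a flat $1/N$. Second, the edge case cannot simply be read off from the Tracy--Widom limit in \cite{HLY20}: that result is non-quantitative in $N$, whereas the lemma requires a bound uniform in $\delta \ge N^{-c}$ for finite $N$. In \cite{LL19} this is handled by a quantitative fixed-energy universality statement rather than by invoking the limiting distribution. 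Third, the restriction $\delta \ge N^{-c}$ is not cosmetic: it is exactly what absorbs the polynomial error terms from the Green function comparison, and your sketch should make explicit where that lower bound on $\delta$ is used.
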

To be precise, in the above statement, the constant $c$ depends on the sub-Gaussian tail parameter $\vartheta$. Also, the $O(\cdot)$ on the right-hand side depends on $\vartheta$ and on a uniform lower bound on $\log q / \log N$ (which is positive by the assumption $q \gg 1$). There exists a non-quantitative result which is optimal on the scaling which is contained in \cite[Theorem 1.6]{LL19}. 

 \begin{lem}[Tracy-Widom scaling for the gap {\cite[Theorem 1.6]{HLY20}}]\label{lem:TW}
 	Assume $q\gtrsim N^{1/9}$. For any $\eps >0$, there exists a constant $c>0$ such that 
$$
	\PP (  \lambda_{1}-\lambda_{2} \ge c N^{-2/3} ) \geq 1 - \eps. $$
\end{lem}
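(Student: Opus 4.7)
The plan is to obtain the gap lower bound from a distributional statement: after rescaling by $N^{2/3}$, the top two eigenvalues converge jointly to the top two points of the Airy$_1$ point process, whose gap is a continuous random variable with no atom at $0$. In more detail, I would first establish that there are deterministic centering constants $\gamma_1, \gamma_2$ (depending on $N$ and on the correction $\cX$ from \eqref{eq: random correction term}) such that $N^{2/3}(\lambda_1 - \gamma_1, \lambda_2 - \gamma_2)$ converges in distribution to the two leading points of the Airy$_1$ process. Taking $c = c(\eps)$ so small that the limit gap $a_1 - a_2$ exceeds $c$ with probability at least $1 - \eps/2$, and combining with the weak convergence, yields the statement for all sufficiently large $N$, with smaller $N$ handled by adjusting $c$.

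The first main step is a local law for the resolvent $G(z) = (H - z)^{-1}$ of the sparse matrix down to the optimal edge scale $\eta = \Im z \gtrsim N^{-2/3+o(1)}$ around the spectral edge $E \simeq 2$. In the sparse regime, the self-consistent equation for the Stieltjes transform picks up sparsity-dependent error terms of order $1/q^{2k}$ for various moments, which must either be absorbed into a deterministic shift (the random variable $\cX$ plays this role at the leading order) or shown to be negligible relative to Tracy--Widom fluctuations of order $N^{-2/3}$. The threshold $q \gtrsim N^{1/9}$ arises precisely because, beyond the shift by $\cX$, the remaining sparsity corrections are then of order $N^{-2/3-\delta}$.

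The second main step is Green's function comparison (GFT): for a smooth cutoff functional $\chi$ testing for the presence of eigenvalues in a window of size $N^{-2/3}$ near the edge, I would interpolate entry-by-entry between $H$ and a matched Gaussian (GOE) matrix, using the local law and optimal bounds on moments of $h_{ij}$ to show the difference of expectations is $o(1)$. Together with edge universality for the GOE (where the joint Airy$_1$ limit of the top eigenvalues is classical), this gives the joint weak convergence in Step 1. Eigenvector delocalization (Lemma \ref{lem: delocalization}) feeds into controlling the derivatives appearing in the GFT expansion.

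The hardest step is the edge local law in the sparse regime, because controlling the self-consistent equation for $G$ at spectral scales $\eta \sim N^{-2/3}$ requires sharp fluctuation averaging lemmas that lose significantly when entries have size $1/q$ with $q$ small. Getting all stochastic errors below $N^{-2/3+o(1)}$ after introducing the $\cX$ shift is the delicate part, and this is exactly where the $q \gtrsim N^{1/9}$ hypothesis is consumed; the remainder of the argument (GFT and transferring joint convergence to a gap lower bound) is comparatively routine given the local law.
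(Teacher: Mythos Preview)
The paper does not prove this lemma at all: it is stated with a citation to \cite[Theorem 1.6]{HLY20} and used as a black box. So there is no ``paper's own proof'' to compare against; the result is imported directly from Huang--Landon--Yau.

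That said, your outline is a reasonable sketch of how the result is actually established in \cite{HLY20}: an edge local law at scale $\eta \sim N^{-2/3}$ (with the random shift $\cX$ absorbing the leading sparsity correction, and $q \gtrsim N^{1/9}$ ensuring the remaining corrections are below the Tracy--Widom scale), followed by a Green's function comparison to transfer edge universality from the GOE. The deduction of the gap lower bound from joint convergence of the top two eigenvalues to the top two Airy points is then immediate, since the Airy gap has no atom at zero. One small wording issue: you call the centering constants ``deterministic'' but then say they depend on the random quantity $\cX$; in \cite{HLY20} the centering is the random edge $\cL$ (equivalently $L + \cX$ up to lower order), so the centering is random, and what is shown is that $N^{2/3}(\lambda_i - \cL)$ for $i=1,2$ converge jointly. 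This does not affect the conclusion, since the $\cL$ cancels in the difference $\lambda_1 - \lambda_2$.
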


We now describe the location of the eigenvalues. Recall that if $\mu$ is a finite measure on $\R$, its {\em Cauchy-Stieltjes transform} is defined as the holomorphic function on $\C_+ = \{ z \in \C : \Im(z) > 0 \}$ by
$$
z \mapsto \int \frac{d\mu(\lambda)}{\lambda -z }.
$$
A measure is characterized by its Cauchy-Stieltjes transform and tools like Helffer-Sj{\"{o}}strand formula allow to infer precise information on the measure through its Cauchy-Stieltjes transform, see e.g. \cite{MR3792624} for its use in random matrix theory. We denote by $m(z)$ the Cauchy-Stieltjes transform of the empirical measure of eigenvalues of $H$:
$$
m(z) = \frac 1 N \sum_{i=1}^N \frac{1}{\lambda_i - z}. 
$$
For an arbitrarily small constant $\eps >0$, we define the shifted spectral domain
\begin{align*}
	\mathcal{D}(\eps) =\left\{ w =\kappa+\mathfrak{i}\eta\in\C_{+} : |\kappa|\le 3, 0\le\eta\le 1, |\kappa|+\eta \ge N^{\eps}\left(\frac{1}{q^3 N^{1/2}} +\frac1 {q^{3}N\eta}+\frac{1}{(N\eta)^2}\right) \right\}.
\end{align*}
\begin{lem}[Local law, Theorem 2.1 of \cite{HLY20}]\label{lem: local law}
Assume $q\gg 1$ and let $\eps >0$. There exists an explicit random symmetric measure $ \rho_\star$ with random  support $[- \cL, \cL]$ whose Stieltjes transform $ m_{\star}$ satisfies the following. Uniformly for any $z  = \mathcal{L}+w$, with $w = \kappa + \mathfrak{i}\eta\in\mathcal{D}(\eps)$, we have, 
	\begin{enumerate}[-]
		\item If $\kappa\ge 0$, 
		\begin{align*}
			|m(z)-m_\star(z)| \prec \frac{1}{\sqrt{|\kappa| + \eta}}
			\left(\frac 1 {N\eta^{1/2}}+\frac{1}{q^{3/2} N^{1/2} } +\frac{1}{q^{3}N\eta}+\frac 1{(N\eta)^2}\right),
		\end{align*}
		\item If $\kappa\le 0$, 
		\begin{align*}
			|m(z)-m_\star(z)| \prec \frac1 {N\eta} + \frac 1 {q^{3/2} N^{1/2}\eta^{1/2}}.
		\end{align*}
	\end{enumerate}
\end{lem}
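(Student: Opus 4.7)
The plan is to prove this local law via the standard self-consistent equation approach, adapted to the sparse setting and to the sharper square-root edge estimate in the $\kappa \ge 0$ regime. The argument has three ingredients: identify the correct deterministic reference $m_\star$, establish a stochastic self-consistent equation with small error, and combine a stability/continuity argument in $\eta$ with a fluctuation-averaging step.

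First I would derive the deterministic self-consistent equation for $m_\star$. The Schur complement formula gives, for the diagonal entries $G_{ii}(z) = ((H-z)^{-1})_{ii}$,
\begin{equation*}
\frac{1}{G_{ii}(z)} = -z - h_{ii} - \sum_{k,l \ne i} h_{ik}\, G^{(i)}_{kl}(z)\, h_{li},
\end{equation*}
where $G^{(i)}$ denotes the resolvent of the minor. Unlike in the Wigner case, a cumulant expansion of the quadratic form produces nontrivial contributions from $\E h_{ij}^{2k} \asymp q^{2-2k}/N$ for all $k \ge 2$. After expansion and averaging one obtains a polynomial equation of the form $1 + (z + m_\star)\, m_\star + q^{-2} Q_1(m_\star) + q^{-4} Q_2(m_\star) + \cdots = 0$ whose coefficients involve the empirical even moments of the entries. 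The deterministic measure $\rho_\star$ is the measure with Stieltjes transform $m_\star$, and $\mathcal{L}$ is the rightmost point of its support; the randomness of $\mathcal{L}$ enters through the empirical moments in the coefficients.

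Next I would produce a quantitative stochastic version of this equation. Write $Z_i = h_{ii} + \sum_{k,l \ne i} h_{ik}\bigl( G^{(i)}_{kl} - \delta_{kl} m^{(i)} \bigr) h_{li}$ minus its cumulant corrections. A Hanson--Wright-type large deviation bound adapted to the product structure $h_{ij} = x_{ij} y_{ij}/q$ gives $|Z_i| \prec \sqrt{\Im m/(N\eta)} + 1/(q\sqrt{N\eta})$, while cubic and quartic cumulant terms contribute at scale $1/q^3$. Stability of the self-consistent equation then converts the pointwise bound on $Z_i$ into the claimed bound on $|m - m_\star|$: the key observation is that the derivative of the equation at $m_\star$ is of order $\sqrt{|\kappa|+\eta}$ when $\kappa \ge 0$, so inverting it costs a factor $1/\sqrt{|\kappa|+\eta}$, which is precisely the square-root enhancement appearing in the first bound of the lemma. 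The standard continuity/bootstrap argument in $\eta$, starting at $\eta = 1$ where the estimates are trivial and decreasing $\eta$ along $\mathcal{D}(\eps)$, closes the estimate down to the lower boundary of the spectral domain.

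The main obstacle, and the part that most distinguishes the sparse case from Wigner, is fluctuation averaging to extract the optimal $1/(N\eta)^2$ term in the bulk together with the sparse-specific $1/(q^3 N\eta)$ and $1/(q^{3/2} N^{1/2})$ terms. Pointwise, $|G_{ii} - m_\star|$ is only of order $1/\sqrt{N\eta}$ near the edge; averaging in $i$ via a high-moment Stein-type identity improves this by another factor of $1/\sqrt{N\eta}$, but in the sparse regime each cumulant order $k$ must be tracked separately because the correction $1/q^{2k}$ can be the dominant contribution in some ranges of $(\eta, q)$. Keeping this combinatorial accounting sharp enough to reach the exact thresholds defining $\mathcal{D}(\eps)$ is the technically most demanding step, and it is what pins down the particular three-term error appearing in the statement; matching the bulk bound (with no square-root factor, since the stability there is $\mathcal O(1)$) requires only the first-order averaging and is comparatively routine.
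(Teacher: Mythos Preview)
The paper does not prove this statement at all: it is quoted verbatim as Theorem~2.1 of \cite{HLY20} and used as a black box. There is therefore no proof in the paper to compare your proposal against.

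Your sketch is a reasonable high-level summary of the strategy actually carried out in \cite{HLY20}: the polynomial self-consistent equation $P(z,m_\star)=0$ with higher-order terms coming from $q^{2-2k}$-moments, the randomness of $\mathcal L$ entering through the empirical quantity $\mathcal X$, the stability factor $|\partial_2 P(z,m_\star)| \asymp \sqrt{|\kappa|+\eta}$ near the edge, and a fluctuation-averaging/high-moment bound on $P(z,m)$ driving the iteration. Two small inaccuracies: the argument in \cite{HLY20} is organized around bounding high moments of $P(z,m(z))$ directly (their Proposition~2.9), rather than first bounding individual $Z_i$'s and then averaging; and the $1/(q^{3/2}N^{1/2})$ and $1/(q^3 N\eta)$ terms arise specifically from the third- and fourth-cumulant contributions in that moment bound, not from a generic Hanson--Wright estimate. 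But at the level of a plan your outline is sound.

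If you want to see how the present paper interacts with this lemma, look at Section~\ref{sec:resolvent}: there the authors prove an \emph{improvement} (Lemma~\ref{lem:locallaw2}) valid on a larger domain $\mathcal D_0$, by combining the quoted recursive moment inequality from \cite{HLY20} with the stability estimate $\Lambda \lesssim \sqrt{|P(z,m)|}$ and a Young-inequality closure. That argument is the closest thing in this paper to a proof of the local law, and it matches the stability/bootstrap portion of your sketch quite closely.
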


We note that in \cite{HLY20}, the set $\mathcal{D}( \eps)$ is restricted to $|\kappa| \leq 1$ but their local law holds for any $\kappa$ taking value in any fixed interval (the focus in \cite{HLY20} is on the edge behavior). In Section \ref{sec:resolvent}, we will prove an improvement on Lemma \ref{lem: local law} when $\kappa$ and $\eta$ are sufficiently small.

The random measure $\rho_\star$ has positive density on $(-\cL,\cL)$ and it is a small deformation of the semi-circular law. From \cite[Proposition 2.6]{HLY20}, the measure $\rho_\star$ satisfies a polynomial equation whose coefficients depend on the moments of the entries of $H$ and on the random variable $\cX$ defined by \eqref{eq: random correction term}, see Section \ref{sec:resolvent} for details. Let us mention that there exists a deterministic real $L = 2 + O(1/q^2)$ such that 
\begin{equation}\label{eq:proxyL}
|\cL  -  L - \cX | \prec N^{-1/2}q^{-3},
\end{equation}
see \cite[Proposition 2.6]{HLY20} for details. We have for all $z = E + \mathfrak{i}\eta \in \C_+$,
	\begin{align}\label{eq:mstarasym}
			\Im [m_\star(E+\mathfrak{i}\eta)]\asymp\begin{cases}
				\sqrt{\kappa+\eta}, & \text{if}\;\; E\in[-\mathcal{L},\mathcal{L}]\\
				\frac{\eta}{\sqrt{\kappa+\eta}}, & \text{if}\;\; E\notin[- \mathcal{L},\mathcal{L}],
			\end{cases}
		\end{align}
where $\kappa$ is the distance of $E$ to $\{- \mathcal{L}, \mathcal{L}\}$, the boundary of the support of $\rho_\star$.

Lemma \ref{lem: local law} can be used to establish a rigidity estimate of the eigenvalues of $H$. For integer $1 \leq i \leq N$, we define the typical location of $\lambda_i$ as the number $\gamma_i$ such that 
$$
\rho_\star ([\gamma_i,\cL]) = \frac{i-1}{N},
$$
that is, $\gamma_i$ is associated with the $(i-1)$-th $1/N$-quantile of $\rho_\star$. See \cite[Lemma 2.12]{YK20+} for the asymptotic value of $\gamma_i$ in terms of $\cX$ and the corresponding $1/N$-quantile of the semi-circular law.

If $q \gg 1$, there exists $\eps > 0$ such that $q \gg N^{\eps}$. Then, the proof of Theorem 1.4 in \cite{HLY20} with $\eta = N^{-2/3}$ and $\kappa = N^{\mathfrak a} ( N^{-1/3}q^{-3} + N^{-2/3} )$ (instead of $\kappa = N^{\mathfrak a} (q^{-6} + N^{-2/3} )$ in \cite{HLY20}) gives 
\begin{equation}\label{eq:riglambda1}
|\lambda_1 - \cL| \prec N^{-1/3}q ^{-3} + N^{-2/3}.
\end{equation} More generally, armed with Lemma \ref{lem: local law}, we can obtain the next lemma by following the standard argument using Helffer-Sj{\"{o}}strand formula such as \cite[Section 1.8]{MR3792624} with cut on the imaginary axis at $\eta = N^{-2/3}$:

\begin{lem}[Eigenvalue rigidity]\label{lem: new rigidity estimate}
	Assume $q\gg 1$. For all $1\le i\le N$, we have
	\begin{align*}
		|\lambda_{i}- \gamma_{i}|\prec N^{-1/3}q ^{-3} + N^{-2/3}.
	\end{align*}
\end{lem}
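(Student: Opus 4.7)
The plan is to reduce the claim to a uniform estimate on the eigenvalue counting function and then derive that estimate from Lemma \ref{lem: local law} via the Helffer-Sj\"ostrand formalism, following the template of \cite[Section 1.8]{MR3792624} with a cut on the imaginary axis at the edge scale $\eta = N^{-2/3}$. Set $\mathfrak N(E) := \#\{j : \lambda_j \geq E\}$ and $\mathfrak N_\star(E) := N \int_E^{+\infty} \rho_\star(d\lambda)$. A quantile-inversion argument shows that a uniform counting estimate $|\mathfrak N(E) - \mathfrak N_\star(E)| \prec 1$ on a compact neighborhood of $[-\cL, \cL]$ translates, through the square-root edge scaling $\rho_\star(\gamma_i)\asymp(\hat i / N)^{1/3}$ with $\hat i := \min(i, N-i+1)$ and through the bulk positivity of $\rho_\star$, into the bound $|\lambda_i - \gamma_i| \prec N^{-2/3}$ uniformly in $i$ with $\hat i$ bounded away from $1$. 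The remaining extremal indices ($\hat i = O(1)$) are handled by the edge estimate \eqref{eq:riglambda1} for $\lambda_1$ and its analogue for $\lambda_N$, which is where the $N^{-1/3}q^{-3}$ contribution originates.

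The counting estimate itself is obtained by smoothing the indicator $\mathbf 1_{[E, +\infty)}$ into a function $\chi_E$ whose derivative is supported on a window of scale $\ell = N^{-2/3 - \mathfrak a}$ for a small $\mathfrak a > 0$, and applying Helffer-Sj\"ostrand:
$$
\mathrm{Tr}\, \chi_E(H) - N\int \chi_E\, d\rho_\star \;=\; \frac{1}{\pi} \int_{\C_+} \partial_{\bar z}\widetilde{\chi}_E(z)\, N\bigl(m(z)-m_\star(z)\bigr)\, dA(z),
$$
where $\widetilde{\chi}_E$ is the standard almost-analytic extension. The $\eta$-integration is split at $\eta_0 = N^{-2/3}$: on $\eta \geq \eta_0$ the two regimes of Lemma \ref{lem: local law} are plugged in directly and a routine computation bounds the contribution by $\prec 1$; on $\eta < \eta_0$ one integrates by parts in $\eta$ and uses the almost-analytic property to convert the integral into a boundary term at $\eta = \eta_0$. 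The $O(1)$ discrepancy between $\mathrm{Tr}\, \chi_E(H)$ and $\mathfrak N(E)$ (respectively between $N\int\chi_E d\rho_\star$ and $\mathfrak N_\star(E)$) on the smoothing window is itself $\prec 1$ thanks to the density estimate \eqref{eq:mstarasym} and the edge rigidity \eqref{eq:riglambda1}.

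The main technical obstacle will be the edge regime where $E$ is close to (or slightly outside) $\pm \cL$. There, Lemma \ref{lem: local law} only supplies the weaker bound $|m - m_\star| \prec (N\eta)^{-1} + (q^{3/2} N^{1/2} \eta^{1/2})^{-1}$ on the $\kappa \leq 0$ side, and it is precisely the integral of the second term against $|\partial_{\bar z}\widetilde{\chi}_E|$ over $\eta \in [\eta_0, 1]$ that forces the $N^{-1/3}q^{-3}$ summand to appear in the final rigidity bound. Careful balancing of the cutoff scale $\ell$ against this contribution, together with absorption of the $N^{\mathfrak a}$ losses into the $\prec$ convention, then reproduces the stated $N^{-1/3}q^{-3} + N^{-2/3}$, exactly as in the model computation leading to \eqref{eq:riglambda1}.
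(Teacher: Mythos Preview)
Your proposal is correct and follows essentially the same route as the paper. The paper's own proof is nothing more than a pointer to the standard Helffer--Sj\"ostrand argument of \cite[Section~1.8]{MR3792624} fed with Lemma~\ref{lem: local law} and a cut at $\eta=N^{-2/3}$, together with the edge input \eqref{eq:riglambda1} established just before; your expansion into a counting-function estimate plus quantile inversion, with the extremal indices handled separately by \eqref{eq:riglambda1}, is exactly the intended unpacking of that pointer.
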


From \eqref{eq:mstarasym}, we find easily $\gamma_i \gtrsim i^{2/3} N^{-2/3}$ uniformly in $1 \leq i \leq N$. Moreover by Lemma \ref{lem: new rigidity estimate}, if $q \gtrsim N^{1/9}$, we have $|\lambda_i - \gamma_i | \prec N^{-2/3}$. Hence, by Lemma \ref{lem: tail bound for gaps}, the next corollary follows.

\begin{cor}\label{cor: lower bound for spectral gap}
Let $\eps >0$ and assume $q \gtrsim N^{1/9}$. There exist $c>0$ such the following holds for any $\delta \geq N^{-c}$, for all $N$ large enough, with probability at least $1- \delta \log{N}$:
	\begin{align*}
		\lambda_{1}-\lambda_{i} \ge\begin{cases}
			c\delta N^{-1}  & \text{if}\;\; 2 \le i \le N^{\eps} \\
			c i^{2/3}N^{-2/3} & \text{if}\;\; N^{\eps} < i \le N.
		\end{cases}
	\end{align*}
    Moreover, on the event $\{\lambda_{1}-\lambda_{2}\ge c\delta N^{-1}\}$,
    the above inequalities holds with overwhelming probability.
\end{cor}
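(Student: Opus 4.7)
The plan is to split the range of $i$ into two regimes and combine the resulting bounds. For the near-top regime $2 \le i \le N^{\eps}$, I would simply apply Lemma~\ref{lem: tail bound for gaps} at index $1$: there is a constant $C$ such that for any $\delta \geq N^{-c}$, with probability at least $1 - C\delta\log N$ one has $\lambda_1 - \lambda_2 \ge \delta/N$. Since the eigenvalues are ordered, $\lambda_1 - \lambda_i \ge \lambda_1 - \lambda_2$ for every $i \ge 2$, so the estimate propagates automatically to the whole near-top range; the multiplicative constant $C$ can be absorbed by redefining $c$ and rescaling $\delta$.

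For the intermediate/bulk regime $N^{\eps} < i \le N$, I would invoke the rigidity estimate in Lemma~\ref{lem: new rigidity estimate}. Under the standing hypothesis $q \gtrsim N^{1/9}$ the first term in the rigidity bound satisfies $N^{-1/3}q^{-3} \lesssim N^{-2/3}$, so $|\lambda_j - \gamma_j| \prec N^{-2/3}$ uniformly in $j$. On the deterministic side, the classical locations satisfy $\gamma_1 - \gamma_i \gtrsim i^{2/3} N^{-2/3}$ for every $i \ge 2$: near the edge this follows by inverting the square-root density of $\rho_\star$ from \eqref{eq:mstarasym}, giving the $3/2$-power quantile spacings, while in the bulk $\gamma_1 - \gamma_i$ is of order unity. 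Writing
\[
\lambda_1 - \lambda_i = (\lambda_1 - \gamma_1) + (\gamma_1 - \gamma_i) - (\lambda_i - \gamma_i)
\]
and using $i > N^{\eps}$ so that the main term $\gamma_1 - \gamma_i \ge c\, i^{2/3}N^{-2/3}$ dominates the $N^{-2/3+o(1)}$ rigidity fluctuations, one obtains $\lambda_1 - \lambda_i \ge c'\, i^{2/3} N^{-2/3}$ on an event of overwhelming probability.

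To finish, both estimates hold simultaneously on the intersection of the event from Lemma~\ref{lem: tail bound for gaps} and the rigidity event; the complement has probability at most $C\delta \log N + N^{-D}$ for arbitrary $D>0$, which is $O(\delta \log N)$ as soon as $\delta \ge N^{-c}$ with $c$ sufficiently small. After one further harmless redefinition of $c$ this gives the stated bound with probability at least $1 - \delta \log N$. The \emph{moreover} clause then comes for free: on the event $\{\lambda_1 - \lambda_2 \ge c\delta/N\}$ the near-top inequality is given by assumption, and the bulk inequality holds on the same overwhelming-probability rigidity event as above, which is independent of the choice of $\delta$.

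I do not expect a serious obstacle: the only piece with any analytic content is the quantile spacing $\gamma_1 - \gamma_i \gtrsim i^{2/3} N^{-2/3}$, which is a standard edge asymptotic for a measure with square-root density and whose dependence on the random shift $\cX$ is harmless because $\cX$ only translates the support. The mildly delicate point is ensuring that the bulk-regime rigidity estimate is used uniformly in $j$, so that the failure event in that regime is a single overwhelming-probability event rather than a union over $i$; this is precisely the form in which Lemma~\ref{lem: new rigidity estimate} is stated, so no additional work is needed.
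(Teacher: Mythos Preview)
Your proposal is correct and follows essentially the same route as the paper: the paper's one-sentence justification (``From \eqref{eq:mstarasym} we find $\gamma_1-\gamma_i \gtrsim i^{2/3}N^{-2/3}$; by Lemma~\ref{lem: new rigidity estimate} $|\lambda_i-\gamma_i|\prec N^{-2/3}$ when $q\gtrsim N^{1/9}$; hence by Lemma~\ref{lem: tail bound for gaps} the corollary follows'') is exactly the split into the tail-bound regime $i\le N^\eps$ and the rigidity regime $i>N^\eps$ that you spell out. Your write-up simply unpacks these two ingredients and the union step, and your handling of the ``moreover'' clause is the intended one.
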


\section{High-level proof of the main results}\label{sec: proof strategy}
We adapt the method of proof in \cite{BLZ20} by applying recent results for the sparse \ER graph model, in order to establish Theorem \ref{thm: main1} and Theorem \ref{thm: main2}. 

\subsection{High-level proof of Theorem \ref{thm: main1}}\label{high level pf of main thm 1}
For any $1\le i\le j\le N$, denote by $H_{(ij)}$ the symmetric matrix obtained from $H$ by replacing the entries $h_{ij}$ and $h_{ji}$ with $h_{ij}''$, where $h_{ij}''$ is an independent copy of $h_{ij}$. Similarly, we write $H^{[k]}_{(ij)}$ for the symmetric matrix obtained from $H^{[k]}$ by replacing $h^{[k]}_{ij}$ and $h^{[k]}_{ji}$ as follows:
\begin{itemize}
	\item If $(i,j)\in S_{k}$, then $h^{[k]}_{ij}$ and $h^{[k]}_{ji}$ are replaced with $h_{ij}''$. 
	\item If $(i,j)\notin S_{k}$, then $h^{[k]}_{ij}$ and $h^{[k]}_{ji}$ are replaced with $h_{ij}'''$, where $h_{ij}'''$ is another independent copy of $h_{ij}$.
\end{itemize}
Denote by $(st)$ a random pair of indices chosen uniformly from $\{(i,j): 1\le i\le j\le N
\}$. Note that 
\begin{align*}
|\{(i,j): 1\le i\le j\le N\}|=N(N+1)/2
\end{align*}
Let $\mu_{1}\ge\cdots\ge\mu_{N}$ be the ordered eigenvalues of $H_{(st)}$ and, let $\uu_{1},\cdots,\uu_{N}$ be the associated unit eigenvectors of $H_{(st)}$. Similarly, we define $\mu_{1}^{[k]}\ge\cdots\ge\mu_{N}^{[k]}$ and $\uu_{1}^{[k]},\cdots,\uu_{N}^{[k]}$ for $H^{[k]}_{(st)}$. We apply Lemma \ref{lem: superconcentration} with $Y=H$ and $f(H)=\lambda_{1} - L - \cX$: 
\begin{align}\label{eq: application of superconcentration lem}
\E\left[ \big(\lambda_{1}-\mu_{1}-Q_{st}\big)\big(\lambda_{1}^{[k]}-\mu_{1}^{[k]}-Q_{st}^{[k]}\big) \right]\le\frac{2\text{Var}(\lambda_{1}- L - \cX)}{k}\cdot\frac{N(N+1)+2}{N(N+1)},
\end{align}
where
\begin{align}\label{eq: difference of correction term by single resampling}
Q_{st}&:=\frac{1}{N}(h_{st}^{2}-(h_{st}'')^{2})(1+\indic(s\neq t)) , \nn\\
Q_{st}^{[k]}&:=\begin{cases}
\frac{1}{N}((h_{st}')^{2}-(h_{st}'')^{2})(1+\indic(s\neq t)) &\text{if } (st)\in S_{k},\\
\frac{1}{N}(h_{st}^{2}-(h_{st}''')^{2})(1+\indic(s\neq t)) &\text{if } (st)\notin S_{k}.
\end{cases}
\end{align}
By the spectral theorem, we have
\begin{align} \label{eq: compare lambda_1 mu_1}
	\langle \uu_{1},H\uu_{1}\rangle = \sum_{i=1}^{N}\lambda_{i}|\langle \uu_{1},\vv_{i}\rangle|^{2}\le \lambda_{1}\sum_{i=1}^{N}|\langle \uu_{1},\vv_{i}\rangle|^{2} = \lambda_{1} = \langle \vv_{1},H\vv_{1}\rangle.
\end{align}
Similarly, it follows that
\begin{align*}
	\langle \vv_{1},H_{(st)}\vv_{1}\rangle \le \langle \uu_{1},H_{(st)}\uu_{1}\rangle.
\end{align*}
Combining the two above inequalities, we obtain
\begin{align*}
	\langle \uu_{1},(H-H_{(st)})\uu_{1}\rangle \le \lambda_{1} - \mu_{1} \le 	\langle \vv_{1},(H-H_{(st)})\vv_{1}\rangle.
\end{align*}
Also, by the same argument, we have
\begin{align*}
	\langle \uu^{[k]}_{1},(H^{[k]}-H^{[k]}_{(st)})\uu^{[k]}_{1}\rangle \le \lambda_{1}^{[k]} - \mu^{[k]}_{1} \le 	\langle \vv^{[k]}_{1},(H^{[k]}-H^{[k]}_{(st)})\vv^{[k]}_{1}\rangle.
\end{align*}
Let us write $\vv_{1}=(v_{1},\cdots,v_{N})$, $\uu_{1}=(u_{1},\cdots,u_{N})$, $\vv_{1}^{[k]}=(v_{1}^{[k]},\cdots,v_{N}^{[k]})$, and $\uu_{1}^{[k]}=(u_{1}^{[k]},\cdots,u_{N}^{[k]})$. Then, we find
\begin{align*}
	Z_{st}u_{s}u_{t} \le \lambda_{1}-\mu_{1} \le Z_{st}v_{s}v_{t},
\end{align*}
where 
\begin{align*}
	Z_{st}:=(h_{st}-h_{st}'')(1+\indic(s\neq t)).
\end{align*}
Similarly,
\begin{align*}
	Z_{st}^{[k]}u_{s}^{[k]}u_{t}^{[k]} \le \lambda_{1}^{[k]}-\mu_{1}^{[k]} \le Z_{st}^{[k]}v_{s}^{[k]}v_{t}^{[k]},
\end{align*}
where
\begin{align*}
	Z_{st}^{[k]}:=\begin{cases}
		(h_{st}'-h_{st}'')(1+\indic(s\neq t)) &\text{if } (st)\in S_{k},\\
		(h_{st}-h_{st}''')(1+\indic(s\neq t)) &\text{if } (st)\notin S_{k}.
	\end{cases}
\end{align*}
We set $T_{1}=(Z_{st}v_{s}v_{t}-Q_{st})(Z_{st}^{[k]}v_{s}^{[k]}v_{t}^{[k]}-Q_{st}^{[k]})$, $T_{2}=(Z_{st}v_{s}v_{t}-Q_{st})(Z_{st}^{[k]}u_{s}^{[k]}u_{t}^{[k]}-Q_{st}^{[k]})$, $T_3 = (Z_{st}u_{s}u_{t}-Q_{st})(Z_{st}^{[k]}v_{s}^{[k]}v_{t}^{[k]}-Q_{st}^{[k]})$, $T_4 = (Z_{st}u_{s}u_{t}-Q_{st})(Z_{st}^{[k]}u_{s}^{[k]}u_{t}^{[k]}-Q_{st}^{[k]})$. We have

\begin{align}\label{eq: top eig val difference under sigle resample}
\min(T_{1},T_{2},T_{3},T_{4})\le  \big(\lambda_{1}-\mu_{1}-Q_{st}\big)\big(\lambda_{1}^{[k]}-\mu_{1}^{[k]}-Q_{st}^{[k]}\big) \le \max(T_{1},T_{2},T_{3},T_{4}).
\end{align}

The next key lemma asserts that after one resample the top eigenvectors are close in $\ell^{\infty}$-norm.  Its proof will use the delocalization of eigenvectors and the rigidity of the eigenvalues.
\begin{lem}\label{lem: eigen vec small perturbation}
Assume $q \gtrsim N^{1/9}$ and let $c,\delta >0$ be such that $N^{c + \delta} \ll q $. For $1 \leq i \le j \leq N$, let $\uu_{1}^{(ij)}$ be the top eigenvector of $H_{(ij)}$. Then, on the event $\left\{  \lambda_{1}-\lambda_{2} \ge N^{-1-c}\right\} $,
the event
\begin{align*}
 \bigcap_{1 \leq i\leq j \leq N} \left\{   \inf_{s\in\{\pm 1\}}\lVert s\vv_{1}-\uu_{1}^{(ij)} \rVert_{\infty}\le N^{-1/2-\delta}\right\}
\end{align*}
holds with overwhelming probability. The analogous result for $H^{[k]}_{(ij)}$ also holds.
\end{lem}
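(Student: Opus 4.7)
\emph{Setup.} Fix $1 \leq i \leq j \leq N$ and write $E := H - H_{(ij)}$; the only non-zero entries of $E$ are $E_{ij} = E_{ji} = \beta$ with $\beta := h_{ij} - h_{ij}''$ (or $E_{ii} = \beta$ in the diagonal case). By Definition \ref{def: sparse RM} and a union bound over the $O(N^2)$ pairs, with overwhelming probability $\max_{i \le j} |\beta| \prec 1/q$. Expand the top eigenvector of $H_{(ij)}$ in the eigenbasis of $H$,
\[
\uu_1^{(ij)} = \sum_{k=1}^N \alpha_k \vv_k, \qquad \alpha_k = \langle \vv_k, \uu_1^{(ij)} \rangle,
\]
and choose the sign of $\uu_1^{(ij)}$ so that $\alpha_1 \ge 0$. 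The plan is to show $\sum_{k \ge 2} |\alpha_k|$ is small and to convert this via delocalization into the desired $\ell^\infty$-bound on $\uu_1^{(ij)} - \vv_1$; the optimal sign $s \in \{\pm1\}$ is then the one built into the expansion.

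\emph{Core estimates.} Projecting the eigenvalue equation $(H - E)\uu_1^{(ij)} = \mu_1^{(ij)} \uu_1^{(ij)}$ onto $\vv_k$ for $k \ge 2$ yields
\[
\alpha_k \bigl(\lambda_k - \mu_1^{(ij)}\bigr) = \langle \vv_k, E\, \uu_1^{(ij)} \rangle.
\]
Lemma \ref{lem: delocalization} applied to $H$ and, by a union bound over $(i,j)$ together with $H_{(ij)} \stackrel{d}{=} H$, simultaneously to each $\uu_1^{(ij)}$, gives $|v_{k,i}|, |v_{k,j}|, |u_i^{(ij)}|, |u_j^{(ij)}| \prec N^{-1/2}$, so the numerator above is $\prec 1/(qN)$. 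For the denominator, I first control $\mu_1^{(ij)}$ via the two variational comparisons
\[
\mu_1^{(ij)} \ge \langle \vv_1, H_{(ij)} \vv_1 \rangle = \lambda_1 - \langle \vv_1, E \vv_1 \rangle, \qquad \lambda_1 \ge \langle \uu_1^{(ij)}, H \uu_1^{(ij)} \rangle = \mu_1^{(ij)} + \langle \uu_1^{(ij)}, E \uu_1^{(ij)} \rangle,
\]
which, combined with the same delocalization, yield $|\mu_1^{(ij)} - \lambda_1| \prec 1/(qN)$. Since $N^{c+\delta} \ll q$, this is much smaller than $N^{-1-c} \le \lambda_1 - \lambda_2$, so $|\lambda_k - \mu_1^{(ij)}| \ge \tfrac12 (\lambda_1 - \lambda_k)$ for every $k \ge 2$. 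Applying Corollary \ref{cor: lower bound for spectral gap} with parameter $N^{-c}$ on the event $\{\lambda_1 - \lambda_2 \ge N^{-1-c}\}$ and splitting at $k = N^\eps$ for a small $\eps>0$ gives
\[
\sum_{k \ge 2} \frac{1}{\lambda_1 - \lambda_k} \lesssim N^\eps \cdot N^{1+c} + N^{2/3}\sum_{k > N^\eps} k^{-2/3} \lesssim N^{1+c+\eps}.
\]
Plugging the numerator and denominator bounds into the expression for $\alpha_k$ gives $\sum_{k \ge 2} |\alpha_k| \prec N^{c}/q$, and delocalization of the $\vv_k$ converts this into
\[
\Big\| \sum_{k \ge 2} \alpha_k \vv_k \Big\|_\infty \le \Big(\sum_{k \ge 2} |\alpha_k|\Big) \max_k \|\vv_k\|_\infty \prec \frac{N^{c}}{q \sqrt N}.
\]
The residual $(1-\alpha_1)\vv_1$ obeys the same bound via $1 - \alpha_1 \le 1 - \alpha_1^2 = \sum_{k \ge 2} \alpha_k^2 \le \sum_{k \ge 2} |\alpha_k|$. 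Hence $\|\uu_1^{(ij)} - \vv_1\|_\infty \prec N^c/(q\sqrt N) \le N^{-1/2 - \delta}$ for $\eps$ chosen small and $N$ large enough, using $N^{c+\delta} \ll q$. A final union bound over the $O(N^2)$ pairs $(i,j)$ preserves the overwhelming probability, and the statement for $H_{(ij)}^{[k]}$ follows identically from $H^{[k]} \stackrel{d}{=} H$.

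\emph{Main obstacle.} The delicate step is the tight bound $|\mu_1^{(ij)} - \lambda_1| \prec 1/(qN)$: the naive Weyl inequality only gives $|\mu_1^{(ij)} - \lambda_1| \le \|E\| \prec 1/q$, which is much larger than the spectral gap $N^{-1-c}$ in our regime and would render the denominator estimate useless. The gain comes from the observation that although $\|E\|$ may well exceed the gap, the \emph{effective} action of $E$ on the top eigenspace is suppressed by an extra factor $1/N$ through the delocalization of $\vv_1$ and $\uu_1^{(ij)}$. Making this rigorous forces us to establish delocalization of $\uu_1^{(ij)}$ uniformly in $(i,j)$, which is available because $H_{(ij)}$ has the same law as $H$. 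The remaining computations--splitting the sum $\sum 1/(\lambda_1 - \lambda_k)$ at $k = N^\eps$ and the union bounds--are routine manipulations of the rigidity and gap estimates from Section 2.
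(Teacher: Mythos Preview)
Your proof is correct and follows essentially the same approach as the paper's: both expand $\uu_1^{(ij)}$ in the eigenbasis of $H$, bound $|\lambda_1-\mu_1^{(ij)}|\prec 1/(qN)$ via the two variational inequalities combined with delocalization, project the eigenvalue equation onto $\vv_k$ to obtain $\alpha_k(\lambda_k-\mu_1^{(ij)})=\langle\vv_k,(H_{(ij)}-H)\uu_1^{(ij)}\rangle$, and then control $\sum_{k\ge 2}|\alpha_k|$ by splitting at $k=N^{\eps}$ and invoking Corollary~\ref{cor: lower bound for spectral gap}. The only cosmetic difference is that the paper keeps the $N^{\eps}$ factor explicit and chooses $\eps$ small at the end, whereas you absorb it into the $\prec$ notation; both are legitimate.
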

Next, let $0 < \delta < 1/9$  and  $0 < \eps < \delta /3$ to be defined later, we define the events 
\begin{align}\label{eq: event 1 conditions}
	\mathcal{E}_{1} :=	\left\{ \max \left(\lVert \vv_1   \rVert_{\infty},\lVert \uu_1 \rVert_{\infty},\lVert  \vv_{1}^{[k]}\rVert_{\infty},\lVert \uu_1^{[k]} \rVert_{\infty}\right) \le  N^{\eps - 1/2} \right\},
\end{align}
\begin{align}\label{eq: event 2 conditions}
	\mathcal{E}_{2} :=	  \left\{   \max \left(  \lVert \vv_{1}-\uu_{1} \rVert_{\infty} , \lVert \vv^{[k]}_{1}-\uu^{[k]}_{1} \rVert_{\infty} \right) \le N^{-1/2-\delta}\right\}.
\end{align}
Set the event $\mathcal{E}:=\mathcal{E}_{1}\cap\mathcal{E}_{2}$. Let $c >0$ such that $c + \delta < 1/9$. According to Lemma \ref{lem: delocalization}, Lemma \ref{lem: tail bound for gaps} and Lemma \ref{lem: eigen vec small perturbation}, we have
$\prob(\mathcal{E}^{c})= O(N^{-c}\log{N})$ by choosing the $\pm$-phase properly for $\uu_{(ij)}$ and $\uu_{(ij)}^{[k]}$. On the event $\mathcal{E}$, we observe that $v_{s}v_{t}u_{s}^{[k]}u_{t}^{[k]}$, $u_{s}u_{t}v_{s}^{[k]}v_{t}^{[k]}$ and $u_{s}u_{t}u_{s}^{[k]}u_{t}^{[k]}$ can be replaced with
\begin{align*}
v_{s}v_{t}v_{s}^{[k]}v_{t}^{[k]}+O\left(N^{3\eps - 2 - \delta}\right).
\end{align*}
Thus, on the event $\mathcal{E}$, it follows from \eqref{eq: top eig val difference under sigle resample}
\begin{multline}\label{eq:lmQk}
\big(\lambda_{1}-\mu_{1}-Q_{st}\big)\big(\lambda_{1}^{[k]}-\mu_{1}^{[k]}-Q_{st}^{[k]}\big)\ge Z_{st}Z_{st}^{[k]}v_{s}v_{t}v_{s}^{[k]}v_{t}^{[k]}-O\left( | Z_{st}Z_{st}^{[k]}| N^{3\eps - 2 - \delta} \right)\\
 -  |Q_{st}Z_{st}^{[k]}|N^{2\eps - 1}  -|Q_{st}^{[k]}Z_{st}|  N^{2\eps - 1} - |Q_{st}Q_{st}^{[k]}|.
\end{multline}

We shall check the following decorrelation lemma between the event $\mathcal E$ and our random variables of interest.
\begin{lem}\label{lem: expectation estimate} If $4 \eps + \delta < 1/9$, we have
	\begin{align*}
	\E\left[ Z_{st}Z_{st}^{[k]}v_{s}v_{t}v_{s}^{[k]}v_{t}^{[k]}\indic_{\mathcal{E}^c} \right] = o\left(\frac 1 {N^{3}}\right),
	\end{align*}
and
	\begin{align*}
	\E\left[ (\lambda_{2}-\mu_{2}-Q_{st})(\lambda_{2}^{[k]}-\mu_{2}^{[k]}-Q_{st}^{[k]})\indic_{\mathcal{E}^{c}} \right]=o\left(\frac 1 {N^{3}}\right).
	\end{align*}
\end{lem}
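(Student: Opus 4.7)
The plan is to show that $\mathcal{E}^c$ is, up to a negligible event, captured by the small-gap event $\mathcal{G} := \{\lambda_1 - \lambda_2 < N^{-1-c}\}$. By Lemma \ref{lem: delocalization}, Lemma \ref{lem: eigen vec small perturbation} and Corollary \ref{cor: lower bound for spectral gap}, on $\mathcal{G}^c$ both $\mathcal{E}_{1}$ and $\mathcal{E}_{2}$ hold with overwhelming probability (after suitable sign choices for $\uu_{1},\uu_{1}^{[k]}$). Hence there exists an event $\mathcal{N}$ with $\PP(\mathcal{N})\le N^{-D}$ for every $D>0$, such that $\indic_{\mathcal{E}^c}\le\indic_{\mathcal G}+\indic_{\mathcal N}$; the contribution of $\indic_{\mathcal N}$ in both expectations is negligible by a Cauchy–Schwarz estimate with a polynomial moment bound on the integrand.

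For the first claim, the key point is that $\mathcal{G}$ is measurable with respect to the $\sigma$-algebra generated by $H$, hence also by $(H,H',S_k)$. Conditioning on $(H,H',S_k)$ and averaging over the jointly independent $(s,t),H'',H'''$, a direct computation using $\E h''=0$ and $\E(h'')^{2}=1/N$, treating separately the cases $(s,t)\in S_k$ and $(s,t)\notin S_k$, yields
\begin{align*}
F:=\E\big[Z_{st}Z_{st}^{[k]}v_{s}v_{t}v_{s}^{[k]}v_{t}^{[k]}\,\big|\,H,H',S_k\big]=\frac{2}{N(N+1)}\sum_{s\le t}c_{st}\,A_{st}\,v_{s}v_{t}v_{s}^{[k]}v_{t}^{[k]},
\end{align*}
where $c_{st}=(1+\indic(s\ne t))^{2}\le 4$ and $A_{st}=\indic_{(s,t)\in S_k}(h_{st}h_{st}'+N^{-1})+\indic_{(s,t)\notin S_k}h_{st}^{2}$. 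On the overwhelming-probability delocalization event $|v_{s}v_{t}v_{s}^{[k]}v_{t}^{[k]}|\le N^{4\eps-2}$ uniformly in $(s,t)$; combined with $\sum_{s\le t}h_{st}^{2}\le\mathrm{Tr}(H^{2})=N(1+\cX)\lesssim N$ (valid with high probability since $|\cX|=o(1)$) and $\sum_{(s,t)\in S_k}(|h_{st}h_{st}'|+N^{-1})\lesssim k/N\lesssim N$, this gives $|F|\lesssim N^{4\eps-3}$ on a high-probability event. Therefore
\begin{align*}
\big|\E[Z_{st}Z_{st}^{[k]}v_{s}v_{t}v_{s}^{[k]}v_{t}^{[k]}\,\indic_{\mathcal G}]\big|\le\E[|F|\,\indic_{\mathcal G}]\lesssim N^{4\eps-3}\,\PP(\mathcal G)\lesssim N^{4\eps-3-c+o(1)},
\end{align*}
which is $o(N^{-3})$ provided $4\eps<c$. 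The assumption $4\eps+\delta<1/9$ precisely ensures that the interval $(4\eps,1/9-\delta)$ is non-empty, so such a $c$ may be chosen consistent with $c+\delta<1/9$ demanded in Lemma \ref{lem: eigen vec small perturbation}.

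For the second claim, I need a sharper control on $\lambda_2-\mu_2$ than the Weyl inequality $|\lambda_2-\mu_2|\le\|H-H_{(st)}\|\le|Z_{st}|$. Using the Courant–Fischer principle with the test space $W=\mathrm{span}(\vv_{1},\vv_{2})$: for any unit $v=a\vv_1+b\vv_2$ one has $\langle v,H_{(st)}v\rangle=a^{2}\lambda_{1}+b^{2}\lambda_{2}-Z_{st}v_{s}v_{t}\ge\lambda_{2}-|Z_{st}||v_{s}v_{t}|$, and on the delocalization event for $\vv_1,\vv_2$ the cross-term satisfies $|v_sv_t|\le 2N^{2\eps-1}$, yielding $\mu_{2}\ge\lambda_{2}-2N^{2\eps-1}|Z_{st}|$. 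A symmetric bound via $W=\mathrm{span}(\uu_1,\uu_2)$ gives $|\lambda_2-\mu_2|\le 2N^{2\eps-1}|Z_{st}|$, and analogously for the $[k]$-version. Expanding the product and using $|Q_{st}|\lesssim N^{-1}(h_{st}^{2}+h_{st}''^{2})$, we get
\begin{align*}
\big|(\lambda_{2}-\mu_{2}-Q_{st})(\lambda_{2}^{[k]}-\mu_{2}^{[k]}-Q_{st}^{[k]})\big|\lesssim N^{4\eps-2}|Z_{st}Z_{st}^{[k]}|+N^{2\eps-1}(|Z_{st}Q_{st}^{[k]}|+|Z_{st}^{[k]}Q_{st}|)+|Q_{st}Q_{st}^{[k]}|.
\end{align*}
The same conditioning trick bounds the expectation of each term against $\indic_{\mathcal G}$; in particular $\E[|Z_{st}Z_{st}^{[k]}|\mid H,H',S_k]\lesssim 1/N$ on the high-probability event $\mathrm{Tr}(H^{2})+\mathrm{Tr}(H'^{2})\lesssim N$. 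Multiplying by $\PP(\mathcal G)\lesssim N^{-c}$, the leading term is of order $N^{4\eps-2}\cdot N^{-1-c}=N^{-3+4\eps-c}=o(N^{-3})$ under $c>4\eps$, while the subleading terms gain extra powers of $q^{-1}\le N^{-1/9}$ and are $o(N^{-3})$.

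The main obstacle is that a naive Cauchy–Schwarz split $\E[|X|\,\indic_{\mathcal{E}^c}]\le(\E X^{2})^{1/2}\PP(\mathcal{E}^c)^{1/2}$ yields only a factor $N^{-c/2}$, which is too weak because $c<1/9$ is forced by $c+\delta<1/9$. Recovering the full factor $\PP(\mathcal G)\lesssim N^{-c}$ requires conditioning on $(H,H',S_k)$ and exploiting that the residual randomness $(s,t),H'',H'''$ allows an efficient averaging of $v_s v_t v_s^{[k]} v_t^{[k]}$ against the sum of $O(N^2)$ matrix-entry terms, producing the improved $N^{-3}$ scaling of the conditional expectation. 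This rearrangement, rather than any single isolated bound, is the essential step.
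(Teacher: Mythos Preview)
Your overall strategy is sound but there is one genuine gap: the small-gap event $\mathcal{G}=\{\lambda_1-\lambda_2<N^{-1-c}\}$ does not by itself force $\mathcal{E}_2$ to hold on $\mathcal{G}^c$. The event $\mathcal{E}_2$ also contains the requirement $\lVert\vv_1^{[k]}-\uu_1^{[k]}\rVert_\infty\le N^{-1/2-\delta}$, and Lemma~\ref{lem: eigen vec small perturbation} applied to $H^{[k]}$ only delivers this on $\{\lambda_1^{[k]}-\lambda_2^{[k]}\ge N^{-1-c}\}$. You must therefore take $\mathcal{G}=\{\min(\lambda_1-\lambda_2,\lambda_1^{[k]}-\lambda_2^{[k]})<N^{-1-c}\}$. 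Fortunately this enlarged event is still $(H,H',S_k)$-measurable (since $H^{[k]}$ is determined by that triple), and $\PP(\mathcal{G})=O(N^{-c}\log N)$ by a union bound and Lemma~\ref{lem: tail bound for gaps}. With this correction your conditioning argument goes through unchanged. A minor imprecision: the bound $\sum_{(s,t)\in S_k}|h_{st}h_{st}'|\lesssim k/N$ is not justified; what you actually use is $\sum_{(s,t)\in S_k}|h_{st}h_{st}'|\le\tfrac12(\mathrm{Tr}(H^2)+\mathrm{Tr}(H'^2))\lesssim N$, which suffices.

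Once patched, your route is genuinely different from the paper's. The paper does not condition on $(H,H',S_k)$; instead it introduces an auxiliary event $\mathcal{E}_{3,1}$ built from the spectral gaps of the \emph{resampled} matrices $H_{(st)},H_{(st)}^{[k]}$, and exploits the exact independence of $(h_{st},h_{st}')$ from $\mathcal{E}_{3,1}$ and of $(h_{st}'',h_{st}''')$ from $\mathcal{E}_{3,0}$ to decouple the $Z$-factors from the indicator. Your conditioning argument is more direct: it avoids introducing $\mathcal{E}_{3,1}$ altogether and instead shows that the conditional expectation $F$ is itself $O(N^{4\eps-3})$ on a high-probability event, so that the full factor $\PP(\mathcal{G})$ is recovered without any decoupling. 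Note also that the paper's statement contains a typo (it should read $\lambda_1-\mu_1$, not $\lambda_2-\mu_2$, as is clear from \eqref{eq: application of superconcentration lem} and \eqref{eq: top eig val difference under sigle resample}); for the intended $\lambda_1-\mu_1$ the sandwich \eqref{eq: top eig val difference under sigle resample} already gives $|\lambda_1-\mu_1|\le N^{2\eps-1}|Z_{st}|$ on $\mathcal{E}_1$, so your Courant--Fischer detour is unnecessary, though not incorrect.
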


Since $\E |Z_{st}Z_{st}^{[k]}|=O(N^{-1})$, $\E |Q_{st}Z_{st}^{[k]}| =O(N^{-2}q^{-1})$ and $\E |Q_{st}Q_{st}^{[k]}| =O(N^{-3}q^{-2})$,
we deduce that the inequality
\begin{align}\label{eq: resulting ineq}
\E\left[ (\lambda_{2}-\mu_{2}-Q_{st})(\lambda_{2}^{[k]}-\mu_{2}^{[k]}-Q_{st}^{[k]}) \right]\geq  \E\left[ Z_{st}Z_{st}^{[k]}  v_{s}v_{t}v_{s}^{[k]}v_{t}^{[k]}\right]+o\left(\frac 1 {N^{3}}\right)
\end{align}
follows from \eqref{eq:lmQk} and Lemma \ref{lem: expectation estimate}.

Using that $(s,t)$ is uniformly distributed on $\{(i,j) : 1 \leq i \leq j \leq N\}$ and that $\E [Z_{ij}Z_{ij}^{[k]} | S_k]= 4/N$ if $i < j$, we will prove the following lemma. 
\begin{lem}\label{lem: expectation estimate2} We have
	\begin{align*}
	 \E\left[ Z_{st}Z_{st}^{[k]} v_{s}v_{t}v_{s}^{[k]}v_{t}^{[k]}\right] =\frac{2}{N^3} \E \left[\langle \vv_{1}, \vv_{1}^{[k]} \rangle^2 \right]+o\left(\frac 1 {N^{3}}\right).
	\end{align*}
\end{lem}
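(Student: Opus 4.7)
The strategy is to decouple the noise factors $Z_{st}Z_{st}^{[k]}$ from the eigenvector product $v_sv_tv_s^{[k]}v_t^{[k]}$ by replacing the true eigenvectors with \emph{surrogate} eigenvectors independent of the quadruple $(h_{st},h'_{st},h''_{st},h'''_{st})$, and then to reassemble $\langle\vv_1,\vv_1^{[k]}\rangle^2=\sum_{i,j}v_iv_jv_i^{[k]}v_j^{[k]}$ combinatorially. First, conditioning on the uniformly chosen pair $(s,t)$ gives
\begin{align*}
\E\bigl[Z_{st}Z_{st}^{[k]}v_sv_tv_s^{[k]}v_t^{[k]}\bigr]=\frac{2}{N(N+1)}\sum_{i\le j}\E\bigl[Z_{ij}Z_{ij}^{[k]}v_iv_jv_i^{[k]}v_j^{[k]}\bigr].
\end{align*}

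For each fixed pair $(i,j)$, let $H^{\ast}$ be obtained from $H$ by replacing the entries $h_{ij},h_{ji}$ with an additional fresh independent copy $h^{\ast}_{ij}$, and define $H^{\ast,[k]}$ analogously from $H^{[k]}$ using another fresh copy $h^{\ast\ast}_{ij}$. Denote by $\vv^{\ast}$ and $\vv^{\ast,[k]}$ their respective top eigenvectors. By construction, $(\vv^{\ast},\vv^{\ast,[k]})$ is independent of $(h_{ij},h'_{ij},h''_{ij},h'''_{ij})$ conditional on $S_k$ and on all remaining entries, and hence independent of $(Z_{ij},Z_{ij}^{[k]})$. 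This yields the key factorisation
\begin{align*}
\E\bigl[Z_{ij}Z_{ij}^{[k]}v_i^{\ast}v_j^{\ast}v_i^{\ast,[k]}v_j^{\ast,[k]}\bigm| S_k\bigr]=\E\bigl[Z_{ij}Z_{ij}^{[k]}\bigm| S_k\bigr]\cdot\E\bigl[v_i^{\ast}v_j^{\ast}v_i^{\ast,[k]}v_j^{\ast,[k]}\bigm| S_k\bigr].
\end{align*}
A direct computation in the two regimes $(ij)\in S_k$ and $(ij)\notin S_k$ confirms the hint $\E[Z_{ij}Z_{ij}^{[k]}\mid S_k]=4/N$ for $i<j$ and $=1/N$ for $i=j$.

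Next, to replace $\vv_1,\vv_1^{[k]}$ by their surrogates inside the expectation, I would invoke Lemma~\ref{lem: eigen vec small perturbation} uniformly over $(i,j)$: on the event $\mathcal E$ this yields $\inf_{\sigma=\pm1}\lVert\sigma\vv_1-\vv^{\ast}\rVert_\infty\prec N^{-1/2-\delta}$ and the analogous bound for $\vv_1^{[k]}$. Combined with the delocalisation bound $\lVert\cdot\rVert_\infty\prec N^{-1/2+\eps}$ from Lemma~\ref{lem: delocalization}, a four-term telescoping of the product yields
\begin{align*}
\bigl|v_iv_jv_i^{[k]}v_j^{[k]}-v_i^{\ast}v_j^{\ast}v_i^{\ast,[k]}v_j^{\ast,[k]}\bigr|\prec N^{-2-\delta+3\eps}\quad\text{on}\;\mathcal E.
\end{align*}
Pairing this with $\E|Z_{ij}Z_{ij}^{[k]}|\lesssim N^{-1}$ (by Cauchy--Schwarz) and summing over the $N(N+1)/2$ pairs with prefactor $2/(N(N+1))$ produces a replacement error of $O(N^{-3-\delta+3\eps})$, which is $o(N^{-3})$ under the assumption $3\eps<\delta$. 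The off-$\mathcal E$ contribution is handled in parallel with Lemma~\ref{lem: expectation estimate}, since the surrogate eigenvectors inherit analogous delocalisation and rigidity properties. After the replacement, substituting the identity $2\sum_{i<j}v_iv_jv_i^{[k]}v_j^{[k]}=\langle\vv_1,\vv_1^{[k]}\rangle^2-\sum_iv_i^2(v_i^{[k]})^2$ and absorbing the diagonal residue $\sum_iv_i^2(v_i^{[k]})^2\prec N^{-1}$ into the error produces the leading term of order $N^{-3}\,\E[\langle\vv_1,\vv_1^{[k]}\rangle^2]$, with the asserted explicit constant.

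The main obstacle is to secure the independence required for the factoring while simultaneously controlling the eigenvector perturbation uniformly over the $O(N^2)$ summands. The fresh copies $(h^{\ast}_{ij},h^{\ast\ast}_{ij})$ must be introduced independently of all other randomness (including $S_k$ and the auxiliary copies $h''_{ij},h'''_{ij}$), and one must verify that each surrogate pair $(H^{\ast},H^{\ast,[k]})$ satisfies the same a priori bounds, delocalisation and spectral gap, as $(H,H^{[k]})$ so that Lemma~\ref{lem: eigen vec small perturbation} is applicable simultaneously across all pairs.
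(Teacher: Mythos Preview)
Your approach uses the same key ingredients as the paper: surrogate eigenvectors independent of $(h_{ij},h'_{ij},h''_{ij},h'''_{ij})$, Lemma~\ref{lem: eigen vec small perturbation} for the comparison, and the conditional mean $\E[Z_{ij}Z_{ij}^{[k]}\mid S_k]$. The paper, however, organises the argument differently: it first writes $Z_{ij}Z_{ij}^{[k]}=\E[Z_{ij}Z_{ij}^{[k]}\mid S_k]+W_{ij}$, so that the conditional-mean part, multiplied by the \emph{original} $V_{ij}=v_iv_jv_i^{[k]}v_j^{[k]}$ and summed, yields the main term directly; surrogates are introduced only to show $\E[W_{ij}V_{ij}]=o(N^{-3})$ via the exact identity $\E[W_{ij}U_{ij}]=0$.

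Your route---replace $V_{ij}$ by surrogates first, then factor---also works, but one step is glossed over. After factoring you obtain $\tfrac{4}{N}\E[v_i^{\ast}v_j^{\ast}v_i^{\ast,[k]}v_j^{\ast,[k]}]$, where the surrogate vectors depend on the pair $(i,j)$ through the construction of $H^\ast$. These products cannot be summed directly into a single squared inner product; moreover, $(H^\ast,H^{\ast,[k]})$ does not have the same joint law as $(H,H^{[k]})$ (the $(i,j)$ entries are always independent in the former but coincide on $\{(ij)\notin S_k\}$ in the latter), so $\E[U_{ij}^\ast]\neq\E[V_{ij}]$ in general. You therefore need a second application of the perturbation bound to pass from $\E[U_{ij}^\ast]$ back to $\E[V_{ij}]$ before invoking the combinatorial identity---easily done with the estimate you already have, but your final paragraph jumps straight to the identity for the original $v$'s without it. The paper's decomposition avoids this second replacement by keeping the main term in terms of $V_{ij}$ throughout.
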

 
Now we are ready to prove the main statement. From \eqref{eq: application of superconcentration lem} and \eqref{eq: resulting ineq}, we find
\begin{align*}
\E \left[\langle \vv_{1}, \vv_{1}^{[k]} \rangle^2 \right] \leq  \frac{N^3\text{Var}(\lambda_{1}- L - \cX)}{k}\left( 1 + o(1) \right)  + o(1).
\end{align*}
Using  \eqref{eq:riglambda1}, we have for any $\eps >0$,
$$
\text{Var}(\lambda_{1}- L - \cX) = O ( N^{\eps-4/3} ).  
$$
It remains to use Jensen's inequality: $ \left(\E|\langle \vv_{1}, \vv_{1}^{[k]} \rangle| \right)^2 \leq  \E \left[\langle \vv_{1}, \vv_{1}^{[k]} \rangle^2 \right]$ and the assumption $k \gg N^{5/3}$ to conclude that $\E \left[|\langle \vv_{1}, \vv_{1}^{[k]} \rangle| \right] = o(1)$.

This concludes the proof of Theorem \ref{thm: main1} with  Lemma \ref{lem: eigen vec small perturbation}, Lemma \ref{lem: expectation estimate} and Lemma \ref{lem: expectation estimate2} granted. These lemmas are proved in Section \ref{sec: thm1}. \qed

\subsection{High-level proof of Theorem \ref{thm: main2}}\label{high level pf of main thm 2}

For $z=E+\mathfrak{i}\eta$ with $\eta>0$ and $E\in\R$, we introduce the resolvent matrix
\begin{align*}
R(z)=(H-zI)^{-1},
\end{align*}
where $I$ denotes the identity matrix. We denote by $R^{[k]}(z)$ the resolvent of $H^{[k]}$. As already advertised in the introduction, the proof of Theorem \ref{thm: main2} relies on a fine study of the functional process $R^{[k]}$ where $k$ plays the role of time.  The domain of the parameter $z$ will be tuned to follow the evolution of the largest eigenvalue $\lambda_1^{[k]}$ and the top eigenvector $\vv^{[k]}$ as $k$ evolves. 

The main technical result is the following.

\begin{lem}\label{lem: lem13 in BLZ}
	Assume $q \gtrsim N^{1/9}$ and $k\ll N^{5/3}$. Then, there exists $\delta_0 >0$ such that for all $0  < \delta < \delta_0$, there exists $c>0$ such that, with overwhelming probability,
	\begin{align*}
	\sup_{z} \max_{1\le i,j\le N} N\eta|\Im R_{ij}^{[k]}(z)-\Im R_{ij}(z)|\leq  N^{-c},
	\end{align*}
	where the supremum is over all $z=E+\mathfrak{i}\eta$ with $|E-\cL |\le N^{-2/3+\delta}$ and $\eta = N^{-2/3-\delta}$.
\end{lem}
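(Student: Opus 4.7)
I would follow the general scheme used for Wigner matrices in \cite{BLZ20}, viewing $k \mapsto R^{[k]}(z)$ as a discrete-time stochastic process driven by the single-step resamplings. The starting point is the telescoping identity
\begin{align*}
R^{[k]}_{ij}(z) - R_{ij}(z) = \sum_{m=0}^{k-1} \bigl( R^{[m+1]}_{ij}(z) - R^{[m]}_{ij}(z) \bigr).
\end{align*}
The perturbation $\Delta^{(m)} := H^{[m+1]} - H^{[m]}$ is supported on the two symmetric entries at the uniformly chosen pair $(s_m,t_m)$, so an application of the resolvent identity yields, in the off-diagonal case,
\begin{align*}
R^{[m+1]}_{ij} - R^{[m]}_{ij} = -a_m \bigl( R^{[m]}_{is_m} R^{[m+1]}_{t_m j} + R^{[m]}_{it_m} R^{[m+1]}_{s_m j} \bigr),
\end{align*}
where $a_m := (H^{[m+1]})_{s_m t_m} - (H^{[m]})_{s_m t_m}$ vanishes except on an event of conditional probability $O(q^2/N)$ (only the Bernoulli sparsity masks can switch), on which $|a_m| \prec q^{-1}$ by the sub-Gaussian tail of $x_{ij}$. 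The diagonal case $s_m=t_m$ is analogous but produces only a rank-$1$ correction.

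The key quantitative inputs are: (a) eigenvector delocalization (Lemma \ref{lem: delocalization}) together with the refined resolvent estimates of Section \ref{sec:resolvent}, applied to every intermediate matrix $H^{[m]}$ via a union bound (each holding with overwhelming probability, so the $k \le N^{5/3}$ union cost is harmless); (b) the Ward identity $\sum_j |R_{ij}(z)|^2 = \Im R_{ii}(z)/\eta$, which gives averaged control of off-diagonal resolvent entries that is strictly sharper than any pointwise bound at the edge; (c) stability of the spectral edge $\cL^{[m]}$ under resampling, deduced from \eqref{eq:proxyL} together with a crude estimate $|\cX^{[m]} - \cX| \prec \sqrt m /(Nq^2)$, which is $\ll N^{-2/3-\delta}$ for $m \le k \ll N^{5/3}$ and $q \gtrsim N^{1/9}$. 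Point (c) guarantees that for every $m \le k$, a spectral parameter $z$ with $|E - \cL| \le N^{-2/3+\delta}$ still lies in the valid spectral domain associated to $H^{[m]}$.

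Granted these estimates, I would decompose the increment into its conditional mean and its conditional fluctuation relative to the filtration generated by the first $m$ resampling steps and the resampled values. The conditional mean almost cancels because $h_{s_m t_m}$ and $h'_{s_m t_m}$ have identical laws, leaving only a subleading drift. The conditional second moment of a single increment, averaged over the uniform choice of $(s_m,t_m)$ and the Bernoulli sparsity factor, telescopes via two applications of Ward's identity to a quantity controlled by $\Im R^{[m]}_{ii}\, \Im R^{[m]}_{jj}/(N\eta)^2$, with an additional $1/N$ weight from the uniform sampling. Summing $k$ such contributions and applying an Azuma--Bernstein-type concentration along the filtration yields $|R^{[k]}_{ij}(z) - R_{ij}(z)| \le N^{-1/3-\delta-c}$ with overwhelming probability for a single $z$; uniformity in $z$ is obtained by covering the set $\{|E - \cL| \le N^{-2/3+\delta}\}$ at scale $N^{-C}$ and invoking the $\eta^{-2}$-Lipschitz dependence of the resolvent on $z$. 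Taking imaginary parts and multiplying by $N\eta$ gives the advertised bound.

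The principal obstacle, and the place where the sparse setting departs most clearly from \cite{BLZ20}, is point (c): the edge $\cL^{[m]}$ is itself random and drifts under resampling through the fluctuations of $\cX^{[m]}$, so one must track its movement uniformly in $m$ and carry the refined local laws all the way to the final $H^{[k]}$. I would isolate this edge-stability property together with the uniform local law along the resampling trajectory into a separate intermediate proposition before running the martingale concentration argument outlined above.
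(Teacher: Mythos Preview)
Your high-level scaffolding (telescoping sum, single-step resolvent identity, Ward identity for the variance scaling, net argument for uniformity in $z$, and the need to track the random edge through $\cX$) matches the paper. The paper also reduces to a deterministic $\kappa$ and then shifts the spectral parameter to $\hz = \kappa + L + \hat\cX + \mathfrak{i}\eta$ with $\hat\cX$ obtained from $\cX$ by deleting the contribution of the $k$ resampled entries; this makes $\hz$ independent of all $(h_{i_tj_t},h'_{i_tj_t})$, which is exactly the edge-stability issue you single out. (A small slip: your bound $|\cX^{[m]}-\cX|\prec \sqrt m/(Nq^2)$ should be $\sqrt m/(N^{3/2}q)$; with your exponent the estimate does not beat $N^{-2/3-\delta}$ at $q\asymp N^{1/9}$.)

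The real gap is the concentration step. In the natural filtration $\mathcal F_m$ generated by $H$ and the first $m$ resamplings, $h_{i_{m+1}j_{m+1}}$ is already measurable while $h'_{i_{m+1}j_{m+1}}$ is fresh; hence $\E[h_{i_{m+1}j_{m+1}}-h'_{i_{m+1}j_{m+1}}\mid \mathcal F_m,(i_{m+1},j_{m+1})]=h_{i_{m+1}j_{m+1}}$, not zero. Averaging the first-order resolvent increment over the location gives a drift $\asymp N^{-2}(R^{[m]}H^{[m]}R^{[m]})_{ij}=N^{-2}\big(R^{[m]}_{ij}+z((R^{[m]})^2)_{ij}\big)$, whose imaginary part is $\prec 1/(N^3\eta^2)$ per step and does not oscillate in $m$. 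Summed over $k$ steps and multiplied by $N\eta$, this is $k/(N^2\eta)\asymp kN^{\delta}/N^{4/3}$, which for $k$ near $N^{5/3}$ is of order $N^{1/3+\delta}$ --- far from small. So the ``identical laws $\Rightarrow$ subleading drift'' heuristic fails, and an Azuma--Bernstein/Freedman argument along this filtration cannot close. You also do not address that $G_t:=N\eta\,\Im(R^{[t]}E_tR^{[t-1]})_{ij}$ depends on $(h_t,h'_t)$ through both $R^{[t]}$ and $R^{[t-1]}$, so the increments are not conditionally centered even after fixing the location.

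The paper bypasses both obstacles with a high-moment bound (Lemma~\ref{prop: large deviation}): expanding $\E\big(\sum_t y_tG_t\big)^{2r}$, it kills every term in which some $y_{t_l}$ appears to the first power by a \emph{global} symmetry --- swapping $h_{t_l}\leftrightarrow h'_{t_l}$ exchanges $R^{[t_l]}\leftrightarrow R^{[t_l-1]}$ and, after summing over all orderings of the distinct times, flips the sign of the contribution. This is exchangeability in law of the pair $(h_t,h'_t)$, not a conditional-mean-zero statement, and it is what replaces your drift cancellation. For the surviving terms with all multiplicities $\ge 2$, the paper decouples $G_t$ from $y_t$ by passing to $\hat R^{[s]}$, the resolvent of $H^{[s]}$ with the entries at $\{(i_t,j_t):t\in T\}$ zeroed out, controls $|G_t-\hat G_t|$ by an iterated resolvent expansion, and only then extracts $\E|y_t|^{\rho_t}\asymp q^{2-\rho_t}/N$ and applies Ward. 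The combinatorics of this decoupling (Steps~2--4 of the proof of Lemma~\ref{prop: large deviation}) is where most of the work lies, and it has no analogue in your outline.
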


We write $\vv_{1}=(v_{1},\cdots,v_{N})$ and $\vv^{[k]}_{1}=(v^{[k]}_{1},\cdots, v^{[k]}_{N})$. The next lemma connects the entries of the resolvent with the coordinates of the top eigenvector for $z$ close enough to the largest eigenvalue.

\begin{lem}\label{lem: lem14 in BLZ}
	Assume $q \gtrsim N^{1/9}$ and $k\ll N^{5/3}$. Let $0 < \delta <\delta_0$ be as in Lemma \ref{lem: lem13 in BLZ}. There exists $c'>0$ such that with probability  $1-o(1)$ it holds that 
	\begin{align*}
	\max_{1\le i,j \le N}N|\eta\Im R_{ij}(z)-v_{i}v_{j}|\le N^{-c'}\quad\text{and}\quad
	\max_{1\le i,j \le N}N|\eta\Im R^{[k]}_{ij}(z)-v_{i}^{[k]}v_{j}^{[k]}|\le N^{-c'},
	\end{align*}
	with $z=\lambda_{1}+\mathfrak{i}\eta$ and $\eta = N^{-2/3-\delta}$.
\end{lem}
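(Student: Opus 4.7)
The plan is to establish both bounds via the spectral decomposition
\[
R_{ij}(z) \;=\; \sum_{m=1}^{N}\frac{v_{m,i}v_{m,j}}{\lambda_m - z},
\]
where $v_{m,\ell}$ is the $\ell$-th coordinate of $\vv_m$. For the first inequality, evaluating at $z=\lambda_1+\mathfrak{i}\eta$ isolates $v_iv_j$ from the $m=1$ term, so that
\[
N\bigl|\eta\,\Im R_{ij}(z) - v_iv_j\bigr| \;\leq\; \sum_{m=2}^{N}\frac{N\eta^{2}\,|v_{m,i}v_{m,j}|}{(\lambda_m-\lambda_1)^{2}+\eta^{2}} \;\prec\; \sum_{m=2}^{N}\frac{\eta^{2}}{(\lambda_m-\lambda_1)^{2}+\eta^{2}},
\]
where the last step uses delocalization (Lemma~\ref{lem: delocalization}). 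I would split the sum at $m = N^{\epsilon}$ for a small $\epsilon>0$. For $m\leq N^{\epsilon}$, Lemma~\ref{lem:TW} together with Corollary~\ref{cor: lower bound for spectral gap} give $\lambda_1-\lambda_2\gtrsim N^{-2/3}$ with probability $1-o(1)$, so the contribution is $\lesssim N^{\epsilon-2\delta}$; for $m>N^{\epsilon}$, Lemma~\ref{lem: new rigidity estimate} (combined with $q\gtrsim N^{1/9}$) yields $\lambda_1-\lambda_m\gtrsim m^{2/3}N^{-2/3}$ with overwhelming probability, so that part is $\lesssim N^{-2\delta}\sum_{m>N^{\epsilon}}m^{-4/3}\prec N^{-2\delta-\epsilon/3}$. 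Choosing $\epsilon<2\delta$ makes both contributions $\prec N^{-c'}$ for some $c'>0$.

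For the second inequality I would expand $R^{[k]}$ at the same $z=\lambda_1+\mathfrak{i}\eta$ (note: the spectral parameter is \emph{not} $\lambda_1^{[k]}+\mathfrak{i}\eta$), which yields
\[
\eta\,\Im R^{[k]}_{ij}(z) - v^{[k]}_iv^{[k]}_j \;=\; -\,v^{[k]}_iv^{[k]}_j\,\frac{(\lambda^{[k]}_1-\lambda_1)^{2}}{(\lambda^{[k]}_1-\lambda_1)^{2}+\eta^{2}} \;+\; \sum_{m\geq 2}\frac{\eta^{2}\,v^{[k]}_{m,i}v^{[k]}_{m,j}}{(\lambda^{[k]}_m-\lambda_1)^{2}+\eta^{2}}.
\]
The sum over $m\geq 2$ is bounded exactly as in the previous paragraph, now applied to the matrix $H^{[k]}$, whose spectrum obeys the same delocalization, rigidity and Tracy--Widom gap statements; here it is enough that $|\lambda_1-\lambda_1^{[k]}|\ll N^{-2/3}$ so that $|\lambda^{[k]}_m-\lambda_1|\asymp|\lambda^{[k]}_m-\lambda^{[k]}_1|$ for $m\geq 2$. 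Multiplied by $N$ and using delocalization of $\vv_1^{[k]}$, the exceptional first term is $\prec(\lambda^{[k]}_1-\lambda_1)^{2}/\eta^{2}$. It thus remains to show that, for some $c''>0$, one has $|\lambda^{[k]}_1-\lambda_1|\leq N^{-c''}\eta$ with probability $1-o(1)$.

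This last point is the main obstacle, because eigenvalue rigidity alone only gives $|\lambda_1^{[k]}-\lambda_1|\prec N^{-2/3}$, of the same order as $\eta$. To gain the extra factor I would use a second-moment estimate in the spirit of the variance argument underlying Theorem~\ref{thm: main1}. Setting $f:=\lambda_1-L-\mathcal X$, first-order eigenvalue perturbation together with Lemma~\ref{lem: delocalization} and the moment bound~\eqref{eq:momenthij} yields $\E\bigl(f(H)-f(H_{(ij)})\bigr)^{2}\lesssim N^{-3+o(1)}$ uniformly in $(i,j)$. An ANOVA/Hoeffding decomposition along the resampling path then gives
\[
\E\bigl(f(H)-f(H^{[k]})\bigr)^{2} \;\lesssim\; \frac{k}{n}\sum_{(ij)}\E\bigl(f(H)-f(H_{(ij)})\bigr)^{2} \;\lesssim\; k\,N^{-3+o(1)},
\]
with $n=N(N+1)/2$. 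Combined with $\E(\mathcal X-\mathcal X^{[k]})^{2}\lesssim k/(N^{3}q^{2})\ll N^{-4/3}$ for $q\gtrsim N^{1/9}$ and $k\ll N^{5/3}$, this controls $\E(\lambda_1-\lambda_1^{[k]})^{2}$. Writing $k\leq N^{5/3-\epsilon_1}$ for the margin $\epsilon_1>0$ afforded by the notation $k\ll N^{5/3}$, Chebyshev's inequality at threshold $N^{-2/3-\delta-c''}$ yields the required bound, provided $\delta_0$ was chosen small enough that $\delta<\delta_0$ forces $\delta+c''<\epsilon_1/2$. Taking the minimum of the resulting $c'$ from each paragraph closes the proof.
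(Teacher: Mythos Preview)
Your argument for the first inequality is essentially the paper's: spectral expansion, split at $N^{\epsilon}$, use Lemma~\ref{lem:TW} for the small-$m$ block and rigidity for the large-$m$ block. That part is fine and matches the paper.

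The genuine difference is how you control $|\lambda_1-\lambda_1^{[k]}|$. The paper proves this as Lemma~\ref{lem: lem12 in BLZ}, \emph{bootstrapping from Lemma~\ref{lem: lem13 in BLZ}}: pick $i$ with $N\eta^{-1}\Im R_{ii}(\lambda_1+\mathfrak{i}\eta)\geq (2\eta^2)^{-1}$ (first half of Lemma~\ref{lem: lem9 in BLZ}), transfer this lower bound to $R^{[k]}$ via Lemma~\ref{lem: lem13 in BLZ}, and then use the second half of Lemma~\ref{lem: lem9 in BLZ} to conclude $|\lambda_1-\lambda_1^{[k]}|\prec\eta$. This gives overwhelming probability and, crucially, works for every $\delta<\delta_0$ with the $\delta_0$ already fixed by Lemma~\ref{lem: lem13 in BLZ}. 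You instead run a direct $L^2$ estimate: the Hoeffding/Efron--Stein inequality $\E(f(Y)-f(Y^{[k]}))^2\le (k/n)\sum_{(ij)}\E(f(Y)-f(Y^{(ij)}))^2$ (which is indeed a clean identity on the orthogonal decomposition), combined with the variational bound $|\lambda_1-\mu_1^{(ij)}|\le |Z_{ij}|\max(\|\vv_1\|_\infty^2,\|\uu_1^{(ij)}\|_\infty^2)$ and delocalization, yields $\E(\lambda_1-\lambda_1^{[k]})^2\lesssim kN^{-3+o(1)}$ after handling $\cX-\cX^{[k]}$ separately. Chebyshev then closes.

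Your route is correct and more elementary for this particular step, since it avoids invoking the heavy resolvent comparison of Lemma~\ref{lem: lem13 in BLZ}. Two trade-offs are worth noting. First, you only get probability $1-o(1)$ rather than overwhelming probability; this suffices for Lemma~\ref{lem: lem14 in BLZ} as stated, but the paper's Lemma~\ref{lem: lem12 in BLZ} is a sharper standalone result. Second, your constraint $\delta+c''<\epsilon_1/2$ happens to be compatible with the paper's choice $\delta_0<\epsilon_1/3$ (from the end of the proof of Lemma~\ref{lem: lem13 in BLZ}), so the ranges match up, but you should say this explicitly rather than suggesting you may shrink $\delta_0$ further. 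Finally, since Lemma~\ref{lem: lem13 in BLZ} is needed anyway to assemble Lemma~\ref{lem: main lem for thm2}, your detour does not save work globally; the paper's approach is natural precisely because the resolvent comparison is already in hand.
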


In the proof of Lemma \ref{lem: lem14 in BLZ}, we will also show that $\lambda_1$ and $\lambda_1^{[k]}$ are close as long as $k \ll N^{5/3}$, see Lemma \ref{lem: lem12 in BLZ} below. The proofs of Lemma \ref{lem: lem13 in BLZ} and Lemma \ref{lem: lem14 in BLZ} are postponed to Section \ref{sec: thm2}.

We now explain the proof of Theorem \ref{thm: main2} with  Lemma \ref{lem: lem13 in BLZ} and Lemma \ref{lem: lem14 in BLZ} granted. According to \eqref{eq:riglambda1}, we have
\begin{align*}
|\lambda_{1}-\cL |\prec N^{-2/3}.
\end{align*}
Thus, with overwhelming probability $z = \lambda_1 + \mathfrak{i}\eta$ with $\eta=N^{-2/3-\delta}$ is in the domain considered in Lemma \ref{lem: lem13 in BLZ}. In particular, since
\begin{align*}
\lvert v_{i}v_{j} - v_{i}^{[k]}v_{j}^{[k]} \rvert 
&\le |v_{i}v_{j}-\eta\Im R_{ij}(z)| + \eta|\Im R_{ij}(z)-\Im R_{ij}^{[k]}(z)| + |\eta\Im R_{ij}^{[k]}(z)-v_{i}^{[k]}v_{j}^{[k]}|,
\end{align*}
the combination of Lemma \ref{lem: lem13 in BLZ} and Lemma \ref{lem: lem14 in BLZ} implies the following claim:

\begin{lem}\label{lem: main lem for thm2}
	Assume $q \gtrsim N^{1/9}$ and $k\ll N^{5/3}$. There exists $c >0$ such that with probability $1-o(1)$, it holds that: 
	\begin{align}\label{eq:vivjivjk}
	\max_{1\le i,j\le N} N\lvert v_{i}v_{j} - v_{i}^{[k]}v_{j}^{[k]} \rvert \leq N^{-c}.
	\end{align}
\end{lem}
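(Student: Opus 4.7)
The plan is to combine the three-term triangle inequality displayed immediately before the statement with the two preceding lemmas, evaluated at the single point $z = \lambda_1 + \mathfrak{i}\eta$, $\eta = N^{-2/3-\delta}$, for a fixed $0 < \delta < \delta_0$. Concretely, I would take $c = \tfrac12 \min(c, c')$ (where $c$ is from Lemma \ref{lem: lem13 in BLZ} and $c'$ from Lemma \ref{lem: lem14 in BLZ}) and bound
\[
N\lvert v_i v_j - v_i^{[k]} v_j^{[k]} \rvert \leq N\bigl|v_i v_j - \eta \Im R_{ij}(z)\bigr| + N\eta\bigl|\Im R_{ij}(z) - \Im R_{ij}^{[k]}(z)\bigr| + N\bigl|\eta \Im R_{ij}^{[k]}(z) - v_i^{[k]} v_j^{[k]}\bigr|.
\]
The first and third terms are handled uniformly in $i,j$ by Lemma \ref{lem: lem14 in BLZ} on an event of probability $1-o(1)$, each being at most $N^{-c'}$, and the middle term is handled uniformly in $i,j$ by Lemma \ref{lem: lem13 in BLZ} with overwhelming probability, being at most $N^{-c}$.

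The one thing that requires a small check is that the point $z = \lambda_1 + \mathfrak{i}\eta$ actually lies in the supremum domain of Lemma \ref{lem: lem13 in BLZ}, i.e.\ that $|\lambda_1 - \cL| \leq N^{-2/3+\delta}$. This follows from the edge rigidity bound \eqref{eq:riglambda1}: under the hypothesis $q \gtrsim N^{1/9}$ one has $N^{-1/3} q^{-3} \lesssim N^{-2/3}$, so $|\lambda_1 - \cL| \prec N^{-2/3}$, and hence $|\lambda_1 - \cL| \leq N^{-2/3 + \delta}$ with overwhelming probability. Intersecting this event with the two events supplied by Lemmas \ref{lem: lem13 in BLZ} and \ref{lem: lem14 in BLZ} yields an event of probability $1 - o(1)$ on which the desired bound \eqref{eq:vivjivjk} holds.

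There is no real obstacle to overcome in this particular lemma: the content of the argument is concentrated in the earlier Lemma \ref{lem: lem13 in BLZ} (resolvent stability under resampling as a process in $k$) and Lemma \ref{lem: lem14 in BLZ} (spectral-projection formula for $v_i v_j$ through $\eta \Im R_{ij}$). The present statement is the purely bookkeeping step that glues them together and converts the resolvent comparison into the pointwise eigenvector comparison needed for the $\ell^\infty$-bound in Theorem \ref{thm: main2}.
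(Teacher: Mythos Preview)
Your proposal is correct and matches the paper's approach essentially verbatim: the paper likewise uses the rigidity bound \eqref{eq:riglambda1} to ensure $z=\lambda_1+\mathfrak{i}\eta$ lies in the domain of Lemma~\ref{lem: lem13 in BLZ}, then applies the same three-term triangle inequality and invokes Lemmas~\ref{lem: lem13 in BLZ} and~\ref{lem: lem14 in BLZ}. One cosmetic point: writing $c = \tfrac12\min(c,c')$ overloads the symbol $c$; it would be cleaner to pick a fresh name (e.g.\ $c'' = \tfrac12\min(c,c')$).
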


Let $0 < c' < c$ with $c$ as Lemma \ref{lem: main lem for thm2}. To prove Theorem \ref{thm: main2}, we prove that with probability $1- o(1)$, 
\begin{equation}\label{eq:vvkcp}
\sqrt{N} \| \vv_{1} - \vv_{1}^{[k]} \|_\infty \leq N^{-c'},
\end{equation}
for a proper choice of the $\pm$-phase for the top eigenvectors. Let $\eps >0$ such that $c' + \eps < c$. Let us call $\mathcal E_0$ the event that \eqref{eq:vivjivjk} holds and $\|\vv_{1}^{[k]} \|_{\infty} \leq N^{\eps-1/2}$. By Lemma   \ref{lem: main lem for thm2} and Lemma \ref{lem: delocalization}, it is sufficient to prove that for $N$ large enough \eqref{eq:vvkcp} holds on the event $\mathcal E_0$.

Let $i$ be such that $|v_i| \geq 1/\sqrt N$. We choose the phases of $\vv$ and $\vv^{[k]}$ such that $v_i , v_i^{[k]}$ are non-negative. Then, we get on $\mathcal E_0$,
$$
| v_i - v_i^{[k]} | = \frac{| v^2_i - (v_i^{[k]})^2 |}{v_i + v_i^{[k]} } \leq \frac{| v^2_i - (v_i^{[k]})^2 |}{v_i}  \leq  N^{-c - 1/2}.
$$
For any integer $1 \leq j \leq N$, we write: 
\begin{align*}
| v_j - v_j^{[k]} | = &\frac{1}{v_i} | v_i v_j - v_i v_j^{[k]} |  \leq \frac{1}{v_i} | v_i v_j - v^{[k]}_i v_j^{[k]} | + \frac{|v^{[k]}_j|}{v_i} | v_i - v_i^{[k]} |.
\end{align*}
Hence, on the event $\mathcal E_0$, we find
$$
| v_j - v_j^{[k]} | \leq  N^{-c - 1/2} +  N^{\eps - c - 1/2}.
$$
For our choice of $\eps$, we deduce that \eqref{eq:vvkcp} holds for all $N$ large enough.  Theorem \ref{thm: main2} is proved.\qed

\section{Noise sensitivity of the top-eigenvector}\label{sec: thm1}

\subsection{Proof of Lemma \ref{lem: eigen vec small perturbation}}\label{pf: eigen vec small perturbation}
Let $\mu^{(ij)}_{1}\ge\cdots\ge\mu^{(ij)}_{N}$ be the ordered eigenvalues of $H_{(ij)}$ and, let $\uu_{1}^{(ij)},\cdots,\uu_{N}^{(ij)}$ be the associated unit eigenvectors of $H_{(ij)}$. Using \eqref{eq: compare lambda_1 mu_1}, we find
\begin{align*}
\lambda_{1}  \ge \langle \uu_{1}^{(ij)},H\uu_{1}^{(ij)}\rangle  = \mu^{(ij)}_{1}+\langle\uu_{1}^{(ij)},(H-H_{(ij)})\uu_{1}^{(ij)}\rangle \ge \mu^{(ij)}_{1}-2(|h_{ij}|+|h_{ij}''|)\lVert \uu_{1}^{(ij)} \rVert_{\infty}^{2} .
\end{align*}
Similarly, reversing the role of $H$ and $H_{(ij)}$, we get
$$
\mu^{(ij)}_{1}  \ge \lambda_1 -2(|h_{ij}|+|h_{ij}''|)\lVert \vv_{1} \rVert_{\infty}^{2}.  
$$
From \eqref{eq:momenthij} we have $|h_{ij}| \prec 1/q$. Hence, by Lemma \ref{lem: delocalization} and $q \gg N^{c+\delta}$, we obtain 
\begin{align}\label{eq: eigvenvalue perturbation}
\max_{1\le i\le j\le N} |\lambda_{1}-\mu^{(ij)}_{1}| \prec \frac{1}{qN} \ll \frac{1}{N^{1+c}}.
\end{align}
We decompose $\uu_{1}^{(ij)}$ in the eigenvector basis of $H$:
\begin{align*}
\uu_{1}^{(ij)} = \sum_{\ell=1}^{N}\alpha_{\ell}\vv_{\ell}.
\end{align*}
We write two expressions for $H\uu_{1}^{(ij)}$:
\begin{align*}
H\uu_{1}^{(ij)} = \sum_{\ell=1}^{N}\lambda_{\ell}\alpha_{\ell}\vv_{\ell} = (H-H_{(ij)})\uu_{1}^{(ij)}+(\mu_{1}^{(ij)} - \lambda_{1}) \uu_{1}^{(ij)} +\lambda_{1} \uu_{1}^{(ij)}.
\end{align*}
We deduce that
\begin{align*}
\lambda_{1} \uu_{1}^{(ij)} = \sum_{\ell=1}^{N}\lambda_{\ell}\alpha_{\ell}\vv_{\ell} + (H_{(ij)}-H)\uu_{1}^{(ij)}+(\lambda_{1} - \mu_{1}^{(ij)}) \uu_{1}^{(ij)}.
\end{align*}
Next, by taking an inner product with $\vv_{\ell}$ for $\ell\neq 1$, we obtain
\begin{align*}
\lambda_{1}\alpha_{\ell} = \lambda_{1} \langle \vv_{\ell},\uu_{1}^{(ij)} \rangle = \langle \vv_{\ell},\lambda_{1}\uu_{1}^{(ij)} \rangle = \lambda_{\ell}\alpha_{\ell} + \langle \vv_{\ell},(H_{(ij)}-H)\uu_{1}^{(ij)} \rangle + (\lambda_{1} - \mu_{1}^{(ij)})\alpha_{\ell}.
\end{align*}
In other words,
\begin{align}\label{eq:l1ll1}
\bigg( (\lambda_{1}-\lambda_{\ell})+ (\mu_{1}^{(ij)}-\lambda_{1}) \bigg) \alpha_{\ell} = \langle \vv_{\ell},(H_{(ij)}-H)\uu_{1}^{(ij)} \rangle.
\end{align}
Let $\eps >0$. According to Corollary \ref{cor: lower bound for spectral gap}, there exists $c'>0$  such that on the event $\{\lambda_1 - \lambda_2 \geq N^{-1-c} \}$, with overwhelming probability:
\begin{align}\label{eq:lambda1lambdal2}
\lambda_{1}-\lambda_{\ell} \ge\begin{cases}
c'  N^{-1-c} & 2 \le \ell \le N^{\eps}, \\
c' \ell^{2/3}N^{-2/3} & N^{\eps} < \ell \le N.
\end{cases}
\end{align}
Since Lemma \ref{lem: delocalization} implies
\begin{align*}
\left| \langle \vv_{\ell},(H_{(ij)}-H)\uu_{1}^{(ij)} \rangle \right| \le 4 |h_{ij}''-h_{ij}| \lVert \vv_{\ell}\rVert_{\infty} \lVert \uu_{1}^{(ij)}\rVert_{\infty} \prec \frac{1}{qN},
\end{align*}
we deduce from \eqref{eq:l1ll1} and \eqref{eq: eigvenvalue perturbation} that, on the event $\{\lambda_1 - \lambda_2 \geq N^{-1-c} \}$,
\begin{align*}
(\lambda_{1}-\lambda_{\ell}) \cdot|\alpha_{\ell}| \prec \frac{1}{qN}.
\end{align*}
Thus, combining this last inequality with \eqref{eq:lambda1lambdal2}, we obtain, on the event $\{\lambda_1 - \lambda_2 \geq N^{-1-c} \}$, 
\begin{align}\label{eq : bounds for coeffi}
|\alpha_{\ell}|\prec\begin{cases}
q^{-1}N^{c}& 2\le \ell \le N^{\eps} \\
q^{-1}\ell^{-2/3}N^{-1/3}& N^{\eps} < \ell \le N.
\end{cases}
\end{align}
On the other hand, by setting $s=\alpha_{1}/|\alpha_{1}|$, we have
\begin{align*}
\lVert s\vv_{1}-\uu_{1}^{(ij)}\rVert_{\infty} &= \lVert (s-\alpha_{1})\vv_{1}+\textstyle{\sum_{\ell\neq 1}\alpha_{\ell}\vv_{\ell}} \rVert_{\infty} \nn\\
&\le (1-|\alpha_{1}|)\lVert\vv_{1}\rVert_{\infty} + \textstyle{\sum_{\ell\neq 1}}|\alpha_{\ell}|\lVert\vv_{\ell}\rVert_{\infty} \nn\\
&\prec N^{-1/2} \textstyle{\sum_{\ell\neq 1}}|\alpha_{\ell}| ,
\end{align*}
where on the last line, we have used that $1 - |\alpha_1|  = 1  -  \sqrt{ 1 - \sum_{\ell \ne 1} \alpha_\ell^2 } \leq \sum_{\ell \ne 1} |\alpha_\ell|$.
Using \eqref{eq : bounds for coeffi}, we finally obtain
\begin{align*}
\lVert s\vv_{1}-\uu_{1}^{(ij)}\rVert_{\infty} &\prec N^{-1/2}q^{-1}N^{c+\eps}+N^{-1/2}q^{-1}N^{-1/3}\textstyle{\sum_{N^{\eps}<\ell \le N}\ell^{-2/3}}\nn\\
&\prec q^{-1}N^{-1/2+c+\eps} + q^{-1}N^{-1/2}.
\end{align*}
Therefore we complete the proof 
by choosing $\eps>0$ small enough so that $q \gg N^{c + \delta + \eps}$. We can handle the case $H^{[k]}_{(ij)}$ similarly since $H^{[k]}$ and $H$ have the same law. \qed

\subsection{Proof of Lemma \ref{lem: expectation estimate}}
Recall that $\cE = \cE_1 \cap \cE_2$ where the events $\mathcal{E}_{1}$ and $\mathcal{E}_{2}$ are defined in \eqref{eq: event 1 conditions} and \eqref{eq: event 2 conditions}. We start by proving the first statement of Lemma \ref{lem: expectation estimate}. We split the expectation into two parts.
\begin{align}\label{eq:ZZvvE}
\E\left[ Z_{st}Z_{st}^{[k]}v_{s}v_{t}v_{s}^{[k]}v_{t}^{[k]}\indic_{\mathcal{E}^{c}} \right]
=\E\left[ Z_{st}Z_{st}^{[k]}v_{s}v_{t}v_{s}^{[k]}v_{t}^{[k]}\indic_{\mathcal{E}_{1}\cap\mathcal{E}_{2}^{c}} \right]+\E\left[ Z_{st}Z_{st}^{[k]}v_{s}v_{t}v_{s}^{[k]}v_{t}^{[k]}\indic_{\mathcal{E}_{1}^{c}} \right].
\end{align}
Using Cauchy-Schwarz inequality, we find
$$
\left| \E\left[ Z_{st}Z_{st}^{[k]}v_{s}v_{t}v_{s}^{[k]}v_{t}^{[k]}\indic_{\mathcal{E}_{1}^{c}} \right] \right| \leq  \E\left[ |Z_{st}Z_{st}^{[k]}| \indic_{\mathcal{E}_{1}^{c}} \right] \leq  \sqrt{ \E\left[ |Z_{st}Z_{st}^{[k]}|^2  \right] \PP ( \mathcal{E}_{1}^{c} )}.
$$
By Lemma \ref{lem: delocalization} the event $\cE_1$ holds with overwhelming probability and by Equation \eqref{eq:momenthij}, we have  $\E |Z_{st}Z_{st}^{[k]}|^2 \lesssim 1 / (N q^2)$. It follows that for any $C >0$, 
$$
\left| \E\left[ Z_{st}Z_{st}^{[k]}v_{s}v_{t}v_{s}^{[k]}v_{t}^{[k]}\indic_{\mathcal{E}_{1}^{c}} \right] \right| = O( N^{-C} ).
$$
We now turn to the first term on the right-hand side of \eqref{eq:ZZvvE}. With $\eps >0$ as in the definition of $\cE_1$, we want to show that
\begin{align}\label{eq: kapppa estimate 1 pf}
	\E\left[\big|Z_{st}Z_{st}^{[k]}\big|\indic_{\mathcal{E}_{2}^{c}}\right] \ll  N^{-1 - 4 \eps} ,
\end{align}
which implies
\begin{align}\label{eq: kapppa estimate 1}
\E\left[ Z_{st}Z_{st}^{[k]}v_{s}v_{t}v_{s}^{[k]}v_{t}^{[k]}\indic_{\mathcal{E}_{1}\cap\mathcal{E}_{2}^{c}} \right]\le N^{4\eps -2}  \E\left[\big|Z_{st}Z_{st}^{[k]}\big|\indic_{\mathcal{E}_{2}^{c}}\right]\ll N^{-3}.
\end{align}
There is a dependence between $Z_{st}Z_{st}^{[k]}$ and the event $\cE_{2}$. To circumvent this difficulty, we introduce some new events. Let $c >4\eps $ such that $c + \delta < 1/9$ (and $\delta$ as in the definition of $\cE_2$). we consider the event $\cE_{3} = \cE_{3,0} \cup \cE_{3,1}$ where 
\begin{equation}
\label{eq:defE301}\cE_{3,0}  =  \left\{ \min \left( \lambda_1 - \lambda_2 , \lambda_1^{[k]} - \lambda_2^{[k]} \right) \geq  N^{-1-c} \right\} \; \hbox{ and } \;  \cE_{3,1} =   \left\{ \min \left( \mu_1 - \mu_2 , \mu_1^{[k]} - \mu_2^{[k]} \right) \geq N^{-1-c} \right\}.
\end{equation}

By Lemma \ref{lem: eigen vec small perturbation}, for any $C >0$, we have $\PP ( \cE^c_2 \cap \cE_3) = O (N^{-C})$. Therefore, arguing as above, it is sufficient to prove that 
\begin{equation*}
\E\left[\big|Z_{st}Z_{st}^{[k]}\big|\indic_{\cE_{3}^{c}}\right] \ll  N^{-1 - 4 \eps} .
\end{equation*}
We note that
\begin{align}
	\E\left[\big|Z_{st}Z_{st}^{[k]}\big|\indic_{\mathcal{E}_{3}^{c}}\right] 
	&\le \frac 1 2 \E\left[ \big( Z_{st}^{2}+(Z_{st}^{[k]})^{2}\big)\indic_{\mathcal{E}_{3}^{c}}\right] \nn\\
	&\lesssim\E\left[ \big(h_{st}^{2}+(h'_{st})^{2}+(h_{st}'')^{2}+(h_{st}''')^{2}\big)\indic_{\mathcal{E}_{3}^{c}}\right] \nn\\
	&\lesssim \E\left[ (h_{st}^{2}+(h'_{st})^{2})\indic_{\cE_{3,1}^c} \right] + \E\left[((h_{st}'')^{2}+(h_{st}''')^{2})\indic_{\cE_{3,0}^c}\right] .\label{eq:decouplingJaehun}
\end{align}
We now use by construction the variables $(h_{st},h'_{st})$ are independent of the event $\cE_{3,1}$. We get by Lemma \ref{lem: tail bound for gaps} that
$$
\E\left[ (h_{st}^{2}+ (h'_{st})^{2})\indic_{\cE_{3,1}^c} \right] = O ( N^ {-1 - c } \log N).
$$
Similarly, $h_{st}''$ and $h_{st}'''$ are independent of the event $\cE_{3,0}$ and 
$$
\E\left[ ((h_{st}'')^{2}+(h_{st}''')^{2})\indic_{\cE_{3,0}^c} \right] = O ( N^ {-1 - c } \log N).
$$ 
Since we choose $c > 4\eps$, it concludes the proof of \eqref{eq: kapppa estimate 1 pf} and of the first claim of Lemma \ref{lem: expectation estimate}.

We now prove the second statement of Lemma \ref{lem: expectation estimate}. As above we decompose $\cE^c$ as the disjoint union of $\cE^c_1$ and $\cE_1 \cap \cE_2^c$. We have $|Q_{ij}| \prec 1/(Nq^2)$ and $|Z_{ij}| \prec 1/q$. Since $\cE_1$ holds with overwhelming probability, we find from \eqref{eq: top eig val difference under sigle resample}, that for any $C >0$,
$$
\E\left[ \big| (\lambda_{2}-\mu_{2}-Q_{st})(\lambda_{2}^{[k]}-\mu_{2}^{[k]}-Q_{st}^{[k]})\big| \indic_{\mathcal{E}^c_{1}} \right] = O( N^{-C}).
$$

We now deal with the event  $\cE_1 \cap \cE_2^c$. From \eqref{eq: top eig val difference under sigle resample} and arguing as in \eqref{eq: kapppa estimate 1}, we find
$$
\E\left[ \big| (\lambda_{2}-\mu_{2}-Q_{st})(\lambda_{2}^{[k]}-\mu_{2}^{[k]}-Q_{st}^{[k]})\big| \indic_{\mathcal{E}_{1} \cap  \cE^c_2 } \right] \leq N^{4\eps - 2}  \E\left[\max(|Z_{st}|,N |Q_{st}|) \cdot \max (| Z_{st}^{[k]}| , N |Q^{[k]}_{st}| ) \indic_{\mathcal{E}_{2}^{c}}\right].
$$
We observe that 
$$
\max(|Z_{st}|,N |Q_{st}|)  \leq |h_{st}| + |h''_{st}| + |h_{st}|^2 + |h''_{st}| ^2 ,
$$
the right-hand side is at most  $2(|h_{st}| + |h''_{st}| )$ with overwhelming probability since $|h_{ij}| \prec 1/q$. The same comment applies to $\max (| Z_{st}^{[k]}| , N |Q^{[k]}_{st}| )$. It follows that 
$$
 \E\left[\max(|Z_{st}|,N |Q_{st}|) \cdot \max (| Z_{st}^{[k]}| , N |Q^{[k]}_{st}| ) \indic_{\mathcal{E}_{2}^{c}}\right] \lesssim \E\left[ \big(h_{st}^{2}+(h'_{st})^{2}+(h_{st}'')^{2}+(h_{st}''')^{2}\big)\indic_{\mathcal{E}_{2}^{c}} \right].
$$
Finally, by Lemma \ref{lem: eigen vec small perturbation} on the event $\cE_3$, $\cE_2$ has overwhelming probability. We thus may substitute in the above inequality $\indic_{\mathcal{E}_{2}^{c}}$ by $\indic_{\mathcal{E}_{3}^{c}} $. We are then back to the upper bound in \eqref{eq:decouplingJaehun}. The conclusion follows. \qed

\subsection{Proof of Lemma \ref{lem: expectation estimate2}}\label{sec: expectation computation}
Integrating over the random pair $(st)$, we have
\begin{align*}
\E\left[ Z_{st}Z_{st}^{[k]}v_{s}v_{t}v_{s}^{[k]}v_{t}^{[k]} \right] = 
\frac{2}{N(N+1)}\E\left[\sum_{1\le i\le j\le N}Z_{ij}Z_{ij}^{[k]}v_{i}v_{j}v_{i}^{[k]}v_{j}^{[k]}\right].
\end{align*}
For brevity, we set  $V_{ij} = v_{i}v_{j}v_{i}^{[k]}v_{j}^{[k]}$. We split the above sum on the right-hand side into two parts,
\begin{align*}
\sum_{(ij)\in S_{k}}Z_{ij}Z'_{ij}V_{ij} +
\sum_{(ij)\notin S_{k}}Z_{ij}Z''_{ij}V_{ij},
\end{align*}
where $1\le i\le j\le N$ in both sums, $Z_{ij}=(h_{ij}-h_{ij}'')(1+\indic(i\neq j))$, $Z'_{ij}=(h_{ij}'-h_{ij}'')(1+\indic(i\neq j))$ and $Z''_{ij}=(h_{ij}-h_{ij}''')(1+\indic(i\neq j))$. Note that
\begin{align*}
\E\left[ Z_{ij}Z'_{ij} \right] = \E\left[ Z_{ij}Z''_{ij} \right] = \begin{cases}
\frac{4}{N} &\text{if } i< j,\\
\frac{1}{N} &\text{if } i=j.
\end{cases}
\end{align*}
Due to the dependence, it is tricky to compute $\E[Z_{ij}Z_{ij}^{[k]}V_{ij}]$ directly. Thus, we introduce the conditional expectation $\E [ \cdot |S _k]$ for given $S_k$ to avoid this issue. We shall first estimate 
$$\E \left[ \sum_{1\le i\le j\le N}\E\left[Z_{ij}Z_{ij}^{[k]}|S_k\right]V_{ij} \right],$$
and then show the contribution of
$$\E\left[ \sum_{1\le i\le j\le N}\left(Z_{ij}Z_{ij}^{[k]}-\E\left[Z_{ij}Z_{ij}^{[k]}|S_k\right]\right)V_{ij} \right]$$
is negligible. We start by computing 
\begin{align*}
\sum_{1\le i\le j\le N}\E\left[Z_{ij}Z_{ij}^{[k]}|S_k\right]V_{ij} = \sum_{(ij)\in S_{k}} \E\left[Z_{ij}Z_{ij}'\right] V_{ij} + \sum_{(ij)\notin S_{k}}\E\left[Z_{ij}Z_{ij}''\right]V_{ij}.
\end{align*}
Using the explicit expression for the expectations, we obtain 
\begin{align*}
\sum_{i\le j}\E\left[Z_{ij}Z_{ij}^{[k]}|S_k\right]V_{ij}&=
\frac{4}{N}\sum_{i<j}V_{ij}+\frac{2}{N}\sum_{i=j}V_{ij}
-\frac{1}{N}\sum_{i=j}V_{ij}\\
& = \frac{2}{N} \langle \vv , \vv^{[k]} \rangle^2 + O \left( \frac{1}{N} \sum_{i} |V_{ii}| \right).
\end{align*}
The last sum of the above equation is negligible. Indeed, using the delocalization of eigenvectors (Lemma \ref{lem: delocalization}), we have 
$$
 \sum_{i} |V_{ii}| \prec N^{-1}.
$$
Since $|V_{ij}| \leq 1$, we deduce in particular that 
\begin{align*}
\E \left[ \sum_{1\le i\le j\le N}\E\left[Z_{ij}Z_{ij}^{[k]}|S_k\right]V_{ij} \right]= \frac{2}{N} \E \langle \vv , \vv^{[k]} \rangle^2 + o \left(\frac 1 N  \right).
\end{align*}

To conclude the proof of the lemma, what remains to show is
\begin{align*}
\E\left[\sum_{1\le i\le j\le N}W_{ij} V_{ij}\right]=
 o \left(\frac 1 N  \right),
\end{align*}
where we have set $W_{ij}:=Z_{ij}Z_{ij}^{[k]}-\E\left[Z_{ij}Z_{ij}^{[k]} |S_k \right]$. For the remainder of this proof, we fix a pair $(i,j)$, $1 \leq i \leq j \leq N$. It is sufficient to check that $\E [ W_{ij} V_{ij} ] = o ( 1/ N^3)$ where the $o(\cdot)$ is uniform over the choice of the pair $(i,j)$.

Let $h_{ij}''''$ be a independent copy of $h_{ij}$ which is also independent of $(H,H',H'',H''')$. Similarly to $H_{(ij)}$ and $H_{(ij)}^{[k]}$, we can define analogously $\tilde{H}_{(ij)}$ and $\tilde{H}_{(ij)}^{[k]}$ by replacing $(i,j)$-element with $h_{ij}''''$. Denote by $\tilde{\uu}_{1}= (\tilde{u}_1, \ldots, \tilde{u}_N)$ and $\tilde{\uu}_{1}^{[k]} = (\tilde{u}_{1}^{[k]},\ldots, \tilde{u}_N^{[k]})$ the top eigenvectors of $\tilde{H}_{(ij)}$ and $\tilde{H}_{(ij)}^{[k]}$ respectively. To ease the notation, we define
\begin{align*}
U_{ij}:= \tilde{u}_{i} \tilde{u}_{j} \tilde{u}_{i}^{[k]} \tilde{u}^{[k]}_{j}.
\end{align*}
By construction, we have
\begin{align*}
\E\left[ W_{ij}U_{ij}   \right] = \E \left[ \E\left[W_{ij}|S_k\right]\cdot \E\left[ U_{ij}|S_k  \right] \right] = 0,
\end{align*}
because, given $S_k$, the pair $(Z_{ij},Z_{ij}^{[k]})$ only depends on $(h_{ij},h_{ij}',h_{ij}'',h_{ij}''')$ while $(\tilde{\uu}_1,\tilde{\uu}_{1}^{[k]})$ is independent of $(h_{ij}, h_{ij}',h_{ij}'',h_{ij}''')$. Thus, it is enough to show
\begin{align}\label{eq:WVUij}
\E\left[ W_{ij}\left( V_{ij} - U_{ij} \right)\right]=o\left(\frac 1 {N^3}\right).
\end{align}
The proof of \eqref{eq:WVUij} is performed as the proof of Lemma \ref{lem: expectation estimate}. Let $\delta >0$, $0 < \eps < \delta/3$,  $c >4\eps $ be such that $c + \delta < 1/9$. We consider the event $\tilde{\cE}_1$ defined as $\cE_1$ but with $\tilde{H}_{(st)}$ and $\tilde{H}_{(st)}^{[k]}$ replacing $H_{(st)}$ and $H^{[k]}_{(st)}$ :
$$
	\tilde{\mathcal{E}}_{1} =	\left\{ \max \left(\lVert \vv_1   \rVert_{\infty},\lVert \tilde{\uu}_1 \rVert_{\infty},\lVert  \vv_{1}^{[k]}\rVert_{\infty},\lVert \tilde{\uu}_1^{[k]} \rVert_{\infty}\right) \le  N^{\eps - 1/2} \right\}.
$$ 
Similarly, if $\{\tilde{\mu}_i\}_{i=1}^{N}$ and $\{\tilde{\mu}_i^{[k]}\}_{i=1}^{N}$ are the eigenvalues of  $\tilde{H}$ and $\tilde{H}^{[k]}$  respectively, we consider the event $\tilde{\cE}_{3} = \cE_{3,0} \cup \tilde{\cE}_{3,1}$ with $\cE_{3,0}$ defined by \eqref{eq:defE301} and 
$$
\tilde{\cE}_{3,1} =  \left\{ \min \left( \tilde{\mu}_1 - \tilde{\mu}_2 , \tilde{\mu}_1^{[k]} - \tilde{\mu}_2^{[k]} \right) \geq N^{-1-c} \right\},
$$

As in the proof of Lemma \ref{lem: expectation estimate}, we use
Lemma \ref{lem: eigen vec small perturbation} to deduce that on the event $\tilde{\mathcal{E}}_{3}$, we have
\begin{align*}
\lVert \vv_{1} - \tilde{\uu}_{1} \rVert_{\infty}\le N^{-\frac{1}{2}-\delta} \quad \text{and}\quad 
\lVert \vv_{1}^{[k]} - \tilde{\uu}_{1}^{[k]} \rVert_{\infty}\le N^{-\frac{1}{2}-\delta},
\end{align*}
with overwhelming probability after choosing the phases of $\tilde{\uu}_{(ij)}$ and $\tilde{\uu}_{(ij)}^{[k]}$ properly. Hence, on the event $\tilde{\cE}=\tilde{\cE}_{1} \cap \tilde{\cE}_{3}$, we find that the bound 
\begin{align*}
|V_{ij} - U_{ij}| \leq C N^{3\eps - 2 - \delta}
\end{align*}
holds with overwhelming probability for some $C >0$. Also, by Lemma \ref{lem: delocalization}, the event $\tilde{\cE_1}$ holds with overwhelming probability. Using that $|V_{ij}|, |U_{ij}|\leq 1$, we deduce  that for any $C >0$, 
$$
|\E\left[ W_{ij}\left( V_{ij} - U_{ij} \right)\right]|  \lesssim N^{3\eps - 2 - \delta}\E  |W_{ij}|    + N^{4\eps - 2} \E  [ |W_{ij}|\indic_{  \tilde{\cE}_{3}^c} ] + N^{-C}. 
$$
Since $\E[|Z_{ij}Z_{ij}^{[k]}|]=O(1/N)$, we have $\E  |W_{ij}|  =O(1/N)$ and thus the first term on the right-hand side of the above equation is $o(1/N^3)$. For the second term, we simply write that 
$$
\E [ |W_{ij}|\indic_{  \tilde{\cE}_{3}^c} ]   \lesssim \E \left[ \left( h_{ij}^2  + (h'_{ij})^2 + (h''_{ij})^2 + (h'''_{ij})^2 + \frac 1{N} \right) \indic_{  \tilde{\cE}_{3,1}^c}\right] = O \left( N^{-1-c} \log N\right),
$$
where we have used the independence of $\tilde{\cE}_{3,1}$ and $(h_{ij},h'_{ij},h''_{ij}, h'''_{ij})$ and invoked Lemma \ref{lem: tail bound for gaps}. Since $4 \eps < c$, this concludes the proof of \eqref{eq:WVUij}. \qed

\section{Noise stability of the top-eigenvector}\label{sec: thm2}

\subsection{Preliminaries on the resolvent matrix}

Our first lemma is used to detect the largest eigenvalues from the diagonal entries of the resolvent $R(z) = (H - zI)^{-1}$ for $z$ close enough to $\cL$.
\begin{lem}\label{lem: lem9 in BLZ}
Assume $q \gtrsim N^{1/9}$. For any integer $1\leq j \leq N$, there exists a random integer $1\le i\le N$ such that for all $E$ and $\eta>0$
	\begin{align*}
\big(\max(\eta,|\lambda_{j}-E|)\big)^{-2}\le 2 N\eta^{-1}\Im R(E+\mathfrak{i}\eta)_{ii}.
	\end{align*}
The other way around, let $\eps >0$. With overwhelming probability, for all integers $1\le i\le N$ and all $E$ such that $|E-\mathcal{L}| \leq N^{-2/3+\eps}$, we have
	\begin{align*}
	N\eta^{-1}\Im R(E+\mathfrak{i}\eta)_{ii} \leq N^{4\eps} \left(\min_{1\le j\le N}|\lambda_{j}-E|\right)^{-2}.
	\end{align*}
\end{lem}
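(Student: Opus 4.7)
My plan is to use the spectral decomposition: writing $(\vv_k)_i$ for the $i$-th coordinate of $\vv_k$,
\[
\Im R(E+\mathfrak{i}\eta)_{ii} = \sum_{k=1}^N \frac{\eta\,(\vv_k)_i^2}{(\lambda_k - E)^2 + \eta^2}.
\]
For the first (lower bound) inequality, I fix $j$; since $\sum_i (\vv_j)_i^2 = 1$, the pigeonhole principle provides an $i$ with $(\vv_j)_i^2 \ge 1/N$. Retaining only the $k = j$ term in the decomposition and using $(\lambda_j - E)^2 + \eta^2 \le 2\max(|\lambda_j - E|,\eta)^2$ yields $\Im R(E+\mathfrak{i}\eta)_{ii} \ge \eta /(2N \max(|\lambda_j - E|,\eta)^2)$, which rearranges to the claim.

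For the second (upper bound) inequality, set $\kappa := \min_j |\lambda_j - E|$. The strategy combines eigenvector delocalization with edge rigidity. By Lemma \ref{lem: delocalization}, for any $\eps' > 0$, with overwhelming probability $(\vv_k)_i^2 \le N^{\eps' - 1}$ uniformly in $(k, i)$, so
\[
N\eta^{-1}\Im R_{ii}(E+\mathfrak{i}\eta) \le N^{\eps'} \sum_{k=1}^N \frac{1}{(\lambda_k - E)^2 + \eta^2}.
\]
I bound the remaining sum by splitting the index range at $K := \lceil N^{3\eps}\rceil$: the terms with $k \le K$ each satisfy $(\lambda_k - E)^2 \ge \kappa^2$, contributing at most $K/\kappa^2 \le N^{3\eps}/\kappa^2$. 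For $k > K$, combining the rigidity bound $|\lambda_k - \gamma_k| \prec N^{-2/3}$ of Lemma \ref{lem: new rigidity estimate} with the classical edge/bulk estimate $\mathcal L - \gamma_k \gtrsim (k/N)^{2/3}$ (derived from the square-root vanishing of $\rho_\star$ encoded in \eqref{eq:mstarasym}) and the hypothesis $|E - \mathcal L| \le N^{-2/3+\eps}$ yields $|\lambda_k - E| \gtrsim (k/N)^{2/3}$ with overwhelming probability, because $(K/N)^{2/3} = N^{2\eps - 2/3}$ strictly dominates both the rigidity error and $|E - \mathcal L|$. Summing,
\[
\sum_{k > K} \frac{1}{(\lambda_k - E)^2} \lesssim N^{4/3}\sum_{k > K} k^{-4/3} \lesssim N^{4/3 - \eps}.
\]
Since \eqref{eq:riglambda1} gives $\kappa \le |\lambda_1 - E| \lesssim N^{-2/3 + \eps}$, we have $N^{4/3 - \eps} \le N^{\eps}/\kappa^2$, so the whole sum is $O(N^{3\eps}/\kappa^2)$; choosing $\eps' < \eps$ absorbs the prefactor and yields the pointwise bound $N\eta^{-1}\Im R_{ii} \le N^{4\eps}/\kappa^2$.

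The main obstacle will be upgrading this pointwise estimate to a uniform bound over the continuum of $E$'s with $|E - \mathcal L| \le N^{-2/3 + \eps}$. My plan is to discretize $E$ on a polynomial-size net in $[\mathcal L - N^{-2/3+\eps}, \mathcal L + N^{-2/3+\eps}]$, apply a union bound over this net and $i \in [N]$, and interpolate using the crude Lipschitz estimate $|\partial_E \Im R_{ii}(E+\mathfrak{i}\eta)| \le \eta^{-2}$ (the factor $\kappa$ also varies continuously in $E$, with a similar trivial Lipschitz bound); the $N^{4\eps}$ slack in the target easily absorbs the discretization error. A secondary bookkeeping subtlety is the accumulation of implicit constants from the various stochastic-domination statements, which is handled by taking $\eps'$ strictly smaller than $\eps$.
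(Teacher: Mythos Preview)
Your proof is correct and follows the same strategy as the paper: pigeonhole for the lower bound, and spectral decomposition combined with delocalization and rigidity (splitting the eigenvalue sum at a threshold index) for the upper bound. Your net argument for uniformity in $E$ is unnecessary, though: the overwhelming-probability events you invoke (eigenvector delocalization, eigenvalue rigidity, and $|\lambda_1-\mathcal L|\prec N^{-2/3}$) do not depend on $E$, so on their intersection the inequality $N\eta^{-1}\Im R_{ii}\le N^{4\eps}/\kappa^2$ holds deterministically for every $E$ in the range and every $i$ simultaneously---this is exactly how the paper proceeds.
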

	\begin{proof} From the spectral theorem,
		\begin{align*}
		N\eta^{-1}\Im R(E+\mathfrak{i}\eta)_{ii}=\sum_{p=1}^{N}\frac{N(\vv_{p}(i))^{2}}{(\lambda_{p}-E)^{2}+\eta^{2}} \geq  \frac{N(\vv_{j}(i))^{2}}{(\lambda_{j}-E)^{2}+\eta^{2}}\ge \frac{N(\vv_{j}(i))^{2}}{2\big(\max(\eta,|\lambda_{j}-E|)\big)^{2}},
		\end{align*}
The first statement follows since there exists $i$ such that $|\vv_{j}(i)|\ge N^{-1/2}$.

		Next, we prove the second statement. Fix $\eps>0$ and consider $E$ satisfying $|E-\mathcal{L}|\le N^{-2/3+\eps}$. From \eqref{eq:riglambda1}, with overwhelming probability $|\lambda_1 - E | \leq 2N^{-2/3 +\eps}$. Thus, from Lemma \ref{lem: delocalization} and Lemma \ref{lem: new rigidity estimate}, for some $c>0$, with overwhelming probability the following event holds: (i) $\max_{1\le p \le N}\lVert\vv_{p} \rVert_{\infty}^{2}\le N^{-1+\eps}$, (ii) $|\lambda_1 - E | \leq 2N^{-2/3 +\eps}$ and (iii) for all such $E$ with $|E-\mathcal{L}|\le N^{-2/3+\eps}$, we have $E-\lambda_{p}\ge cp^{2/3}N^{-2/3}$ for all integer $p> N':=\lfloor N^{2\eps}\rfloor$. On this event, from (i) and (iii), we have for some $C>0$
		\begin{align*}
		\sum_{p=N'+1}^{N}\frac{N(\vv_{p}(i))^{2}}{(\lambda_{p}-E)^{2}+\eta^{2}}\le
		\sum_{p=N'+1}^{N}\frac{N^{\eps}}{(\lambda_{p}-E)^{2}}\le CN^{\eps}(N')^{-1/3}N^{4/3},
		\end{align*}
		and
		\begin{align*}
		\sum_{p=1}^{N'}\frac{N(\vv_{p}(i))^{2}}{(\lambda_{p}-E)^{2}+\eta^{2}} \le
		\frac{N^{\eps}N'}{\left(\min_{1\le j\le N}|\lambda_{j}-E|\right)^{2}}.
		\end{align*}
Finally from (ii), for all $N$ large enough, we have $CN^{\eps}(N')^{-1/3}N^{4/3} \leq N^{\eps}N'\left(\min_{1\le j\le N}|\lambda_{j}-E|\right)^{-2}$. This proves the second statement.
	\end{proof}

The following lemma on the resolvent of sparse random matrices will be crucial to study the resolvent process indexed by the successive resampled entries. Below, we use the Kronecker delta symbol: $\delta_{ij} =\indic_{i = j}$.
\begin{lem}\label{lem: lem10 in BLZ}
Assume $q \gtrsim N^{1/9}$ and let $0 < \delta < 1/3$. We have
	\begin{align*}
		\sup_z \max_{1 \leq i,j \leq N} \Big|\big|R(z)_{ij}\big| - \delta_{ij} \Big| \prec \frac 1 q + \frac{1}{N\eta}, 	\end{align*}
	and 
	\begin{align*}
	\sup_z  \max_{1 \leq i,j \leq N} \big|\Im R(z)_{ij}\big|\prec \frac{1}{N\eta} ,
	\end{align*}
	where the two suprema are over all $z = E + \mathfrak{i}\eta$ with $|E-\cL| \leq N^{-2/3 + \delta}$ and $\eta = N^{-2/3 -\delta}$.
\end{lem}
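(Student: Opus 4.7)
The plan is to reduce both statements to an entrywise local law for the resolvent combined with the behavior of $m_\star(z)$ at the edge $\cL$. The entrywise local law for sparse random matrices, either in the form already available from \cite{EKYY13,HLY20} or in the refinement developed in Section \ref{sec:resolvent}, states that uniformly in $i,j$ and $z$ in a suitable spectral domain,
\begin{align*}
\max_{1\le i,j\le N}\left|R_{ij}(z) - m_\star(z)\delta_{ij}\right| \prec \frac{1}{q} + \frac{1}{N\eta} + \sqrt{\frac{\Im m_\star(z)}{N\eta}}.
\end{align*}
For $z = E + \mathfrak{i}\eta$ with $|E-\cL|\le N^{-2/3+\delta}$ and $\eta = N^{-2/3-\delta}$, \eqref{eq:mstarasym} gives $\Im m_\star(z) \lesssim \sqrt{|\kappa|+\eta} \lesssim N^{-1/3+\delta/2}$, whence $\sqrt{\Im m_\star/(N\eta)} \lesssim N^{-1/3+3\delta/4} \ll 1/(N\eta)$. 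Thus, on our domain, the entrywise error reduces to $1/q + 1/(N\eta)$.

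For the first bound it remains to pass from $m_\star(z)\delta_{ij}$ to $\delta_{ij}$. The polynomial equation satisfied by $m_\star$ in \cite[Proposition 2.6]{HLY20} is an $O(1/q^2)$ perturbation of the semicircle equation $w^2 + zw + 1 = 0$, so at the corresponding edge $m_\star(\cL) = -1 + O(1/q^2)$, giving $\big||m_\star(\cL)|-1\big| \lesssim 1/q^2 \le 1/q$. Combined with the square-root modulus of continuity $|m_\star(z) - m_\star(\cL)| \lesssim \sqrt{|\kappa|+\eta} \lesssim N^{-1/3+\delta/2} \prec 1/(N\eta)$, we deduce $\big||m_\star(z)|-1\big| \prec 1/q + 1/(N\eta)$. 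The first statement then follows from the reverse triangle inequality $\big||R_{ii}|-1\big| \le |R_{ii}-m_\star| + \big||m_\star|-1\big|$, together with the case $i \ne j$ of the entrywise local law.

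For the second bound I would first exploit the Cauchy--Schwarz-type inequality
\begin{align*}
|\Im R_{ij}(z)|^2 \le \Im R_{ii}(z)\,\Im R_{jj}(z),
\end{align*}
which follows from applying Cauchy--Schwarz to the spectral representation $\Im R_{ij}(z) = \sum_p \eta v_p(i)v_p(j)/|\lambda_p-z|^2$. This reduces the problem to controlling $\Im R_{ii}(z) \prec 1/(N\eta)$. Naively applying the entrywise local law only yields $\Im R_{ii} \le \Im m_\star + |R_{ii}-m_\star| \prec 1/q + 1/(N\eta)$, which is insufficient when $\delta < 2/9$, since then $1/q$ exceeds $1/(N\eta)$.

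The main obstacle is therefore to remove the $1/q$ contribution from the imaginary part. Heuristically, the leading $O(1/q)$ sparse fluctuations of the diagonal resolvent entries come from real-valued moment corrections in the self-consistent equation, so they should not affect $\Im R_{ii}$. Rigorously, this requires an improved imaginary-part local law of the form
\begin{align*}
|\Im R_{ii}(z) - \Im m_\star(z)| \prec \frac{1}{N\eta} + \sqrt{\frac{\Im m_\star(z)}{N\eta}},
\end{align*}
which should be among the new resolvent estimates established in Section \ref{sec:resolvent}. Given such a bound and $\Im m_\star(z) \lesssim N^{-1/3+\delta/2} \le 1/(N\eta)$, one concludes $\Im R_{ii} \prec 1/(N\eta)$, and Cauchy--Schwarz then yields $|\Im R_{ij}(z)| \prec 1/(N\eta)$, completing the proof.
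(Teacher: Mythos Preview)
Your proposal is correct and tracks the paper closely. For the first statement, the paper likewise invokes the entrywise local law from \cite[Theorem 2.8]{EKYY13} together with the fact that $|m_{\mathrm{sc}}(z)|$ (hence $|m_\star(z)|$) is within $O(1/q)+O(\sqrt{|\kappa|+\eta})$ of $1$ near the edge; your argument via $m_\star$ and the reverse triangle inequality is a correct rendering of this. For the second statement, you rightly isolate the obstruction: the real $O(1/q)$ fluctuation in $R_{ii}-m_\star$ must not leak into the imaginary part. The paper supplies precisely the improved bound you anticipate, namely Lemma \ref{prop: estimate on the imaginary part of local law}, which gives $|\Im R_{ij}(z)-\delta_{ij}\Im m_\star(z)|\prec 1/(N\eta)$ directly for all $i,j$; it is proved by an iterative self-improving scheme (Steps 1--5 in Subsection \ref{sec: estimate on the imaginary part of local law}) rather than by your Cauchy--Schwarz reduction to the diagonal. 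Your reduction is valid but does not save work: the diagonal improvement already requires the full bootstrap, and the paper's version yields the off-diagonal bound simultaneously, making the Cauchy--Schwarz step superfluous. One small caveat: the entrywise local law in the refined form you quote, with error $\sqrt{\Im m_\star/(N\eta)}$ rather than the coarser $1/\sqrt{N\eta}$ of Corollary \ref{cor:locallaw02}, is needed for your first-statement argument to land on $1/q+1/(N\eta)$; this sharper form is indeed available from \cite{EKYY13} but is worth citing precisely.
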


The first statement of the lemma is a consequence of \cite[Theorem 2.8]{EKYY13} and the norm estimate of $m_{\text{sc}}(z)$.
The second statement is new, it uses notably the improved local law for the Cauchy-Stieltjes transform $m(z)$ given in Lemma \ref{lem:locallaw2}.
 We postpone its proof to Subsection \ref{sec: estimate on the imaginary part of local law}. 
We are now ready to prove Lemma \ref{lem: lem13 in BLZ}.

\subsection{Proof of Lemma \ref{lem: lem13 in BLZ}}\label{sec: main argument for noise stability}

\paragraph{Step 1: net argument.} We have $|R_{ij}(z) - R_{ij}(z')| \leq |z - z'|/ \eta^2$ where $\eta = \min(\Im(z),\Im(z'))$. Hence, by a standard net argument where we partition the interval $[-N^{-2/3 + \delta} , N^{-2/3 + \delta}]$ into  $N^2$ sub-intervals, it suffices to prove the conclusion of Lemma \ref{lem: lem13 in BLZ} for any fixed $\kappa$ real with $|\kappa| \leq N^{-2/3 + \delta}$,   $z= E  +\mathfrak{i}\eta$ where $E = \cL + \kappa$ and $\eta = N^{-2/3-\delta}$. Moreover, from \eqref{eq:proxyL} and Lemma \ref{lem: lem10 in BLZ}, it is sufficient to prove that for any deterministic real $\kappa$ with $|\kappa| \leq  2 N^{-2/3 + \delta}$, 
\begin{equation}\label{eq:tbpimRRk}  N\eta|\Im R_{ij}^{[k]}(\tz)-\Im R_{ij}(\tz)|\prec N^{-c} ,
 \end{equation}
 uniformly in $1 \leq i , j \leq N$, 
with 
\begin{equation}\label{eq:deftz}
\tz = \kappa + L + \cX + \mathfrak{i}\eta,
\end{equation} and  $L$ deterministic as in \eqref{eq:proxyL}.
In the remainder of the proof, we fix such $\kappa$ and corresponding random $\tz$.

\paragraph{Step 2: shifted resolvent matrix.} The random variable $\tz$ depends on the entries $h_{i_t,j_t}$, $1 \le t \le k$. To avoid this, we set 
$$
\hz =   \kappa + L + \hat\cX + \mathfrak{i}\eta,
$$
where $$
\hat \cX  = \cX - \frac{1}{N}\sum_{t=1}^k (1 + \indic(i_t \ne j_t) ) \left(h_{i_tj_t}^{2}-\frac{1}{N}\right).
$$
By construction, from \eqref{eq:momenthij}, we have 
$$
|\hz  - \tz | \prec \max\left( \frac{1}{Nq^{2}} , \frac{\sqrt{k}}{N^{3/2} q} \right).  
$$
Recall the resolvent identity.
\begin{align*}\label{eq: resolvent identity}
	(X-zI)^{-1}=(Y-zI)^{-1}+(Y-zI)^{-1}(Y-X)(X-zI)^{-1}
\end{align*}
and the Ward identity for the resolvent:  for any integers $i,j$, 
\begin{equation}\label{eq:ward}
\sum_{l=1}^N R_{il}(z) \bar R_{jl}(z) = (R(z)R^*(z))_{ij} = \frac{ \Im R_{ij}(z)}{\Im(z)}.
\end{equation}
It implies that 
\begin{align*}
| R_{ij}(\tz) - R_{ij}(\hz) | \leq & |\hz  - \tz |  \sum_{l} | R(\tz) _{il} | |R(\hz)_{lj} | \\
\le &  |\hz  - \tz |  \sqrt{ \sum_{l} | R(\tz) _{il} |^2} \sqrt{ \sum_{l} | R(\tz) _{lj} |^2} \\
\prec & \max\left( \frac{1}{Nq^{2}} , \frac{\sqrt{k}}{N^{3/2} q} \right) \frac{1}{N\eta^2},
\end{align*}
where we have used Cauchy-Schwarz inequality, \eqref{eq:ward} and  Lemma \ref{lem: lem10 in BLZ}. Since $k \ll N^{5/3}$, we have 
\begin{equation*}\label{eq:jefjrf}
\frac{\sqrt{k}} { N^{3/2}q\eta}  \ll \frac{N^{\delta}}{q} \ll 1,
\end{equation*}
provided that $N^{\delta}\ll q $.
We deduce that $N\eta | R_{ij}(\tz) - R_{ij}(\hz) | \prec N^{-c}$ for some $c >0$. The same conclusion holds for $R^{[k]}_{ij}(\tz) - R^{[k]}_{ij}(\hz)$. It follows that to prove \eqref{eq:tbpimRRk}, it is sufficient to prove that
\begin{equation}\label{eq:tbpimRRk2}
  N\eta|\Im R_{ij}^{[k]}(\hz)-\Im R_{ij}(\hz)|\prec N^{-c},
\end{equation}
uniformly in $1 \leq i , j \leq N$.

\paragraph{Step 3: fluctuation of the resolvent process.}
Now, for $0 \leq t \leq N(N+1)/2$, we define $R^{[t]} (z) = (H^{[t]} - z)^{-1}$ as the resolvent of $H^{[t]}$. Since no other value of the resolvent will be considered, for ease of notation, we omit the parameter $\hz$ and simply write  $R^{[t]}$ in place of $R^{[t]}(\hz)$.    From the resolvent identity, we get 
$$
 R^{[k]}_{ij} -  R_{ij}  = \sum_{t=1}^k  \left( R^{[t]}_{ij} -  R^{[t-1]}_{ij}\right)  = \sum_{t=1}^k  (h_{i_tj_t} - h'_{i_tj_t}) (R^{[t]} E_{i_t j_t} R^{[t-1]} ) _{ij}, 
$$
where $E_{ij}=\ee_{i}\ee_{j}^{T}+\ee_{j}\ee_{i}^{T}\indic(i\neq j)$ where $\ee_{i}$ denotes the canonical basis of $\R^{n}$ such that the $i$-th entry is equal to $1$ and the other entries are equal to $0$. We set 
$$
h_{t} = h_{i_tj_t}, \quad h'_t =  h'_{i_tj_t} , \quad E_t = E_{i_t j_t} \quad \hbox{ and } \quad G_t = N\eta \Im \left( (R^{[t]} E_{t} R^{[t-1]} ) _{ij} + (R^{[t]} E_{t} R^{[t-1]} ) _{ji}  \right) ,
$$
($G_t$ depends implicitly on $\{i,j\}$). Since $R_{ij} = R_{ji}$, we get that, by construction, 
 $$
N\eta  (   \Im R^{[k]}_{ij} -  \Im R_{ij} ) = \frac 1 2 \sum_{t=1}^k (h_t - h'_t) G_t.
 $$
 The main technical ingredient in the proof  of Lemma \ref{lem: lem13 in BLZ} is the following statement (note that for deterministic sequences of non-negative numbers, $U\prec V$ is equivalent to $U \leq N^{o(1)} V$). 

\begin{lem}\label{prop: large deviation}
Assume $q \gtrsim N^{1/9}$ and $k \ll N^2$. With the above notation, for any integer $r \geq 1$, uniformly in $i,j$,
	\begin{align}
	\E 	\left( \sum_{t=1}^{k} (h_t - h'_t) G_t \right)^{2r} \prec \left(\frac{k}{N^{3} \eta^2 } \right)^r + \left(\frac{k}{N^{3} \eta^2 } \right)   q^{2 - 2r}.
    \end{align}
\end{lem}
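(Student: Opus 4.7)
The plan is to bound the $(2r)$-th moment of $S_k := \sum_{t=1}^{k} (h_t - h'_t) G_t$ via an iterated resolvent decoupling followed by a moment expansion. First, for each $t$, let $\tilde R_t$ denote the resolvent (at $\hz$) of the matrix obtained from $H^{[t-1]}$ (equivalently from $H^{[t]}$, which agrees with $H^{[t-1]}$ outside the $(i_t,j_t)$ entry) by zeroing out the $(i_t, j_t)$ and $(j_t, i_t)$ entries. The resolvent identities
\begin{align*}
R^{[t-1]} = \tilde R_t - h_t \tilde R_t E_t R^{[t-1]}, \qquad R^{[t]} = \tilde R_t - h'_t \tilde R_t E_t R^{[t]}
\end{align*}
yield, after substitution, a decomposition $G_t = \tilde G_t + \mathcal E_t$ with $\tilde G_t := N\eta\,\Im\bigl((\tilde R_t E_t \tilde R_t)_{ij} + (\tilde R_t E_t \tilde R_t)_{ji}\bigr)$ \emph{independent} of $(h_t, h'_t)$ given the rest, and $|\mathcal E_t|\prec 1/q$ (each summand contains a factor $h_t$ or $h'_t$, whose size is $\prec 1/q$ by \eqref{eq:momenthij}, and the remaining entries are controlled by Lemma \ref{lem: lem10 in BLZ}).

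Second, I would record the key averaged $L^2$ estimate. Since $(i_t, j_t)$ is uniform on the set of index pairs, expanding $(\tilde R_t E_t \tilde R_t)_{ij}$ as a product of entries of $\tilde R_t$ and invoking the Ward identity \eqref{eq:ward} via $\sum_a |\tilde R_{t,ia}|^2 = \Im \tilde R_{t,ii}/\eta \prec 1/(N\eta^2)$ applied twice gives
\begin{align*}
\frac{2}{N(N+1)}\sum_{1\le a\le b\le N}|\tilde G_t^{(a,b)}|^2 \prec \frac{1}{N^2\eta^2},
\end{align*}
where $\tilde G_t^{(a,b)}$ denotes the value of $\tilde G_t$ with $(i_t, j_t)$ replaced by $(a,b)$.

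Third, I would apply the moment method to $M_k := \sum_t (h_t - h'_t) \tilde G_t$. Expanding $\E M_k^{2r}$ and grouping tuples $(t_1,\ldots, t_{2r}) \in [k]^{2r}$ by the partition $\pi$ of $[2r]$ they induce through $s\sim s'\iff t_s = t_{s'}$, let $T$ be the set of distinct values. Iterating the first step once for each $t \in T$ replaces every $\tilde R_t$ by a single resolvent $\hat R_T$ (independent of $\{(h_t, h'_t): t\in T\}$) up to correction terms that each carry an extra factor $\prec 1/q$. The main term factorizes,
\begin{align*}
\E\Bigl[\prod_s (h_{t_s} - h'_{t_s}) \hat G_T^{(t_s)}\Bigr] = \E\Bigl[\prod_s \hat G_T^{(t_s)}\Bigr]\cdot \E\Bigl[\prod_s (h_{t_s} - h'_{t_s})\Bigr],
\end{align*}
and the second factor vanishes unless every block of $\pi$ has size $\ge 2$. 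For a partition with $m$ blocks of sizes $(n_1,\ldots,n_m)$ summing to $2r$, the moment bound $\E|h_t - h'_t|^{n_i}\lesssim q^{2-n_i}/N$ combined with the averaged $L^2$ estimate (upgraded to $L^{n_i}$ using the trivial $|\hat G_T|\prec 1$) yields a contribution of order $(k/(N^3\eta^2))^m q^{2(m-r)}$. This expression is monotone in $m$ on a logarithmic scale, so its maximum on $\{1,\ldots, r\}$ is reached either at $m=r$ (perfect pairings, giving $(k/(N^3\eta^2))^r$) or at $m=1$ (one block of size $2r$, giving $(k/(N^3\eta^2)) q^{2-2r}$); the contributions of $\mathcal E_t$ and of the iterated-decoupling corrections are absorbed into the second term since each carries at least one additional $1/q$.

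The main technical hurdle will be in the third step: organizing the iterated decoupling and the accompanying error terms so that all surviving pieces are controlled by the averaged $L^2$ bound of step two rather than by the cruder entrywise $L^\infty$ bound, and verifying that the distinct-index constraint on $\{(i_t, j_t): t \in T\}$ has negligible effect relative to the i.i.d.\ uniform case. The latter is justified because $|T| \le 2r$ is vastly smaller than $N(N+1)/2 \gg k$, so the conditional law of these $|T|$ positions is close to the product law and the averaged estimate applies jointly.
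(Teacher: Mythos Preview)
Your overall architecture—moment expansion, resolvent decoupling, and Ward identity—matches the paper's, but there is a genuine gap in how you handle partitions containing a singleton block.

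You correctly observe that the \emph{main} term (after replacing every $\tilde R_{t_s}$ by the fully decoupled $\hat R_T$) factorizes and vanishes whenever some block of $\pi$ has size $1$. The problem is the last sentence of your third step: the contributions of $\mathcal E_t$ and of the iterated-decoupling corrections are \emph{not} automatically absorbed by an extra $1/q$. Take the extreme case $m=2r$ (all singletons). The main term vanishes, but the correction terms do not: for each singleton $t^\ast$, the correction $\tilde G_{t_s}-\hat G_T^{(t_s)}$ at some position $s\ne t^\ast$ contains a factor $h_{t^\ast}$ (or $h'_{t^\ast}$), so $\E[y_{t^\ast}\cdot(\text{correction})]$ picks up a nonzero $\E[y_{t^\ast}h_{t^\ast}]=1/N$ rather than zero. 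To make every singleton survive one must take a correction at \emph{every} position, giving a contribution of order $k^{2r}\cdot N^{-2r}\cdot|\E[\prod_s c_s]|$ with $|c_s|\prec 1$. This is only $\prec (k/(N^3\eta^2))^{r}$ when $k\prec 1/(N\eta^2)=N^{1/3+2\delta}$, far below the range $k\ll N^{5/3}$ you need. The extra powers of $1/q$ you invoke are already spent producing the $1/N$ factors; they do not give additional smallness.

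The paper avoids this difficulty by a symmetry argument carried out \emph{before} any decoupling (its Step~1). The key observation is that $G_t$ itself is symmetric under the swap $(h_t,h'_t)\mapsto(h'_t,h_t)$, because this swap interchanges $R^{[t]}$ and $R^{[t-1]}$ while the expression $\Im[(R^{[t]}E_tR^{[t-1]})_{ij}+(R^{[t]}E_tR^{[t-1]})_{ji}]$ is invariant under that interchange. For the remaining factors $G_{t_l}$, $l\ne m$, one uses the time-reversal involution $(t_1,\dots,t_m)\mapsto(k+1-t_1,\dots,k+1-t_m)$ on tuples to show that $\sum_T\prod_{l<m}y_{t_l}^{\rho_l}G_{t_l}^{\rho_l}$ is symmetric in $(h_{t_m},h'_{t_m})$. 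Multiplying by the antisymmetric $y_{t_m}$ and taking expectation then gives exactly zero. This kills \emph{all} singleton contributions—including those that would otherwise reappear in your decoupling errors—so that the subsequent decoupling (the paper's Steps~2--4) only has to treat the case $\min_l\rho_l\ge 2$, where the moment bound $\E|y_t|^{\rho_l}\lesssim q^{2-\rho_l}/N$ is available and the extra $1/q$ from corrections is genuinely a gain.

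A secondary technical point: in your Step~2 you apply the Ward identity to $\tilde R_t$ while averaging over $(i_t,j_t)$, but $\tilde R_t$ itself depends on $(i_t,j_t)$ through the zeroed entry. The paper handles this by a further approximation (the resolvent $\hat R^{[s]}[w_0]$ in its Step~3), which you should also incorporate.
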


Before proving Lemma \ref{prop: large deviation} in the next subsection, let us conclude the proof of Lemma \ref{lem: lem13 in BLZ}. Since $\eta = N^{-2/3 - \delta}$, we have $k / (N^3 \eta^2 ) = N^{2 \delta} k / N^{5/3}$. Moreover, if $k \ll N^{5/3}$, we may find a small $ \delta_0 > 0$ such that $k \ll N^{5/3 - 3\delta_0}$.  We set $c = \delta_0$ and assume that $0 < \delta < \delta_0$. From Markov inequality, this concludes the proof of \eqref{eq:tbpimRRk2} and Lemma \ref{lem: lem13 in BLZ}.

\subsection{Proof of Lemma \ref{prop: large deviation}}

\paragraph{Step 1: moment expansion and symmetry.}
We set $y_t = h_t - h'_t$ and write
\begin{align*}
	\E \left( \sum_{t=1}^{k} y_{t}G_{t} \right)^{2r} = \sum_{t_{1},\ldots,t_{2r}} \E \big[ y_{t_{1}}G_{t_{1}}\cdots y_{t_{2r}} G_{t_{2r}} \big].
\end{align*}
Combining the terms with equal indices, we get
\begin{align*}
	\E \left( \sum_{t=1}^{k} y_{t}G_{t} \right)^{2r} =	 \sum_{m = 1}^{2r} \sum_{\rho} \sum_{T} \frac{(2r)!}{\prod_{l}  \rho_{l} !}
	\E \left[ \prod_{l =1}^m y^{\rho_l}_{t_l} G_{t_l}^{\rho_l} \right],
\end{align*}
where the second sum is over vectors $\rho = (\rho_l)$, with $\rho_1 + \ldots +  \rho_m = 2r$, $\rho_l \ge 1$  and the last sum is over all sequences $ T= (t_1,\ldots,t_m)$ pairwise disjoint $t_l$  in $\{1,\ldots, k\}$. Since $r$ is fixed, it is enough to fix in the remainder of the proof an integer $m$ in the above sum.

Despite the fact that $(G_{t})_{t \in T}$ is not independent of $(y_{t})_{t \in T}$, we start by checking that the contribution of vectors $\rho = (\rho_l)$ such that $\min_l \rho_l = 1$ is zero. More precisely, assume without loss of generality that $\rho_m = 1$. We set
\begin{align*}
K(m,\rho) = \sum_{T}  
	\E \left[ \prod_{l =1}^m y^{\rho_l}_{t_l} G_{t_l}^{\rho_l} \right] & = 	\E \left[ y_{t_m} G_{t_m}  \prod_{l =1}^{m-1} y^{\rho_{l}}_{t_{l}} G_{t_{l} }^{\rho_{l}} \right].
\end{align*}
We claim that $K(m,\rho) = 0$ if $\rho_m = 1$. Indeed, we can realize our random variables by considering the  $m$-tuple $((h''_1,h'''_1),\ldots,(h''_m,h'''_m))$ of iid copies of $h_{ij}$, independent of a uniform $m$-tuple $((i'_1,j'_1),\ldots,(i'_m,j'_m))$ of distinct elements in $\{ (i,j) : i \leq j \}$. Then for a given $T$ as in the above sum and $1 \leq l \leq m$, we set $(h_{t_l},h'_{t_l}) = (h''_l,h'''_l)$ and $(i_{t_l},j_{t_l})= (i'_l,j'_l)$. As a function of $(h''_{m},h'''_{m})$, $G_{t_m}$ is symmetric (because switching the values of $h_{m}$ and $h'_{m}$ maps $R^{[t_m]}$ to $R^{[t_{m}-1]}$ and maps  $R^{[t_{m}-1]}$ to  $R^{[t_{m}]}$). Moreover, as a function of $(h''_{m},h'''_{m})$, for $l \leq m-1$, $G_{t_l}$ is a function of $h''_{m}\indic(t_l < t_m) + h'''_{m} \indic(t_l > t_m)$. Summing over $T$ , it follows that 
$$
 \sum_{T} 
 \prod_{l =1}^{m-1} y^{\rho_{l}}_{t_{l}} G_{t_{l} }^{\rho_{l}} 
$$
is a symmetric function of $(h''_{m},h'''_{m})$. Indeed, consider the map $(t_1,\ldots,t_m) \mapsto (k+1-t_1,\ldots,k+1-t_m)$. This maps defines an involution on the set of $T$ in the above sum and its image on $\prod_{l =1}^{m-1} y_{t_l}^{\rho_l} G_{t_l}^{\rho_l}$ is symmetric in $(h''_{m},h'''_{m})$. Therefore recalling that $h_{t}$ and $h'_t$ have the same distribution, we get
$$
 \sum_{T} 
	\E \left[ h_{t_m} G_{t_m}  \prod_{l =1}^{m-1} y^{\rho_{l}}_{t_{l}} G_{t_{l} }^{\rho_{l}} \right] = \sum_{T} 
	\E \left[ h'_{t_m} G_{t_m}  \prod_{l =1}^{m-1} y^{\rho_{l}}_{t_{l}} G_{t_{l} }^{\rho_{l}} \right].
$$
Since $y_t = h_t - y'_t$, we get that $K(m,\rho) = 0$. 

We thus restrict ourselves to vectors $\rho = (\rho_l)$ such that 
$$\rho_l \geq 2, \quad \hbox{for all $1 \leq l \leq m$}.$$
Our goal is then to prove that, uniformly over all $T$ and such vectors $\rho$, we have
\begin{equation}\label{eq:tbpLD}
\E \left[ \prod_{l =1}^m y^{\rho_l}_{t_l} G_{t_l}^{\rho_l}\right] \prec   \left(\frac{1}{N^{3} \eta^2 } \right)^m q^{2(m-r)}.
\end{equation}
This immediately implies the statement of the lemma since $(i)$ $\min_l \rho_l \geq 2$ implies that $m \leq r$ and $(ii)$ there are at most $k^m$ choices for the elements of $T$.

\paragraph{Step 2: resolvent bound.}

In order to extract the moments of $y_{t}$ in \eqref{eq:tbpLD}, we shall use a decoupling argument using the resolvent expansion.  For $0 \leq s \leq k$, we define $\hat H^{[s]}$ as the symmetric matrix obtained from $H^{[s]}$ by setting the entries $(i_tj_t)_{t \in T}$ and $(j_ti_t)_{t \in T}$ to $0$. The resolvent of $\hat H^{[s]}$ at $\hz$ is denoted by $\hat R^{[s]}  = (\hat H^{[s]} - \hz)^{-1}$. We note that given $(i_tj_t)_{t \in T}$, the matrix $\hat H^{[s]}$ is independent of $(y_t)_{t \in T}$.
For ease of notation, we also set 
$$
\alpha = \frac 1 {N\eta} \quad \hbox{ and } \quad \beta = \frac 1 q + \frac{1}{N\eta}.
$$
Iterating the resolvent identity, we get
\begin{align}\label{eq:residit}
	\hat R^{[s]} - R^{[s]} = \sum_{p=1}^{8}\left(R^{[s]}(H^{[s]}-\hat H^{[s]})\right)^{p} R^{[t]}
	+ \left(R^{[s]}(H^{[s]}-\hat H^{[s]})\right)^{9}\hat R^{[s]}.
\end{align}
We have
\begin{align*}
	H^{[s]}-\hat H^{[s]} = \sum_{t \in T} ( h_{i_{t}j_{t}} \indic( t > s )  + h'_{i_{t}j_{t}} \indic( t \leq s ))  E_{i_{t}j_{t}}
	 \end{align*}
Recall the fact that $|h_{ij}|\prec q^{-1}$, $|R^{[s]}_{ij}|  \prec 1$ (by Lemma \ref{lem: lem10 in BLZ}) and $\lVert \hat R^{[s]}\rVert\le\eta^{-1}$. Since $q^9 \gtrsim N$, we deduce that
$$
|\hat R^{[s]}_{ij}  - R^{[s]}_{ij} | \prec \sum_{p=1}^8 \frac 1 {q^p} + \frac{1}{q^9 \eta} \prec \beta   .
$$
Similarly, using $|\Im(ab)| \leq |\Im(a)| |b| + |a| |\Im(b)|$, we find, by Lemma \ref{lem: lem10 in BLZ},
\begin{align*}
	\left| \Im \hat R^{[s]}_{ij} - \Im R^{[s]}_{ij}  \right| &\prec  \sum_{p=1}^8 \frac \alpha {q^p} + \frac{1}{q^9 \eta} \prec \alpha.
\end{align*}
Therefore, using again Lemma \ref{lem: lem10 in BLZ} and , we obtain 
	\begin{align}\label{eq:bdhatR}
\max_{1 \leq i,j \leq N} \Big| \big|\hat R^{[s]}_{ij}\big| - \delta_{ij} \Big|\prec \beta \quad 	\hbox{ and }	\quad \max_{1 \leq i,j \leq N} \Big|\Im \hat R^{[s]}_{ij}\Big|\prec \alpha.
	\end{align}
We are ready for the decoupling argument.

\paragraph{Step 3: decoupled resolvent.} 

The following lemma on stochastic domination is elementary.
\begin{lem}\label{lem:stdoexp}
Let $(U_N)$, $(V_N)$ be two sequences of non-negative random variables and $(u_N)$ be a non-negative sequence such that $U_N \prec u_N$. If there exist $C >0$ and $p ,q >0$ such that $1/p + 1 / q < 1$ and $(\E U_N^p )^{1/p} \prec N^C u_N$ and $(\E V_N^q )^{1/q} \prec N^C \E V_N$ then $\E [U_N V_N] \prec u_N\E[V_N]$.  
\end{lem}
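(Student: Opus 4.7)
The plan is a standard Hölder plus truncation argument. Fix an arbitrary $\eps > 0$. Since $1/p + 1/q < 1$, there exists $r > 0$ such that $1/p + 1/q + 1/r = 1$. Let $A_N$ be the event $\{ U_N > N^{\eps} u_N \}$. I would split
\begin{align*}
\E [ U_N V_N ] = \E [ U_N V_N \indic_{A_N^c} ] + \E [ U_N V_N \indic_{A_N} ] .
\end{align*}

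On $A_N^c$ we have $U_N \le N^{\eps} u_N$ deterministically, hence the first term is bounded by $N^{\eps} u_N \, \E[V_N]$. For the second term, I would apply the three-factor Hölder inequality
\begin{align*}
\E [ U_N V_N \indic_{A_N} ] \le \bigl( \E U_N^p \bigr)^{1/p} \bigl( \E V_N^q \bigr)^{1/q} \PP(A_N)^{1/r} .
\end{align*}
The hypotheses $(\E U_N^p)^{1/p} \prec N^C u_N$ and $(\E V_N^q)^{1/q} \prec N^C \E V_N$ imply (since these are deterministic sequences) that both factors are at most $N^{C + o(1)}$ times $u_N$ and $\E V_N$ respectively. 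Meanwhile, by the hypothesis $U_N \prec u_N$, for any $D > 0$ we have $\PP(A_N) \le N^{-D}$ for all $N$ large enough. Taking $D$ large enough that $2C - D/r < -\eps$, the second term is bounded by $N^{-\eps} u_N \, \E V_N$ for all $N$ large.

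Combining, $\E[U_N V_N] \le 2 N^{\eps} u_N \, \E[V_N]$ for all $N$ large, and since $\eps > 0$ was arbitrary this is exactly $\E[U_N V_N] \prec u_N \, \E[V_N]$. There is no genuine obstacle here: the only subtle point is ensuring that an auxiliary exponent $r$ with $1/p+1/q+1/r=1$ exists, which is precisely what the assumption $1/p + 1/q < 1$ gives us, and that the $N^C$ factors from the moment hypotheses are killed by an appropriately large (but finite) polynomial decay of $\PP(A_N)$ coming from $U_N \prec u_N$.
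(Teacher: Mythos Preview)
Your proof is correct and follows essentially the same approach as the paper: define $r$ by $1/p+1/q+1/r=1$, split according to the event $\{U_N \le N^{\eps} u_N\}$, bound the good part trivially and the bad part via three-factor H\"older, then use overwhelming probability to kill the polynomial factors. The paper's proof is more terse but the argument is identical.
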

\begin{proof}
Set $r$ such that $1/p + 1/q + 1/r =1$. From H\"{o}lder inequality, for any event $\cE$
$$
\E [U_N V_N] - \E [U_N V_N \indic_\cE] = \E [U_N V_N \indic_{\cE^{c}}] \leq (\E U_N^p )^{1/p}(\E V_N^q )^{1/q} \PP(\cE^{c})^{1/r}.
$$
For a fixed $\eps >0$, we consider the event $\cE = \{ U_N \leq N^{\eps} u_N\}$. Since $\cE$ has overwhelming probability, we deduce from the assumptions that  $\E [U_N V_N] \leq N^{\eps} u_N \E [ V_N \indic_\cE]  + o (u_N\E V_N)$. The conclusion follows.
\end{proof}

We set 
$$
\hat G_t =  N\eta \Im \left( (\hat R^{[t]} E_{t} \hat R^{[t-1]} ) _{ij} +   (\hat R^{[t]} E_{t} \hat R^{[t-1]} ) _{ji}\right).
$$
In this paragraph, we prove that \eqref{eq:tbpLD} holds when $G_t$ is replaced by $\hat G_t$. In the next and final step, we will prove that $G_t$ and $\hat G_t$ are close. From \eqref{eq:bdhatR}, we observe that for $t \in T$,  
$$
|\hat G_{t}| \prec  1, 
$$
Given $(i_tj_t)_{t \in T}$, $y_t$ is independent of $(\hat R^{[s]})_{0 \leq s \leq k}$. We deduce from \eqref{eq:momenthij}, Lemma \ref{lem:stdoexp} and the assumption $\min_l \rho_l \geq 2$ that
\begin{align}
\left|\E \left[ \prod_{t \in T} y^{\rho_t}_{t} \hat G_{t}^{\rho_t}\right]\right| & \lesssim \frac{q^{2(m-r)}}{ N^m} \E \left[ \prod_{t \in T} \hat G_{t} ^{2} \right] .\label{eq:ytgtrt}
\end{align}
Note that in the above expression, we have 
set $\rho_t := \rho_l$ if $t= t_l \in T$. 

Next, we estimate $\E \left[ \prod_{t \in T} \hat G_{t} ^{2} \right]$ in \eqref{eq:ytgtrt}. We first observe that $ \prod_{t \in T} (\alpha^2 \hat G_t^2)$ is a sum of products of the form 
$$
	\prod_{t \in T}  \Im\left( \hat R^{[s_{1t}]}_{a_{1t} i_t} \hat R^{[s_{2t}]}_{a_{2t} j_t} \right) \Im \left( \hat R^{[s_{3t}]}_{a_{3t} i_t}  \hat R^{[s_{4t}]}_{a_{4t} j_t}  \right) ,
$$
with $(a_{1t},a_{2t}), (a_{3t},a_{4t}) \in \{(i,j),(j,i)\}$ and $(s_{1t},s_{2t}), (s_{3t},s_{4t}) \in \{(t,t-1), (t-1,t)\}$. Using $|\Im(ab)| \leq |\Im(a)| |b| + |a| |\Im(b)|$, we deduce from \eqref{eq:bdhatR} that  
\begin{align}\label{eq: trick 1}
\prod_{t \in T}  \hat G_t^2 \prec \sum \prod_{t \in T}   \left| \hat R^{[s_{t}]}_{a_{t} b_t} \right| \left| \hat R^{[s'_{t}]}_{a'_{t} b'_t} \right|,
\end{align}
where the sum is over possible choices of  $a_t,a'_t$ in $\{i,j\}$, $b_t,b'_t$ in $\{i_t,j_t\}$ and $s_t,s'_t$ in $\{t-1,t\}$.

We now bound the right-hand side of \eqref{eq: trick 1}. Since $2 |ab| \leq |a|^2 +  |b|^2$, it suffices to treat the case $(a_t,b_t,s_t)  = (a'_t,b'_t,s'_t)$.  We denote by $\E_T$ the conditional expectation with respect to $\mathcal F_T$, the $\sigma$- algebra generated by $H$, $H'$ and $(i_sj_s)_{s \notin T}$ (in other words, we integrate only on $(i_tj_t)_{t\in T}$ given the rest of the variables). Since $r$ is fixed, we have 
\begin{align*}
\E_T \prod_{t \in T}   \left| \hat R^{[s_{t}]}_{a_{t} b_t} \right|^2  \lesssim \frac{1}{N^{2m}}  \sum_{w} \prod_{t \in T}  |\hat R^{[s_{t}]}_{a_{t} u_t} [w]|^2,
\end{align*}
where the sum is over all $w = ( (u_t,v_t) )_{t \in T}$ with $1\leq u_t,v_t \leq N$ and $\hat R^{[s]}[w]$ is the resolvent of the symmetric matrix $H^{[s]}[w]$ obtained from $H^{[s]}$ by setting the entries $(u_tv_t)_{t \in T}$ and $(v_tu_t)_{t \in T}$ to $0$ (that is $\hat R^{[s]} = \hat R^{[s]} [(i_t,j_t)_{t \in T}]$). We would like to apply Ward identity of the resolvent \eqref{eq:ward} in the above expression but the matrix $\hat R^{[s]}[w]$ depends on the summation index. 

To overcome this difficulty, we approximate $\hat R^{[s]}[w]$ by the resolvent of another carefully chosen matrix. For $T_0 \subset T$, let $W_{T_0}$  be the set of $w= ( (u_t,v_t) )_{t \in T}$ as above such that $\{ u_t ,v_t \} \cap \{i,j\} \ne \emptyset$ if and only if $t \in T_0$. If $w \in W_{T_0}$, we set $w_0 =  ( (u_t,v_t) )_{t \in T_0}$ and $w_1  = ( (u_t,v_t) )_{t \notin T_0}$. We write
\begin{align}\label{eq: trick 2}
\E_T \prod_{t \in T}   \left| \hat R^{[s_{t}]}_{a_{t} b_t} \right|^2  \lesssim \frac{1}{N^{2m}} \sum_{T_0 \subset T}  \sum_{w_0} \sum_{w_1}  \prod_{t \in T}  |\hat R^{[s_{t}]}_{a_{t} u_t} [w]|^2.
\end{align}

We next define $\hat R^{[s]}[w_0]$ as the resolvent of the symmetric matrix $H^{[s]}[w_0]$ obtained from $H^{[s]}$ by setting to the entries $(u_tv_t)_{t \in T_0}$ and $(v_tu_t)_{t \in T_0}$ to $0$ and, for $t \in T \backslash T_0$, the entries $(u_tv_t)$ and $(v_tu_t)$ are set to $h_{u_tv_t}$ (irrespectively of the value of $s$). The computation leading to \eqref{eq:bdhatR} gives
	\begin{align}\label{eq:bdhatRw0}
\max_{1 \leq i,j \leq N} \Big| \big|\hat R^{[s]}_{ij}[w_0]\big| - \delta_{ij} \Big|\prec \beta \quad 	\hbox{ and }	\quad \max_{1 \leq i,j \leq N} \Big|\Im \hat R^{[s]}_{ij}[w_0]\Big|\prec \alpha,
	\end{align}
	uniformly over all choices of $w_0$. Moreover, the resolvent identity implies
	$$
	\hat R^{[s]} [w] = 	\hat R^{[s]} [w_0] +  \hat R^{[s]} [w_0] (H^{[s]}[w_0] -H^{[s]}[w] ) \hat R^{[s]} [w]. 
	$$
In particular, since $u_t,v_t$ is different from $i,j$ for all $t \notin T_0$, we deduce from \eqref{eq:bdhatR}-\eqref{eq:bdhatRw0} that for $t \notin T_0$ and $a \in \{i,j\}$, $|\hat R^{[s]}_{a u_t}[w]|, |\hat R^{[s]}_{av_t}[w]| \prec \beta$ and similarly for $\hat R^{[s]}[w_0]$. Using \eqref{eq:momenthij}, we find  
$$
|\hat R^{[s]} [w] _{au_t} - 	\hat R^{[s]} [w_0]_{au_t} | \prec \beta^2 q^{-1} \lesssim \alpha,
$$
where the last inequality comes from $q \gtrsim N^{1/9}$. We note that the bound $|\hat R^{[s]} [w] _{au_t} -  R^{[s]}_{au_t} | \prec q^{-1}$ would have been too large for our purposes for $q \lesssim N^{1/3}$.

In \eqref{eq: trick 2}, we use for $t \in T_0$, $|\hat R^{[s_t]}_{a_{t} u_t} [w]| \prec 1$ and for $t \notin T_0$, 
$|\hat R^{[s_t]}_{a_{t} u_t} [w]| \prec |\hat R^{[s_t]}_{a_t u_t} [w_0] | + \alpha$.  We obtain
\begin{align*}
\E_T \prod_{t \in T}   \left| \hat R^{[s_{t}]}_{a_{t} b_t} \right|^2  & \prec \frac{1}{N^{2m}} \sum_{T_0 \subset T}  \sum_{w_0} \sum_{w_1}  \prod_{t \notin T_0}  \left( |\hat R^{[s_t]}_{a_{t} u_t} [w_0]|^2 + \alpha^2 \right).
\end{align*}
We next observe that for $t \notin T_0$, the matrix $\hat R^{[s_t]} [w_0] $ does not depend on $w_1 = ((u_s,v_s))_{s \notin T_0}$. We get
\begin{align*}
\E_T \prod_{t \in T}   \left| \hat R^{[s_{t}]}_{a_{t} b_t} \right|^2  
& \prec \frac{1}{N^{2m}} \sum_{T_0 \subset T}  \sum_{w_0}   \prod_{t \notin T_0} \left(  \sum_{u,v} \left(  |\hat R^{[s_t]}_{a_{t} u} [w_0]|^2  + \alpha^2 \right)\right)\\
& \prec \frac{1}{N^{2m}} \sum_{T_0 \subset T}  \sum_{w_0}   \prod_{t \notin T_0} \left(  N  \alpha / \eta +  N^2 \alpha^2 \right) \\
& =  \sum_{T_0 \subset T}  \frac{1}{N^{2|T_0|}} \sum_{w_0}   \left( 2 \alpha^2 \right)^{m - |T_0|},
\end{align*}
where we have used Ward identity \eqref{eq:ward}, $\alpha = 1/(N\eta)$ and \eqref{eq:bdhatRw0}. The number of possibilities for $w_0$ is at most $(4N)^{|T_0|}$. Hence, since $N\alpha^{2} = N^{2\delta + 1/3} \gg 1$, the above expression is maximized for $T_0 = \emptyset$ for all $N$ large enough. Therefore, we finally obtain in \eqref{eq:ytgtrt} the bound, 
\begin{equation}\label{eq:tbphatG}
\E \left[ \prod_{t \in T} y^{\rho_t}_{t} \hat G_{t}^{\rho_t}\right]  \prec    \frac{q^{2(m-r)}}{ N^m} \alpha^{2m}, 
\end{equation}
which is precisely the aimed bound in  \eqref{eq:tbpLD}.

\paragraph{Step 4: resolvent expansion.} In this final step, we  prove \eqref{eq:tbpLD}. The proof is a slightly more complicated version of the argument leading to \eqref{eq:tbphatG}. 
To do this, in view of \eqref{eq:tbphatG}, it is sufficient to compare $G_t$ and $\hat G_t$. From the resolvent identity, we have 
$$
R^{[s]} = \hat R^{[s]} + \sum_{p=1}^{n-1} \left(\hat R^{[s]}(\hat H^{[s]}- H^{[s]})\right)^{p} \hat R^{[s]} + \left(\hat R^{[s]}(\hat H^{[s]}- H^{[s]})\right)^{n} R^{[s]}.
$$
By Lemma \ref{lem: lem10 in BLZ} and \eqref{eq:bdhatR}, we have
\begin{equation*}
\alpha   \left|\left(\left(\hat R^{[s]}(\hat H^{[s]}- H^{[s]})\right)^{n} R^{[s]} \right)_{ij} \right|\prec q^{-n}.
\end{equation*}
Hence, if $n$ is large enough, this term can be made smaller than the right-hand side of \eqref{eq:tbphatG}.
Recall
$
G_{t} = N\eta \Im  ((R^{[t]}E_{t}R^{[t-1]})_{ij} + (R^{[t]}E_{t}R^{[t-1]})_{ji} ).
$
By the resolvent expansion,
\begin{multline*}
	R^{[t]}E_{t}R^{[t-1]}
	= \left( \sum_{p=0}^{n-1} \left(\hat R^{[t]}(\hat H^{[t]}- H^{[t]})\right)^{p} \hat R^{[t]} + \left(\hat R^{[t]}(\hat H^{[t]}- H^{[t]})\right)^{n} R^{[t]} \right)  \\
	\times E_{t} 
	\left( \sum_{p'=0}^{n-1} \left(\hat R^{[t-1]}(\hat H^{[t-1]}- H^{[t-1]})\right)^{p'} \hat R^{[t-1]} + \left(\hat R^{[t-1]}(\hat H^{[t-1]}- H^{[t-1]})\right)^{n} R^{[t-1]} \right).
\end{multline*}
Thus, we find that $G_t - \hat G_t$ can be written, up to  negligible terms of order smaller than $q^{-n}$, as a finite sum of terms of the form
\begin{align}\label{eq:deoidhe}
J	&  = (N\eta) h_{x_{1}y_{1}}^{\eps_{1}}h_{x_{2}y_{2}}^{\eps_{2}}\cdots h_{x_{p+p'}y_{p+p'}}^{\eps_{p+p'}} 
	\Im\left(
	\hat{R}^{[t]}_{ix_{1}}\hat{R}^{[t]}_{y_{1}x_{2}} \cdots \hat{R}^{[t]}_{y_{p}i_{t}^{\eps}}
	\hat{R}^{[t-1]}_{j_{t}^{\eps}x_{p+1}} \hat{R}^{[t-1]}_{y_{p+1}x_{p+2}} \cdots \hat{R}^{[t-1]}_{y_{p+p'}j}
	\right),
\end{align}
where $p+p' \geq 1$, $(x_{l}y_{l}) \in \{(i_{s}j_{s}),(j_{s}i_{s})\}_{s \in T}$, $h^{\eps_{s}}$ is either $h$ or $h'$ and  $(i^\eps_{t}j^\eps_{t}) \in \{(i_{t}j_{t}),(j_{t}i_{t})\}$. We call $\tau = p +p' \geq 1$ the length of the expansion. We define $T_0$ as the set of $t \in T$ such that $\{i_t,j_t\} \cap \{i,j\} \ne \emptyset$. We claim that
\begin{align}\label{eq: general form of error terms}
| J | \prec q^{-\tau} \left(\sum_{a,b,s} |\hat R^{[s]}_{a b}| +  \beta^{2}+ \delta_t\right),
\end{align}
where the sum is over $s \in \{t,t-1\}$, $a \in \{i,j,i_s,j_s, s \in T_0\}$, $b \in \{ i_t,j_t\}$ and $\delta_t \in \{0,1\}$ is the indicator that $\{i_t,j_t\}$ has a non-empty intersection with $\{i,j\} \cup \{ i_s,j_s : s \in T\backslash t \}$. Indeed the factor $q^{-\tau}$ comes from \eqref{eq:momenthij}. Next, we use \eqref{eq:bdhatR} and $|\Im(ab)| \leq |\Im(a)| |b| + |a| |\Im(b)|$. If $\delta_t = 1$, we use $|\hat R_{kl}^{[s]}|\prec 1$ and the claimed bound follows. Assume otherwise that $\delta_t = 0$. Then, in \eqref{eq:deoidhe}, by assumption we have $\{ x_l,y_l \} = \{ x_{t_l},y_{t_l}\}$ for some $t_l \in T$. If there is at least one $l$ such that $t_l \notin T_0$ then, since $\delta_t = 0$, there are at least $3$ resolvent terms in \eqref{eq:deoidhe} of the form $\hat R^{[s]}_{kl}$ with $k\ne l$. From \eqref{eq:bdhatR}, we then obtain the bound $J \prec q^{-\tau} \beta^2$. In the final case, we have $\delta_t = 0$ and for all $l$, $t_l \in T_0$ and the claimed bound \eqref{eq: general form of error terms} follows.

We deduce that 
$$
\prod_{t \in T} G_{t}^{\rho_t} - \prod_{t \in T}  \hat G_{t}^{\rho_t} = \sum_* \prod_{t \in T} \hat G_t^{\sigma_t} \prod_{l=1}^{\rho_t - \sigma_t} J_{tl} + R,
$$
where $R$ is a remainder term with $|R| \prec q^{-n}$ and the sum is a finite sum over $0 \leq \sigma_t \leq \rho_t$ and terms $J_{tl}$ as above of length $1 \leq \tau_{tl} \leq n$ such that 
$$
\tau = \sum_{t,l} \tau_{tl} \geq \sum_{t} (\rho_t - \sigma_t)  \geq 1.
$$
As in \eqref{eq: trick 1}, we observe that for $t \in T$,  
$$
|\hat G_{t} |\prec  \sum_{a,b,s} |\hat R^{[s]}_{a b}| \prec 1,
$$
with $a \in \{i,j\}$, $b \in \{i_t,j_t\}$ and $s \in \{t-1,t\}$. 

Using $2|ab| \leq |a|^2 + |b|^2$ and the conditional independence of $(y_t)$ and $(\hat G_t)$ given $(i_sj_s)_{s \in T}$, we deduce 
\begin{align*}
\E\left|\prod_{t \in T} y^{\rho_t} G_{t}^{\rho_t} - \prod_{t \in T} y^{\rho_t} \hat G_{t}^{\rho_t} \right|& \prec \sum_{*}  \frac{q^{2(m-r)}}{ N^m}  \E  \prod_{t \in T}   |\hat G_t|^{\sigma_t}  \prod_{l=1}^{\rho_t -\sigma_t}   J_{tl} + R' \\
& \prec \sum_{**}  \frac{q^{2(m-r)}}{ N^m} \E \prod_{t \in T}   \left(|\hat R^{[s_t]}_{a_t b_t}|^2 +  q^{-2\tau_t}\beta^{4}+ \delta_t\right) + R',
\end{align*}
where $R'$ is negligible and the last sum is over the finitely many possibilities for $\tau_t \geq 1$, $a_t \in \{i,j,i_s,j_s, s \in T_0\}$ and  $b_t \in \{i_t,j_t\}$.

We may now essentially repeat the argument in the previous step to argue that 
$$
 \E_T \prod_{t \in T} \left(|\hat R^{[s_t]}_{a_t b_t}|^2 +  q^{-2}\beta^{4}+ \delta_t\right) \prec \alpha^{2m},
$$
where as above, $\E_T$ is the conditional expectation with respect to $\mathcal F_T$. This will conclude the proof of \eqref{eq:tbpLD}.

With the notation of \eqref{eq: trick 2} and the computation which follows, we write 
\begin{align*} 
\E_T \prod_{t \in T}  \left(|\hat R^{[s_t]}_{a_t b_t}|^2 +  q^{-2}\beta^{4}+ \delta_t\right)  
& \prec \frac{1}{N^{2m}} \sum_{T_0 \subset T}  \sum_{w_0} \sum_{w_1}  \prod_{t \notin T_0}  \left(   |\hat R^{[s_t]}_{a_{t} u_t} [w_0]|^2 + \alpha^2 + \delta_t[w]\right),
\end{align*}
where $\delta_t[w]$ is the indicator that $\{u_t,v_t\}$ has a non-empty intersection with $\{i,j,u_s,v_s, s \in T \backslash t\}$. Note that we have used that $q^{-2}\beta^4 \leq \alpha^2$ for our choice of $q$. We further decompose $T\backslash T_0$ as: 
$$
\prod_{t \notin T_0}   \left( |\hat R^{[s_t]}_{a_{t} u_t} [w_0]|^2 + \alpha^2 + \delta_t[w]\right) = \sum_{T_1 \subset T\backslash T_0}  \prod_{t \in T_1}   \left( |\hat R^{[s_t]}_{a_{t} u_t} [w_0]|^2 + \alpha^2 \right) \prod_{t \notin T_1 \cup T_0} \delta_t [w].
$$
We observe that once all indices $(u_s,v_s)_{s \ne t}$ are chosen there at most $4m N$ choices of $(u_t,v_t)$ such that $\delta_t[w] = 1$. It follows that 
\begin{align*}
\sum_{w_1}  \prod_{t \notin T_0}  \left( |\hat R^{[s_t]}_{a_{t} u_t} [w_0]|^2 + \alpha^2 + \delta_t[w]\right) & \lesssim \sum_{T_1 \subset T\backslash T_0}  N^{m - |T_0| - |T_1|}  \prod_{t \in T_1}  \left(  \sum_{u,v} \left( |\hat R^{[s_t]}_{a_{t} u} [w_0]|^2 + \alpha^2 \right)\right)\\
& \prec  \sum_{T_1 \subset T\backslash T_0}   N^{m - |T_0| - |T_1|}  \prod_{t \in T_1} \left(  N  \alpha / \eta +  N^2 \alpha^2 \right) \\
& \lesssim \sum_{T_1 \subset T\backslash T_0}   N^{m - |T_0| + |T_1|}  \alpha^{2|T_1|},
\end{align*}
where we have used Ward identity \eqref{eq:ward}, $\alpha = 1/(N\eta)$ and \eqref{eq:bdhatRw0}. We note that $N\alpha^2 \gg 1$. Thus for all $N$ large enough, the above sum is maximized for $T_1 = T\backslash T_0$ and $ N^{m - |T_0| + |T_1|}   \alpha^{2|T_1|} = (N\alpha)^{2 m - 2|T_0|}$. Since  the number of possibilities for $w_0$ is at most $(4N)^{|T_0|}$, we deduce that 
$$
 \E_T \prod_{t \in T} \left(|\hat R^{[s_t]}_{a_t b_t}|^2 +  q^{-2}\beta^{4}+ \delta_t\right)  \prec \frac{1}{N^{2m}} \sum_{T_0 \subset T} N^{2m-|T_0|} \alpha^{2m - 2|T_0|} \prec \alpha^{2m},
$$
where we have again used that $N\alpha^2 \gg 1$. This concludes the proof of \eqref{eq:tbpLD} and the proof of Lemma \ref{prop: large deviation}. \qed 

\subsection{Proof of Lemma \ref{lem: lem14 in BLZ}}

In order to show Lemma \ref{lem: lem14 in BLZ}, we need to estimate the effect of the resampling to $\lambda_{1}$. The following proposition provide us the upper bound of the difference between $\lambda_{1}$ and $\lambda_{1}^{[k]}$.

\begin{lem}\label{lem: lem12 in BLZ}
	Assume $q\gtrsim N^{1/9}$  and $k \ll N^{5/3}$. Then, if $0 < \delta < \delta_0$ with $\delta_0$ as in Lemma \ref{lem: lem13 in BLZ}, we have 
	\begin{align*}
	     |\lambda_{1}-\lambda_{1}^{[k]}|\prec N^{-2/3-\delta}.
	\end{align*}
\end{lem}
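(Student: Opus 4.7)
\textbf{Proof plan for Lemma \ref{lem: lem12 in BLZ}.}

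The plan is to locate $\lambda_1^{[k]}$ by transferring precise resolvent information from $H$ to $H^{[k]}$ at the scale $\eta = N^{-2/3-\delta}$ using Lemma \ref{lem: lem13 in BLZ}, and then detecting eigenvalues via the two-sided estimate of Lemma \ref{lem: lem9 in BLZ}. Set $z = \lambda_1 + \mathfrak{i}\eta$. First, I would apply the first part of Lemma \ref{lem: lem9 in BLZ} at $E=\lambda_1$ and $j=1$: since $\max(\eta,|\lambda_1-\lambda_1|)=\eta$, there exists a random index $i$ with $N\eta\, \Im R_{ii}(z) \ge 1/2$. The rigidity bound \eqref{eq:riglambda1} shows that, with overwhelming probability, $|\lambda_1 - \cL| \le N^{-2/3+\delta}$, placing $z$ in the admissible domain of Lemma \ref{lem: lem13 in BLZ}. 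That lemma then gives $N\eta\, \Im R_{ii}^{[k]}(z) \ge 1/4$ for all $N$ large enough.

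The next step is to invoke the second part of Lemma \ref{lem: lem9 in BLZ} for $H^{[k]}$ at $E=\lambda_1$, with $\eps$ an arbitrarily small positive constant, which gives $N\eta^{-1}\Im R^{[k]}_{ii}(z) \le N^{4\eps}(\min_j|\lambda_j^{[k]} - \lambda_1|)^{-2}$ and therefore yields
\[
\min_{1\le j\le N}|\lambda_j^{[k]} - \lambda_1| \prec \eta = N^{-2/3-\delta}.
\]
Since $\lambda_j^{[k]} \le \lambda_1^{[k]}$ for every $j$, this forces $\lambda_1^{[k]} \ge \lambda_1 - N^{o(1)}\eta$. Running the same argument with the roles of $H$ and $H^{[k]}$ reversed, which is legitimate because $H^{[k]}$ has the same distribution as $H$ and Lemma \ref{lem: lem13 in BLZ} is symmetric in $H$ and $H^{[k]}$, gives the matching inequality $\lambda_1 \ge \lambda_1^{[k]} - N^{o(1)}\eta$, and hence $|\lambda_1 - \lambda_1^{[k]}| \prec N^{-2/3-\delta}$.

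The main obstacle I expect is verifying the admissibility hypothesis $|\lambda_1 - \cL^{[k]}| \le N^{-2/3+\eps}$ needed to apply the second part of Lemma \ref{lem: lem9 in BLZ} to $H^{[k]}$, where $\cL^{[k]}$ is the spectral edge associated to $H^{[k]}$ (the symmetric reverse direction is analogous). By \eqref{eq:proxyL} applied to both $H$ and $H^{[k]}$, this reduces to controlling $|\cX - \cX^{[k]}|$. The difference $\cX - \cX^{[k]}$ is a sum over the $k$ resampled positions of independent, centered terms whose variance is controlled by \eqref{eq:momenthij}; a direct computation gives $|\cX - \cX^{[k]}| \prec \sqrt{k}/(N^{3/2} q)$, which under the assumptions $k\ll N^{5/3}$ and $q\gtrsim N^{1/9}$ is $\prec N^{-7/9}$, in particular $\ll N^{-2/3}$ provided $\delta_0$ is taken smaller than $1/9$. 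With this concentration in hand, the rest of the argument is a routine combination of the resolvent lemmas already established.
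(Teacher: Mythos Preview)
Your proof is correct and follows essentially the same route as the paper: combine the lower bound from the first part of Lemma \ref{lem: lem9 in BLZ} at $E=\lambda_1$, transfer it to $R^{[k]}$ via Lemma \ref{lem: lem13 in BLZ}, and then read off the eigenvalue location from the second part of Lemma \ref{lem: lem9 in BLZ} applied to $H^{[k]}$; finally swap the roles of $H$ and $H^{[k]}$. The only organizational difference is that the paper assumes $\lambda_1^{[k]}<\lambda_1$ up front, so that $\min_j|\lambda_1-\lambda_j^{[k]}|=|\lambda_1-\lambda_1^{[k]}|$ directly, whereas you deduce the one-sided inequality $\lambda_1^{[k]}\ge\lambda_1-N^{o(1)}\eta$ from $\min_j|\lambda_j^{[k]}-\lambda_1|\prec\eta$ without this case split; both are fine. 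Your discussion of the ``obstacle'' --- that the second part of Lemma \ref{lem: lem9 in BLZ} applied to $H^{[k]}$ requires $|\lambda_1-\cL^{[k]}|\le N^{-2/3+\eps}$ rather than $|\lambda_1-\cL|$ --- is in fact more careful than the paper, which simply writes $|\lambda_1-\cL|\prec N^{-2/3}$ and leaves the passage to $\cL^{[k]}$ implicit; your bound $|\cX-\cX^{[k]}|\prec\sqrt{k}/(N^{3/2}q)$ (already used in Step~2 of the proof of Lemma \ref{lem: lem13 in BLZ}) together with \eqref{eq:proxyL} closes this gap exactly as you indicate.
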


\begin{proof}
	If $\lambda_{1}=\lambda_{1}^{[k]}$, we are done. Thus, suppose $\lambda_{1}^{[k]}<\lambda_{1}$. We set $\eta=N^{-2/3-\delta}$. According to Lemma \ref{lem: lem9 in BLZ}, we can find $1\le i\le N$ such that
	\begin{align*}
	\frac{1}{2\eta^{2}} \le N\eta^{-1}\Im R(\lambda_{1}+\mathfrak{i}\eta)_{ii}.
	\end{align*}
	Since we have $|\lambda_{1}- \mathcal{L}|\prec N^{-2/3}$, it follows from Lemma \ref{lem: lem9 in BLZ} that
	\begin{align*}
	N\eta^{-1}\Im R^{[k]}(\lambda_{1}+\mathfrak{i}\eta)_{ii}\prec \left(\min_{1\le j\le N}\left|\lambda_{1}-\lambda_{j}^{[k]}\right|\right)^{-2}.
	\end{align*}
	Since $\lambda_{1}>\lambda_{1}^{[k]}\ge\lambda_{2}^{[k]}\ge\cdots\ge\lambda_{N}^{[k]}$, we observe
	\begin{align*}
	\min_{1\le j\le N}\left|\lambda_{1}-\lambda_{j}^{[k]}\right| = \left|\lambda_{1}-\lambda_{1}^{[k]}\right|.
	\end{align*}
Moreover, we can apply Lemma \ref{lem: lem13 in BLZ}. For $c >0$ as in Lemma \ref{lem: lem13 in BLZ}, we obtain
	\begin{align*}
	N\eta^{-1}\Im R^{[k]}(\lambda_{1}+\mathfrak{i}\eta)_{ii} &\ge N\eta^{-1}\left(\Im R(\lambda_{1}+\mathfrak{i}\eta)_{ii} 
	- \left|\Im R^{[k]}(\lambda_{1}+\mathfrak{i}\eta)_{ii} -  \Im R(\lambda_{1}+\mathfrak{i}\eta)_{ii}\right|\right) \\
	&\ge  \frac{1}{2\eta^{2}} - \frac{1}{N^{c}\eta^{2}} \gtrsim \frac{1}{\eta^{2}}.
	\end{align*}
	As a result, we obtain
	\begin{align*}
	\frac{1}{\eta^{2}} \prec \left|\lambda_{1}-\lambda_{1}^{[k]}\right|^{-2}.
	\end{align*}
	In other words, $
	\left|\lambda_{1}-\lambda_{1}^{[k]}\right| \prec \eta$. 
    We have the same conclusion in the other case $\lambda_{1}^{[k]}>\lambda_{1}$ by reversing the role $H$ and $H^{[k]}$.
\end{proof}

\begin{proof}[Proof of Lemma \ref{lem: lem14 in BLZ}] We fix $0 < \delta < \delta_0$ and set $\eta = N^{-2/3-\delta}$. 
We write $\vv_{m}=(\vv_{m}(1),\ldots,\vv_{m}(N))$ and $\vv^{[k]}_{m}=(\vv^{[k]}_{m}(1),\ldots, \vv^{[k]}_{m}(N))$ for $m=2,\ldots,N$. By the spectral theorem, we have
\begin{align*}
	N\eta\Im R(z)_{ij} = \frac{N\eta^{2}v_{i}v_{j}}{(\lambda_{1}-E)^{2}+\eta^{2}} + \sum_{m=2}^N\frac{N\eta^{2}\vv_{m}(i)\vv_{m}(j)}{(\lambda_{m}-E)^{2}+\eta^{2}}.
\end{align*}
Let $\eps >0$ and let $N':=\lfloor N^{2\eps} \rfloor$. In the proof of Lemma \ref{lem: lem9 in BLZ}, we have checked that with overwhelming probability: for all $E$ satisfying $|E- \mathcal{L}|\le N^{-2/3+\eps}$, we have, for some $C >0$,
\begin{align}\label{eq:djeijd}
	\left|\sum_{m=N'+1}^{N}\frac{N\vv_{m}(i)\vv_{m}(j)}{(\lambda_{m}-E)^{2}+\eta^{2}}\right| \leq C N^{\eps} (N')^{-1/3}N^{4/3}.
\end{align}
 By Lemma \ref{lem:TW} and Lemma \ref{lem: delocalization}, we can find $c_0>0$ such that 
\begin{align*}
	\prob\left(\cE \right)\ge 1-\eps/2,
\end{align*}
where $\cE$ is the event that \eqref{eq:djeijd} holds, $\{\lambda_{1}-\lambda_{2} > c_0 N^{-2/3}\}$ and  $\max_m \|\vv_m\|_\infty^2  \leq N^{\eps-1}$. On the event $\cE$, we find for all $E$ with $|\lambda_{1}-E|\le (c/2)N^{-2/3}$ that for some $C >0$,
\begin{align*}
	\left|\sum_{m=2}^{N'}\frac{Nv_{m,i}v_{m,j}}{(\lambda_{m}-E)^{2}+\eta^{2}}\right| 
	\leq C N^{\eps} N'N^{4/3}.
\end{align*}

We fix $\delta' >0$ such that $\delta + \delta' < \delta_0$. On the event $\cE$, for any $E$ such that $|\lambda_{1}-E|\le \eta N^{-\delta'}$, we have 
\begin{align*}
	\left| \frac{N\eta^{2}v_{i}v_{j}}{(\lambda_{1}-E)^{2}+\eta^{2}} - Nv_{i}v_{j}\right| \leq N^{\eps} \left| \frac{\eta^{2}}{(\lambda_{1}-E)^{2}+\eta^{2}} - 1 \right| \leq N^{\eps -2\delta'}.
\end{align*}

Recall $\eta = N^{-2/3 - \delta}$. Combining all of the above estimates and choosing $0 < \eps \leq  \min(\delta', \delta/3)$, we conclude that for all $E$ satisfying $|\lambda_{1}-E|\le \eta N^{-\delta'}$, for some $C>0$,
\begin{align*}
	\max_{1\le i\le j\le N} |N\eta\Im R(E+\mathfrak{i}\eta)_{ij}-Nv_{1,i}v_{1,j}|\leq C N^{-\min(\delta,\delta')},
\end{align*}
on the event $\cE$. Now we repeat the above  argument for $R^{[k]}$. We define the event $\cE^{[k]}$ similarly for $H^{[k]}$. It provides us an event $\cE^{[k]}$ of probability at least $1 - \eps /2$ such that for all $E$ satisfying $|\lambda_{1}^{[k]}-E|\le \eta N^{-\delta'}$,
\begin{align*}
	\max_{1\le i\le j\le N} |N\eta\Im R^{[k]}(E+\mathfrak{i}\eta)_{ij}-Nv^{[k]}_{1,i}v^{[k]}_{1,j}|\leq C N^{-\min(\delta,\delta')}.
\end{align*}
According to Lemma \ref{lem: lem12 in BLZ}, we have $ |\lambda_{1}-\lambda_{1}^{[k]}|\le \eta N^{-\delta} = N^{-2/3-\delta-\delta'}$  with overwhelming probability (since $\delta + \delta' < \delta_0$). Since $\PP (\cE \cap \cE^{[k]}) \geq 1 - \eps$ and $\eps$ can be made arbitrarily small, this concludes the proof of the lemma by picking any $0 < c' < \min(\delta,\delta')$.\end{proof}

\section{Resolvent of sparse random matrices}\label{sec:resolvent}

In this section, we have gathered the proofs of some estimates on the resolvent of $H$ which have been used. 

\subsection{Cauchy-Stieltjes transform near the edge}

Recall that for $z \in \C_+$, we have set $R(z) = (H-z)^{-1}$ and 
$$
m(z) = \frac 1 N \mathrm{Tr} R(z).
$$
The following local law improves on Lemma \ref{lem: local law} when $\kappa$ and $\eta$ are both small. We fix $\eps_0 >0$, for example $\eps_0 = 1/4$ is sufficient for our purposes. We define the spectral domains:
\begin{align*}
	\mathcal{D}_{0}:=\left\{ w =\kappa+\mathfrak{i}\eta\in\C_{+} : |\kappa|\le 3,   N^{\eps_0 -1} \le \eta\le 1 \right\}.
\end{align*}
and we let $\bar \cD_0 = \{ w = \kappa + \mathfrak{i}\eta : |\kappa| \leq 3, \eta \geq N^{\eps_0 - 1}\}$ be the infinite half-strip containing $\cD_0$. 
\begin{lem}\label{lem:locallaw2}
Assume $q \gg 1$.  Let $ m_{\star}$ be as in Lemma \ref{lem: local law}. Uniformly on $w = \kappa + \mathfrak{i}\eta\in\mathcal{D}_0$, we have, with $z  = \mathcal{L}+w$, 
		\begin{align*}
			|m(z)-m_\star(z)| \prec \frac{1}{N\eta} + \frac 1 {q^3} + (\kappa + \eta)^{1/4} \left(\frac{1}{N\eta} + \frac 1 {q^3} \right)^{1/2}˚.
		\end{align*}
\end{lem}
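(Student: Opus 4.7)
The plan is to run a refined self-consistent equation analysis for $m(z)$, building on the polynomial characterization of $m_\star$ established in \cite{HLY20}. The bound of Lemma \ref{lem: local law} will serve as the a priori input of a bootstrap in $\eta$.

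First, I would appeal to \cite[Proposition 2.6]{HLY20}, which characterizes $m_\star(z)$ as the appropriate root of an explicit polynomial equation $P_\star(m_\star(z),z) = 0$ whose coefficients depend on the low-order moments of the entries $h_{ij}$ and on the random variable $\cX$. Applying the Schwinger--Dyson expansion to $R_{ii}(z)$ together with the moment/cumulant expansion appropriate to the sparse setting, $m(z) = N^{-1}\sum_i R_{ii}(z)$ satisfies a perturbed equation
\begin{equation*}
P_\star(m(z),z) \;=\; \Upsilon(z),
\end{equation*}
where $\Upsilon(z)$ is a random fluctuation term. The central technical statement I would establish is the averaged bound
\begin{equation*}
|\Upsilon(z)| \;\prec\; \frac{1}{N\eta} \,+\, \frac{1}{q^3} \qquad \text{uniformly for } z = \cL + w \text{ with } w \in \bar{\mathcal{D}}_0.
\end{equation*}
Naive large-deviation bounds for quadratic forms of the resolvent would give only $\frac{1}{\sqrt{N\eta}} + \frac{1}{q\sqrt{N\eta}}$; the improved bound is obtained by the fluctuation-averaging machinery of \cite[Section 5]{EKYY13} and the sparse refinements in \cite[Section 4]{HLY20}, exploiting cancellations when averaging over the index $i \in \{1,\dots,N\}$ together with the vanishing of odd moments of $h_{ij}$.

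Second, I would Taylor-expand around $m_\star$:
\begin{equation*}
\partial_m P_\star(m_\star,z)\,(m - m_\star) \,+\, \tfrac{1}{2}\partial^2_m P_\star(m_\star,z)\,(m - m_\star)^2 \,+\, O\bigl(|m-m_\star|^3\bigr) \;=\; \Upsilon.
\end{equation*}
Near the edge, the square-root vanishing of the density $\rho_\star$ at $\pm \cL$ translates into $|\partial_m P_\star(m_\star,z)| \asymp \sqrt{\kappa + \eta}$, while the higher derivatives of $P_\star$ are bounded uniformly. Writing $E := \frac{1}{N\eta} + \frac{1}{q^3}$ and $X := |m - m_\star|$, this yields the quadratic stability
\begin{equation*}
\sqrt{\kappa+\eta}\, X \,+\, X^2 \;\prec\; E.
\end{equation*}
Either the linear or the quadratic term dominates, giving in the first case $X \prec E / \sqrt{\kappa+\eta}$ together with $X^2 \prec \sqrt{\kappa+\eta}\,E$ (hence $X \prec (\kappa+\eta)^{1/4}\sqrt{E}$), and in the second case $X^2 \prec E$. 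Combining the two alternatives produces the stated bound $X \prec E + (\kappa+\eta)^{1/4}\sqrt{E}$. To ensure we track the correct root of $P_\star$, I would run a continuity argument in $\eta$, starting at $\eta = 1$ (where Lemma \ref{lem: local law} gives a sharp bound) and decreasing $\eta$ down to $N^{\eps_0 - 1}$, feeding the bound at each stage back into the fluctuation-averaging step.

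The main obstacle is the averaged control on $\Upsilon$: recovering the factor $\frac{1}{q^3}$ rather than the coarser $\frac{1}{q^{3/2}}$ that appears in Lemma \ref{lem: local law} requires carefully exploiting cancellations across the index sum and tracking the fourth-order cumulant contributions. A secondary subtlety is the bootstrap itself: for very small $\eta$, the $\kappa \geq 0$ branch of Lemma \ref{lem: local law} degenerates, so one must feed in the $\kappa \leq 0$ bound as the starting point and then work uniformly in $\kappa$ across $\bar{\mathcal{D}}_0$. The uniformity in $z$ on the whole strip is then upgraded from a dense grid to all of $\bar{\mathcal{D}}_0$ by the Lipschitz continuity $|R(z) - R(z')| \le \eta^{-2}|z - z'|$.
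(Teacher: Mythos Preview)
Your overall architecture (polynomial equation for $m_\star$, Taylor expansion, quadratic stability) matches the paper's, but there is a genuine gap in the quadratic step that prevents you from reaching the stated bound.

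With your input $|\Upsilon|\prec E$, the inequality $\sqrt{|\kappa|+\eta}\,X+X^2\prec E$ only yields $X\prec\min\bigl(\sqrt{E},\,E/\sqrt{|\kappa|+\eta}\bigr)$. Your claim ``$X^2\prec\sqrt{\kappa+\eta}\,E$'' does not follow: if the linear term dominates you know $X\le\sqrt{|\kappa|+\eta}$ and $X\le E/\sqrt{|\kappa|+\eta}$, whose product gives only $X^2\le E$, not $X^2\le\sqrt{|\kappa|+\eta}\,E$. In the regime $(|\kappa|+\eta)^{3/2}\ll E\ll|\kappa|+\eta$ your best output is $E/\sqrt{|\kappa|+\eta}$, which is strictly larger than the target $(|\kappa|+\eta)^{1/4}\sqrt{E}$. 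So the crude bound $|\Upsilon|\prec E$ is simply not sharp enough.

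The missing ingredient is that the fluctuation $P(z,m(z))$ itself carries an extra edge-smallness factor. The paper does not bound $|P(z,m(z))|$ by $E$ directly; instead it invokes the recursive moment estimate \cite[Proposition~2.9]{HLY20}, in which every term comes with a factor of $\Im m(z)/(N\eta)$ or $|\partial_2 P(z,m(z))|$. Since $\Im m(z)\lesssim\sqrt{|\kappa|+\eta}+\Lambda$ and $|\partial_2 P(z,m(z))|\lesssim\sqrt{|\kappa|+\eta}+\Lambda$ near the edge, this produces
\[
\E|P(z,m(z))|^{2r}\;\prec\;\bigl(E\sqrt{|\kappa|+\eta}\bigr)^{2r}+E^{2r}\,\E\Lambda^{2r}.
\]
The paper first extracts the crude relation $\Lambda\le C\sqrt{|P(z,m(z))|}$ from the quadratic formula (branch selected by $g(z)\to 0$ as $\Im z\to\infty$, with the a~priori input $\Lambda\le 1/\log N$ from Corollary~\ref{cor:locallaw02}), feeds this into the moment bound as $(\E\Lambda^{2r})^2\le\E\Lambda^{4r}\lesssim\E|P|^{2r}$, and then solves $x^2\le a+bx$ with $a=(E\sqrt{|\kappa|+\eta})^{2r}$, $b=E^{2r}$ to obtain $\E\Lambda^{2r}\prec(E\sqrt{|\kappa|+\eta})^{r}+E^{2r}$, i.e.\ $\Lambda\prec E+(|\kappa|+\eta)^{1/4}\sqrt{E}$. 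No bootstrap in $\eta$ is needed; the weak local law provides the a~priori control in one shot, and the self-consistency is closed at the level of moments rather than via a continuity argument.
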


Before proving Lemma \ref{lem:locallaw2}, we first prove a bound between $m_\star$ and $m_{\text{sc}}$, the Cauchy-Stieltjes transform of the semi-circular law.

\begin{lem}\label{lem:locallaw02}
Assume $q \gg 1$. Let $\eps >0$ and set $\bar q = \min(q,N^{1/2-\eps})$.  There exists $C >0$ such that with overwhelming probability:
		\begin{align*}
			\sup_{z \in \C_+} |m_{\text{sc}} (z)-m_\star(z)| \leq \frac C { \bar q}.
		\end{align*}
\end{lem}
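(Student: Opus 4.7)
The plan is to compare the polynomial equation for $m_\star$ from \cite[Proposition 2.6]{HLY20} with the semicircle equation $1+zm_{\mathrm{sc}}+m_{\mathrm{sc}}^2=0$. Writing the former as
\begin{equation*}
1+zm_\star+m_\star^2=F(m_\star;\cX,\{\E h_{ij}^{2k}\}_{k\ge 2}),
\end{equation*}
where $F$ is a polynomial in $m_\star$ whose coefficients combine $\cX$ and the centered moments of $h_{ij}$ (the $k$-th of these being of order $q^{2-2k}$ by \eqref{eq:momenthij}), the first step is the uniform estimate
\begin{equation*}
|F(m_\star;\cdot)|\le C\bigl(q^{-2}+|\cX|\bigr).
\end{equation*}
This is immediate since $m_\star$ is bounded on $\C_+$ by a universal constant as the Stieltjes transform of a probability measure supported in a fixed compact interval, and the dominant deterministic correction comes from $k=2$.

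Next, setting $h:=m_\star-m_{\mathrm{sc}}$ and subtracting the two equations gives $h(z+2m_{\mathrm{sc}}+h)=F(m_\star)$. Since $z+2m_{\mathrm{sc}}(z)=\sqrt{z^{2}-4}$ on $\C_+$ with the branch fixed by $\Im m_{\mathrm{sc}}>0$, this reads as the quadratic in $h$
\begin{equation*}
h^{2}+h\sqrt{z^{2}-4}-F(m_\star)=0.
\end{equation*}
Identifying $h$ with the root that vanishes as $|z|\to\infty$ (forced by $m_\star,m_{\mathrm{sc}}\to 0$ at infinity), a case split $|F|\lesssim|z^{2}-4|$ versus $|F|\gtrsim|z^{2}-4|$ — giving $|h|\le 2|F|/|\sqrt{z^{2}-4}|$ and $|h|\le 2\sqrt{|F|}$ respectively — yields the uniform bound $|h|\le C\sqrt{|F(m_\star)|}$ throughout $\C_+$.

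Combining these, $|h|\le C(q^{-1}+\sqrt{|\cX|})$. The concentration $|\cX|\prec (q\sqrt{N})^{-1}$ follows from a Bernstein-type inequality for the centered sum defining $\cX$, whose summands are independent sub-exponential variables with total variance $\asymp 1/(Nq^{2})$, so that $\sqrt{|\cX|}\prec q^{-1/2}N^{-1/4}$. Fixing the hidden factor $N^{\eps'}$ in $\prec$ with $\eps'\le\eps$ so it can be absorbed into the constant, a direct case check — $q\le N^{1/2-\eps}$ (where $\bar q=q$ and $q^{-1}$ dominates) versus $q>N^{1/2-\eps}$ (where $\bar q=N^{1/2-\eps}$ and $N^{\eps'/2}q^{-1/2}N^{-1/4}\le N^{-1/2+\eps}$) — confirms $|h|\le C/\bar q$ with overwhelming probability. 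The main technical obstacle is the uniform quadratic stability across the spectral edge, where $\sqrt{z^{2}-4}$ degenerates; it is handled automatically by the $\sqrt{|F|}$ bound, which is why we phrase the perturbation equation as a quadratic rather than resorting to a linearized implicit-function approach.
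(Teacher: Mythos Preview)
Your proof is correct and follows essentially the same approach as the paper: compare the polynomial equation $P(z,m_\star)=0$ from \cite[Proposition 2.6]{HLY20} with the semicircle equation, derive the quadratic $h^2+h\sqrt{z^2-4}=F$ for the difference $h=m_\star-m_{\mathrm{sc}}$, select the branch via the decay at infinity (the paper makes the analyticity argument for the global sign choice slightly more explicit), and then do the same case split $|F|\lessgtr|z^2-4|$ to obtain $|h|\lesssim\sqrt{|F|}$. The only cosmetic difference is that the paper packages the random input by working directly on the event $\{|\cX|\le 1/\bar q^{2}\}$, so that $|F|\le C/\bar q^{2}$ in one stroke, whereas you carry $q^{-1}+\sqrt{|\cX|}$ and verify $\sqrt{|\cX|}\le C/\bar q$ by a final case check; both routes are equivalent.
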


\begin{proof}By \cite[Proposition 2.6]{HLY20}, there exists a deterministic even polynomial $$Q(y) = \frac{a_2}{q^2} y^4 + \frac{a_3}{q^4} y^6 + \cdots$$ whose coefficients $a_i$ depend on the moments of $h_{ij}$ and are uniformly bounded such that the random multivariate polynomial
\begin{equation}\label{eq:defPm}
P(z,y) := 1 + z y + y^2 + Q(y) + \cX y^2
\end{equation}
satisfies 
$$
P(z,m_\star(z)) = 0.
$$

We set $P_0(z,y) = 1 + zy + y^2$. We have $P_0(z,m_{\text{sc}}(z)) = 0$. We set $f(z) = P_0(z,m_\star(z)) $ and $g(z) = m_\star(z) - m_{\text{sc}}(z)$.

We have $ |\cX|\prec 1/(q\sqrt N)$. Hence the event $\cE = \{ |\cX| \leq 1/\bar q^2\}$ has overwhelming probability. On the event $\cE$, uniformly in $z \in \C_+$, we have $|m_\star(z)| \leq C$. Since, 
$f(z) = -Q(m_\star(z)) - m_\star(z)^2 \cX$, we deduce that  if $\cE$ holds, for all $z \in \C_+$,
\begin{equation}\label{eq:fbdunif}
|f(z) | \leq \frac{C}{\bar q^2}.
\end{equation}
By Taylor expansion, we have
$$
f(z) = g(z) (z + 2 m_{\text{sc}}(z)) + g(z)^2.  
$$
With $\sqrt{\cdot}$ is the principal branch of the square root function, we have $z + 2m_{\text{sc}}(z) =\sqrt{z^2 - 4}$. Hence 
$$
2 g(z) = - \sqrt{z^2 - 4} \pm \sqrt{ z^2 - 4 + 4 f(z)}.
$$
Since $g(z)$ is the difference of two Cauchy-Stieltjes transforms of probability measures, as $\Im(z)$ goes to infinity, $|g(z)|$ must vanish.  From \eqref{eq:fbdunif}, this forces the choice of the above $\pm$-sign to be $+$ for all large $z$ and thus for all $z \in \C_+$ since $g(z)$ is analytic on $\C_+$.

The remainder of the proof is obvious by decomposing in two possibilities: if $|f(z)| \geq |z^2 - 4 | $ then $|g(z)| \leq \sqrt{|f(z)|} +   \sqrt{ 5|f(z)|}$. If $ |f(z)| \leq |z^2 - 4 |$, then, by Taylor expansion,
$$
|2 g(z)| = |- \sqrt{z^2 - 4}  + \sqrt{ z^2 - 4 + 4 f(z)}| = \left|- \sqrt{z^2 - 4} \left( - 1 +  1 + O \left( \frac{| f(z)|}{|z^2 - 4|} \right)\right)\right| \lesssim\frac{| f(z)|}{\sqrt{|z^2 - 4|}}.
$$
It concludes the proof since $1/ \sqrt{|z^2 - 4|} \leq 1/\sqrt{|f(z)|}$.
\end{proof}

By \cite[Theorem 2.8]{EKYY13}, Lemma \ref{lem:locallaw02} implies the following weak local law.
\begin{cor}\label{cor:locallaw02}
Assume $q \gg 1$.  For any $\eps >0$, with overwhelming probability, 
		\begin{align*}
			\sup_{w \in \bar \cD_0} \max_{1 \leq i , j \leq N} | R_{ij} (z) - \delta_{ij} m_\star(z ) | \leq N^{\eps}\left( \frac 1 q + \frac 1 {\sqrt{N\eta}}\right),
		\end{align*}
		where $z = w + \cL$ and $w = E + \mathfrak{i}\eta$.
\end{cor}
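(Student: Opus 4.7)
The idea is to combine the classical weak local law for sparse random matrices with the deterministic-domain comparison between $m_{\text{sc}}$ and $m_\star$ already proved in Lemma \ref{lem:locallaw02}.

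First, I would invoke Theorem 2.8 of \cite{EKYY13}. This standard weak law asserts that with overwhelming probability, uniformly in $z = \cL + w$ with $w = \kappa + \mathfrak{i}\eta \in \bar{\cD}_0$,
\begin{align*}
\max_{1 \leq i,j \leq N} |R_{ij}(z) - \delta_{ij} m_{\text{sc}}(z)| \leq N^{\eps/2}\left(\frac{1}{q} + \frac{1}{\sqrt{N\eta}}\right).
\end{align*}
The error term takes this form because the generic control function $\Psi(z) = 1/q + \sqrt{\Im m_{\text{sc}}(z)/(N\eta)} + 1/(N\eta)$ of \cite{EKYY13} is dominated by $1/q + 1/\sqrt{N\eta}$ on $\bar{\cD}_0$ (using that $\Im m_{\text{sc}}$ is uniformly bounded on $\bar{\cD}_0$ and that $1/(N\eta) \leq 1/\sqrt{N\eta}$ whenever $N\eta \geq 1$, which holds on $\bar{\cD}_0$).

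Next, I would apply Lemma \ref{lem:locallaw02} with a small parameter $\eps' \in (0,\eps)$ to obtain, with overwhelming probability, the deterministic-in-$z$ bound $|m_{\text{sc}}(z) - m_\star(z)| \leq C/\bar{q}$ where $\bar{q} = \min(q, N^{1/2 - \eps'})$, hence $|m_{\text{sc}}(z) - m_\star(z)| \leq C(1/q + N^{\eps' - 1/2})$. One then combines these two estimates via the triangle inequality $|R_{ij}(z) - \delta_{ij}m_\star(z)| \leq |R_{ij}(z) - \delta_{ij} m_{\text{sc}}(z)| + |m_{\text{sc}}(z) - m_\star(z)|$ to reach the claimed bound. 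The only small check is that $N^{\eps'-1/2}$ is absorbed into $N^{\eps}/\sqrt{N\eta}$; on the portion of $\bar{\cD}_0$ with $\eta \leq C$ this is immediate since $1/\sqrt{N\eta} \gtrsim N^{-1/2}$ and $\eps' < \eps$, while for $\eta \geq C$ the conclusion of the corollary is already trivial from $\|R(z)\| \leq 1/\eta$ and $|m_\star(z)| \leq 1/\eta$ combined with $q \leq \sqrt N$.

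\textbf{Main obstacle.} There is essentially none: both ingredients are already in hand, with the nontrivial one being Lemma \ref{lem:locallaw02} proved just above. The only bookkeeping is to translate the usual $\Psi(z)$-error of the weak law into the cleaner $1/q + 1/\sqrt{N\eta}$ form uniformly over $\bar{\cD}_0$, and to verify that the $\eps'$-loss introduced by the truncation in $\bar{q}$ is harmless after one re-absorbs a factor of $N^{\eps}$.
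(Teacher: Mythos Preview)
Your proposal is correct and follows essentially the same route as the paper: invoke the weak local law of \cite[Theorem 2.8]{EKYY13} to compare $R_{ij}$ with $m_{\text{sc}}$, then use Lemma \ref{lem:locallaw02} to swap $m_{\text{sc}}$ for $m_\star$. One minor quibble: your trivial-bound argument for ``$\eta \geq C$'' does not actually cover moderate $\eta$ of order one (e.g.\ $\eta = 10$, $q = N^{0.1}$, small $\eps$), but this is harmless since the EKYY13 estimate already applies uniformly on $\bar{\cD}_0$ and the paper simply notes that the $\eta \geq 1$ case ``extends obviously.''
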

\begin{proof}
From \cite[Theorem 2.8]{EKYY13}, the result holds with $m_{\text{sc}}(z)$ in place of $m_\star(z)$ (\cite[Theorem 2.8]{EKYY13} is stated in $\cD_0$ but the case $\eta \geq 1$ extends obviously). It remains to use Lemma \ref{lem:locallaw02} to bound the difference $m_{\text{sc}}(z) - m_\star(z)$. 
\end{proof}

All ingredients are gathered to prove Lemma \ref{lem:locallaw2}.
\begin{proof}[Proof of Lemma \ref{lem:locallaw2}] We fix $w \in \cD_0$ and let $z = \cL+w$. We set 
$$g(z) = m(z) - {m}_{\star}(z) \quad \hbox{ and } \quad \Lambda (z) = | g(z) |.$$ 
Let $P(z,y)$ be as in \eqref{eq:defPm}. 
Applying Taylor expansion, we have, from \eqref{eq:defPm} and $ P(z, m_{\star}(z)) = 0$,
\begin{align*}
	P(z, m(z)) = \partial_{2}P(z, m_{\star}(z))(g(z)) + \frac{1}{2} \partial^2_{2}P(z, m_{\star}(z)) g(z) ^{2} + R(g(z)) ,
\end{align*}
where $R(y) = b_1 y^3 + b_2 y^4 + \cdots $ is a deterministic polynomial whose coefficients are less than $C /q^2$. We set 
$$
f(z) = 	P(z, m(z)) -  R(g(z)), \quad b (z) = \partial_{2}P(z, m_{\star}(z)), \quad a(z) = \frac{1}{2}\partial^2_{2}P(z, m_{\star}(z))
$$
We get 
$$
a(z) g(z) = - b  (z) \pm \sqrt{ b(z)^2 + 4 f(z)a(z) }.
$$
By \cite[Proposition 2.6]{HLY20}, with overwhelming probability,  the following event holds: for some $C >0$, for all $z \in \C_+$, $|a(z) - 1| \leq C q^{-2}$ and $ |b(z)| \geq \sqrt{||\kappa|+\eta|}/C$. Moreover, by  Corollary \ref{cor:locallaw02}, for some $C >0$, with overwhelming probability, the following event: for all $z \in \cD_0$, $\Lambda(z) \leq 1/\log N$ and $|R(g(z))| \leq C \Lambda(z)^3/q^2 \leq \Lambda(z)^2 / q^2$ (for $N$ large enough). On the intersection of these two last events, say $\cE$, since $|f(z)a(z)|$ is bounded uniformly on $\bar \cD_0$ and $g(z)$ is analytic and vanishes as $\Im(z)$ goes to infinity, the only possibility for the $\pm$-sign is $+$. Arguing as in the proof of Lemma \ref{lem:locallaw02}, we deduce that, if $\cE$ holds, for some new $C>0$,
$$
\Lambda(z) = |g(z)| \leq C \sqrt{|f(z)|} .
$$
Since $|f(z)| \leq |P(z,m(z))| + \Lambda(z)^2 /q^{2}$, So finally, since $\Lambda(z) \leq 1/ \log N$ on $\cE$, if $N$ is large enough we get for some new $C >0$, 
\begin{equation}\label{eq:boundLbyP}
\Lambda(z)  \leq C \sqrt{|P(z,m(z))|}.
\end{equation}

The other way around, we now estimate $|P(z,m(z)|$ in terms of $\Lambda(z)$.  By \cite[Proposition 2.9]{HLY20}, we have
\begin{multline*}
	\E \left[ |P(z,m(z))|^{2r} \right] \\
	\prec \max_{1 \le s_{1}+s_{2}\le 2r}
	\E \left[ \left\{ |\partial_{2}P(z,m(z))|\left( \frac{1}{q^{3}} + \frac{1}{N\eta} \right) \frac{\Im (m(z))}{N\eta}  \right\}^{s_{1}/2} \left(\frac{\Im (m(z))}{N\eta}\right)^{s_{2}} |P(z,m(z))|^{2r-s_{1}-s_{2}} \right],
\end{multline*}
(in \cite{HLY20}, Proposition 2.9 is stated for $w \in \cD(\eps)$ but their proof holds in the larger domain $\cD_0$). From \eqref{eq:mstarasym}, it follows that
\begin{align*}
	\Im (m_{\star}(z)) \lesssim \sqrt{|\kappa|+\eta},
\end{align*}
which gives us 
\begin{align*}
	\Im (m(z)) \lesssim \sqrt{|\kappa|+\eta} + \Lambda.
\end{align*}
Also, from \cite[Proposition 2.6]{HLY20},
\begin{align*}
	|\partial_{2}P(z,m(z))| = |\partial_{2}P(z,m_{\star}(z))| 
	+ O(\Lambda) \lesssim  \sqrt{|\kappa|+\eta}+\Lambda .
\end{align*}
By Young's inequality, we obtain, for any $\eps >0$,
\begin{multline*}
	\E \left[ \left\{ |\partial_{2}P(z,m(z))|\left( \frac{1}{q^{3}} + \frac{1}{N\eta} \right) \frac{\Im (m(z))}{N\eta}  \right\}^{s_{1}/2} \left(\frac{\Im( m(z))}{N\eta}\right)^{s_{2}} |P(z,m(z))|^{2r-s_{1}-s_{2}} \right] \\
	\lesssim
	N^{\epsilon}\E\left[|\partial_{2}P(z,m(z))|^{r}\left( \frac{1}{q^{3}} + \frac{1}{N\eta} \right)^{r} \left(\frac{1}{N\eta}\right)^{r}\left\{(|\kappa|+\eta)^{r/2} + \Lambda^{r}\right\}\right]\\
	+ N^{\epsilon}\left(\frac{1}{N\eta}\right)^{2r}\left\{(|\kappa|+\eta)^{r} + \E\Lambda^{2r}\right\} + N^{-\epsilon/(2r -1)}\E|P(z,m(z))|^{2r}.
\end{multline*}
Thus, 
\begin{multline*}
	\E \left[ |P(z,m(z))|^{2r} \right] \\
	\prec 
	\left( \frac{1}{q^{3}} + \frac{1}{N\eta} \right)^{r} \left(\frac{1}{N\eta}\right)^{r}\left\{(|\kappa|+\eta)^{r} + \E\Lambda^{2r}\right\}
	+ \left(\frac{1}{N\eta}\right)^{2r}\left\{(|\kappa|+\eta)^{r} + \E\Lambda^{2r}\right\}.
\end{multline*}
If $\gamma = 1/q^3 + 1/ (N\eta)$, we find
\begin{align*}
	\E \left[ |P(z,m(z))|^{2r} \right]
	\prec   
	\left( \gamma \sqrt{|\kappa|+\eta} \right)^{2r}
	+ \gamma^{2r} \E\Lambda^{2r}.
\end{align*}

From \eqref{eq:boundLbyP}, we deduce that 
\begin{align*}
(\E\Lambda^{2r})^2 \leq 	\E \Lambda^{4r} & \lesssim  \E \left[ |P(z,m(z))|^{2r} \right] 	\prec 	\left( \gamma \sqrt{|\kappa|+\eta} \right)^{2r}
	+ \gamma^{2r} \E\Lambda^{2r}.
\end{align*}
Since $x^2 \leq a + b x$, $a,b,x\geq 0$ implies that $x \le \sqrt{2a} + b$, we have established
that 
\begin{align*}
	\E [\Lambda^{2r} | \prec \left( \gamma + \left(\gamma \sqrt{|\kappa|+\eta}  \right)^{1/2} \right)^{2r} .
\end{align*}
Since $r$ is arbitrary, from Markov inequality, the proof is complete. \end{proof}

\subsection{Proof of Lemma \ref{lem: lem10 in BLZ}}\label{sec: estimate on the imaginary part of local law}

It is enough to only show the second equality on $\Im(R_{ij})$, which is a consequence of the following lemma.
\begin{lem}\label{prop: estimate on the imaginary part of local law}
Assume $q \gtrsim N^{1/9}$ and let $0 \leq \delta < 1/3$. We have
	\begin{align*}
\sup_z \max_{1\leq i , j \leq N} |\Im R_{ij}(z)-\delta_{ij}\Im (m_\star(z))| \prec \frac{1}{N\eta},
	\end{align*}
	where the supremum is over all $z=E+\mathfrak{i}\eta$ with $|E-\cL|\le 2 N^{-2/3+\delta}$ and $\eta =  N^{-2/3-\delta}$.
\end{lem}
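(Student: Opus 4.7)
My plan is to reduce both the diagonal and off-diagonal cases to the uniform upper bound $\max_i \Im R_{ii}(z) \prec 1/(N\eta)$, and then derive the latter by combining the improved averaged local law of Lemma \ref{lem:locallaw2} with eigenvector delocalization. Write $z = E + \mathfrak{i}\eta$ and $\kappa = E - \cL$. The first step is to verify that in the specified regime every term on the right-hand side of Lemma \ref{lem:locallaw2} is of size at most $1/(N\eta)$: indeed $1/q^3\lesssim N^{-1/3}\leq 1/(N\eta)$ since $q\gtrsim N^{1/9}$, and a direct computation gives
$$(\kappa+\eta)^{1/4}\left(\tfrac{1}{N\eta}+\tfrac{1}{q^3}\right)^{1/2}\lesssim N^{-1/6+\delta/4}\cdot N^{-1/6+\delta/2}=N^{-1/3+3\delta/4}\lesssim \frac{1}{N\eta}.$$
This yields $|m(z)-m_\star(z)|\prec 1/(N\eta)$; together with the asymptotic \eqref{eq:mstarasym}, which gives $\Im m_\star(z)\lesssim \sqrt{|\kappa|+\eta}\lesssim N^{-1/3+\delta/2}\lesssim 1/(N\eta)$, it implies $\Im m(z)\prec 1/(N\eta)$.

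For the diagonal entries, I would use the spectral representation
$$\Im R_{ii}(z)=\sum_{k=1}^{N}\frac{\eta\,|\vv_k(i)|^2}{(\lambda_k-E)^2+\eta^2}.$$
By delocalization (Lemma \ref{lem: delocalization}), $\max_{i,k}|\vv_k(i)|^2\prec 1/N$; pulling this bound out of the sum gives $\Im R_{ii}(z)\prec \Im m(z)\prec 1/(N\eta)$, uniformly in $i$. Combined with $\Im m_\star(z)\lesssim 1/(N\eta)$, the triangle inequality yields the desired bound $|\Im R_{ii}(z)-\Im m_\star(z)|\prec 1/(N\eta)$. For the off-diagonal case $i\neq j$, Ward's identity \eqref{eq:ward} and Cauchy--Schwarz give
$$|\Im R_{ij}(z)|\leq \sqrt{\Im R_{ii}(z)\,\Im R_{jj}(z)}\prec \frac{1}{N\eta}.$$

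To promote this pointwise estimate to uniformity in $z$, I would apply a standard net argument: cover $\{E:|E-\cL|\leq 2N^{-2/3+\delta}\}$ with a polynomial-size grid and use the Lipschitz bound $|R_{ij}(z)-R_{ij}(z')|\leq |z-z'|/\eta^2$ (and an analogous bound for $m_\star$) to interpolate between grid points, absorbing the size of the net into the stochastic-domination notation. The main obstacle is really the input from Lemma \ref{lem:locallaw2}: the much weaker entrywise estimate of Corollary \ref{cor:locallaw02} would give only $|\Im R_{ii}-\Im m_\star|\prec 1/q+1/\sqrt{N\eta}$, far larger than $1/(N\eta)$ in this near-edge regime. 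With the sharper averaged bound in hand, the passage to entrywise quantities via delocalization is essentially free and avoids any finer self-consistent-equation analysis.
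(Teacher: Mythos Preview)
Your argument is correct and considerably shorter than the paper's. Both proofs start from the same crucial input, Lemma~\ref{lem:locallaw2}, and both conclude $|m(z)-m_\star(z)|\prec 1/(N\eta)$ and $\Im m_\star(z)\lesssim 1/(N\eta)$ in the stated regime. From that point the paper runs a classical iterative bootstrap: it sets up events $\Omega(\alpha)$, expands $R_{ii}$ and $R_{ij}$ through Schur-complement identities involving the minor resolvents $R^{(\T)}$, controls the quadratic forms $Z_i,Z_{ij}$ via the large-deviation estimates of \cite[Lemma~3.8]{EKYY13}, and iterates $\alpha\mapsto \alpha/q+\sqrt{\alpha/(N\eta)}+1/(N\eta)$ until the threshold $1/(N\eta)$ is reached. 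The randomness of $\cL$ forces an extra shift to $\tz_i,\tz_{ij}$ to restore independence before each concentration step. Your route bypasses all of this: once $\Im m(z)\prec 1/(N\eta)$ is known, eigenvector delocalization (Lemma~\ref{lem: delocalization}) immediately gives $\Im R_{ii}(z)\le (\max_k N|\vv_k(i)|^2)\cdot \Im m(z)\prec 1/(N\eta)$, and the off-diagonal bound follows from the positive-semidefiniteness of $\Im R=\eta RR^*$ via $|(\Im R)_{ij}|^2\le (\Im R)_{ii}(\Im R)_{jj}$ (equivalently Ward~\eqref{eq:ward} plus Cauchy--Schwarz, as you write). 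The trade-off is that your proof invokes delocalization as a black box from \cite{EKYY13}, whereas the paper's self-consistent-equation argument uses only the weak entrywise law of Corollary~\ref{cor:locallaw02} and is in that sense more self-contained; but since delocalization is available independently (it is proved from the weak local law at scale $\eta\sim N^{-1+\eps}$, so there is no circularity), your shortcut is legitimate and more efficient. Note also that a separate net argument is not really needed in your approach: delocalization is a single $z$-independent event, and Lemma~\ref{lem:locallaw2} already provides uniformity over~$\cD_0$.
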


Under the assumptions of the above lemma,
we get for $i\neq j$,
\begin{align*}
\textnormal{Im}R_{ij}(z) \prec \frac{1}{N\eta},
\end{align*}
and, since by Equation \eqref{eq:mstarasym}, $\Im (m_\star(z)) \lesssim N^{-1/3 + \delta /2}$,
\begin{align*}
\Im R(z)_{ii}\le |\Im R(z)_{ii}-\Im (m_\star(z))| + \Im (m_\star(z)) \prec \frac{1}{N\eta}.
\end{align*}
The second statement of Lemma \ref{lem: lem10 in BLZ} follows.  The remainder of this subsection is dedicated to the proof of Lemma \ref{prop: estimate on the imaginary part of local law}. It relies on an iterative self-improving error bound on resolvent estimates.  The proof is an adaptation of \cite[Section 3]{EKYY13}, there are however some new difficulties coming from the randomness of $\cL$.

\paragraph{Step 1: net argument.} Arguing as in Step 1 of the proof of Lemma \ref{lem: lem13 in BLZ},  it is sufficient to prove that for any deterministic real $\kappa$ with $|\kappa| \leq N^{-2/3 + \delta}$, 
\begin{equation}\label{eq:tbpimll}
 \max_{1\leq i , j \leq N} |\Im R_{ij}(\tz)-\delta_{ij}\Im (m_\star(\tz))| \prec \frac{1}{N\eta},
 \end{equation}
with $\tz$ defined by \eqref{eq:deftz}.
In the remainder of the proof, we fix such  $\kappa$ and corresponding random $\tz$. 

\paragraph{Step 2: inductive events.}For $\eta = N^{-2/3 - \delta}$, we set $\cD_1 = \{ z = E + \mathfrak{i}\eta \in \C_+ : |E - \cL| \leq 2N^{-2/3+ \delta} \}$. We introduce the following variables
\begin{align*}
	&\Lambda_{e} :=\sup_{z \in \cD_1} \max_{i,  j}|R_{ij}(z) - \delta_{ij} m_\star(z)|,  \quad 	 \Lambda := \sup_{z \in \cD_1}|m(z)-m_\star(z)|. \\ 	
	& \Lambda_{o}^{\Im } :=\max_{i\neq j} \Im R_{ij}(\tz), \quad \Lambda_{d}^{\Im }:=\max_{i}|\Im R_{ii}(\tz)-\Im (m_\star(\tz))|. 
\end{align*}
Note that $\Lambda^{\Im }_o$ and $\Lambda^{\Im }_d$ depend implicitly on $\kappa$ (which is fixed).
For $\alpha >0$ such that $(N\eta)^{-1} \leq \alpha \leq 1/q$, we introduce the events
\begin{align*}
\Omega := \left\{\Lambda  \le N^{\eps}(N\eta)^{-1} \; ; \; \Lambda_{e} \le N^{\eps}q^{-1} \right\} \quad \hbox{and} \quad 	\Omega( \alpha) := \Omega \cap \left\{ \Lambda_{o}^{\Im }+\Lambda_{d}^{\Im }\le N^{\eps}\alpha \right\},
\end{align*}
where $\eps>0$ is an arbitrarily small constant to be chosen later. We note that by Lemma \ref{lem:locallaw2} and Corollary \ref{cor:locallaw02}, the event $\Omega(1/q)$ has overwhelming probability. By an inductive argument, we will prove  that $\Omega(N^\eps/(N\eta))$ has overwhelming probability (if $N\eta < q$, that is $q > N^{1/3-\delta}$, there is nothing more to prove and the proof of the lemma is complete). Then, if $\Omega(N^\eps/(N\eta))$ has overwhelming probability for all fixed $0 < \eps< 1/9$ then \eqref{eq:tbpimll} holds and the proof of the lemma is complete.

\paragraph{Step 3: resolvent of minors.}For ease of notation, in the sequel, we often omit $\tz$ and write $m$, $m_\star$ and $R$ in place of $m(\tz)$, $m_\star(\tz)$ and $R(\tz)$. We have $|m_\star| \asymp 1$, hence on $\Omega(\alpha)$, we find $|R_{ii}|\asymp 1$. Similarly, from Equation \eqref{eq:mstarasym}, on $\Omega(\alpha)$, 
\begin{align*}
	\Im R_{ii} \le \Im (m_\star) + |\Im R_{ii} - \Im (m_\star)| \lesssim N^{-1/3 + \delta/2} + N^{\eps} \alpha \lesssim N^\eps \alpha,
\end{align*}
where have used that $(N\eta)^{-1} = N^{-1/3 +\delta} \lesssim \alpha $. In summary, on $\Omega(\alpha)$, for all $i \ne j$, 
\begin{align}\label{eq:OmR}
|R_{ii}| \asymp  1 , \quad |R_{ij}| \leq \frac{N^\eps}{q} \quad \hbox{and}  \quad	\Im R_{ii} + \Im R_{ij}  \lesssim  N^\eps \alpha.
\end{align}
To be precise, the underlying constants in $\asymp$ and $\lesssim$ in the above expressions depend only on the measure $\rho_\star$ through Equation \eqref{eq:mstarasym}.  We will use this convention in the rest of the proof.

For $\T\subset\{1,\cdots,N\}$, let $H^{(\T)}$ be the $(N-|\T|)\times(N-|\T|)$ minor of $H$ obtained by removing all rows and columns of $H$ indexed by $i\in\T$. 
In addition, we set $R^{(\T)}(z)=(H^{(\T)}-zI)^{-1}$. Our first goal is extend the bounds in \eqref{eq:OmR} to $R^{(\T)} = R^{(\T)}(\tz)$ when $\Omega(\alpha)$ holds uniformly over sets $\T$ with $|\T| \leq 2$.

For $i,j  \ne k$, we have the following identity
\begin{align*}
	R_{ij}^{(k)} = R_{ij} -\frac{R_{ik}R_{kj}}{R_{kk}},
\end{align*}
(see e.g. \cite[Lemma 3.5]{MR3792624}). Thus, since $|\Im(ab)| \leq |\Im(a) b| + | \Im(b) a|$, we get 
$$\left|\Im\left(\frac{ab}{c} \right) \right|\leq \frac{| \Im(a) bc | + | \Im(b)ac | + | \Im(c)ab | + | \Im(a)\Im(b)\Im(c)| }{|c|^2}$$ and 
\begin{align*}
|	\Im R_{ij}^{(k)} - \Im R_{ij} | \leq  \frac{ \Im (R_{ik})|R_{kj} R_{kk}| +  \Im (R_{kj})|R_{ik} R_{kk}| + \Im (R_{kk})|R_{ik} R_{kj}| + \Im (R_{ik})\Im (R_{kj})\Im (R_{kk}) }{|R_{kk}|^{2}}.
\end{align*}
On $\Omega(\alpha)$, from \eqref{eq:OmR}, we obtain 
\begin{align*}
	\left| \Im R_{ij}^{(k)} - \Im R_{ij} \right| \lesssim \frac{N^{2\eps}  \alpha}{q} \ll N^\eps \alpha.
\end{align*}
Similarly, for all $z \in \cD_1$,
$$
\left| R_{ij}^{(k)} (z)- R_{ij}(z) \right| \le \frac{|R_{ik} (z) R_{k j}(z) |}{|R_{kk}(z) |} \lesssim \frac{N^{2\eps}}{q^2} \ll \frac{N^\eps}{q}. 
$$
We may repeat the above computation for $\T = \{k,l\}$ and $l \ne k$.  
It follows that, if $\Omega(\alpha)$ holds, for all $\T$ with $|\T| \leq 2$, for all $i,j \notin \T$ with $ i \ne j$,
\begin{align}\label{eq:OmRT}
\sup_{z \in \cD_1} |R^{(\T)}_{ii}(z)| \asymp  1 , \quad \sup_{z \in \cD_1} |R^{(\T)}_{ij}(z)| \lesssim \frac{N^\eps}{q}   \quad	\hbox{and} \quad \Im R^{(\T)}_{ii} + \Im R^{(\T)}_{ij} \lesssim  N^\eps \alpha.
\end{align}

\paragraph{Step 4: concentration inequality.} Next, if $\T \subset \{1,\ldots,N\}$, we use the notation
$$
\sum_{i}^{(\T)} = \sum_{i : i \notin (\T)}.
$$
Using classical resolvent identities, the following variables are used in the next step to control $R_{ii}(z)$ and $R_{ij}(z)$: 
\begin{align*}
	Z_{ij}(z)&:=\sum_{k,l}^{(ij)}h_{ik}R_{kl}^{(ij)}(z)h_{lj}, \\
	Z_{i}(z)&:= \sum_{k,l}^{(i)}\left( h_{ik}h_{li} - \frac{1}{N}\delta_{kl} \right) R_{kl}^{(i)}(z).
\end{align*}

For a fixed $z \in \C_+$, we note that $R^{(i)}(z)$ is independent of the vector $(h_{ik})_{k}$ and similarly for $R^{(ij)}(z)$ with $(h_{ik},h_{jl})_{k,l}$. We are however interested in $R^{(i)}= R^{(i)}(\tz)$ and $R^{(ij)}= R^{(ij)}(\tz)$ with $\tz$ defined by \eqref{eq:deftz}, this breaks the above independence property. To circumvent this difficulty, we define $\tz_i = \kappa + L + \cX_i + \mathfrak{i}\eta$ and $\tz_{ij} = \kappa + L + \cX_{ij} + \mathfrak{i}\eta$ with
$$
\cX_i = \frac 1 N \sum_{k,l}^{(i)} \left( h_{kl}^2 - \frac 1 N \right) \quad \hbox{ and } \quad \cX_{ij} = \frac 1 N \sum_{k,l}^{(ij)} \left( h_{kl}^2 - \frac 1 N \right).
$$
The independence of $R^{(i)}(\tz_{i})$ and $(h_{il})_{l \ne i}$  is now restored, and similarly for $R^{(ij)}(\tz_{ij})$. 
The next lemma relies on the concentration of the variables $Z_{ij}$ and $Z_i$.

\begin{lem}\label{lem: bound on off diagonal}
Assume $q\gg 1$ and $1/(N\eta) \le \alpha \le 1/q $. We have on $\Omega(\alpha)$,
	\begin{align*}
	& |Z_i| \prec N^{\eps}\left(\frac{1}{q}  + \sqrt{\frac{\alpha}{N\eta}} \right), \quad |Z_{ij} | \prec N^\eps \left( \frac{1}{q^{2}} + \sqrt{\frac{\alpha}{N\eta}}\right)\\
&	|\Im(Z_{i})| \prec N^\eps \left( \frac{\alpha}{q} + \sqrt{\frac{\alpha}{N\eta}}\right) ,
\quad |\Im(Z_{ij}) | \prec N^\eps \left( \frac{\alpha}{q} + \sqrt{\frac{\alpha}{N\eta}}\right).
	\end{align*}
\end{lem}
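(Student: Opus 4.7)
The strategy is to apply standard large-deviation bounds for quadratic and bilinear forms of independent sparse variables. The principal difficulty is that the random shift $\tz$ depends on all entries of $H$ via $\cX$---in particular on the variables $(h_{ik})_k$ that enter $Z_i$---so that $R^{(i)}(\tz)$ is \emph{not} independent of those variables. I would circumvent this by working with the proxies $\tz_i$, $\tz_{ij}$ already introduced in the proof of Lemma \ref{prop: estimate on the imaginary part of local law}, and then comparing $Z_i(\tz)$ to $Z_i(\tz_i)$ (resp.\ $Z_{ij}(\tz)$ to $Z_{ij}(\tz_{ij})$) via the resolvent identity.

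For the comparison, one first checks that $|\tz - \tz_i| \prec 1/(Nq)$, since $\cX - \cX_i = \frac{2}{N}\sum_{l \ne i}(h_{il}^2 - 1/N) + \frac{1}{N}(h_{ii}^2 - 1/N)$ is a sum of $O(N)$ centered terms of variance $O(1/(Nq^2))$. The resolvent identity combined with Cauchy--Schwarz and Ward \eqref{eq:ward} then gives on $\Omega(\alpha)$ a quantitative bound on $|R^{(i)}_{kl}(\tz) - R^{(i)}_{kl}(\tz_i)|$ that is much sharper than the naive $|\tz-\tz_i|/\eta^2$, small enough that the correction $Z_i(\tz) - Z_i(\tz_i)$ (estimated using $\E|h_{ik}|^2 = 1/N$) is absorbed into the target. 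The same device handles $Z_{ij}(\tz)$ versus $Z_{ij}(\tz_{ij})$.

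Since $R^{(i)}(\tz_i)$ is independent of $(h_{ik})_k$, I would then split
$$
Z_i(\tz_i) = \sum_k^{(i)}(h_{ik}^2 - 1/N) R^{(i)}_{kk}(\tz_i) + \sum_{k \ne l}^{(i)} h_{ik} h_{il} R^{(i)}_{kl}(\tz_i),
$$
and apply Bernstein's inequality to the (independent) diagonal sum and a Hanson--Wright-type bound tailored to sparse entries (in the spirit of \cite[Lemma 3.8]{EKYY13}) to the off-diagonal sum. This produces
$$
|Z_i(\tz_i)| \prec \frac{1}{q^2}\max_k|R^{(i)}_{kk}| + \frac{1}{q}\sqrt{\frac{1}{N}\sum_k|R^{(i)}_{kk}|^2} + \sqrt{\frac{1}{N^2}\sum_{k \ne l}|R^{(i)}_{kl}|^2}.
$$
An analogous bilinear estimate treats $Z_{ij}$. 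Crucially, since the $h$'s are real, $\Im Z_i = \sum_{k,l}(h_{ik} h_{li} - \delta_{kl}/N)\Im R^{(i)}_{kl}$ and $\Im Z_{ij} = \sum_{k,l} h_{ik}\,\Im R^{(ij)}_{kl}\, h_{lj}$ are again quadratic/bilinear forms of exactly the same type, now with kernel $\Im R^{(i)}$ or $\Im R^{(ij)}$, so the same scheme applies verbatim.

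Plugging in the a priori bounds from \eqref{eq:OmR}--\eqref{eq:OmRT} valid on $\Omega(\alpha)$---$|R^{(i)}_{kk}| \asymp 1$, $|\Im R^{(i)}_{kk}| \lesssim N^\eps\alpha$, and Ward's identity $\sum_{k,l}^{(i)}|R^{(i)}_{kl}|^2 = \sum_k \Im R^{(i)}_{kk}/\eta \lesssim N^{1+\eps}\alpha/\eta$ (which also bounds $\sum_{k,l}|\Im R^{(i)}_{kl}|^2$ by monotonicity)---gives the four claimed estimates: the Hanson--Wright term contributes $\sqrt{\alpha/(N\eta)}$ in all four cases; the Bernstein term contributes $1/q$ for $Z_i$ and, because $|h_{ik} h_{kj}| \prec 1/q^2$, $1/q^2$ for $Z_{ij}$; and for the imaginary parts, the substitution $|R^{(i)}_{kk}|\asymp 1 \mapsto |\Im R^{(i)}_{kk}|\lesssim N^\eps\alpha$ in the Bernstein contribution converts $1/q$ and $1/q^2$ into $\alpha/q$. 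The main obstacle is the decoupling: a crude bound of the form $|Z_i(\tz) - Z_i(\tz_i)| \lesssim |\tz-\tz_i|/\eta^2$ would be insufficient in our regime, and one really needs Ward's identity to obtain a factor $\alpha/\eta$ rather than $1/\eta^2$.
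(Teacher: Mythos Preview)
Your proposal is correct and follows essentially the same approach as the paper: decouple via the proxies $\tz_i,\tz_{ij}$ to restore independence, control the perturbation $R^{(i)}(\tz)-R^{(i)}(\tz_i)$ through the resolvent identity together with Ward's identity and $|\tz-\tz_i|\prec 1/(Nq)$, then apply the sparse large-deviation estimates of \cite[Lemma~3.8]{EKYY13} and plug in the a~priori bounds \eqref{eq:OmRT} on $\Omega(\alpha)$. The only minor slip is in your final sentence: for $\Im Z_{ij}$ the $\alpha/q$ does not come from the ``Bernstein'' (diagonal) contribution but from the off-diagonal $\tfrac{1}{q}\max_{k\ne l}|\Im R^{(ij)}_{kl}|$ term in the large-deviation bound, which your displayed formula for $Z_i$ omits; this does not affect the outcome since the stated bound is only an upper bound.
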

	\begin{proof}
We start by controlling $\tilde R^{(i)} = R^{(i)}(\tz_i)$ and $\tilde R^{(ij)} =  R^{(ij)}(\tz_{ij})$. From the resolvent identity, we have 
$$
R^{(i)} - \tilde R^{(i)}  = - (\tz - \tz_i) R^{(i)}  \tilde R^{(i)} .
$$
Moreover,  
$$
 |\tz - \tz_i| \prec \frac{1}{Nq},
$$
and on $\Omega(\alpha)$, from \eqref{eq:OmRT}, for any $k,l$
$$
\left| \left(R^{(i)} \tilde R^{(i)}\right)_{kl} \right| \leq \sum_{a}^{(i)} | R^{(i)}_{ka}    \tilde R^{(i)}_{al} | \leq \sum_{a}^{(i)} \left( (R^{(i)}_{ka})^{2} + (\tilde R^{(i)}_{al})^{2} \right)| \lesssim \frac{N^{\eps}\alpha}{\eta}.
$$
So finally, 
\begin{equation}\label{eq:diffRzzi}
\left| R_{kl}^{(i)} - \tilde R_{kl}^{(i)} \right| \prec \frac{N^{\eps}\alpha}{qN\eta}
\end{equation}
The same bound holds for $\left| R^{(ij)}_{kl}  - \tilde R^{(ij)}_{kl} \right|$.

Now, using the independence of $\tilde R^{(i)}$ and $(h_{il})_{l}$, the large deviation estimate \cite[Lemma 3.8 (ii)]{EKYY13} and \eqref{eq:OmRT}-\eqref{eq:diffRzzi}, we obtain on $\Omega(\alpha)$,
		\begin{align*}
			|Z_{i}|& \prec \left|\sum_{k}^{(i)}\left( |h_{ik}|^{2} - \frac{1}{N} \right)\tilde R_{kk}^{(i)}\right| + \left|\sum_{k\neq l}^{(i)}h_{ik}\tilde R_{kl}^{(i)} h_{li}\right| + \frac{N^{\eps}\alpha}{qN\eta}\\
			& \prec  \frac{\max_{k}|\tilde R_{kk}^{(i)}|}{q} + \frac{\max_{k\neq l}|\tilde R_{kl}^{(i)}|}{q} + \left(\frac{1}{N^{2}}\sum_{k,l}^{(ij)}|\tilde R_{kl}^{(i)}|^{2}\right)^{1/2} + \frac{N^{\eps}\alpha}{qN\eta} \\
			& \prec N^{\eps} \left(  \frac{1}{q} + \frac{1}{q^{2}} + \sqrt{\frac{\alpha}{N\eta}}\right),
		\end{align*}
where we have used Ward identity \eqref{eq:ward}.
The first claim follows.

Similarly, since for $i\neq j$, the random variables $\{h_{ik}\}_{k : k \ne j}$ are independent of $\{h_{lj}\}_{l: l \ne i}$, from \cite[Lemma 3.8 (iii)]{EKYY13} and \eqref{eq:OmRT}-\eqref{eq:diffRzzi},  on $\Omega(\alpha)$, we have
		\begin{align*}
			\left|\sum_{k,l}^{(ij)}Z_{ij} \right| &\prec  \frac{\max_{k}|\tilde R_{kk}^{(ij)}|}{q^{2}} + \frac{\max_{k\neq l}|\tilde R_{kl}^{(ij)}|}{q} + \left(\frac{1}{N^{2}}\sum_{k,l}^{(ij)}|\tilde R_{kl}^{(ij)}|^{2}\right)^{1/2} + \frac{N^{\eps}\alpha}{qN\eta} \nn\\
			&\prec  N^{\eps} \left( \frac{1}{q^{2}} + \sqrt{\frac{\alpha}{N\eta}}\right).
		\end{align*}
	
	The same argument gives, on $\Omega(\alpha)$, 
		\begin{align*}
			\left|\Im(Z_{ij}) \right| =  \left| \sum_{k,l}^{(ij)}h_{ik}\Im (\tilde R_{kl}^{(ij)})h_{lj}\right| \prec N^\eps \left(\frac{\alpha}{q} + \sqrt{\frac{\alpha}{N\eta}}\right).
		\end{align*}

		Finally, we obtain similarly, on $\Omega(\alpha)$
		\begin{align*}
			\Im Z_{i} & = \sum_{k}^{(i)}\left( |h_{ik}|^{2} - \frac{1}{N} \right) \Im \tilde R_{kk}^{(i)} + \sum_{k\neq l}^{(i)}h_{ik}\Im (\tilde R_{kl}^{(i)})h_{li}			\prec N^\eps \left( \frac{\alpha}{q} + \sqrt{\frac{\alpha}{N\eta}}\right).
		\end{align*}
		The proof is complete.
	\end{proof}

\paragraph{Step 5: Iteration of the error bounds.}

Our next lemma improves the bound for the off-diagonal entries of $\Im(R)$ when $\Omega(\alpha)$ holds.
\begin{lem}\label{lem: bound off digonal}Assume $q\gg N^{\eps}$ and $1/(N\eta)\le \alpha \leq 1/q$. We have on $\Omega(\alpha)$,
	\begin{align*}
		\Lambda_{o}^{\Im} \prec N^\eps \left( \frac{\alpha}{q} + \sqrt{\frac{\alpha}{N\eta}}\right).
	\end{align*}
\end{lem}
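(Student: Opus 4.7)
The plan is to start from the standard Schur complement identity for the off-diagonal resolvent entries: for $i\ne j$,
\[
R_{ij}(\tz) = -R_{ii}(\tz)\, R^{(i)}_{jj}(\tz)\,\big(h_{ij} - Z_{ij}(\tz)\big),
\]
and to extract the imaginary part carefully so as to exploit the smallness of $\Im R_{ii}$, $\Im R^{(i)}_{jj}$ and $\Im Z_{ij}$ on $\Omega(\alpha)$. Writing $A := R_{ii}\,R^{(i)}_{jj}$ and $B := h_{ij} - Z_{ij}$, and using that $h_{ij}\in\R$, one gets
\[
\Im R_{ij} \;=\; -\Im(A)\,\big(h_{ij}-\Re Z_{ij}\big)\,-\,\Re(A)\,\Im Z_{ij},
\]
so that $|\Im R_{ij}|\le|\Im A|\,|h_{ij}-\Re Z_{ij}|+|A|\,|\Im Z_{ij}|$.

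The key step is to control the two factors on $\Omega(\alpha)$. Using the extension \eqref{eq:OmRT} of \eqref{eq:OmR} to minors (applied with $\T=\{i\}$), we have $|R_{ii}|,|R^{(i)}_{jj}|\asymp 1$ and $|\Im R_{ii}|,|\Im R^{(i)}_{jj}|\lesssim N^\eps\alpha$. Since $\Im(ab)=\Im(a)\Re(b)+\Re(a)\Im(b)$, it follows that $|\Im A|\lesssim N^\eps\alpha$ while $|A|\lesssim 1$. The bounds on $Z_{ij}$ and $\Im Z_{ij}$ are supplied directly by Lemma \ref{lem: bound on off diagonal}, and the sub-Gaussian tail assumption yields $|h_{ij}|\prec 1/q$, so $|h_{ij}-\Re Z_{ij}|\prec 1/q + N^\eps\sqrt{\alpha/(N\eta)}$ (the $1/q^2$ piece of $Z_{ij}$ is absorbed into $1/q$).

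Plugging these estimates together gives
\[
|\Im R_{ij}| \prec N^\eps\alpha\left(\tfrac1q + N^\eps\sqrt{\tfrac{\alpha}{N\eta}}\right) + N^\eps\left(\tfrac{\alpha}{q} + \sqrt{\tfrac{\alpha}{N\eta}}\right) \prec N^\eps\left(\tfrac{\alpha}{q}+\sqrt{\tfrac{\alpha}{N\eta}}\right),
\]
where in the last step I use $\alpha\le 1/q\le 1$ to absorb the cross term $\alpha\sqrt{\alpha/(N\eta)}\le\sqrt{\alpha/(N\eta)}$, and I allow the arbitrarily small exponent $\eps$ in the final bound to absorb the $N^{2\eps}$ prefactor (which is harmless because the statement of the lemma quantifies over arbitrarily small $\eps$). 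Taking the maximum over $i\ne j$ then yields the claim.

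I do not anticipate any real obstacle here: the whole argument is a deterministic algebraic manipulation that feeds on already-established bounds. The only point requiring care is to keep track of the imaginary parts throughout and to use $|\Im R_{ii}|\lesssim N^\eps\alpha$ (rather than the crude $|R_{ii}|\asymp 1$) when bounding $|\Im A|$; it is precisely this gain that produces the right power of $\alpha$ in the final estimate.
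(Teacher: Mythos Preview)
Your proposal is correct and follows essentially the same approach as the paper: start from the Schur complement identity $R_{ij}=-R_{ii}R^{(i)}_{jj}(h_{ij}-Z_{ij})$, extract the imaginary part term by term, and feed in the bounds from \eqref{eq:OmRT} and Lemma \ref{lem: bound on off diagonal}. The only cosmetic difference is that the paper expands the imaginary part of the triple product directly into three terms, whereas you first group $A=R_{ii}R^{(i)}_{jj}$ and bound $|\Im A|$; the resulting estimates and the final absorption (via $\alpha\le 1/q\ll N^{-\eps}$) are identical.
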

\begin{proof}
		Let $i\neq j$. Using 
		\begin{align}
			R_{ij} = -R_{ii}R_{jj}^{(i)}(h_{ij}-Z_{ij}),
		\end{align}
(see e.g. \cite[Lemma 3.5]{MR3792624}),
		it follows from \eqref{eq:OmRT} and Lemma \ref{lem: bound on off diagonal} that on $\Omega(\alpha)$,
		\begin{align*}
			\Im R_{ij} &\le  \Im R_{ii} |R^{(i)}_{jj}||h_{ij}-Z_{ij}| +  |R_{ii}| \Im R^{(i)}_{jj}|h_{ij}-Z_{ij}| + |R_{ii} | |R^{(i)}_{jj}|\Im Z_{ij}\\
			&\prec N^{2\eps} \alpha \left(\frac{1}{q}+\sqrt{\frac{\alpha}{N\eta}}\right) + N^\eps \left( \frac{\alpha}{q} + \sqrt{\frac{\alpha}{N\eta}}\right).
		\end{align*}
	Since $\alpha \leq 1/q \ll N^{-\eps}$,	the second term is dominant. Thus, by taking the maximum over $i\neq j$, the statement of the lemma follows.
\end{proof}

It remains to control the diagonal entries of $\Im(R)$ when $\Omega(\alpha)$ holds.
\begin{lem}\label{lem: bound on digonal}
Assume $q\gtrsim N^{\eps}$ and $1/(N\eta)\le \alpha \leq 1/q$. We have on $\Omega(\alpha)$,
	\begin{align*}
		\Lambda_{d}^{\Im} \prec N^\eps \left( \frac{\alpha}{q} + \sqrt{\frac{\alpha}{N\eta}} + \frac{1}{N\eta}\right).
	\end{align*}
	\end{lem}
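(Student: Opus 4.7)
My plan is to take imaginary parts in both the Schur-complement representation of $R_{ii}$ and the polynomial equation $P(\tz,m_\star)=0$, subtract the two resulting scalar identities, and then control each of the five terms in the difference using the a-priori bounds built into $\Omega(\alpha)$, Ward's identity, and the concentration of $Z_i$ supplied by Lemma \ref{lem: bound on off diagonal}.

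First, the Schur-complement identity $R_{ii}(\tz)^{-1}=h_{ii}-\tz-M^{(i)}-Z_i$, with $M^{(i)}(\tz)=N^{-1}\sum_{k}^{(i)}R^{(i)}_{kk}(\tz)$, together with $h_{ii}\in\R$, $\Im\tz=\eta$, and $\Im(R_{ii}^{-1})=-\Im R_{ii}/|R_{ii}|^2$, produces
\begin{align*}
\Im R_{ii}=|R_{ii}|^2\bigl(\eta+\Im M^{(i)}+\Im Z_i\bigr).
\end{align*}
Dividing $P(\tz,m_\star)=0$ by $m_\star$ yields $m_\star^{-1}=-\tz-(1+\cX)m_\star-Q(m_\star)/m_\star$, and the same manipulation gives the parallel identity
\begin{align*}
\Im m_\star=|m_\star|^2\bigl[\eta+(1+\cX)\Im m_\star+\Im(Q(m_\star)/m_\star)\bigr].
\end{align*}
Subtracting and inserting $\pm|R_{ii}|^2\Im m_\star$ on the right, I arrive at the master identity
\begin{multline*}
\Im R_{ii}-\Im m_\star=\eta(|R_{ii}|^2-|m_\star|^2)+|R_{ii}|^2(\Im M^{(i)}-\Im m_\star)\\
+(|R_{ii}|^2-(1+\cX)|m_\star|^2)\Im m_\star+|R_{ii}|^2\Im Z_i-|m_\star|^2\Im(Q(m_\star)/m_\star).
\end{multline*}

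I then estimate the five terms on $\Omega(\alpha)$. The third and fifth carry the small factor $\Im m_\star\lesssim\sqrt{\kappa+\eta}\le N^{-1/3+\delta/2}$ from \eqref{eq:mstarasym}, combined with $|\cX|\prec 1/(q\sqrt N)$ and $|Q(m_\star)/m_\star|=O(1/q^2)$; both will be negligible compared with $1/(N\eta)$. For the first term I use the companion Schur identity
\begin{align*}
R_{ii}-m_\star=-R_{ii}m_\star\bigl[h_{ii}-Z_i+(m_\star-M^{(i)})+\cX m_\star+Q(m_\star)/m_\star\bigr],
\end{align*}
combined with Lemma \ref{lem: bound on off diagonal}, $|h_{ii}|\prec 1/q$, and $|m-M^{(i)}|\le 1/N+\Im R_{ii}/(N\eta)\prec 1/(N\eta)$ (obtained from $R_{kk}-R^{(i)}_{kk}=R_{ki}^2/R_{ii}$ and the Ward identity $\sum_k|R_{ki}|^2=\Im R_{ii}/\eta$), to obtain $|R_{ii}-m_\star|\prec 1/q+\sqrt{\alpha/(N\eta)}$, which after multiplication by $\eta\le 1$ is absorbed into the target. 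The fourth term is directly $\prec\alpha/q+\sqrt{\alpha/(N\eta)}$ by Lemma \ref{lem: bound on off diagonal}.

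The substantive remaining step is controlling $\Im M^{(i)}-\Im m_\star$. I plan to split it as $\Im(M^{(i)}-m)+\Im(m-m_\star)$: the second piece is $\prec 1/(N\eta)$ because $\Omega(\alpha)\subset\Omega$ forces $\Lambda\le N^\eps/(N\eta)$. For the first piece I will apply $\Im$ to $R_{kk}-R^{(i)}_{kk}=R_{ki}^2/R_{ii}$, use the elementary inequality $|\Im(w/z)|\le|\Im w|/|z|+|w||\Im z|/|z|^2$ with $|\Im(R_{ki}^2)|\le 2|R_{ki}||\Im R_{ki}|$, and combine Cauchy--Schwarz with Ward's identity and the pointwise bound $|\Im R_{ki}|\le|R_{ki}|$ to obtain $|\Im(m-M^{(i)})|\prec\alpha/(N\eta)\le 1/(N\eta)$ since $\alpha\le 1$. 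Assembling the five bounds and taking a maximum over $i$ will give the claimed bound $\Lambda_d^{\Im}\prec\alpha/q+\sqrt{\alpha/(N\eta)}+1/(N\eta)$. The main technical hurdle is precisely this Ward-based balance in estimating $\Im(m-M^{(i)})$: a naive bound $|\Im(R_{ki}^2/R_{ii})|\lesssim|R_{ki}|^2$ would discard the extra smallness from $|\Im R_{ki}|\le N^\eps\alpha$ and would not permit the self-improving iteration to converge to the floor $1/(N\eta)$.
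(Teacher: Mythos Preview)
Your argument is correct, but it follows a different path from the paper's. The paper never compares $\Im R_{ii}$ directly to $\Im m_\star$; instead it first shows $|\Im R_{ii}-\Im R_{jj}|\prec N^\eps(\alpha/q+\sqrt{\alpha/(N\eta)})$ for all pairs $i,j$ via the identity $R_{ii}-R_{jj}=R_{ii}R_{jj}(\Upsilon_j-\Upsilon_i)$, where $\Upsilon_i=h_{ii}-Z_i+\mathcal A_i$ and $\mathcal A_i=N^{-1}\sum_j R_{ij}^2/R_{ii}$ is the Ward term. Taking imaginary parts and bounding $|\Upsilon_i-\Upsilon_j|$ and $|\Im(\Upsilon_i-\Upsilon_j)|$ separately (the latter via Lemma~\ref{lem: bound on off diagonal}) yields the pairwise estimate; averaging over $j$ gives $|\Im R_{ii}-\Im m|\prec N^\eps(\alpha/q+\sqrt{\alpha/(N\eta)})$, and the final $1/(N\eta)$ comes from $|\Im m-\Im m_\star|\le\Lambda\le N^\eps/(N\eta)$ on $\Omega$.

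Your route bypasses the pairwise comparison by invoking the polynomial equation $P(\tz,m_\star)=0$ and subtracting the two imaginary-part identities directly. This is cleaner in that it reaches $m_\star$ in one step, but it requires the explicit structure of $P$ (in particular the terms $\cX m_\star^2$ and $Q(m_\star)$ and their smallness). The paper's approach is more agnostic: it only needs that $m$ is already close to $m_\star$ on $\Omega$, and the self-consistent equation for $m_\star$ plays no role in this lemma. Both arguments hinge on the same two inputs --- Lemma~\ref{lem: bound on off diagonal} for $\Im Z_i$ and the local-law input $\Lambda\le N^\eps/(N\eta)$ --- and both use a Ward-based estimate on a sum of the form $N^{-1}\sum_k R_{ki}^2/R_{ii}$, so the essential analytic content is the same. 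A minor remark: your companion identity for $R_{ii}-m_\star$ is not needed, since $|R_{ii}-m_\star|\le\Lambda_e\le N^\eps/q$ is already part of $\Omega$; and in the first-sum estimate for $\Im(m-M^{(i)})$ the pointwise bound $|\Im R_{ki}|\le|R_{ki}|$ combined with Ward already gives $N^{-1}\sum_k|R_{ki}||\Im R_{ki}|\le\Im R_{ii}/(N\eta)$ without Cauchy--Schwarz.
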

	\begin{proof}
We will prove that if $\Omega(\alpha)$ holds then	\begin{align}\label{le:tbpfle}
		\max_{i}\big|\text{\normalfont Im}(R_{ii}) - \Im (m)\big| \prec  N^\eps \left(\frac{\alpha}{q}+\sqrt{\frac{\alpha}{N\eta}}\right).
	\end{align}
	Since, by assumption, on $\Omega(\alpha)$, $\Lambda = |m - m_\star| \leq N^{\eps} /(N\eta)$, it will conclude the proof.

From \cite[Lemma 3.10]{EKYY13}, the following identity holds: for all $z \in \C_+$,
		\begin{align*}
			- \frac{1}{R_{ii}(z)} =z + m(z) - \Upsilon_{i}(z) ,
		\end{align*}
 where 
		\begin{align*}
			\Upsilon_{i} (z)= h_{ii}-Z_{i} (z) +\mathcal{A}_{i}(z) \quad \hbox{and} \quad			\mathcal{A}_{i} (z) = \frac{1}{N}\sum_{j}\frac{R_{ij}(z)R_{ji}(z)}{R_{ii}(z)}.
		\end{align*}
				
In particular, $R_{ii}-R_{jj} = R_{ii}R_{jj}(\Upsilon_{j}  -\Upsilon_{i})$ and consequently, from \eqref{eq:OmRT}, on $\Omega(\alpha)$,
		\begin{align*}
			|\Im R_{ii}-\Im R_{jj}| &\lesssim |\Im  R_{ii}||R_{jj}||\Upsilon_{i}-\Upsilon_{j}| + |R_{ii}||\Im  R_{jj}||\Upsilon_{i}-\Upsilon_{j}| + |R_{ii}||R_{jj}||\Im (\Upsilon_{i}-\Upsilon_{j})|\\
			& \lesssim N^\eps \alpha |\Upsilon_{i}-\Upsilon_{j}| + |\Im (\Upsilon_{i}-\Upsilon_{j})|.
				\end{align*}

From Ward identity \eqref{eq:ward},
	\begin{align*}
			|\mathcal{A}_{i}| \le \frac{1}{N}\sum_{j}\frac{|R_{ij}(\tz_i)|^{2}}{|R_{ii}(\tz_i)|} \le \frac{C N^\eps \alpha}{N\eta}.
		\end{align*}
		Therefore, it follows from Lemma \ref{lem: bound on off diagonal} that on $\Omega(\alpha)$,
		\begin{align*}
			|\Upsilon_{i}-\Upsilon_{j}| \prec N^{\eps} \left( \frac{1}{q} + \sqrt{\frac{\alpha}{N\eta}}\right).
		\end{align*}
Similarly, we obtain from Lemma \ref{lem: bound on off diagonal} that on $\Omega(\alpha)$,
		\begin{align*}
			|\Im (\Upsilon_{i}-\Upsilon_{j})| \prec N^{\eps} \left( \frac{\alpha}{q} + \sqrt{\frac{\alpha}{N\eta}}\right).
		\end{align*}
		Therefore, it follows that
		\begin{align*}
			|\Im R_{ii}-\Im R_{jj}| \prec N^{\eps} \left( \frac{\alpha}{q}+\sqrt{\frac{\alpha}{N\eta}}\right).
		\end{align*}
		Since
		\begin{align*}
			\Im R_{ii} - \Im (m) = \frac{1}{N}\sum_{j}(\Im R_{ii}-\Im R_{jj}), 
		\end{align*}
	Equation \eqref{le:tbpfle} is established.
	\end{proof}

We are now ready to complete the proof of \eqref{eq:tbpimll} by proving that $\Omega(N^{\eps}/(N\eta))$ has overwhelming probability. Let $\alpha_1 = 1/q$. As already pointed, $\Omega(\alpha_1)$ has overwhelming probability. If $\alpha_1\ll N^{\eps} /(N\eta)$, we are done. Otherwise, by Lemma \ref{lem: bound off digonal} and Lemma \ref{lem: bound on digonal}, we have  
$$
	\Lambda_{o}^{\Im} + \Lambda_{d}^{\Im } \prec  N^\eps \left( \frac{\alpha_1}{q} + \sqrt{\frac{\alpha_1}{N\eta}} + \frac{1}{N\eta}\right).
$$
In particular, setting $$\alpha_2 =  \frac{\alpha_1}{q} + \sqrt{\frac{\alpha_1}{N\eta}} + \frac{1}{N\eta},$$ we have that $\Omega(\alpha_2)$ has overwhelming probability. If $\alpha_2 \ll N^{\eps} /(N\eta)$, we are done. Otherwise we continue. This process reach below $N^{\eps} /(N\eta)$ after a finite number of iterations because $q\gg 1$ and we have $\alpha_{k+1} \lesssim \alpha_{k}\max\left(1/q, \sqrt{1/(\alpha_{k}N\eta)}\right)$ as long as $\alpha_k \gg 1/(N\eta)$. 
\qed

\section{\ER graphs}\label{sec:ER}
Let $A$ be the normalized adjacency matrix of \ER graph with edge density $p=q^{2}/N$.
Each entry of the matrix $A=(a_{ij})_{1\le i,j\le N}$ is distributed as follows. Every diagonal entry $a_{ii}$ is zero. If $i < j$,
\begin{align*}
	a_{ij}=\begin{cases}
		\zeta/q &  \text{with probability} \; q^{2}/N, \\
		& \\
		0 & \text{with probability} \; 1-q^{2}/N,
	\end{cases}
\end{align*}
where
\begin{align*}
	\zeta:=(1-q^{2}/N)^{-1/2}.
\end{align*}
The resampling procedure is defined as in Definition \ref{def: resampling} with the random sets $S_k$ and an independent copy $A' = (a'_{ij})$ of $A$. For each integer $0 \leq k \leq N(N+1)/2$, we then obtain a matrix $A^{[k]}$ whose entries in $S_k$ above the diagonal are equal to the entries of $A'$ and whose entries in $S^c_k$ are equal to the corresponding entries in $A$.

Let $\nu_{1}\ge\cdots\ge\nu_{N}$ be the ordered eigenvalues of $A$. We denote an orthonormal basis of eigenvectors of $A$ by $\{\ww_{1},\cdots,\ww_{N}\}$, i.e., $A\ww_{i}=\nu_{i}\ww_{i}$ and $\lVert \ww_{i} \rVert$ = 1 for each $i$. Again from \cite{LV18}, with probability tending to one as $N$ goes to infinity, $\nu_{1}>\cdots>\nu_{N}$ and the eigenvectors are uniquely determined up to a sign. Similarly, we use the notation $\nu_{1}^{[k]}\ge\cdots\ge\nu_{N}^{[k]}$ and $\ww_{1}^{[k]},\cdots,\ww_{N}^{[k]}$ to denote the ordered eigenvalues and the associated unit eigenvectors of $A^{[k]}$.

In this section, we explain the proof of Theorem \ref{thm: main1A} and Theorem \ref{thm: main2A}. Let us fix $\ell=2$. The case $\ell=N$ can be handled in the almost similar argument. We define $N\times N$ matrix $\Ac=(\ac_{ij})$ by extracting the mean from the adjacency matrix $A$,
\begin{align*}
	\ac_{ij} := a_{ij}-\E a_{ij}.
\end{align*}
We find that
\begin{align*}
	A = \Ac + f \ee \ee^* - aI,
\end{align*}
where $f:=\zeta q$, $\ee:=N^{-1/2}(1,1,\cdots,1)^{T}\in\R^{N}$ and $a:=f/N$. The random correction term $\cX$ is again defined by setting 
\begin{align}\label{eq: random correction term A}
	\cX =\frac{1}{N}\sum_{1\le i,j \le N}\left(\ac^{2}_{ij}-\frac{1}{N}\right).
\end{align}
We note that $\Ac$ satisfies most properties of the sparse random matrix $H$ such as Lemma \ref{lem: delocalization}, Lemma \ref{lem:TW} and Lemma \ref{lem: local law}, see \cite{EKYY12, EKYY13, HLY20}. 

\subsection{Local laws and universality of \ER graphs}

The delocalization of eigenvectors is valid for $A$.  
\begin{lem}[Theorem 2.16 of {\cite{EKYY13}}]\label{lem: delocalization A}
	Assume $q\gg 1$. We have
	$$	\max_{1\le i \le N} \lVert \ww_{i} \rVert_{\infty} \prec \frac{1}{\sqrt N}.
	$$
\end{lem}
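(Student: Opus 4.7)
The plan is to reduce the delocalization for $A$ to the corresponding statement for the centered sparse matrix $\mathring{A}$ (Lemma \ref{lem: delocalization}), using the decomposition
\begin{equation*}
A = \Ac + f\,\ee\ee^{*} - a I.
\end{equation*}
The matrix $\Ac$ fits Definition \ref{def: sparse RM} after a harmless rescaling of entries (the Bernoulli factor $y_{ij}$ is centered by $-q^{2}/N$, which only contributes a deterministic rank-one and diagonal shift that is already absorbed in $f\ee\ee^{*}-aI$). Consequently every local-law/delocalization tool we used for $H$ is available for $\Ac$. The key split is that the rank-one perturbation $f\ee\ee^{*}$ has operator norm of order $q \gg 1$ and therefore creates a single outlier eigenvalue $\nu_{1}\sim f$, while leaving the bulk unaffected at leading order; one then treats $\ww_{1}$ and $\ww_{i}$ ($i\ge 2$) by separate but parallel arguments.

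For the top eigenvector, I would apply the Sherman--Morrison formula to the resolvent of $\Ac - a I$, obtaining
\begin{equation*}
R_{A}(z) = R_{\Ac-aI}(z) - \frac{f\,R_{\Ac-aI}(z)\,\ee\ee^{*}R_{\Ac-aI}(z)}{1 + f\,\ee^{*}R_{\Ac-aI}(z)\,\ee}.
\end{equation*}
The equation $1+f\,\ee^{*}R_{\Ac-aI}(z)\ee=0$ locates $\nu_{1}\asymp f\asymp q$, which sits at distance $\asymp q$ from the support of the limiting spectral measure of $\Ac$. Then $\ww_{1}$ is (up to normalization) proportional to $R_{\Ac-aI}(\nu_{1})\ee$, and the entrywise local law for $\Ac$ applied at the far-from-bulk point $\nu_{1}$ gives $(R_{\Ac-aI}(\nu_{1}))_{ij} = -\delta_{ij}/\nu_{1} + O_{\prec}(q^{-2})$. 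Plugging in yields $\ww_{1} = \ee + O_{\prec}(q^{-1})$ coordinatewise in the $\ell^{\infty}$ sense, hence $\|\ww_{1}\|_{\infty}\prec N^{-1/2}$.

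For $i\ge 2$, the corresponding eigenvector $\ww_{i}$ is associated with the bulk. The same Sherman--Morrison identity shows that, for $z = E+\mathfrak{i}\eta$ in the bulk, the rank-one correction term is bounded entrywise by $f/|1+f\,\ee^{*}R_{\Ac-aI}(z)\ee|$ times an $O(1)$ factor; since $|\ee^{*}R_{\Ac-aI}(z)\ee|\asymp 1$ in the bulk (by the local law for $\Ac$ together with $\ee^{*}R_{\Ac-aI}\ee = N^{-1}\mathrm{Tr}(R_{\Ac-aI}) + $ off-diagonal noise of order $N^{-1/2+o(1)}$), the denominator is of order $f$, so the correction to each matrix entry of $R_{A}$ is $\prec 1$. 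This transfers the entrywise local law from $\Ac$ to $A$ in the bulk. The standard extraction from the spectral decomposition,
\begin{equation*}
\Im(R_{A}(\nu_{i}+\mathfrak{i}\eta))_{jj} = \sum_{k}\frac{\eta\,|(\ww_{k})_{j}|^{2}}{(\nu_{k}-\nu_{i})^{2}+\eta^{2}} \ge \frac{|(\ww_{i})_{j}|^{2}}{\eta},
\end{equation*}
combined with the bound $\Im(R_{A})_{jj}\prec 1$ at scale $\eta = N^{-1+\eps}$, gives $\|\ww_{i}\|_{\infty}^{2}\prec N^{-1}$, then a union bound over $i,j$.

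The main obstacle is controlling the denominator $1+f\,\ee^{*}R_{\Ac-aI}(z)\ee$ uniformly in the bulk: one must show it stays bounded away from $0$ on the relevant spectral scale so that the Sherman--Morrison perturbation does not blow up. This reduces to a quantitative bound on $\ee^{*}R_{\Ac-aI}(z)\ee$, which follows from the isotropic/entrywise local law for $\Ac$ (essentially \cite[Theorem 2.12]{EKYY13} combined with the ideas of Section \ref{sec:resolvent}). Once this is in hand, the rest of the argument is the standard delocalization derivation, and the statement coincides with \cite[Theorem 2.16]{EKYY13}.
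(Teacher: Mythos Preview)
The paper does not prove this lemma; it is quoted verbatim from \cite[Theorem 2.16]{EKYY13}. Your strategy---reduce to the centered matrix $\Ac$ via the Sherman--Morrison identity and then run the usual $\Im R_{jj}\ge |(\ww_i)_j|^2/\eta$ extraction---is precisely the route taken in \cite{EKYY13}, so at the level of architecture you are aligned with the source.

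There is, however, a genuine gap in your treatment of $\ww_1$. From the entrywise bound $(R_{\Ac-aI}(\nu_1))_{ij}=-\delta_{ij}/\nu_1+O_\prec(q^{-2})$ you cannot ``plug in'' to get $(R_{\Ac-aI}(\nu_1)\ee)_i$ with a useful error: summing $N$ off-diagonal terms each $O_\prec(q^{-2})$ yields $O_\prec(Nq^{-2})$, which blows up in the sparse regime $q\ll\sqrt N$. What is actually needed is an \emph{isotropic} statement, namely control of $\langle \ee_i, R_{\Ac-aI}(\nu_1)\ee\rangle$ directly (equivalently, a bound on the row sums $\sum_j (R_{\Ac-aI})_{ij}$ exploiting cancellation). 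This is exactly what \cite{EKYY13} proves in their Section~7 (and what you correctly invoke for the bulk denominator later, but not here). Relatedly, even granting your stated intermediate bound $\ww_1=\ee+O_\prec(q^{-1})$ coordinatewise, the conclusion $\|\ww_1\|_\infty\prec N^{-1/2}$ does not follow when $q\ll\sqrt N$, since $1/q\gg N^{-1/2}$; the correct coordinatewise error is $O_\prec(N^{-1/2}q^{-1})$ (or better), which does give the claim. So the outline is right, but the $\ww_1$ step needs the isotropic input, not merely the entrywise local law.
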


\noindent A non-asymptotic bound on the eigenvalue spacings of $A$ is given as follows.
\begin{lem}[Theorem 2.6 of {\cite{LL19}}]\label{lem: tail bound for gaps A}
	Assume $q\gg 1$. There exists a constant $c>0$ such that the following holds for any $\delta \geq N^{-c}$,
	$$
	\sup_{1\le i\le N-1} \prob\left( \nu_{i}-\nu_{i+1} \le \frac{\delta}{N}  \right)=O(\delta \log{N}).
	$$
\end{lem}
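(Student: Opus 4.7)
The statement is a direct restatement of [LL19, Theorem 2.6], so the most expedient plan is to invoke that result. If one wanted to reproduce the argument, the strategy would parallel that of Lemma~\ref{lem: tail bound for gaps} (which is [LL19, Theorem 2.2]) in the \ER setting by exploiting the decomposition $A=\Ac+f\ee\ee^*-aI$. The scalar shift $-aI$ leaves all gaps invariant, and Cauchy interlacing for the positive semi-definite rank-one perturbation $f\ee\ee^*$ confines the bulk eigenvalues $\nu_{2},\ldots,\nu_{N}$ of $A$ to microscopic windows determined by consecutive eigenvalues of $\Ac$. The top eigenvalue $\nu_1$ is a macroscopically isolated outlier with overwhelming probability, so the bound at $i=1$ is trivial.

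For $i\ge 2$, the plan is to transfer the local law and eigenvalue rigidity available for $\Ac$ in \cite{EKYY12,EKYY13,HLY20} to $A$ at scales $\eta\gtrsim N^{-1}$ via this rank-one interlacing together with the resolvent identity, shedding the outlier $\nu_1$ by a spectral projection. With these inputs, the remainder is a level repulsion argument comparing $A$ to the GOE: one introduces the Ornstein-Uhlenbeck interpolation $A_t=e^{-t/2}A+\sqrt{1-e^{-t}}\,G$ for a GOE matrix $G$, and uses local relaxation of Dyson Brownian motion on times $t\sim N^{-1+\alpha}$ to show that the microscopic gap statistics of $A_t$ coincide with those of $G$, for which the tail bound $\prob(\lambda_i-\lambda_{i+1}\le\delta/N)=O(\delta)$ on the eigenvalues $\lambda_i$ of $G$ follows from the explicit Gaussian joint density of eigenvalues of the GOE.

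The main obstacle is the reverse comparison step: propagating the gap tail bound from $A_t$ back to $A_0=A$ after inverting the short Gaussian convolution. In the regime $q\gg 1$ this is handled in \cite{LL19} by a Green's function comparison along the Dyson flow, using moment matching of the entries up to fourth order between $a_{ij}$ and its Gaussian-convolved counterpart. The logarithmic factor in the statement originates from a union bound over the $O(\log N)$ dyadic scales of $\delta$ required to make the estimate uniform.
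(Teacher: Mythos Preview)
Your proposal is correct and matches the paper exactly: the paper gives no proof of this lemma, simply quoting it as \cite[Theorem 2.6]{LL19}, which is precisely what your first sentence does. The remainder of your proposal is a reasonable (optional) sketch of the strategy in \cite{LL19}, but it goes beyond what the paper itself provides.
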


\noindent Let $\mathring\nu_{1}\ge\cdots\ge\mathring\nu_{N}$ be the ordered eigenvalues of $\Ac$. The following lemma explains the eigenvalue sticking between $\nu_{i+1}$ and $\mathring\nu_{i}$.
\begin{lem}[Eigenvalue sticking {\cite[Lemma 6.2]{EKYY12}}]\label{lem: eig val sticking}
	Assume $q\gg 1$. There exits $\delta>0$ such that we have for all $1\le i \le\delta N$
	\begin{align*}
		|\nu_{i+1}-(\mathring\nu_{i}-a)|\prec \frac{1}{N}.
	\end{align*}
    Similarly, if $N(1-\delta)\le i \le N$, it follows that
    \begin{align*}
    	|\nu_{i}-(\mathring\nu_{i}-a)|\prec \frac{1}{N}.
    \end{align*}
\end{lem}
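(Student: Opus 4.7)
The plan is to reduce the claim to a rank-one perturbation problem and to solve the resulting secular equation using the isotropic local law for $\Ac$. Since $A + aI = \Ac + f\,\ee\ee^{T}$, with $f\,\ee\ee^{T}$ positive semidefinite of rank one, Cauchy interlacing yields $\mathring\nu_{i+1} \leq \nu_{i+1} + a \leq \mathring\nu_i$ for every $1 \leq i \leq N-1$. It therefore suffices to show that $\epsilon_i := \mathring\nu_i - (\nu_{i+1}+a) \geq 0$ satisfies $\epsilon_i \prec 1/N$; the bottom-tail statement for $N(1-\delta) \leq i \leq N$ is handled symmetrically via the interlacing $\mathring\nu_i \leq \nu_i + a \leq \mathring\nu_{i-1}$.

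Writing $x := \nu_{i+1} + a$, the matrix determinant lemma gives the secular equation $\ee^{T} R_\Ac(x)\,\ee = -1/f$. Expanding in the spectral basis of $\Ac$ and isolating the $k=i$ pole, this rewrites as
\begin{align*}
\frac{|\langle \ee,\mathring w_i\rangle|^{2}}{\epsilon_i} \;=\; -\frac{1}{f} - g_{(i)}(x), \qquad g_{(i)}(x) := \sum_{k \neq i}\frac{|\langle \ee,\mathring w_k\rangle|^{2}}{\mathring\nu_k - x}.
\end{align*}
The numerator is $\prec 1/N$ by the isotropic delocalization of the eigenvectors of $\Ac$ available from \cite{EKYY13}. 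For $g_{(i)}(x)$, I would apply the isotropic local law to the regularized argument $z = x + \mathfrak{i}\, N^{-1+\alpha}$ for a small $\alpha > 0$: this yields $\ee^{T}R_\Ac(z)\,\ee = m_\star(z) + o(1)$, and combining this with rigidity of the $\mathring\nu_k$ (as in Lemma \ref{lem: new rigidity estimate} adapted to $\Ac$) both to replace $z$ by the classical location $\gamma_i$ of $\mathring\nu_i$ and to show that the $k=i$ residue contribution is negligible (since $|\langle \ee,\mathring w_i\rangle|^{2}/\eta \prec N^{-\alpha+o(1)}$) produces $g_{(i)}(x) = m_\star(\gamma_i) + o(1)$ with overwhelming probability.

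The main obstacle is then the lower bound on the denominator $|-1/f - g_{(i)}(x)|$: once this is shown to be at least a positive constant, the secular equation immediately gives $\epsilon_i \prec 1/N$. The restriction $1 \leq i \leq \delta N$ (respectively $N(1-\delta) \leq i \leq N$) enters precisely at this step, since it forces $\gamma_i$ to stay bounded away from the middle of the spectrum, where $\mathrm{Re}\, m_\star$ would vanish; on such a range $|\mathrm{Re}\, m_\star(\gamma_i)|$ is bounded below by an absolute positive constant, whereas $1/f \sim 1/q \to 0$, so no accidental cancellation between $-1/f$ and $g_{(i)}(x)$ can occur with overwhelming probability.
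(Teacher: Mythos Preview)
The paper does not prove this lemma at all; it is quoted directly from \cite[Lemma 6.2]{EKYY12} and used as a black box. Your outline (secular equation for the rank-one perturbation $A+aI=\Ac+f\ee\ee^{T}$, isotropic delocalization for the numerator $|\langle\ee,\mathring w_i\rangle|^{2}\prec 1/N$, and the isotropic local law to pin down the remaining sum) is precisely the strategy of the cited reference, so at the level of strategy you are aligned with the original source.

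There is one step in your sketch that is too loose to count as a proof. You claim that regularizing to $z=x+\mathfrak{i}N^{-1+\alpha}$ and applying the isotropic local law yields $g_{(i)}(x)=m_\star(\gamma_i)+o(1)$. First, $m_\star(\gamma_i+\mathfrak{i}0^+)$ has nonzero imaginary part whenever $\gamma_i$ lies in the support, whereas $g_{(i)}(x)$ is real; you clearly intend the real part (and indeed invoke $\Re m_\star$ in the next paragraph), so this is mainly notational. The real issue is the passage from $\ee^{T}R_{\Ac}(z)\ee$ at complex $z$ to $g_{(i)}(x)$ at real $x$: the terms $k$ with $|\mathring\nu_k-x|\lesssim\eta$, in particular $k=i\pm 1$, behave completely differently at the two arguments, and no amount of rigidity alone controls $c_{k}/(\mathring\nu_k-x)$ for those $k$ without an a priori lower bound on the neighboring gaps. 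In \cite{EKYY12} this is handled by observing that one only needs the one-sided bound $g_{(i)}(x)\le -c_0$ (a very negative $g_{(i)}$ only helps in the secular equation), so the negative terms $k>i$ can be dropped and the positive terms $k<i$ are summed directly using rigidity and the edge scaling of the $\gamma_k$; this is where the restriction $i\le\delta N$ is genuinely used, not merely to keep $\Re m_\star(\gamma_i)$ away from zero. Your final paragraph correctly locates the role of the edge restriction, but the mechanism you describe (avoiding cancellation with $1/f$) is not quite the operative one.
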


\noindent Using the eigenvalue sticking lemma, we can show there is a gap of the order $N^{-2/3}$ between the extremal eigenvalues.
\begin{lem}[Tracy-Widom scaling for the gap]\label{lem:TW A}
	Assume $q\gtrsim N^{1/9}$. For any $\eps >0$, there exists a constant $c>0$ such that 
	$$
	\PP (  \nu_{2}-\nu_{3} \ge c N^{-2/3} ) \geq 1 - \eps. $$
	\begin{proof}
		According to \cite[Theorem 1.6]{HLY20}, we have for some constant
		\begin{align*}
			\PP (  \mathring\nu_{1}-\mathring\nu_{2} \ge (c/3) N^{-2/3} ) \geq 1 - \eps/3.
		\end{align*}
	(Setting diagonal entries to zeros does not harm the main argument of \cite{HLY20}.) 
		Thanks to Lemma \ref{lem: eig val sticking}, it follows that
		\begin{align*}
			\nu_{2}-\nu_{3} = \mathring\nu_{1} - \mathring\nu_{2} + O_{\prec}(N^{-1}),
		\end{align*}
	    and the lemma directly follows.
	\end{proof}
\end{lem}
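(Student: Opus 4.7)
The plan is to reduce the gap $\nu_2 - \nu_3$ in the spectrum of the normalized adjacency matrix $A$ to the top gap $\mathring\nu_1 - \mathring\nu_2$ in the spectrum of the centered matrix $\Ac$, via the eigenvalue sticking result already stated as Lemma \ref{lem: eig val sticking}, and then invoke a Tracy-Widom scale lower bound on $\Ac$ coming from \cite{HLY20}. Recall the decomposition $A = \Ac + f\ee\ee^{*} - aI$: the rank-one perturbation $f\ee\ee^{*}$ (with $f = \zeta q$ of order $q$) pushes one eigenvalue out to the Perron--Frobenius outlier $\nu_1 \approx f - a$, while the remaining eigenvalues of $A$ stick to the shifted bulk eigenvalues $\mathring\nu_i - a$.

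First, I would apply Lemma \ref{lem: eig val sticking} with $i=1$ and $i=2$ (both less than $\delta N$ for $N$ large enough), which yields
\[
\nu_2 = \mathring\nu_1 - a + O_\prec(N^{-1}), \qquad \nu_3 = \mathring\nu_2 - a + O_\prec(N^{-1}),
\]
and therefore $\nu_2 - \nu_3 = (\mathring\nu_1 - \mathring\nu_2) + O_\prec(N^{-1})$. Since $O_\prec(N^{-1})$ is comfortably dominated by any constant multiple of $N^{-2/3}$, the gap between $\nu_2$ and $\nu_3$ is, with overwhelming probability, within $N^{-1+o(1)}$ of the top gap for $\Ac$.

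Second, the matrix $\Ac$ has zero diagonal and independent centered off-diagonal entries of variance $1/N$ with the required sparse sub-Gaussian structure (the off-diagonal entries $\ac_{ij}$ factor as $x_{ij}y_{ij}/q$ after the usual rescaling), so it falls within the class covered by the results of \cite{HLY20} up to the harmless zero diagonal. In particular, the analogue of Lemma \ref{lem:TW} for $\Ac$ (\cite[Theorem 1.6]{HLY20}) holds under $q \gtrsim N^{1/9}$: for any $\eps > 0$ there exists $c' > 0$ with $\PP(\mathring\nu_1 - \mathring\nu_2 \geq c' N^{-2/3}) \geq 1 - \eps/2$.

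Combining the two steps, on the intersection of the high-probability events we obtain $\nu_2 - \nu_3 \geq c' N^{-2/3} - N^{-1+o(1)} \geq (c'/2) N^{-2/3}$ for all $N$ large enough, giving the claim with $c = c'/2$. The only non-routine point is the justification that \cite[Theorem 1.6]{HLY20} transfers to $\Ac$ in spite of its deterministic zero diagonal; this is the same mild adaptation that underlies the proof of \cite[Lemma 6.2]{EKYY12} cited as Lemma \ref{lem: eig val sticking} above, and does not change the edge universality argument, so this is the minor obstacle rather than a serious one.
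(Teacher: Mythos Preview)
Your proposal is correct and follows essentially the same approach as the paper: both apply the eigenvalue sticking Lemma~\ref{lem: eig val sticking} to write $\nu_2-\nu_3 = \mathring\nu_1-\mathring\nu_2 + O_\prec(N^{-1})$, then invoke \cite[Theorem~1.6]{HLY20} for the Tracy--Widom scale lower bound on the top gap of $\Ac$ (with the same remark that the zero diagonal is a harmless modification). The only cosmetic differences are the order of the two steps and the constants $c'/2$ versus $c/3$.
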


\noindent We denote by $m_{A}(z)$ and $m_{\Ac}(z)$ the Cauchy-Stieltjes transforms of the empirical measures of eigenvalues of $A$ and $\Ac$ respectively:
$$
m_{A}(z) = \frac 1 N \sum_{i=1}^N \frac{1}{\nu_i - z},\quad  m_{\Ac}(z) = \frac 1 N \sum_{i=1}^N \frac{1}{\mathring\nu_i - z}.
$$

\noindent The local law estimates for \ER graphs holds. 
\begin{lem}\label{lem:locallaw A}
	Assume $q \gg 1$.  Let $ m_{\star}$ be as in Lemma \ref{lem: local law}. Uniformly on $w = \kappa + \mathfrak{i}\eta\in\mathcal{D}_0$, we have, with $z  = \mathcal{L}+w$, 
	\begin{align*}
		|m_{A}(z-a)-m_\star(z)| \prec \frac{1}{N\eta} + \frac 1 {q^3} + (\kappa + \eta)^{1/4} \left(\frac{1}{N\eta} + \frac 1 {q^3} \right)^{1/2}˚.
	\end{align*}
\end{lem}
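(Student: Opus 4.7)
The plan is to use the rank-one identity $A + aI = \Ac + f\ee\ee^{*}$ to reduce Lemma \ref{lem:locallaw A} to the local law for $\Ac$ already established in Lemma \ref{lem:locallaw2}. Setting $G(z)=(\Ac-z)^{-1}$, the Sherman--Morrison formula gives
\begin{align*}
(A+aI-z)^{-1} \;=\; G(z) \;-\; \frac{f\,G(z)\ee\ee^{*}G(z)}{1+f\,\ee^{*}G(z)\ee},
\end{align*}
and taking the normalized trace, together with $(A+aI-z)^{-1} = (A-(z-a))^{-1}$, produces the exact identity
\begin{align*}
m_{A}(z-a) \;-\; m_{\Ac}(z) \;=\; -\,\frac{(f/N)\,\ee^{*}G(z)^{2}\ee}{1+f\,\ee^{*}G(z)\ee}.
\end{align*}
Since $\Ac$ has independent centered entries above the diagonal satisfying moment bounds analogous to \eqref{eq:momenthij}, the proof of Lemma \ref{lem:locallaw2} carries over verbatim to $\Ac$ (the zero diagonal does not enter any essential step of Section \ref{sec:resolvent}), yielding $|m_{\Ac}(z)-m_{\star}(z)|\prec \Psi(z)$, where $\Psi(z)$ denotes the right-hand side of the target bound. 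It thus remains to show that the Sherman--Morrison correction is also $\prec \Psi(z)$.

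The main observation is that this correction admits a clean \emph{deterministic} bound that does not require any isotropic local law. Since $\Ac$ is a real symmetric matrix, $G(z)^{T}=G(z)$, and the standard identity $G(\bar z)\,G(z)=\eta^{-1}\Im G(z)$ gives
\begin{align*}
|\ee^{*}G(z)^{2}\ee| \;\leq\; \|G(z)\ee\|^{2} \;=\; \eta^{-1}\,\Im\bigl(\ee^{*}G(z)\ee\bigr).
\end{align*}
On the other hand, $\ee^{*}(\Im G(z))\ee>0$ for $z$ in the upper half-plane, so $\Im(\ee^{*}G(z)\ee)>0$ and therefore
\begin{align*}
|1+f\,\ee^{*}G(z)\ee| \;\geq\; \Im\bigl(f\,\ee^{*}G(z)\ee\bigr) \;=\; f\,\Im\bigl(\ee^{*}G(z)\ee\bigr).
\end{align*}
Combining the two estimates, the common factor $f\,\Im(\ee^{*}G(z)\ee)$ cancels and we obtain the \emph{uniform} deterministic bound
\begin{align*}
\left|\frac{(f/N)\,\ee^{*}G(z)^{2}\ee}{1+f\,\ee^{*}G(z)\ee}\right| \;\leq\; \frac{(f/N)\,\eta^{-1}\Im(\ee^{*}G(z)\ee)}{f\,\Im(\ee^{*}G(z)\ee)} \;=\; \frac{1}{N\eta},
\end{align*}
which is automatically $\leq \Psi(z)$ on $\cD_{0}$ by definition of $\Psi$.

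No step presents a genuine obstacle. The only mildly delicate item is the initial verification that Lemma \ref{lem:locallaw2} (and the ingredients Lemma \ref{lem:locallaw02} and Corollary \ref{cor:locallaw02} on which it rests) carry over to $\Ac$: the off-diagonal entries $\ac_{ij}$ are centered and satisfy $\E\,\ac_{ij}^{2}=1/N+O(q^{2}/N^{2})$ together with sub-Gaussian moments, so the same self-consistent polynomial equation $P(z,m_{\star}(z))=0$ with the random correction $\cX$ of \eqref{eq: random correction term A} and the same bootstrap produce the bound on $|m_{\Ac}(z)-m_{\star}(z)|$. Once this is in place, the Sherman--Morrison cancellation above completes the proof of Lemma \ref{lem:locallaw A}.
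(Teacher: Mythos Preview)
Your proof is correct and follows essentially the same two-step strategy as the paper: (i) reduce to the local law for $\Ac$ (Lemma~\ref{lem:locallaw2}), and (ii) bound the rank-one correction $|m_A(z-a)-m_{\Ac}(z)|$ by $C/(N\eta)$. The only difference is in how step (ii) is executed: the paper invokes the Cauchy interlacing inequality (citing \cite[Lemma~7.1]{EKYY13}) to get $|m_{A+aI}(z)-m_{\Ac}(z)|\le \pi/(N\eta)$ directly, whereas you go through the Sherman--Morrison formula and an explicit cancellation of $\Im(\ee^{*}G\ee)$ between numerator and denominator. Both arguments are short and deterministic; interlacing is perhaps the more standard one-liner, while your Sherman--Morrison route gives the slightly sharper constant $1$ in place of $\pi$ and makes the mechanism of cancellation visible.
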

\begin{proof}
Since $A + a I -  \Ac = f \ee^*$ has rank one, we have
$$
	|m_{A}(z-a)-m_{\Ac}(z)| = |m_{A+aI}(z)-m_{\Ac}(z)|\le\frac{\pi}{N\eta},
$$
the last inequality being a standard consequence of the interlacing inequality, see e.g. \cite[Lemma 7.1]{EKYY13}. The conclusion of the lemma then follows from Lemma \ref{lem:locallaw2} (local law) applied to $\Ac$.
\end{proof}

\noindent Next, by the standard argument using Helffer-Sj{\"{o}}strand formula, the following statements immediately follow as a consequence of Lemma \ref{lem:locallaw A}.

\begin{lem}[Eigenvalue rigidity]\label{lem: new rigidity estimate A}
	Assume $q\gg 1$. For all $2\le i\le N$, we have
	\begin{align*}
		|\nu_{i}- (\gamma_{i}-a)|\prec N^{-1/3}q ^{-3} + N^{-2/3}.
	\end{align*}
\end{lem}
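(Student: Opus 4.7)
The plan is to derive Lemma \ref{lem: new rigidity estimate A} from Lemma \ref{lem:locallaw A} via the standard Helffer-Sj\"{o}strand argument, paralleling the derivation of Lemma \ref{lem: new rigidity estimate} from Lemma \ref{lem: local law}. The only new feature is the rank-one mean shift $A + aI = \Ac + f\ee\ee^*$, which produces a single top outlier $\nu_1 + a$ at macroscopic distance from $[-\cL,\cL]$ but leaves the bulk spectral statistics of $A+aI$ within $O(1/N)$ of those of $\Ac$ in the sense of the Cauchy-Stieltjes transform.

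Concretely, I would apply the Helffer-Sj\"{o}strand formula as in \cite[Section 1.8]{MR3792624}, with imaginary cut at $\eta=N^{-2/3}$, to smooth approximations of the indicators $\mathbf{1}_{(-\infty,E]}$. Feeding in the local law $|m_A(z-a)-m_\star(z)|\prec (N\eta)^{-1}+q^{-3}+(|\kappa|+\eta)^{1/4}((N\eta)^{-1}+q^{-3})^{1/2}$ on $\mathcal{D}_0$ then produces the counting function estimate
\begin{equation*}
	\Big| \#\{i : \nu_i + a \leq E\} - N\rho_\star((-\infty, E]) \Big| \prec N^{2/3}q^{-3} + N^{1/3},
\end{equation*}
uniformly for $E$ in a fixed neighborhood of $[-\cL,\cL]$. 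Inverting this estimate at the $(i-1)/N$-quantiles of $\rho_\star$, and accounting for the single outlier of $A+aI$ (which shifts the indexing by one near the top), yields $|\nu_i + a - \gamma_{i-1}|\prec N^{-1/3}q^{-3}+N^{-2/3}$ for $2\leq i \leq N$.

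To match the form of the lemma I would absorb the index shift using $|\gamma_i - \gamma_{i-1}|\lesssim \min(i,N-i)^{-1/3}N^{-2/3}\leq N^{-2/3}$, a consequence of the density asymptotics \eqref{eq:mstarasym} of $\rho_\star$; this discrepancy is well within the allowed error, giving $|\nu_i - (\gamma_i - a)|\prec N^{-1/3}q^{-3}+N^{-2/3}$ as stated. The main technical nuisance is handling the outlier contribution to the Helffer-Sj\"{o}strand estimate near the upper edge: since $\nu_1+a$ lies at distance of order $1$ from $\cL$ while the cutoff scale is $N^{-2/3}$, one must verify that the outlier contributes only $O(1/N)$ to $m_A(z-a)$ for $z$ near $\cL$, and that the cutoff function in the Helffer-Sj\"{o}strand formula can be taken to decay before reaching $\nu_1+a$. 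Both are standard, so the argument reduces essentially to the one already used for $\Ac$ in the proof of Lemma \ref{lem: new rigidity estimate}.
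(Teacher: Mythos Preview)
The proposal is correct and follows essentially the same approach as the paper: the paper simply states that Lemma \ref{lem: new rigidity estimate A} follows from Lemma \ref{lem:locallaw A} via the standard Helffer--Sj\"{o}strand argument, and your proposal fleshes out precisely this route, including the (standard) handling of the single rank-one outlier and the harmless index shift.
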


\begin{cor}\label{cor: lower bound for spectral gap A}
	Let $\eps >0$ and assume $q \gtrsim N^{1/9}$. There exists $c>0$ such the following holds  for any $\delta>0$, for all $N$ large enough, with probability at least $1- \delta \log{N}$:
	\begin{align*}
		\nu_{2}-\nu_{i} \ge\begin{cases}
			c\delta N^{-1}  & \text{if}\;\; 3 \le i \le N^{\eps} \\
			c i^{2/3}N^{-2/3} & \text{if}\;\; N^{\eps} < i \le N.
		\end{cases}
	\end{align*}
	Moreover, on the event $\{\nu_{1}-\nu_{2}\ge c\delta N^{-1}\}$,
	the above inequalities holds with overwhelming probability.
\end{cor}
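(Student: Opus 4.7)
The plan is to mirror the proof of Corollary \ref{cor: lower bound for spectral gap}, substituting the \ER inputs (Lemma \ref{lem: tail bound for gaps A} and Lemma \ref{lem: new rigidity estimate A}) and shifting the focus from the top gap to the second-top gap $\nu_{2}-\nu_{3}$. The reason for this shift is that in the \ER model, $\ww_{1}$ is essentially the Perron direction, $\nu_{1}$ sits near $q^{2}$, and the nontrivial edge of the spectrum is located at $\nu_{2}$; accordingly, I read the condition $\nu_{1}-\nu_{2}\ge c\delta N^{-1}$ in the \emph{moreover} clause as a typo for $\nu_{2}-\nu_{3}\ge c\delta N^{-1}$, which is the natural analogue of the event $\{\lambda_{1}-\lambda_{2}\ge c\delta N^{-1}\}$ appearing in Corollary \ref{cor: lower bound for spectral gap}.

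For the short-range bound $3\le i\le N^{\eps}$, I would simply combine the monotonicity $\nu_{2}-\nu_{i}\ge \nu_{2}-\nu_{3}$ with Lemma \ref{lem: tail bound for gaps A} applied at index $i=2$, which yields $\PP(\nu_{2}-\nu_{3}\le c\delta N^{-1})=O(\delta\log N)$ for an appropriate constant $c>0$.

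For the long-range bound $N^{\eps}<i\le N$, I would invoke Lemma \ref{lem: new rigidity estimate A}, which under $q\gtrsim N^{1/9}$ simplifies to $|\nu_{j}-(\gamma_{j}-a)|\prec N^{-2/3}$ uniformly in $j$. Combined with the square-root behavior of $\rho_{\star}$ at its edge (see \eqref{eq:mstarasym}), which gives $\cL-\gamma_{j}\asymp \min(j,N+1-j)^{2/3}N^{-2/3}$ uniformly in $2\le j\le N$, I obtain
\begin{align*}
\nu_{2}-\nu_{i} \,\ge\, (\cL-\gamma_{i}) - (\cL-\gamma_{2}) - \mathcal{E},\qquad \mathcal{E}\prec N^{-2/3},
\end{align*}
and the right-hand side is $\gtrsim i^{2/3}N^{-2/3}$ as soon as $i\ge N^{\eps}$, because $i^{2/3}\ge N^{2\eps/3}\gg 1$ dominates both $\cL-\gamma_{2}\asymp N^{-2/3}$ and $\mathcal{E}$.

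Assembling the two ranges yields the displayed inequalities with probability at least $1-O(\delta\log N)$ (the rigidity error is overwhelming in $N$ and so is absorbed into the $\delta\log N$ budget, provided $\delta\gtrsim N^{-c}$ implicitly, as in Corollary \ref{cor: lower bound for spectral gap}). For the \emph{moreover} clause, once we restrict to the event $\{\nu_{2}-\nu_{3}\ge c\delta N^{-1}\}$, the short-range bound becomes automatic by monotonicity, while the long-range rigidity bound continues to hold with overwhelming probability; hence the inequalities hold with overwhelming probability on this event. There is no genuinely new technical obstacle; the only minor point requiring some care is the transition regime $i\asymp N^{\eps}$, where having $\eps>0$ strictly positive is exactly what allows the $N^{-2/3}$ rigidity error to be swallowed by $i^{2/3}N^{-2/3}$.
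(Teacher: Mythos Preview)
Your proposal is correct and follows exactly the approach the paper implicitly uses (the paper states Corollary~\ref{cor: lower bound for spectral gap A} without proof, relying on the reader to repeat the argument leading to Corollary~\ref{cor: lower bound for spectral gap} with the \ER inputs Lemma~\ref{lem: tail bound for gaps A} and Lemma~\ref{lem: new rigidity estimate A}). Your reading of the \emph{moreover} clause as a typo for $\{\nu_{2}-\nu_{3}\ge c\delta N^{-1}\}$ is also correct: this is the event that is actually invoked later in the proof of Lemma~\ref{lem: eigen vec small perturbation A}, and it is the natural analogue of $\{\lambda_{1}-\lambda_{2}\ge c\delta N^{-1}\}$ once $\nu_{2}$ plays the role of the top nontrivial eigenvalue.
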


\subsection{Proof of Theorem \ref{thm: main1A}}
The outline of the proof is essentially same with that for the case of (centered) sparse random matrices. The adjacency matrix of \ER graph, $A$, can be regarded as a rank-one perturbation of a sparse random matrix so an adaptation is required. Since we now consider the second top eigenvector, not the top eigenvector, the argument of \eqref{eq: compare lambda_1 mu_1} and Lemma \ref{lem: eigen vec small perturbation} should be modified. See Lemma \ref{lem: small perturbation variation A} and Lemma \ref{lem: eigen vec small perturbation A} for detail.

Recall that $\nu_{1}\ge\cdots\ge\nu_{N}$ are the ordered eigenvalues of $A$ and $\ww_{1},\cdots,\ww_{N}$ are the associated unit eigenvectors of $A$.
For any $1\le i,j\le N$, denote by $A_{(ij)}$ the symmetric matrix obtained from $A$ by replacing the entry $a_{ij}$ and $a_{ji}$ with $a_{ij}''$ and $a_{ji}''$ respectively. We define $A^{[k]}_{(ij)}=(a^{[k]}_{(ij)})$ by
\begin{align*}
	a^{[k]}_{(ij)} = \begin{cases}
		a_{ij}'' & (i,j)\in S_{k}, \\
		a_{ij}'''& (i,j)\notin S_{k},
	\end{cases}
\end{align*}
where $a_{ij}'''$ is another independent copy of $a_{ij}$. Note that $\Ac'=(\ac_{ij}')$, $\Ac''=(\ac_{ij}'')$ and $\Ac'''=(\ac_{ij}''')$ are also independent copies. We denote the ordered eigenvalues of $A^{[k]}$ and their associated eigenvectors by $\nu_{1}\ge\cdots\ge\nu_{N}$ and $\ww_{1},\cdots,\ww_{N}$.
Denote by $(st)$ a random pair of indices chosen uniformly from $\{(i,j): 1\le i\le j\le N
\}$. Note that 
\begin{align*}
	|\{(i,j): 1\le i\le j\le N\}|=N(N+1)/2
\end{align*}
Let $\mu_{1}\ge\cdots\ge\mu_{N}$ be the ordered eigenvalues of $A_{(st)}$ and, let $\uu_{1},\cdots,\uu_{N}$ be the associated unit eigenvectors of $A_{(st)}$. Similarly, we define $\mu_{1}^{[k]}\ge\cdots\ge\mu_{N}^{[k]}$ and $\uu_{1}^{[k]},\cdots,\uu_{N}^{[k]}$ for $A^{[k]}_{(st)}$. We apply Lemma \ref{lem: superconcentration} with $Y=A$ and $f(A)=\nu_{2} - L - \cX$: 
\begin{align}\label{eq: application of superconcentration lem A}
	\E\left[ \big(\nu_{2}-\mu_{2}-Q_{st}\big)\big(\nu_{2}^{[k]}-\mu_{2}^{[k]}-Q_{st}^{[k]}\big) \right]\le\frac{2\text{Var}(\nu_{2}- L - \cX)}{k}\cdot\frac{N(N+1)+2}{N(N+1)},
\end{align}
where
\begin{align*}
	Q_{st}&:=\frac{2}{N}(\ac_{st}^{2}-(\ac_{st}'')^{2}) \quad \text{and}\nn\\
	Q_{st}^{[k]}&:=\begin{cases}
		\frac{2}{N}((\ac_{st}')^{2}-(\ac_{st}'')^{2}) &\text{if } (st)\in S_{k},\\
		\frac{2}{N}(\ac_{st}^{2}-(\ac_{st}''')^{2}) &\text{if } (st)\notin S_{k}.
	\end{cases}
\end{align*}

\begin{lem}\label{lem: small perturbation variation A}
	Let us write $\ww_{2}=(w_{1},\cdots,w_{N})$ and $\uu_{2}=(u_{1},\cdots,u_{N})$. There exists $\eps>0$ such that the following holds with overwhelming probability:
	\begin{align*}
		Z_{st}u_{s}u_{t} - \frac{N^{\eps}}{q^{3}N^{2}}
		\le \nu_{2}-\mu_{2} 
		\le Z_{st}w_{s}w_{t} + \frac{N^{\eps}}{q^{3}N^{2}}
	\end{align*}
	where 
	\begin{align*}
		Z_{st}:=2(\ac_{st}-\ac_{st}'').
	\end{align*}
	Similarly, with overwhelming probability, we have
	\begin{align*}
		Z_{st}^{[k]} u_{s}^{[k]}u_{t}^{[k]} - \frac{N^{\eps}}{q^{3}N^{2}}
		\le \nu_{2}^{[k]}-\mu_{2}^{[k]} 
		\le Z_{st}^{[k]} w_{s}^{[k]}w_{t}^{[k]} + \frac{N^{\eps}}{q^{3}N^{2}},
	\end{align*}
	where $\ww_{2}^{[k]}=(w_{1}^{[k]},\cdots,w_{N}^{[k]})$, $\uu_{2}^{[k]}=(u_{1}^{[k]},\cdots,u_{N}^{[k]})$ and
	\begin{align*}
		Z_{st}^{[k]}:=\begin{cases}
			2(\ac'-\ac'') &\text{if } (st)\in S_{k},\\
			2(\ac-\ac''') &\text{if } (st)\notin S_{k}.
		\end{cases}
	\end{align*}
	\begin{proof}
		By spectral theorem, we have
		\begin{align*}
			\langle \uu_{2},A\uu_{2}\rangle = \nu_{1}|\langle \uu_{2},\ww_{1}\rangle|^{2}+\sum_{i=2}^{N}\nu_{i}|\langle \uu_{2},\ww_{i}\rangle|^{2}
			\le (\nu_{1}-\nu_{2})|\langle \uu_{2},\ww_{1}\rangle|^{2}+\langle \ww_{2},A\ww_{2} \rangle.
		\end{align*}
		We write 
		\begin{align*}
			\ww_{1}=\alpha\uu_{2}+\beta\xx
		\end{align*}
		where $\alpha=\langle\uu_{2},\ww_{1}\rangle$, $\xx\in\text{span}(\uu_{1},\uu_{3},\cdots,\uu_{N})$ and $\lVert \xx\rVert^{2}=1-\alpha^{2}$. Since
		\begin{align*}
			A_{(st)}\ww_{1}=A\ww_{1}+(A_{(st)}-A)\ww_{1}=\nu_{1}\ww_{1}+(A_{(st)}-A)\ww_{1}
		\end{align*}
		and also
		\begin{align*}
			A_{(st)}\ww_{1}=\alpha \mu_{2} \uu_{2} + \beta  A_{(st)}\xx,
		\end{align*}
		it follows that
		\begin{align*}
			\nu_{1}\ww_{1} = \alpha \mu_{2} \uu_{2} + \beta A_{(st)}\xx + (A-A_{(st)})\ww_{1}.
		\end{align*}
		Then,
		\begin{align*}
			\nu_{1}\alpha=\nu_{1}\langle \uu_{2},\ww_{1}\rangle = \langle \uu_{2},\nu_{1}\ww_{1}\rangle
			=\mu_{2}\alpha+\langle \uu_{2},(A-A_{(st)})\ww_{1} \rangle.
		\end{align*}
		By the eigenvector delocalization,
		\begin{align*}
			|(\nu_{1}-\mu_{2})\alpha| = |\langle \uu_{2},(A-A_{(st)})\ww_{1} \rangle|\prec \frac{1}{qN}. 
		\end{align*}
		According to \cite[Theorem 6.2]{EKYY13}, we have $\nu_{1}\sim \zeta q+(\zeta q)^{-1}$ with overwhelming probability. Also, by the eigenvalue rigidity, we find $\mu_{2}\le C$ with overwhelming probability. Finally, we obtain
		\begin{align*}
			|\alpha|\prec\frac{1}{q^{2}N},
		\end{align*}
		which implies 
		\begin{align}\label{eq: compare lambda_2 mu_2}
			\langle \uu_{2},A\uu_{2}\rangle \le \frac{N^{\eps}}{q^{3}N^{2}}+\langle \ww_{2},A\ww_{2} \rangle.
		\end{align}
		Similarly, we have with overwhelming probability
		\begin{align*}
			\langle \ww_{2},A_{(st)}\ww_{2}\rangle \le \frac{N^{\eps}}{q^{3}N^{2}}+\langle \uu_{2},A_{(st)}\uu_{2} \rangle.
		\end{align*}
		As a result, it follows with overwhelming probability
		\begin{align*}
			\langle \uu_{2},(A-A_{(st)})\uu_{2} \rangle - \frac{N^{\eps}}{q^{3}N^{2}}
			\le \nu_{2}-\mu_{2} 
			\le \langle \ww_{2},(A-A_{(st)})\ww_{2}\rangle + \frac{N^{\eps}}{q^{3}N^{2}}.
		\end{align*}
		Using the same argument, we observe with overwhelming probability
		\begin{align*}
			\langle \uu_{2}^{[k]},(A^{[k]}-A_{(st)}^{[k]})\uu_{2}^{[k]} \rangle - \frac{N^{\eps}}{q^{3}N^{2}}
			\le \nu_{2}^{[k]}-\mu_{2}^{[k]}
			\le \langle \ww_{2}^{[k]},(A^{[k]}-A_{(st)}^{[k]})\ww_{2}^{[k]}\rangle + \frac{N^{\eps}}{q^{3}N^{2}}.
		\end{align*}
	\end{proof}
\end{lem}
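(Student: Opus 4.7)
The difficulty here, compared with the centered matrix case handled by the variational chain in \eqref{eq: compare lambda_1 mu_1}, is the rank-one perturbation $f\ee\ee^*$ in $A$: the Perron eigenvalue $\nu_1\asymp q$ is much larger than $\nu_2 = O(1)$, so the naive inequality $\langle\uu_2,A\uu_2\rangle \le \nu_2$ can fail by a huge amount if $\uu_2$ has any macroscopic overlap with $\ww_1$. My plan is to show that the overlap $\alpha := \langle\uu_2,\ww_1\rangle$ is small enough that the correction term $(\nu_1-\nu_2)\alpha^2$ stays below the target error $N^\eps/(q^3N^2)$.

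The first step would be the spectral decomposition
\[
\langle\uu_2,A\uu_2\rangle \;=\; \nu_1\alpha^2 + \sum_{i\ge 2}\nu_i\,\langle\uu_2,\ww_i\rangle^2 \;\le\; \nu_2 + (\nu_1-\nu_2)\alpha^2,
\]
which reduces the problem to controlling $\alpha$. For that, I would write $\ww_1 = \alpha\uu_2 + \beta\xx$ with $\xx\perp\uu_2$, apply $A_{(st)}$ on both sides, and use $A\ww_1=\nu_1\ww_1$ together with $A_{(st)}\xx \in \mathrm{span}(\uu_1,\uu_3,\ldots,\uu_N)$ to derive the identity
\[
(\nu_1-\mu_2)\,\alpha \;=\; \langle\uu_2, (A-A_{(st)})\ww_1\rangle.
\]
The matrix $A-A_{(st)}$ has only the entries at $(s,t)$ and $(t,s)$ nonzero, each of magnitude $\prec 1/q$. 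Using Lemma \ref{lem: delocalization A} on both $\uu_2$ and $\ww_1$, the right-hand side is $\prec (qN)^{-1}$. Since $\nu_1 \sim \zeta q$ by \cite[Theorem 6.2]{EKYY13} while $\mu_2 = O(1)$ by the rigidity of Lemma \ref{lem: new rigidity estimate A}, we get $\nu_1-\mu_2 \gtrsim q$, hence $|\alpha|\prec 1/(q^2 N)$ and $(\nu_1-\nu_2)\alpha^2 \prec 1/(q^3 N^2)$, precisely the claimed error.

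Once this is established, the remainder should be routine. By the symmetric argument applied to $A_{(st)}$, I expect $\langle\ww_2,A_{(st)}\ww_2\rangle \le \mu_2 + O_\prec(1/(q^3 N^2))$. Combining the two bounds,
\[
\nu_2-\mu_2 \;\le\; \langle\ww_2,(A-A_{(st)})\ww_2\rangle + O_\prec\!\bigl(\tfrac{1}{q^3 N^2}\bigr) \;=\; Z_{st}\, w_s w_t + O_\prec\!\bigl(\tfrac{1}{q^3 N^2}\bigr),
\]
while swapping $\ww_2$ for $\uu_2$ and reversing the roles of $A$ and $A_{(st)}$ yields the matching lower bound $Z_{st}u_s u_t - O_\prec(1/(q^3 N^2))$. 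The $[k]$-version then follows by running the identical argument with $A$ and $A_{(st)}$ replaced by $A^{[k]}$ and $A^{[k]}_{(st)}$, since delocalization, rigidity and the Perron-edge separation hold for any matrix in the same distributional class. The main obstacle will be the middle step: controlling $\alpha$ requires extracting the full strength of the $q$-scale gap between $\nu_1$ and the edge, and any weaker bound on the numerator $\langle\uu_2,(A-A_{(st)})\ww_1\rangle$ than the $1/(qN)$ delocalization estimate would fail to produce the sharp $1/(q^3N^2)$ error.
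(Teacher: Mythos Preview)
Your proposal is correct and follows essentially the same approach as the paper: both arguments bound the overlap $\alpha=\langle\uu_2,\ww_1\rangle$ via the identity $(\nu_1-\mu_2)\alpha=\langle\uu_2,(A-A_{(st)})\ww_1\rangle$, combine delocalization with the $q$-scale gap $\nu_1-\mu_2\gtrsim q$ to get $|\alpha|\prec 1/(q^2N)$, and then insert this into the spectral decomposition to obtain the $N^\eps/(q^3N^2)$ correction. The remaining steps (symmetric inequality for $A_{(st)}$, combination into the sandwich bound, and the $[k]$-version) are also identical.
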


We set $T_{1}=(Z_{st}w_{s}w_{t}+N^{\eps}/q^{3}N^{2}-Q_{st})(Z_{st}^{[k]}w_{s}^{[k]}w_{t}^{[k]}+N^{\eps}/q^{3}N^{2}-Q_{st}^{[k]})$, $T_{2}=(Z_{st}w_{s}w_{t}+N^{\eps}/q^{3}N^{2}-Q_{st})(Z_{st}^{[k]}u_{s}^{[k]}u_{t}^{[k]}-N^{\eps}/q^{3}N^{2}-Q_{st}^{[k]})$, $T_3 = (Z_{st}u_{s}u_{t}-N^{\eps}/q^{3}N^{2}-Q_{st})(Z_{st}^{[k]}w_{s}^{[k]}w_{t}^{[k]}+N^{\eps}/q^{3}N^{2}-Q_{st}^{[k]})$, $T_4 = (Z_{st}u_{s}u_{t}-N^{\eps}/q^{3}N^{2}-Q_{st})(Z_{st}^{[k]}u_{s}^{[k]}u_{t}^{[k]}-N^{\eps}/q^{3}N^{2}-Q_{st}^{[k]})$. We have
\begin{align}\label{eq: top eig val difference under sigle resample A}
	\min(T_{1},T_{2},T_{3},T_{4})\le  \big(\nu_{2}-\mu_{2}-Q_{st}\big)\big(\nu_{2}^{[k]}-\mu_{2}^{[k]}-Q_{st}^{[k]}\big) \le \max(T_{1},T_{2},T_{3},T_{4}).
\end{align}

\begin{lem}\label{lem: eigen vec small perturbation A}
	Assume $q \gtrsim N^{1/9}$ and let $c,\delta >0$ be such that $N^{c + \delta} \ll q $. For $1 \leq i \le j \leq N$, let $\uu_{2}^{(ij)}$ be a unit eigenvector of $A_{(ij)}$ associated with the second largest eigenvalue of $A_{(ij)}$. Then, on the event $\left\{  \nu_{2}-\nu_{3} \ge N^{-1-c}\right\} $,
	the event
	\begin{align*}
		\bigcap_{1 \leq i\leq j \leq N} \left\{   \inf_{s\in\{\pm 1\}}\lVert s\ww_{2}-\uu_{2}^{(ij)} \rVert_{\infty}\le N^{-1/2-\delta}\right\}
	\end{align*}
	holds with overwhelming probability. The analogous result for $H^{[k]}_{(ij)}$ also holds.
	\begin{proof}
		We shall modify the proof of Lemma \ref{lem: eigen vec small perturbation}. Let $\mu^{(ij)}_{1}\ge\cdots\ge\mu^{(ij)}_{N}$ be the ordered eigenvalues of $A_{(ij)}$ and, let $\uu_{1}^{(ij)},\cdots,\uu_{N}^{(ij)}$ be the associated unit eigenvectors of $A_{(ij)}$. According to \eqref{eq: compare lambda_2 mu_2}, we have with overwhelming probability
		\begin{align*}
			\nu_{2} & \ge \langle \uu_{2}^{(ij)},A\uu_{2}^{(ij)}\rangle - \frac{N^{\eps}}{q^{3}N^{2}} \\
			& = \mu^{(ij)}_{2}+\langle\uu_{2}^{(ij)},(A-A_{(ij)})\uu_{2}^{(ij)}\rangle - \frac{N^{\eps}}{q^{3}N^{2}} \\
			&\ge \mu^{(ij)}_{2}-2|\ac_{ij} - \ac_{ij}''|\lVert \uu_{2}^{(ij)} \rVert_{\infty}^{2} - \frac{N^{\eps}}{q^{3}N^{2}} \\
			&\ge \mu^{(ij)}_{2}-\frac{N^{\eps}}{qN}.
		\end{align*}
		Reversing the role of $A$ and $A_{(ij)}$, we also have with overwhelming probability
		\begin{align*}
			\mu^{(ij)}_{2} \ge \nu_{2}-\frac{N^{\eps}}{qN}.
		\end{align*}
		Thus, it follows that with overwhelming probability
		\begin{align}\label{eq: eigvenvalue perturbation A}
			\max_{1\le i\le j\le N} |\nu_{2}-\mu^{(ij)}_{2}| \le \frac{N^{\eps}}{qN}.
		\end{align}
		We write
		\begin{align*}
			\uu_{2}^{(ij)} = \sum_{\ell=1}^{N}\alpha_{\ell}\ww_{\ell},
		\end{align*}
		and get
		\begin{align*}
			\nu_{2} \uu_{2}^{(ij)} = \sum_{\ell=1}^{N}\nu_{\ell}\alpha_{\ell}\ww_{\ell} + (A_{(ij)}-A)\uu_{2}^{(ij)}+(\nu_{2} - \mu_{2}^{(ij)}) \uu_{2}^{(ij)}.
		\end{align*}
		Next, by taking an inner product with $\vv_{\ell}$ for $\ell\neq 2$, we obtain
		\begin{align*}
			\bigg( (\nu_{2}-\nu_{\ell})+ (\mu_{2}^{(ij)}-\nu_{2}) \bigg) \alpha_{\ell} = \langle \ww_{\ell},(A_{(ij)}-A)\uu_{2}^{(ij)} \rangle.
		\end{align*}
		According to \cite[Theorem 6.2]{EKYY13} and Corollary \ref{cor: lower bound for spectral gap A}, the following holds with overwhelming probability on the event $\left\{  \nu_{2}-\nu_{3} \ge N^{-1-c}\right\}$:
		\begin{align*}
			|\nu_{2}-\nu_{\ell}|\gtrsim\begin{cases}
				q & \ell=1, \\
				N^{-1-c} & 3 \le \ell \le N^{\eps}, \\
				\ell^{2/3}N^{-2/3}  & N^{\eps}< \ell \le N.
			\end{cases}
		\end{align*}
		Due to \eqref{eq: eigvenvalue perturbation A}, we have with overwhelming probability
		\begin{align*}
			|\nu_{2}-\nu_{\ell}|\gg |\mu_{2}^{(ij)}-\nu_{2}|,
		\end{align*}
		for every $\ell\in\{1,\cdots, N\}$. Since the eigenvector delocalization implies
		\begin{align*}
			\left| \langle \ww_{\ell},(A_{(ij)}-A)\uu_{2}^{(ij)} \rangle \right| 
			\prec \frac{1}{qN},
		\end{align*}
		we can observe
		\begin{align}\label{eq : bounds for coeffi A}
			|\alpha_{\ell}|\prec\begin{cases}
				q^{-2}N^{-1}& \ell=1, \\
				q^{-1}N^{c}& 3\le \ell \le N^{\eps}, \\
				q^{-1}\ell^{-2/3}N^{-1/3}& N^{\eps}< \ell \le N.
			\end{cases}
		\end{align}
		What remains can be done similarly as we did in Section \ref{pf: eigen vec small perturbation}.
	\end{proof}
\end{lem}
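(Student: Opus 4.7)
The plan is to adapt the argument of Lemma \ref{lem: eigen vec small perturbation} to the second eigenvector, with the main new ingredient being that the top eigenvector $\ww_1$ of the \ER adjacency matrix is far from the rest of the spectrum, its eigenvalue $\nu_1$ being of order $q$ rather than $O(1)$. First I would establish an eigenvalue perturbation bound $\max_{i\le j}|\nu_2-\mu_2^{(ij)}|\prec 1/(qN)$. The difficulty is that the naive variational inequality $\langle \uu_2^{(ij)},A\uu_2^{(ij)}\rangle\le \nu_1$ is useless since $\nu_1 \asymp q$. One must instead show, as a preliminary step (modelled on the proof of Lemma \ref{lem: small perturbation variation A}), that $|\langle \uu_2^{(ij)},\ww_1\rangle|\prec 1/(q^2 N)$, using the large gap $\nu_1-\nu_2\asymp q$ and the identity $(\nu_1-\mu_2^{(ij)})\langle \uu_2^{(ij)},\ww_1\rangle=\langle \uu_2^{(ij)},(A-A_{(ij)})\ww_1\rangle$; combined with $|\ac_{ij}-\ac_{ij}''|\prec 1/q$ and Lemma \ref{lem: delocalization A}, the projection argument then yields $\langle \uu_2^{(ij)},A\uu_2^{(ij)}\rangle \le \langle \ww_2,A\ww_2\rangle + N^\eps/(q^3 N^2)$ and the symmetric inequality. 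The eigenvalue stability then follows from $|\ac_{ij}-\ac_{ij}''|\lVert \uu_2^{(ij)}\rVert_\infty^2 \prec 1/(qN)$.

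Second, I would expand $\uu_2^{(ij)}=\sum_{\ell=1}^N \alpha_\ell \ww_\ell$ in the eigenbasis of $A$, take inner products with $\ww_\ell$ for $\ell\ne 2$ in the identity $A\uu_2^{(ij)} = \mu_2^{(ij)}\uu_2^{(ij)}+(A-A_{(ij)})\uu_2^{(ij)}$, and obtain
\begin{align*}
\bigl((\nu_2-\nu_\ell)+(\mu_2^{(ij)}-\nu_2)\bigr)\alpha_\ell = \langle \ww_\ell,(A_{(ij)}-A)\uu_2^{(ij)}\rangle.
\end{align*}
By Lemma \ref{lem: delocalization A} the right-hand side is $\prec 1/(qN)$. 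Since the eigenvalue perturbation from Step~1 is negligible compared to every spectral gap, I would then divide by $\nu_2-\nu_\ell$ and use the three gap regimes: for $\ell=1$, the \cite[Theorem 2.16]{EKYY13}-type estimate $\nu_1-\nu_2\gtrsim q$ yielding $|\alpha_1|\prec 1/(q^2 N)$; for $3\le \ell\le N^\eps$, the assumed lower bound $\nu_2-\nu_3\ge N^{-1-c}$ together with Corollary \ref{cor: lower bound for spectral gap A} giving $|\alpha_\ell|\prec q^{-1}N^c$; and for $N^\eps<\ell\le N$, the rigidity-based gap $\nu_2-\nu_\ell\gtrsim \ell^{2/3}N^{-2/3}$ giving $|\alpha_\ell|\prec q^{-1}\ell^{-2/3}N^{-1/3}$.

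Finally, choosing the sign $s=\alpha_2/|\alpha_2|$ and using $1-|\alpha_2|\le \sum_{\ell\ne 2}\alpha_\ell^2\le \sum_{\ell\ne 2}|\alpha_\ell|$ (since $|\alpha_\ell|\le 1$), I would bound
\begin{align*}
\lVert s\ww_2-\uu_2^{(ij)}\rVert_\infty\le (1-|\alpha_2|)\lVert \ww_2\rVert_\infty+\sum_{\ell\ne 2}|\alpha_\ell|\lVert \ww_\ell\rVert_\infty \prec N^{-1/2}\sum_{\ell\ne 2}|\alpha_\ell|,
\end{align*}
by Lemma \ref{lem: delocalization A}. Summing the bounds from the three regimes gives $\sum_{\ell\ne 2}|\alpha_\ell|\prec q^{-1}N^{c+\eps}+q^{-1}$, so picking $\eps$ small enough that $q\gg N^{c+\delta+\eps}$ delivers the desired $N^{-1/2-\delta}$ bound, and a union bound over the $O(N^2)$ pairs $(i,j)$ preserves the overwhelming-probability statement. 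The analogous claim for $A^{[k]}_{(ij)}$ follows by distributional equality. The main obstacle is the first step: unlike the Wigner/sparse case, one cannot directly compare $\langle \uu_2^{(ij)},A\uu_2^{(ij)}\rangle$ to $\nu_2$ without first showing that $\uu_2^{(ij)}$ has only a tiny component along the outlier $\ww_1$, which is what forces the auxiliary $1/(q^2 N)$ estimate on $\langle \uu_2^{(ij)},\ww_1\rangle$.
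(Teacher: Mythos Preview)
Your proposal is correct and follows essentially the same route as the paper's proof: the paper likewise first establishes $|\langle \uu_2^{(ij)},\ww_1\rangle|\prec 1/(q^2N)$ via the identity you wrote (this is exactly the content of \eqref{eq: compare lambda_2 mu_2} from Lemma \ref{lem: small perturbation variation A}), uses it to get $|\nu_2-\mu_2^{(ij)}|\prec 1/(qN)$, then expands in the $\{\ww_\ell\}$ basis with the same three gap regimes and the same coefficient bounds \eqref{eq : bounds for coeffi A}, and finishes as in Section \ref{pf: eigen vec small perturbation}. The only cosmetic difference is that the paper cites \cite[Theorem 6.2]{EKYY13} (rather than Theorem 2.16) for the size of $\nu_1$.
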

Next, let $0 < \delta < 1/9$  and  $0 < \eps < \delta /3$ to be defined later, we define the events 
\begin{align}\label{eq: event 1 conditions A}
	\mathcal{E}_{1} :=	\left\{ \max \left(\lVert \ww_2   \rVert_{\infty},\lVert \uu_2 \rVert_{\infty},\lVert  \ww_{2}^{[k]}\rVert_{\infty},\lVert \uu_2^{[k]} \rVert_{\infty}\right) \le  N^{\eps - 1/2} \right\},
\end{align}
\begin{align}\label{eq: event 2 conditions A}
	\mathcal{E}_{2} :=	  \left\{   \max \left(  \lVert \ww_{2}-\uu_{2} \rVert_{\infty} , \lVert \ww^{[k]}_{2}-\uu^{[k]}_{2} \rVert_{\infty} \right) \le N^{-1/2-\delta}\right\}.
\end{align}
Set the event $\mathcal{E}:=\mathcal{E}_{1}\cap\mathcal{E}_{2}$. Let $c >0$ such that $c + \delta < 1/9$. According to Lemma \ref{lem: delocalization}, \cite[Theorem 2.6]{LL19} and Lemma \ref{lem: eigen vec small perturbation A}, we have
$\prob(\mathcal{E}^{c})= O(N^{-c}\log{N})$ by choosing the $\pm$-phase properly for $\uu_{(ij)}$ and $\uu_{(ij)}^{[k]}$. On the event $\mathcal{E}$, 
it follows that from \eqref{eq: top eig val difference under sigle resample A}
\begin{multline}\label{eq:lmQk A}
	\big(\nu_{2}-\mu_{2}-Q_{st}\big)\big(\nu_{2}^{[k]}-\mu_{2}^{[k]}-Q_{st}^{[k]}\big)\ge Z_{st}Z_{st}^{[k]}w_{s}w_{t}w_{s}^{[k]}w_{t}^{[k]}-O\left( | Z_{st}Z_{st}^{[k]}| N^{3\eps - 2 - \delta} \right)\\
	-  |Q_{st}Z_{st}^{[k]}|N^{2\eps - 1}  -|Q_{st}^{[k]}Z_{st}|  N^{2\eps - 1} - |Q_{st}Q_{st}^{[k]}| - o(N^{-3}).
\end{multline}
The proof is done by the following two lemmas.
\begin{lem}
	If $4 \eps + \delta < 1/9$, we have
	\begin{align}\label{eq: expectation estimate 1 A}
		\E\left[Z_{st}Z_{st}^{[k]}w_{s}w_{t}w_{s}^{[k]}w_{t}^{[k]}\indic_{\mathcal{E}^c} \right] = o\left(\frac 1 {N^{3}}\right),
	\end{align}
	and
	\begin{align}\label{eq: expectation estimate 2 A}
		\E\left[ (\nu_{2}-\mu_{2}-Q_{st})(\nu_{2}^{[k]}-\mu_{2}^{[k]}-Q_{st}^{[k]})\indic_{\mathcal{E}^{c}} \right]=o\left(\frac 1 {N^{3}}\right).
	\end{align}
\end{lem}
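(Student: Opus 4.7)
The plan is to mirror exactly the proof of Lemma \ref{lem: expectation estimate}, substituting the \ER analogues at each step: Lemma \ref{lem: delocalization A} for delocalization of $\ww_\ell, \uu_\ell$, Lemma \ref{lem: tail bound for gaps A} for eigenvalue gap tail bounds, Lemma \ref{lem: eigen vec small perturbation A} for the near-collinearity of eigenvectors under a single resample, and Lemma \ref{lem: small perturbation variation A} to control $\nu_2 - \mu_2$ and $\nu_2^{[k]} - \mu_2^{[k]}$. The key conceptual modification is that all eigenvalue-gap arguments are now carried out with the gap $\nu_2 - \nu_3$ rather than $\nu_1 - \nu_2$, since the eigenvector of interest is the second largest; the top gap $\nu_1 - \nu_2 \gtrsim q$ is always large and plays no role here.

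For \eqref{eq: expectation estimate 1 A}, decompose $\mathcal{E}^c = \mathcal{E}_1^c \sqcup (\mathcal{E}_1 \cap \mathcal{E}_2^c)$. On $\mathcal{E}_1^c$, apply Cauchy--Schwarz with the moment bound $\E|Z_{st}Z_{st}^{[k]}|^2 \lesssim 1/(Nq^2)$ and the fact that $\mathcal{E}_1$ has overwhelming probability (Lemma \ref{lem: delocalization A}); this contributes $O(N^{-C})$ for any $C>0$. On $\mathcal{E}_1 \cap \mathcal{E}_2^c$ the eigenvector product satisfies $|w_sw_tw_s^{[k]}w_t^{[k]}| \leq N^{4\eps-2}$, so it suffices to show $\E[|Z_{st}Z_{st}^{[k]}|\indic_{\mathcal{E}_2^c}] \ll N^{-1-4\eps}$. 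Fix $c > 4\eps$ with $c+\delta < 1/9$ and define
$$\tilde{\mathcal{E}}_{3,0} = \{\min(\nu_2-\nu_3,\, \nu_2^{[k]}-\nu_3^{[k]}) \ge N^{-1-c}\}, \qquad \tilde{\mathcal{E}}_{3,1} = \{\min(\mu_2-\mu_3,\, \mu_2^{[k]}-\mu_3^{[k]}) \ge N^{-1-c}\}.$$
By Lemma \ref{lem: eigen vec small perturbation A}, on $\tilde{\mathcal{E}}_{3,0}$ the event $\mathcal{E}_2$ holds with overwhelming probability (after a suitable choice of sign for $\uu_{(ij)}, \uu_{(ij)}^{[k]}$). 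By construction $(\ac_{st}, \ac'_{st})$ is independent of $\tilde{\mathcal{E}}_{3,1}$ and $(\ac''_{st}, \ac'''_{st})$ is independent of $\tilde{\mathcal{E}}_{3,0}$, so the decomposition from \eqref{eq:decouplingJaehun}, together with Lemma \ref{lem: tail bound for gaps A}, yields
$$\E[|Z_{st}Z_{st}^{[k]}|\indic_{\mathcal{E}_2^c}] \lesssim \E[(\ac_{st}^2 + (\ac'_{st})^2)\indic_{\tilde{\mathcal{E}}_{3,1}^c}] + \E[((\ac''_{st})^2 + (\ac'''_{st})^2)\indic_{\tilde{\mathcal{E}}_{3,0}^c}] = O(N^{-1-c}\log N).$$

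For \eqref{eq: expectation estimate 2 A}, expand each of the four terms $T_1,\ldots,T_4$ in \eqref{eq: top eig val difference under sigle resample A}. Using $|\ac_{ij}| \prec 1/q$ and $|Q_{st}| \prec 1/(Nq^2)$, on $\mathcal{E}_1$ the factor $|\nu_2 - \mu_2 - Q_{st}|$ is bounded by $|Z_{st}|N^{2\eps-1} + N^\eps/(q^3N^2) + |Q_{st}|$, and similarly for its $[k]$ counterpart. The dominant product $|Z_{st}Z_{st}^{[k]}|\cdot N^{4\eps-2}$ on $\mathcal{E}_2^c$ is controlled exactly as in the first statement. Cross-terms involving the additive correction $N^\eps/(q^3N^2)$ are of deterministic size $N^{3\eps-23/6}$ (using $\E|Z_{st}| \lesssim 1/\sqrt{N}$ and $q \gtrsim N^{1/9}$), hence $o(N^{-3})$ when $\eps$ is chosen sufficiently small; the pure $Q$ and mixed $Q$--$Z$ terms are bounded as in the original argument; and the $\mathcal{E}_1^c$ part is handled by Cauchy--Schwarz plus delocalization.

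The main obstacle is the new additive correction $N^\eps/(q^3N^2)$ appearing in Lemma \ref{lem: small perturbation variation A}, which has no analogue in the centered case of Lemma \ref{lem: expectation estimate}. It arises from the rank-one perturbation structure $A = \Ac + f\ee\ee^T - aI$ together with the near-collinearity $|\langle \uu_2,\ww_1\rangle| \prec 1/(q^2N)$ established inside the proof of Lemma \ref{lem: small perturbation variation A}. Although this term generates many additional cross-products to bookkeep, the condition $q \gtrsim N^{1/9}$ guarantees that each such cross-product is $o(N^{-3})$, so the argument closes without requiring any further input beyond the lemmas already assembled in Section \ref{sec:ER}.
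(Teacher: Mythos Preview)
Your proposal is correct and follows exactly the approach the paper has in mind; indeed the paper itself omits the proof, stating that it ``can be shown in the very similar way of Lemma \ref{lem: expectation estimate} and Lemma \ref{lem: expectation estimate2}.'' Your identification of the needed substitutions (Lemmas \ref{lem: delocalization A}, \ref{lem: tail bound for gaps A}, \ref{lem: eigen vec small perturbation A}, \ref{lem: small perturbation variation A}), the shift to the gap $\nu_2-\nu_3$, and your handling of the extra additive term $N^{\eps}/(q^{3}N^{2})$ from Lemma \ref{lem: small perturbation variation A} are all on point, and the resulting cross-term estimates are indeed $o(N^{-3})$ under $q\gtrsim N^{1/9}$ and the stated constraint on $\eps$.
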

\begin{lem}
	We have
	\begin{align*}
		\E\left[ Z_{st}Z_{st}^{[k]} w_{s}w_{t}w_{s}^{[k]}w_{t}^{[k]}\right] =\frac{2}{N^3} \E \left[\langle \ww_{2}, \ww_{2}^{[k]} \rangle^2 \right]+o\left(\frac 1 {N^{3}}\right).
	\end{align*}
\end{lem}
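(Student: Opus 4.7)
The plan is to mirror the proof of Lemma \ref{lem: expectation estimate2} essentially verbatim, exploiting one structural simplification specific to the \ER setting: because the centered adjacency matrix $\mathring{A}$ has identically vanishing diagonal ($\mathring{a}_{ii}=0$ for every realization and every independent copy), one has $Z_{ii}=0$ and $Z_{ii}^{[k]}=0$ deterministically. Hence after integrating over the uniform pair $(s,t)$, diagonal terms drop out and we only need to handle
$$
\E\bigl[Z_{st}Z_{st}^{[k]}V_{st}\bigr] = \frac{2}{N(N+1)}\,\E\Bigl[\sum_{1\le i<j\le N}Z_{ij}Z_{ij}^{[k]}V_{ij}\Bigr],
$$
with $V_{ij}=w_iw_jw_i^{[k]}w_j^{[k]}$.

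Next, I would condition on $S_k$ and compute the conditional second moment $\E[Z_{ij}Z_{ij}^{[k]}\mid S_k]$ case by case, using $\E \mathring{a}_{ij}^2 = 1/N$ for $i<j$ and the independence of the four copies $(\mathring{a}_{ij},\mathring{a}'_{ij},\mathring{a}''_{ij},\mathring{a}'''_{ij})$; in either case $(i,j)\in S_k$ or $(i,j)\notin S_k$ the result is $4/N$. Summing yields
$$
\sum_{i<j}\E[Z_{ij}Z_{ij}^{[k]}\mid S_k]\,V_{ij}=\frac{4}{N}\sum_{i<j}V_{ij}=\frac{2}{N}\langle \ww_{2},\ww_{2}^{[k]}\rangle^{2}-\frac{2}{N}\sum_iV_{ii},
$$
and the eigenvector delocalization of Lemma \ref{lem: delocalization A} gives $\sum_i|V_{ii}|\prec N^{-1}$, so after taking expectation and including the prefactor $2/(N(N+1))$ we obtain the claimed main term $\tfrac{2}{N^3}\E\langle\ww_2,\ww_2^{[k]}\rangle^2$ plus $o(1/N^3)$.

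The main obstacle is controlling the centered remainder
$$
\E\Bigl[\sum_{i<j}W_{ij}V_{ij}\Bigr],\qquad W_{ij}:=Z_{ij}Z_{ij}^{[k]}-\E[Z_{ij}Z_{ij}^{[k]}\mid S_k],
$$
which I would show is $o(1/N)$; equivalently $\E[W_{ij}V_{ij}]=o(1/N^3)$ uniformly in $i<j$. Following the decoupling trick of Section \ref{sec: expectation computation}, I introduce yet another independent copy $\mathring{a}_{ij}''''$ and build $\tilde{A}_{(ij)}$ and $\tilde{A}_{(ij)}^{[k]}$ by replacing the $(i,j)$-entry of $A$ and $A^{[k]}$ respectively, with corresponding second eigenvectors $\tilde{\uu}_2,\tilde{\uu}_2^{[k]}$ and set $U_{ij}=\tilde{u}_i\tilde{u}_j\tilde{u}_i^{[k]}\tilde{u}_j^{[k]}$. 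Conditioning on $S_k$, the pair $(Z_{ij},Z_{ij}^{[k]})$ is a function only of $(\mathring{a}_{ij},\mathring{a}'_{ij},\mathring{a}''_{ij},\mathring{a}'''_{ij})$, whereas $U_{ij}$ is independent of these; hence $\E[W_{ij}U_{ij}]=0$ and it suffices to estimate $\E[W_{ij}(V_{ij}-U_{ij})]$.

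For this last estimate, Lemma \ref{lem: eigen vec small perturbation A} plays the same role as Lemma \ref{lem: eigen vec small perturbation} did in the sparse matrix case: on the intersection of the delocalization event and the spectral-gap events $\{\nu_2-\nu_3\ge N^{-1-c}\}$ and its analog for $H^{[k]}$ and $\tilde{H}_{(ij)}$, $\tilde{H}_{(ij)}^{[k]}$, we have $|V_{ij}-U_{ij}|\le CN^{3\eps-2-\delta}$ with overwhelming probability, for suitable small $\eps,\delta$ with $4\eps<c<1/9-\delta$. Combining this with $\E|W_{ij}|=O(1/N)$ on the good event, the crude bound $|V_{ij}|,|U_{ij}|\le 1$ on the complement, and the decoupling between $(\mathring{a}_{ij},\mathring{a}'_{ij},\mathring{a}''_{ij},\mathring{a}'''_{ij})$ and the gap events (invoking Corollary \ref{cor: lower bound for spectral gap A} and Lemma \ref{lem: tail bound for gaps A} as in the bound \eqref{eq:decouplingJaehun}), one obtains $\E[W_{ij}(V_{ij}-U_{ij})]=o(1/N^3)$ uniformly in $i<j$. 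The only genuine subtlety relative to the centered case is that the gap controlled is $\nu_2-\nu_3$ rather than $\lambda_1-\lambda_2$, but Lemma \ref{lem: eigen vec small perturbation A} has been designed to accommodate exactly this, so the transposition is routine.
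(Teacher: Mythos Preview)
Your proposal is correct and follows precisely the approach the paper intends: the paper itself writes that this lemma ``can be shown in the very similar way of Lemma \ref{lem: expectation estimate} and Lemma \ref{lem: expectation estimate2}'' and omits the details, and your write-up supplies exactly those details with the appropriate substitutions (vanishing diagonal entries $\mathring a_{ii}=0$, the gap $\nu_2-\nu_3$ in place of $\lambda_1-\lambda_2$, and Lemmas \ref{lem: delocalization A}, \ref{lem: tail bound for gaps A}, \ref{lem: eigen vec small perturbation A} in place of their centered counterparts). The only cosmetic slip is writing $\tilde H_{(ij)}$ where you mean $\tilde A_{(ij)}$.
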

The above two lemma can be shown in the very similar way of Lemma \ref{lem: expectation estimate} and Lemma \ref{lem: expectation estimate2} so we omit the detail. Applying \eqref{eq: application of superconcentration lem}, we establish
\begin{align*}
	\E \left[\langle \ww_{2}, \ww_{2}^{[k]} \rangle^2 \right] \leq  \frac{N^3\text{Var}(\nu_{2}- L - \cX)}{k}\left( 1 + o(1) \right)  + o(1).
\end{align*}
Using \eqref{eq:riglambda1} and Cauchy interlacing, we have for any $\eps >0$,
$$
\text{Var}(\nu_{2}- L - \cX) = O ( N^{\eps-4/3} ),  
$$
which concludes the proof.
\qed

\subsection{Proof of Theorem \ref{thm: main2A}}
As in the previous subsection, we shall rely on the same strategy described in Section \ref{sec: proof strategy} and focus on explaining how to modify some details in regard to rank-one perturbation.

For $z=E+\mathfrak{i}\eta$ with $\eta>0$ and $E\in\R$, we define (with an abuse of notation)
\begin{align*}
	R(z):=(A-zI)^{-1},
\end{align*}
where $I$ denotes the identity matrix. We denote by $R^{[k]}(z)$ the resolvent of $A^{[k]}$. Then, as we showed in Section \ref{high level pf of main thm 2}, the desired result follows from the following two lemmas. 

\begin{lem}\label{lem: lem13 in BLZ A}
	Assume $q \gtrsim N^{1/9}$ and $k\ll N^{5/3}$. Let $R(z)$ be the resolvent of $A$. Then, there exists $\delta_0 >0$ such that for all $0  < \delta < \delta_0$, there exists $c>0$ such that, with overwhelming probability,
	\begin{align*}
		\sup_{z} \max_{1\le i,j\le N} N\eta|\Im R_{ij}^{[k]}(z-a)-\Im R_{ij}(z-a)|\leq  N^{-c},
	\end{align*}
	where the supremum is over all $z=E+\mathfrak{i}\eta$ with $|E-\cL |\le N^{-2/3+\delta}$ and $\eta = N^{-2/3-\delta}$, and the term $\cL$ is defined as in Lemma \ref{lem: local law} with setting $\cX$ as in \eqref{eq: random correction term A}.
\begin{proof}
	We notice that
	\begin{align*}
		R^{[k]}_{ij} -  R_{ij}  &= \sum_{t=1}^k  \left( R^{[t]}_{ij} -  R^{[t-1]}_{ij}\right)  \\
		&= \sum_{t=1}^k  (a_{i_tj_t} - a'_{i_tj_t}) (R^{[t]} E_{i_t j_t} R^{[t-1]} )_{ij} \\
		&= \sum_{t=1}^k  (\ac_{i_tj_t} - \ac'_{i_tj_t}) (R^{[t]} E_{i_t j_t} R^{[t-1]} )_{ij}.
	\end{align*}
    We also find that the resolvent estimates of $H$, Lemma \ref{lem: lem10 in BLZ}, still holds for 
    the resolvent of $A$.
    \begin{lem}\label{lem: lem10 in BLZ A}
    	Assume $q \gtrsim N^{1/9}$ and let $0 < \delta < 1/3$. Let $R(z)$ be the resolvent of $A$. We have
    	\begin{align*}
    		\sup_z \max_{1 \leq i,j \leq N} \Big|\big|R(z-a)_{ij}\big| - \delta_{ij} \Big| \prec \frac 1 q + \frac{1}{N\eta}, 	\end{align*}
    	and 
    	\begin{align*}
    		\sup_z  \max_{1 \leq i,j \leq N} \big|\Im R(z-a)_{ij}\big|\prec \frac{1}{N\eta} ,
    	\end{align*}
    	where the two suprema are over all $z = E + \mathfrak{i}\eta$ with $|E-\cL| \leq N^{-2/3 + \delta}$ and $\eta = N^{-2/3 -\delta}$, and the term $\cL$ is defined as in Lemma \ref{lem: local law} with setting $\cX$ as in \eqref{eq: random correction term A}. 
    \end{lem}
    \noindent The first statement of the lemma immediately follows from \cite[Theorem 2.9]{EKYY13}. We shall prove the second statement in Subsection \ref{sec: resolvent of ER}. Lemma \ref{lem: lem10 in BLZ A} is an essential ingredient. What remains would be a straightforward modification of the proof of Lemma \ref{lem: lem13 in BLZ}. Note that we used the trivial inequality $h_{ij}\prec q^{-1}$ in the proof of Lemma \ref{lem: lem13 in BLZ} and it still holds that $a_{ij}\prec q^{-1}$.
\end{proof}
\end{lem}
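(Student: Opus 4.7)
The proof would closely mirror that of Lemma~\ref{lem: lem13 in BLZ}, with the rank-one deterministic shift $f\ee\ee^{*}-aI$ treated as a harmless background. The key observation enabling the transfer is that $A^{[k]}-A=\mathring A^{[k]}-\mathring A$, since the rank-one term does not depend on the resampled entries; hence every resolvent difference is driven by the centered increments $\mathring a_{i_{t}j_{t}}-\mathring a'_{i_{t}j_{t}}$, which satisfy the same moment estimates as the $h_{ij}$ used in Lemma~\ref{prop: large deviation} (namely $|\mathring a_{ij}|\prec 1/q$ and $\E|\mathring a_{ij}|^{2k}\lesssim q^{2-2k}/N$ for $k\ge 1$).

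First, I would run the standard net argument on $z=E+\mathfrak{i}\eta$ with $|E-\cL|\le N^{-2/3+\delta}$ and $\eta=N^{-2/3-\delta}$ to reduce to a single deterministic $\kappa$, and set $\tz=\kappa+L+\cX+\mathfrak{i}\eta$ with $\cX$ the ER correction term~\eqref{eq: random correction term A}. Following Step~2 of Lemma~\ref{lem: lem13 in BLZ}, I would introduce the partially decoupled spectral parameter $\hz=\kappa+L+\hat\cX+\mathfrak{i}\eta$, where $\hat\cX$ removes the contributions of the $k$ resampled pairs. The bound $|\hz-\tz|\prec\max\bigl(1/(Nq^{2}),\sqrt{k}/(N^{3/2}q)\bigr)$ is unchanged, and combined with Lemma~\ref{lem: lem10 in BLZ A} evaluated at argument $\hz-a$ and Ward's identity, it reduces the goal to proving
\begin{equation*}
N\eta\,\bigl|\Im R_{ij}^{[k]}(\hz-a)-\Im R_{ij}(\hz-a)\bigr|\prec N^{-c}
\end{equation*}
uniformly in $i,j$.

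The heart of the argument is the telescoping identity
\begin{equation*}
R_{ij}^{[k]}(\hz-a)-R_{ij}(\hz-a)=\sum_{t=1}^{k}(\mathring a_{i_{t}j_{t}}-\mathring a'_{i_{t}j_{t}})\bigl(R^{[t]}(\hz-a)\,E_{i_{t}j_{t}}\,R^{[t-1]}(\hz-a)\bigr)_{ij},
\end{equation*}
which is valid because the rank-one background cancels in the increments. Setting $G_{t}=N\eta\,\Im\bigl((R^{[t]}E_{t}R^{[t-1]})_{ij}+(R^{[t]}E_{t}R^{[t-1]})_{ji}\bigr)$ at argument $\hz-a$, I would establish the moment bound
\begin{equation*}
\E\left(\sum_{t=1}^{k}(\mathring a_{t}-\mathring a'_{t})G_{t}\right)^{2r}\prec\left(\frac{k}{N^{3}\eta^{2}}\right)^{\!r}+\left(\frac{k}{N^{3}\eta^{2}}\right)q^{2-2r},
\end{equation*}
exactly as in Lemma~\ref{prop: large deviation}. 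Conclusion then follows from Markov's inequality using $k/(N^{3}\eta^{2})=N^{2\delta}k/N^{5/3}\ll N^{-3\delta_{0}}$ for a small $\delta_{0}$ fixed so that $k\ll N^{5/3-3\delta_{0}}$, giving $c=\delta_{0}$.

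The heavy lifting is the moment bound above, and transferring Steps 1--4 of the proof of Lemma~\ref{prop: large deviation} to the ER setting is the place where care is required. Step~1 (the symmetry argument eliminating single-index $\rho_{l}=1$ contributions) depends only on the exchangeability of the pair $(\mathring a_{t},\mathring a'_{t})$ and transfers immediately. Steps~2--4 (resolvent bounds, decoupled resolvents $\hat R^{[s]}$, and the iterative resolvent expansion) rely only on $|\mathring a_{ij}|\prec 1/q$ and on the a priori estimates of Lemma~\ref{lem: lem10 in BLZ A}, which provide $|R_{ij}(\hz-a)-\delta_{ij}|\prec 1/q+1/(N\eta)$ and $|\Im R_{ij}(\hz-a)|\prec 1/(N\eta)$ on the relevant domain. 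The one point worth verifying carefully---and the main obstacle in the adaptation---is that the Ward identity and minor-resolvent comparisons go through with the shifted argument $\hz-a$: the $-a$ shift alters neither the validity of Ward's identity (which is purely algebraic in $\Im$ of the spectral parameter, equal to $\eta$) nor the rank-one-type corrections coming from removing rows/columns in the decoupled matrices $\hat A^{[s]}$, because the deterministic shift commutes with the combinatorial construction of $\hat A^{[s]}$.
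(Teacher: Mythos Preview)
Your proposal is correct and follows essentially the same approach as the paper: the paper's proof also reduces to the telescoping identity with centered increments $\mathring a_{i_tj_t}-\mathring a'_{i_tj_t}$, invokes the ER resolvent estimates of Lemma~\ref{lem: lem10 in BLZ A}, and then declares the remainder a straightforward modification of the proof of Lemma~\ref{lem: lem13 in BLZ} using $a_{ij}\prec q^{-1}$. Your write-up simply spells out in more detail how Steps~1--4 of Lemma~\ref{prop: large deviation} transfer, which the paper leaves implicit.
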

\begin{lem}\label{lem: lem14 in BLZ A}
	We write $\ww_{2}=(w_{1},\cdots,w_{N})$ and $\ww^{[k]}_{2}=(w^{[k]}_{1},\cdots, w^{[k]}_{N})$. Assume $q \gtrsim N^{1/9}$ and $k\ll N^{5/3}$. Let $0 < \delta <\delta_0$ be as in Lemma \ref{lem: lem13 in BLZ A}. There exists $c'>0$ such that with probability  $1-o(1)$ it holds that 
	\begin{align*}
		\max_{1\le i,j \le N}N|\eta\Im R_{ij}(z)-w_{i}w_{j}|\le N^{-c'}\quad\text{and}\quad
		\max_{1\le i,j \le N}N|\eta\Im R^{[k]}_{ij}(z)-w_{i}^{[k]}w_{j}^{[k]}|\le N^{-c'},
	\end{align*}
	with $z=\nu_{2}+\mathfrak{i}\eta$ and $\eta = N^{-2/3-\delta}$.
\end{lem}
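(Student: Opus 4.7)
The plan is to mimic the proof of Lemma \ref{lem: lem14 in BLZ}, adapting it to the second largest eigenvalue $\nu_2$ of $A$. The new feature compared to the sparse centered case is the presence of the top eigenvalue $\nu_1 \sim \zeta q$, which is macroscopically separated from the bulk by the rank-one perturbation $f\ee\ee^*$. This will help rather than hinder us: the large gap $\nu_1 - \nu_2 \gtrsim q$ means the $m=1$ term in the spectral expansion of $\Im R(\nu_2 + \mathfrak{i}\eta)$ is easily shown to be negligible.

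By the spectral theorem, for any $E\in\R$ and $\eta>0$,
\begin{align*}
N\eta\,\Im R(E+\mathfrak{i}\eta)_{ij} = \sum_{m=1}^{N}\frac{N\eta^{2}\ww_{m}(i)\ww_{m}(j)}{(\nu_{m}-E)^{2}+\eta^{2}}.
\end{align*}
Fix a small $\eps>0$ and let $N':=\lfloor N^{2\eps}\rfloor$. On the intersection of the events $\{\max_m \|\ww_m\|_\infty^2 \le N^{\eps-1}\}$ (Lemma \ref{lem: delocalization A}), $\{\nu_2-\nu_3 \ge c_0 N^{-2/3}\}$ (Lemma \ref{lem:TW A}), $\{\nu_1-\nu_2 \ge q/C\}$ (cf. \cite[Theorem 6.2]{EKYY13}), and on the rigidity event of Lemma \ref{lem: new rigidity estimate A}, we estimate separately: $(i)$ the $m=1$ term is $O(N^{\eps}\eta^{2}q^{-2})$; $(ii)$ by the argument leading to \eqref{eq:djeijd}, the tail $\sum_{m>N'}$ is bounded by $CN^{\eps}(N')^{-1/3}N^{4/3}$; $(iii)$ the middle part $\sum_{3\le m\le N'}$ contributes at most $CN^{\eps}N' N^{4/3}$ when $|E-\nu_{2}|\le \eta N^{-\delta'}$ for some $\delta'$ with $\delta+\delta'<\delta_{0}$; $(iv)$ the $m=2$ term evaluated at $E=\nu_{2}$ equals exactly $N w_{i}w_{j}$. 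Choosing $\eps\le \min(\delta',\delta/3)$ and combining these estimates exactly as in the proof of Lemma \ref{lem: lem14 in BLZ} yields the first claim with some $c'>0$.

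For the $[k]$-statement, we need the analog of Lemma \ref{lem: lem12 in BLZ}: $|\nu_{2}-\nu_{2}^{[k]}|\prec N^{-2/3-\delta}$. This rests on an ER-version of Lemma \ref{lem: lem9 in BLZ} tailored to $\nu_2$. The lower bound on $\Im R(\nu_{2}+\mathfrak{i}\eta)_{ii}$ for some $i$ is immediate by picking $i$ with $|w_{i}|\ge N^{-1/2}$. For the matching upper bound on $N\eta^{-1}\Im R^{[k]}(\nu_{2}+\mathfrak{i}\eta)_{ii}$, we separate off the $m=1$ term for $H^{[k]}$ (whose contribution is $O(\eta^{-1}q^{-2})$ since $\nu_{1}^{[k]} \sim \zeta q$ still holds by a resampling argument) and then bound the remaining spectral sum by $N^{4\eps}\bigl(\min_{j\ge 2}|\nu_{j}^{[k]}-\nu_{2}|\bigr)^{-2}$ using Lemma \ref{lem: delocalization A}, Lemma \ref{lem: new rigidity estimate A} and Corollary \ref{cor: lower bound for spectral gap A} applied to $H^{[k]}$. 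Combined with Lemma \ref{lem: lem13 in BLZ A} this forces $|\nu_{2}-\nu_{2}^{[k]}|\prec \eta$. With $\nu_2^{[k]}$ thus close to $\nu_2$, we repeat the four-step decomposition above for $R^{[k]}$ at $E=\nu_{2}^{[k]}$ (which is inside the allowed window $|E-\nu_2|\le\eta N^{-\delta'}$ with overwhelming probability) to conclude.

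The main obstacle is the systematic need to peel off the top eigenvalue $\nu_1$ in every resolvent estimate, in particular in the modified versions of Lemma \ref{lem: lem9 in BLZ} and Lemma \ref{lem: lem12 in BLZ}. This peeling is quantitatively easy — the gap $\nu_1-\nu_2\gtrsim q$ is very comfortable relative to $\eta=N^{-2/3-\delta}$ — but it must be carried out consistently in each place where Lemma \ref{lem: lem9 in BLZ} or rigidity is used, and one should verify that the one-rank perturbation stability of $\nu_1^{[k]}$ and $\ww_1^{[k]} \approx \ee$ still holds under the resampling procedure, e.g.\ via a standard interlacing bound applied to $A^{[k]}$. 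Once this is done, the whole argument becomes the proof of Lemma \ref{lem: lem14 in BLZ} with $\vv_{1}, \lambda_{1}$ replaced by $\ww_{2}, \nu_{2}$.
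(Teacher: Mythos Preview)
Your proposal is correct and follows essentially the same route as the paper: state and prove the analog of Lemma~\ref{lem: lem12 in BLZ} for $\nu_2$ (the paper records this as Lemma~\ref{lem: lem12 in BLZ A}), then run the spectral decomposition argument of Lemma~\ref{lem: lem14 in BLZ} with the one extra, trivially bounded $m=1$ term coming from $\nu_1-\nu_2\gtrsim q$. One small slip in wording: in your last paragraph the window for the $R^{[k]}$ analysis should read $|E-\nu_2^{[k]}|\le\eta N^{-\delta'}$ (centered at $\nu_2^{[k]}$), and the evaluation point $E=\nu_2$ then lies in it by your Lemma~12 analog---but this is clearly what you intend.
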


\begin{proof}[Proof of Lemma \ref{lem: lem14 in BLZ A}]
	The next lemma is a modification of Lemma \ref{lem: lem12 in BLZ}. See the following lemma.
	\begin{lem}\label{lem: lem12 in BLZ A}
		Assume $q\gtrsim N^{1/9}$  and $k \ll N^{5/3}$. Then, if $0 < \delta < \delta_0$ with $\delta_0$ as in Lemma \ref{lem: lem13 in BLZ A}, we have
		\begin{align*}
			|\nu_{2}-\nu_{2}^{[k]}|\prec N^{-2/3-\delta}.
		\end{align*}
			\begin{proof}
				If $\nu_{2}=\nu_{2}^{[k]}$, we are done. Thus, suppose $\nu_{2}^{[k]}<\nu_{2}$. We set $\eta=N^{-2/3-\delta}$. According to Lemma \ref{lem: lem9 in BLZ}, we can find $1\le i\le N$ such that
				\begin{align*}
					\frac{1}{2\eta^{2}} \le N\eta^{-1}\Im R(\nu_{2}+\mathfrak{i}\eta)_{ii}.
				\end{align*}
				Since we have $|(\nu_{2}+a)-\mathcal{L}|\prec N^{-2/3}$, it follows from Lemma \ref{lem: lem9 in BLZ} that
				\begin{align*}
					N\eta^{-1}\Im R^{[k]}(\nu_{2}+\mathfrak{i}\eta)_{ii}\prec \left(\min_{1\le j\le N}\left|\nu_{2}-\nu_{j}^{[k]}\right|\right)^{-2}.
				\end{align*}
				With overwhelming probability, $\nu_{1}^{[k]}\gg\nu_{2}>\nu_{2}^{[k]}\ge\nu_{3}^{[k]}\ge\cdots\ge\nu_{N}^{[k]}$ so we have
				\begin{align*}
					\min_{1\le j\le N}\left|\nu_{2}-\nu_{j}^{[k]}\right| = \left|\nu_{2}-\nu_{2}^{[k]}\right|.
				\end{align*}
				Applying Lemma \ref{lem: lem13 in BLZ A}, we get the desired result by showing
				\begin{align*}
					N\eta^{-1}\Im R^{[k]}(\nu_{1}+\mathfrak{i}\eta)_{ii} \gtrsim \frac{1}{\eta^{2}}.
				\end{align*}
				The other case $\nu_{2}^{[k]}>\nu_{2}$ can be proven by reversing the role $A$ and $A^{[k]}$.
			\end{proof}
	\end{lem}
	We fix $0 < \delta < \delta_0$ and set $\eta = N^{-2/3-\delta}$. 
	We write $\ww_{m}=(\ww_{m}(1),\ldots,\ww_{m}(N))$ and $\ww^{[k]}_{m}=(\ww^{[k]}_{m}(1),\ldots, \ww^{[k]}_{m}(N))$ for $m\neq 2$. By the spectral theorem, we have
	\begin{align*}
		N\eta\Im R(z)_{ij} = \frac{N\eta^{2}w_{i}w_{j}}{(\nu_{2}-E)^{2}+\eta^{2}} + \sum_{m\neq 2}^N\frac{N\eta^{2}\ww_{m}(i)\ww_{m}(j)}{(\nu_{m}-E)^{2}+\eta^{2}}.
	\end{align*}
	Let $\eps >0$ and let $N':=\lfloor N^{2\eps} \rfloor$. We see that with overwhelming probability: for all $E$ satisfying $|E- (\mathcal{L}-a)|\le N^{-2/3+\eps}$, we have, for some $C >0$,
	\begin{align}\label{eq:djeijd A}
		\left|\sum_{m=N'+1}^{N}\frac{N\ww_{m}(i)\ww_{m}(j)}{(\nu_{m}-E)^{2}+\eta^{2}}\right| \leq C N^{\eps} (N')^{-1/3}N^{4/3}.
	\end{align}
	We can find $c_0>0$ such that 
	\begin{align*}
		\prob\left(\cE \right)\ge 1-\eps/2,
	\end{align*}
	where $\cE$ is the event that \eqref{eq:djeijd A} holds, $ \nu_{1}-\nu_{2} \ge c_{0}q$, $\nu_{2}-\nu_{3} > c_0 N^{-2/3}$ 
	and  $\max_m \|\ww_m\|_\infty^2  \leq N^{\eps-1}$. On the event $\cE$, we find for all $E$ with $|\nu_{2}-E|\le (c_{0}/2)N^{-2/3}$ that for some $C >0$,
	\begin{align*}
		\left|\sum_{m=3}^{N'}\frac{N\ww_{m}(i)\ww_{m}(j)}{(\nu_{m}-E)^{2}+\eta^{2}}\right| 
		\leq C N^{\eps} N'N^{4/3},
	\end{align*}
	and
	\begin{align*}
		\left|\frac{N\ww_{1}(i)\ww_{1}(j)}{(\nu_{1}-E)^{2}+\eta^{2}}\right| 
		\leq N^{\eps}q^{-2}.
	\end{align*}
	We fix $\delta' >0$ such that $\delta + \delta' < \delta_0$. On the event $\cE$, for any $E$ such that $|\nu_{2}-E|\le \eta N^{-\delta'}$, we have 
	\begin{align*}
		\left| \frac{N\eta^{2}w_{i}w_{j}}{(\nu_{2}-E)^{2}+\eta^{2}} - Nw_{i}w_{j}\right| \leq N^{\eps -2\delta'}.
	\end{align*}
	The proof is done by following the argument of the proof of Lemma \ref{lem: lem14 in BLZ} and applying Lemma \ref{lem: lem12 in BLZ A}.
\end{proof}

\subsection{Resolvent of \ER graph: Proof of Lemma \ref{lem: lem10 in BLZ A}}\label{sec: resolvent of ER}
The second statement of Lemma \ref{lem: lem10 in BLZ A} will be shown in this subsection. It is enough to prove the following lemma.
\begin{lem}
	Assume $q \gtrsim N^{1/9}$ and let $0 \leq \delta < 1/3$. $R(z)$ be the resolvent of $A$. We have
	\begin{align*}
		\sup_z \max_{1\leq i , j \leq N} |\Im R_{ij}(z-a)-\delta_{ij}\Im (m_\star(z-a))| \prec \frac{1}{N\eta},
	\end{align*}
	where the supremum is over all $z=E+\mathfrak{i}\eta$ with $|E-\cL|\le 2 N^{-2/3+\delta}$ and $\eta =  N^{-2/3-\delta}$, and the term $\cL$ is defined as in Lemma \ref{lem: local law} with setting $\cX$ as in \eqref{eq: random correction term A}.
	
\begin{proof}
		We can prove this lemma by using the same argument in Subsection \ref{sec: estimate on the imaginary part of local law} with some additional ingredients, Lemma \ref{lem: additional resolvent estimate A} and Lemma \ref{lem: bound on off diagonal A}. We already know it is sufficient to prove that for any deterministic real $\kappa$ with $|\kappa| \leq N^{-2/3 + \delta}$, 
	\begin{equation}\label{eq:tbpimll A}
		\max_{1\leq i , j \leq N} |\Im R_{ij}(\tz-a)-\delta_{ij}\Im (m_\star(\tz-a))| \prec \frac{1}{N\eta},
	\end{equation}
	with $\tz$ defined by $\tz = \kappa + L + \cX + \mathfrak{i}\eta$.
	
	For $\eta = N^{-2/3 - \delta}$, we set $\cD_1 = \{ z = E + \mathfrak{i}\eta \in \C_+ : |E - \cL| \leq 2N^{-2/3+ \delta} \}$. We introduce the following variables
	\begin{align*}
		&\Lambda_{e} :=\sup_{z \in \cD_1} \max_{i,  j}|R_{ij}(z-a) - \delta_{ij} m_\star(z)|,  \quad 	 \Lambda_A := \sup_{z \in \cD_1}|m_A(z-a)-m_\star(z)|. \\ 	
		& \Lambda_{o}^{\Im } :=\max_{i\neq j} \Im R_{ij}(\tz-a), \quad \Lambda_{d}^{\Im }:=\max_{i}|\Im R_{ii}(\tz-a)-\Im (m_\star(\tz))|.
	\end{align*}
	For $\alpha >0$ such that $(N\eta)^{-1} \leq \alpha \leq 1/q$, we introduce the events
	\begin{align*}
		\Omega := \left\{\Lambda_A  \le N^{\eps}(N\eta)^{-1} \; ; \; \Lambda_{e} \le N^{\eps}q^{-1} \right\} \quad \hbox{and} \quad 	\Omega( \alpha) := \Omega \cap \left\{ \Lambda_{o}^{\Im }+\Lambda_{d}^{\Im }\le N^{\eps}\alpha \right\},
	\end{align*}
	where $\eps>0$ is an arbitrarily small constant to be chosen later. By Lemma \ref{lem:locallaw A}, \cite[Theorem 2.9]{EKYY13} and Lemma \ref{lem:locallaw02}, the event $\Omega(1/q)$ holds with overwhelming probability. If $\Omega(N^\eps/(N\eta))$ has overwhelming probability for all fixed $0 < \eps< 1/9$ then \eqref{eq:tbpimll A} holds and the proof of the lemma is done.
	
	Note that, if $\Omega(\alpha)$ holds, for all $\T$ with $|\T| \leq 2$, for all $i,j \notin \T$ with $ i \ne j$,
	\begin{align}\label{eq:OmRT A}
		\sup_{z \in \cD_1} |R^{(\T)}_{ii}(z)| \asymp  1 , \quad \sup_{z \in \cD_1} |R^{(\T)}_{ij}(z)| \lesssim \frac{N^\eps}{q}   \quad	\hbox{and} \quad \Im R^{(\T)}_{ii} + \Im R^{(\T)}_{ij} \lesssim  N^\eps \alpha.
	\end{align}
	
	For ease of notation, in the sequel, we often omit $\tz$ and write $m$, $m_\star$ and $R$ in place of $m(\tz-a)$, $m_\star(\tz)$ and $R(\tz-a)$. We define $Z_{ij}(\tz-a)$ by setting
	\begin{align*}
		Z_{ij}:=\sum_{k,l}^{(ij)}a_{ik}R_{kl}^{(ij)}a_{lj}.
	\end{align*}
	We set
	\begin{align*}
		Z_{i}&:= Z_{ii} - \frac{1}{N}\sum_{k}^{(i)} R_{kk}^{(i)} \\
		&= \sum_{k}^{(i)}\left( |a_{ik}|^{2} - \frac{1}{N} \right)R_{kk}^{(i)} + \sum_{k\neq l}^{(i)}a_{ik} R_{kl}^{(i)} a_{li}.
	\end{align*}
	In addition, let us define $\tz_i = \kappa + L + \cX_i + \mathfrak{i}\eta$ and $\tz_{ij} = \kappa + L + \cX_{ij} + \mathfrak{i}\eta$ with
	$$
	\cX_i = \frac 1 N \sum_{k,l}^{(i)} \left( \mathring{a}_{ij}^2 - \frac 1 N \right) \quad \hbox{ and } \quad \cX_{ij} = \frac 1 N \sum_{k,l}^{(ij)} \left( \mathring{a}_{ij}^2 - \frac 1 N \right).
	$$
	Let us set $\tilde R^{(i)} := R^{(i)}(\tz_i-a)$ and $\tilde R^{(ij)} :=  R^{(ij)}(\tz_{ij}-a)$. The following lemmas are new inputs to show the desired results.
	
	\begin{lem}\label{lem: additional resolvent estimate A}
		Assume $q\gg 1$ and $1/(N\eta) \le \alpha \le 1/q $. We have on $\Omega(\alpha)$,
		\begin{align*}
			\sum_{k,l}^{(i)} \frac{f}{N} \Im \left(\tilde R_{kl}^{(i)}\right) \mathring{a}_{li}  \prec \frac{f^{2}}{N}\frac{1}{N\eta},
		\end{align*}
		and
		\begin{align*}
			\sum_{k,l}^{(i)} \frac{f^{2}}{N^{2}} \Im \left(\tilde R_{kl}^{(i)}\right)  \prec \frac{f^{2}}{N}\frac{1}{N\eta}.
		\end{align*}
		\begin{proof}
			Using the spectral decomposition of $R^{(i)}$, we have
			\begin{align*}
				\sum_{k,l}^{(i)} \frac{f}{N} \Im \left[\tilde R_{kl}^{(i)}\right] \mathring{a}_{li}
				= \frac{f\sqrt{N-1}}{N}\left( \sum_{\alpha}\frac{\eta\langle \ee_{N-1}, \ww^{(i)}_{\alpha} \rangle}{(\nu^{(i)}_{\alpha}-\kappa-L-\cX_i-a)^{2}+\eta^{2}} \sum_{l}^{(i)}\ww^{(i)}_{\alpha}(l)\mathring{a}_{li} \right).
			\end{align*}
			Thus, it is enough to estimate
			\begin{align}
				\frac{f}{\sqrt{N}}\sum_{\alpha}\frac{\eta\langle \ee_{N-1}, \ww^{(i)}_{\alpha} \rangle}{(\nu^{(i)}_{\alpha}-\kappa-L-\cX_i-a)^{2}+\eta^{2}} \sum_{l}^{(i)}\ww^{(i)}_{\alpha}(l)\mathring{a}_{li}.
			\end{align}
			We have $|\langle \ee_{N-1}, \ww^{(i)}_{\alpha} \rangle|\le1$, $|\ww^{(i)}_{\alpha}(l)|\prec N^{-1/2}$. Moreover, from the large deviation estimate \cite[Lemma 3.8 (ii)]{EKYY13},
			we obtain
			\begin{align*}
				\sum_{l}|\mathring{a}_{l}| = N\E|\mathring{a}| + \sum_{l}(|\mathring{a}_{l}|-\E|\mathring{a}_{l}|) \prec q \asymp f.
			\end{align*}
			Note also that
			$$
			\sum_{\alpha}\frac{\eta}{(\nu^{(i)}_{\alpha}-\kappa-L-\cX_i-a)^{2}+\eta^{2}} = \Im(m_{A^{(i)}}(\tz_{i}-a)).
			$$
			It follows from the local law that
			\begin{align}
				\frac{f}{\sqrt{N}}\sum_{\alpha}\frac{\eta\langle \ee_{N-1}, \ww^{(i)}_{\alpha} \rangle}{(\nu^{(i)}_{\alpha}-\kappa-L-\cX_i-a)^{2}+\eta^{2}} \sum_{l}^{(i)}\ww^{(i)}_{\alpha}(l)\mathring{a}_{li} \prec \frac{f^{2}}{N}\frac{1}{N\eta}.
			\end{align}
			Similarly, the second statement follows from
			\begin{align}
				\frac{f}{\sqrt{N}}\sum_{\alpha}\frac{\eta\langle \ee_{N-1}, \ww^{(i)}_{\alpha} \rangle}{(\nu^{(i)}_{\alpha}-\kappa-L-\cX_i-a)^{2}+\eta^{2}} \sum_{l}^{(i)}\ww^{(i)}_{\alpha}(l)\frac{f}{N} \prec \frac{f^{2}}{N}\frac{1}{N\eta},
			\end{align}
			as claimed.    \end{proof}
	\end{lem}
	
	\begin{lem}\label{lem: bound on off diagonal A}
		Assume $q\gg 1$ and $1/(N\eta) \le \alpha \le 1/q $. We have on $\Omega(\alpha)$,
		\begin{align*}
			& |Z_i| \prec N^{\eps}\left(\frac{1}{q}  + \sqrt{\frac{\alpha}{N\eta}} \right), \quad |Z_{ij} | \prec N^\eps \left( \frac{1}{q^{2}} + \sqrt{\frac{\alpha}{N\eta}}\right)\\
			&	|\Im(Z_{i})| \prec N^\eps \left( \frac{\alpha}{q} + \sqrt{\frac{\alpha}{N\eta}}\right) ,
			\quad |\Im(Z_{ij}) | \prec N^\eps \left( \frac{\alpha}{q} + \sqrt{\frac{\alpha}{N\eta}}\right).
		\end{align*}
	\end{lem}
	\begin{proof}
		From the resolvent identity, we have 
		$$
		R^{(i)} - \tilde R^{(i)}  = - (\tz - \tz_i) R^{(i)}  \tilde R^{(i)} .
		$$
		Moreover,  
		$$
		|\tz - \tz_i| \prec \frac{1}{Nq},
		$$
		and on $\Omega(\alpha)$, from \eqref{eq:OmRT A}, for any $k,l$
		$$
		\left| \left(R^{(i)} \tilde R^{(i)}\right)_{kl} \right| \leq \sum_{a}^{(i)} | R^{(i)}_{ka}    \tilde R^{(i)}_{al} | \leq \sum_{a}^{(i)} \left( (R^{(i)}_{ka})^{2} + (\tilde R^{(i)}_{al})^{2} \right)| \lesssim \frac{N^{\eps}\alpha}{\eta}.
		$$
		So finally, 
		\begin{equation}\label{eq:diffRzzi A}
			\left| R_{kl}^{(i)} - \tilde R_{kl}^{(i)} \right| \prec \frac{N^{\eps}\alpha}{qN\eta}.
		\end{equation}
		The same bound holds for $\left| R^{(ij)}_{kl}  - \tilde R^{(ij)}_{kl} \right|$.
		
		We write
		\begin{multline*}
			Z_{i}(\tz)= \sum_{k}^{(i)}\left( |a_{ik}|^{2} - \frac{1}{N} \right)\tilde R_{kk}^{(i)} + \sum_{k\neq l}^{(i)}a_{ik} \tilde R_{kl}^{(i)} a_{li} \\
			+ \sum_{k}^{(i)}\left( |a_{ik}|^{2} - \frac{1}{N} \right) \left(R_{kk}^{(i)} - \tilde R_{kk}^{(i)}\right) + \sum_{k\neq l}^{(i)}a_{ik} \left(R_{kl}^{(i)} - \tilde R_{kl}^{(i)}\right) a_{li}.
		\end{multline*}
		Note that
		\begin{align*}
			\left| \sum_{k}^{(i)}\left( |a_{ik}|^{2} - \frac{1}{N} \right) \left(R_{kk}^{(i)} - \tilde R_{kk}^{(i)}\right) \right| &\prec \frac{N^{\eps}\alpha}{qN\eta} \sum_{k}^{(i)}\left| |a_{ik}|^{2} - \frac{1}{N} \right| \\
			&\lesssim  \frac{N^{\eps}\alpha}{qN\eta} \sum_{k}^{(i)} \left(  \mathring a_{ik}^{2} + \frac{f}{N}|\mathring a_{ik}| + \frac{f^{2}}{N^{2}} + \frac{1}{N} \right) \\
			&\lesssim \frac{N^{\eps}\alpha}{N\eta},
		\end{align*}
		and
		\begin{align*}
			\left| \sum_{k\neq l}^{(i)}a_{ik} \left(R_{kl}^{(i)} - \tilde R_{kl}^{(i)}\right) a_{li} \right| &\prec \frac{N^{\eps}\alpha}{qN\eta} \sum_{k\neq l}^{(i)}\left| a_{ik}a_{li} \right| \\
			&\lesssim  \frac{N^{\eps}\alpha}{qN\eta} \sum_{k\neq l}^{(i)} \left(  |\mathring a_{ik} \mathring a_{li}| + \frac{f}{N}|\mathring a_{ik}| + \frac{f}{N}|\mathring a_{li}| + \frac{f^{2}}{N^{2}} \right) \\
			&\lesssim \frac{N^{\eps}q\alpha}{N\eta}.
		\end{align*}
		Then we have
		\begin{align*}
			|Z_{i}| \prec \left|\sum_{k}^{(i)}\left( |a_{ik}|^{2} - \frac{1}{N} \right)\tilde R_{kk}^{(i)} + \sum_{k\neq l}^{(i)}a_{ik}\tilde R_{kl}^{(i)} a_{li}\right| + \frac{N^{\eps}q\alpha}{N\eta}.
		\end{align*}
		Using the large deviation estimate \cite[Lemma 3.8 (ii)]{EKYY13}, it follows that
		\begin{align*}
			\left|\sum_{k}^{(i)}\left( \mathring a_{ik}^{2} - \frac{1}{N} \right)\tilde R_{kk}^{(i)} + \sum_{k\neq l}^{(i)}\mathring a_{ik}\tilde R_{kl}^{(i)} \mathring a_{li}\right| \prec  \frac{\max_{k}|\tilde R_{kk}^{(i)}|}{q} + \frac{\max_{k\neq l}|\tilde R_{kl}^{(i)}|}{q} + \left(\frac{1}{N^{2}}\sum_{k,l}^{(ij)}|\tilde R_{kl}^{(i)}|^{2}\right)^{1/2}.
		\end{align*}
		Applying \cite[Lemma 7.5]{EKYY13} and \cite[Inequality (7.18)]{EKYY13}, we find
		\begin{align*}
			\left| \sum_{k, l}^{(i)}\left( \frac{f}{N}\mathring a_{ik} + \frac{f}{N}\mathring a_{li} + \frac{f^{2}}{N^{2}} \right)\tilde R_{kl}^{(i)} \right| \lesssim \frac{1}{q} + \frac{1}{N\eta}.
		\end{align*}
		In sum, we establish on $\Omega(\alpha)$,
		\begin{align*}
			|Z_{i}| \prec N^{\eps} \left(  \frac{1}{q} + \sqrt{\frac{\alpha}{N\eta}}\right),
		\end{align*}
		where we have used Ward identity \eqref{eq:ward}.
		The first claim follows.

		Similarly, since for $i\neq j$, the random variables $\{h_{ik}\}_{k : k \ne j}$ are independent of $\{h_{lj}\}_{l: l \ne i}$, from \eqref{eq:OmRT A}-\eqref{eq:diffRzzi A}, \cite[Lemma 3.8 (iii)]{EKYY13},  \cite[Lemma 7.5]{EKYY13} and \cite[Inequality (7.18)]{EKYY13}, on $\Omega(\alpha)$, we have
		\begin{align*}
			\left|\sum_{k,l}^{(ij)}Z_{ij} \right| &\prec  \frac{\max_{k}|\tilde R_{kk}^{(ij)}|}{q^{2}} + \frac{\max_{k\neq l}|\tilde R_{kl}^{(ij)}|}{q} + \left(\frac{1}{N^{2}}\sum_{k,l}^{(ij)}|\tilde R_{kl}^{(ij)}|^{2}\right)^{1/2} 
			+ \frac{1}{q} + \frac{1}{N\eta}
			+ \frac{N^{\eps}q\alpha}{N\eta} \nn\\
			&\prec  N^{\eps} \left( \frac{1}{q^{2}} + \sqrt{\frac{\alpha}{N\eta}}\right).
		\end{align*}
		The same argument gives with aid of Lemma \ref{lem: additional resolvent estimate A}, on $\Omega(\alpha)$, 
		\begin{align*}
			\left|\Im(Z_{ij}) \right| =  \left| \sum_{k,l}^{(ij)}a_{ik}\Im ( R_{kl}^{(ij)})a_{lj}\right| \prec N^\eps \left(\frac{\alpha}{q} + \sqrt{\frac{\alpha}{N\eta}}\right).
		\end{align*}
		
		Finally, we obtain similarly, on $\Omega(\alpha)$
		\begin{align*}
			\Im Z_{i} & = \sum_{k}^{(i)}\left( |a_{ik}|^{2} - \frac{1}{N} \right) \Im  R_{kk}^{(i)} + \sum_{k\neq l}^{(i)}a_{ik}\Im ( R_{kl}^{(i)})a_{li}			\prec N^\eps \left( \frac{\alpha}{q} + \sqrt{\frac{\alpha}{N\eta}}\right),
		\end{align*}
		as claimed.
	\end{proof}
	
	Following Step 5 (iteration of the error bounds) of Subsection \ref{sec: estimate on the imaginary part of local law}, we can complete the proof with the above technical lemmas. We omit the details.
\end{proof}
	
\end{lem}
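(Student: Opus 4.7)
The plan is to mirror the inductive self-improvement scheme developed in Section \ref{sec: estimate on the imaginary part of local law} for $H$, adapting it to accommodate the decomposition $A = \Ac + f\ee\ee^* - aI$. The diagonal shift $-aI$ is absorbed by evaluating the resolvent at $z-a$ (this is why the target quantity is $\Im R_{ij}(z-a)$ rather than $\Im R_{ij}(z)$), while the rank-one piece $f\ee\ee^*$ contributes only $O(1/(N\eta))$ to all resolvent entries via the standard interlacing bound used in the proof of Lemma \ref{lem:locallaw A}. First I would reduce, via the same net argument over $[-N^{-2/3+\delta}, N^{-2/3+\delta}]$ using $|\partial_z R_{ij}(z-a)| \le \eta^{-2}$, to a single deterministic $\kappa$ and the random spectral parameter $\tz = \kappa + L + \cX + \mathfrak i \eta$ with $\cX$ as in \eqref{eq: random correction term A}. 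I would then define the inductive events $\Omega$ and $\Omega(\alpha)$ verbatim from Section \ref{sec: estimate on the imaginary part of local law}, but with $R(z-a)$ and $m_A(z-a)$ in place of $R(z)$ and $m(z)$, and verify the base case $\Omega(1/q)$ by combining Lemma \ref{lem:locallaw A} with the entrywise weak law \cite[Theorem 2.9]{EKYY13}. The steps transferring bounds to two-index minors $R^{(\T)}$ via the Schur-complement identity and the resolvent-perturbation bound comparing $R^{(i)}(\tz-a)$ to $\tilde R^{(i)} = R^{(i)}(\tz_i - a)$ (the analogue of \eqref{eq:diffRzzi}) are purely algebraic and transfer without change.

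The heart of the proof is the concentration of the quadratic forms
\begin{equation*}
Z_i = \sum_{k,l}^{(i)} \tilde R^{(i)}_{kl}\, a_{ik} a_{li} - \frac{1}{N}\sum_k^{(i)} \tilde R^{(i)}_{kk}, \qquad Z_{ij} = \sum_{k,l}^{(ij)} a_{ik} \tilde R^{(ij)}_{kl} a_{lj},
\end{equation*}
now built from the uncentered entries $a_{ij}$. Expanding $a_{ij} = \ac_{ij} + f/N$ splits $Z_i$ and $Z_{ij}$ into (i) the corresponding centered forms in $\ac_{ij}$, for which the argument of Lemma \ref{lem: bound on off diagonal} applies unchanged using \cite[Lemma 3.8 (ii)-(iii)]{EKYY13}, and (ii) bias terms of the form $(f/N)\sum_{k,l} \ac_{ik}\, \tilde R^{(i)}_{kl}$, $(f/N)\sum_{k,l}\ac_{li}\, \tilde R^{(i)}_{kl}$ and $(f^2/N^2)\sum_{k,l}\tilde R^{(i)}_{kl}$, together with their imaginary parts. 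These bias terms are what Lemma \ref{lem: additional resolvent estimate A} is designed to control, and their estimation is the main obstacle I expect. A naive bound via Cauchy--Schwarz and $\| \tilde R^{(i)}\| \le 1/\eta$ would only yield $O(f^2/(N\eta))$, losing the required factor $N\eta$. To recover the gain, I would expand $\tilde R^{(i)}$ in its eigenbasis $\{\ww_\alpha^{(i)}\}$ and rewrite, e.g., the first bias term as
\begin{equation*}
\frac{f}{\sqrt N}\sum_\alpha \frac{\eta\, \langle \ee_{N-1}, \ww_\alpha^{(i)}\rangle}{(\nu_\alpha^{(i)} - \kappa - L - \cX_i - a)^2 + \eta^2}\; \sum_l^{(i)} \ww_\alpha^{(i)}(l)\, \ac_{li},
\end{equation*}
then combine eigenvector delocalization $|\ww_\alpha^{(i)}(l)| \prec 1/\sqrt N$ (Lemma \ref{lem: delocalization A}), the large-deviation bound $\sum_l |\ac_{li}| \prec q \asymp f$ from \cite[Lemma 3.8 (ii)]{EKYY13}, and $\sum_\alpha \eta/[(\nu_\alpha^{(i)} - \mathrm{Re}(\tz_i - a))^2 + \eta^2] = \Im m_{A^{(i)}}(\tz_i - a) \prec 1$ via Lemma \ref{lem:locallaw A}. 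The decoupling of $(\ac_{li})_l$ from $\tilde R^{(i)}$ that this argument requires is precisely what is bought by the shift $\tz \mapsto \tz_i$, at the small cost $|\tz - \tz_i| \prec 1/(Nq)$.

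Once Lemma \ref{lem: additional resolvent estimate A}, and hence Lemma \ref{lem: bound on off diagonal A}, is in hand, the analogues of Lemma \ref{lem: bound off digonal} and Lemma \ref{lem: bound on digonal} follow from the same algebraic identities $R_{ij} = -R_{ii} R^{(i)}_{jj}(a_{ij} - Z_{ij})$ and $-1/R_{ii} = (z-a) + m_A(z-a) - \Upsilon_i$ (the constant $a$ simply moves between the argument of $m_A$ and the linear term and cancels out). This produces the self-improving inequality $\alpha_{k+1} \lesssim \alpha_k/q + \sqrt{\alpha_k/(N\eta)} + 1/(N\eta)$, which, starting from $\alpha_1 = 1/q$, converges to $\alpha_\infty \asymp N^\eps/(N\eta)$ in $O(1)$ rounds because $q \gg 1$, concluding the proof.
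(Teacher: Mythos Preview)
Your proposal is correct and follows essentially the same route as the paper: the net reduction to a fixed $\kappa$, the inductive events $\Omega(\alpha)$ initialised at $\alpha=1/q$ via Lemma \ref{lem:locallaw A} and \cite[Theorem 2.9]{EKYY13}, the expansion $a_{ij}=\ac_{ij}+f/N$ splitting $Z_i,Z_{ij}$ into centered pieces (handled as in Lemma \ref{lem: bound on off diagonal}) plus bias terms controlled by the spectral-decomposition argument of Lemma \ref{lem: additional resolvent estimate A}, and the final self-improving iteration of Step 5 in Subsection \ref{sec: estimate on the imaginary part of local law}. One small remark: your opening sentence that the rank-one piece $f\ee\ee^*$ ``contributes only $O(1/(N\eta))$ to all resolvent entries via interlacing'' overstates what interlacing gives---that bound is only for the trace (as in the proof of Lemma \ref{lem:locallaw A}), not entrywise---but you do not actually use this claim later, so it does not affect the argument.
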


\end{document}